\documentclass[10pt,reqno]{amsart}
\usepackage{amssymb,mathrsfs,graphicx,extpfeil,amsmath,bbm}
\usepackage{ifthen,latexsym,float,colortbl}
\usepackage[margin=1in]{geometry}
\usepackage{caption}
\usepackage{dsfont}
\usepackage{arydshln}
\usepackage{sidecap}
\usepackage{rotating,dsfont,stackengine}
\usepackage{hyperref}
\usepackage{enumerate}
\usepackage[scr=boondox]{mathalfa}
\usepackage{xcolor}
\usepackage{lineno}

\usepackage[normalem]{ulem}
\usepackage{soul}
\usepackage{cancel}

\usepackage{colortbl}
\definecolor{black}{rgb}{0.0, 0.0, 0.0}
\definecolor{red}{rgb}{1.0, 0.5, 0.5}
\provideboolean{shownotes} 
\setboolean{shownotes}{true} 
\newcommand{\margnote}[1]{
	\ifthenelse{\boolean{shownotes}}%
	{\marginpar{\raggedright\tiny\texttt{#1}}}%
	{}%
}
\newcommand{\hole}[1]{
	\ifthenelse{\boolean{shownotes}}%
	{\begin{center} \fbox{ \rule {.25cm}{0cm} \rule[-.1cm]{0cm}{.4cm}
				\parbox{.85\textwidth}{\begin{center} \texttt{#1}\end{center}} \rule
				{.25cm}{0cm}}\end{center}} {} }

\graphicspath{{pics/}}

\title[Nonlinear quantum Fokker--Planck equation near equilibrium]
{Nonlinear quantum Fokker--Planck equation near equilibrium}

 \author[Choi]{Young-Pil Choi}
 \address[Young-Pil Choi]{\newline Department of Mathematics,\newline
 Yonsei University, 50 Yonsei-Ro, Seodaemun-Gu, Seoul 03722, Republic of Korea}
 \email{ypchoi@yonsei.ac.kr}

\author[Hwang]{Byung-Hoon Hwang}
\address[Byung-Hoon Hwang]{\newline Department of Mathematics Education\newline
	Sangmyung University, 20 Hongjimun 2-gil, Jongno-Gu, Seoul 03016, Republic of Korea}
\email{bhhwang@smu.ac.kr}

 \author[Hyun]{Ju-Hwan Hyun}
 \address[Ju-Hwan Hyun]{\newline Department of Mathematics\newline
 Yonsei University, 50 Yonsei-Ro, Seodaemun-Gu, Seoul 03722, Republic of Korea}
 \email{jhhyun@yonsei.ac.kr}

\makeatother

\numberwithin{equation}{section}

\newtheorem{theorem}{Theorem}[section]
\newtheorem{lemma}{Lemma}[section]

\newtheorem{proposition}{Proposition}[section]
\newtheorem{remark}{Remark}[section]

\makeatletter
\def\@subsubsection#1{\par
  \addpenalty\@secpenalty
  \vskip 1.5ex plus 1ex minus .2ex
  \refstepcounter{subsubsection}
  \addcontentsline{toc}{subsubsection}{\protect\numberline{\thesubsubsection}#1}
  \noindent{\normalfont\bfseries\thesubsubsection\quad#1\par\nobreak}
  \vskip 0.5ex \@afterheading}
\makeatother

\newcommand{\lt}{\left}
\newcommand{\rt}{\right}
\newcommand{\lal}{\langle}
\newcommand{\ral}{\rangle}

\newcommand{\mI}{\mathbb{I}}
\newcommand{\mN}{\mathbb{N}}

\newcommand{\mQ}{\mathbb{Q}}
\newcommand{\mR}{\mathbb{R}}

\newcommand{\mcC}{\mathcal{C}}
\newcommand{\mcD}{\mathcal{D}}
\newcommand{\mcE}{\mathcal{E}}
\newcommand{\mcF}{\mathcal{F}}

\newcommand{\mcI}{\mathcal{I}}
\newcommand{\mcJ}{\mathcal{J}}
\newcommand{\mcN}{\mathcal{N}}
\newcommand{\mcP}{\mathcal{P}}

\newcommand{\ph}{\psi_\hbar}

\newcommand{\mfe}{\mathfrak{e}}

\newcommand{\intr}{\int_{\mR^3}}
\newcommand{\inttr}{\iint_{\mR^3 \times \mR^3}}
\newcommand{\Ls}{\Lambda^{-\tilde{s}}}

\newcommand{\Q}{\mathbb Q}
\newcommand{\R}{\mathbb R}

\newcommand{\bq}{\begin{equation}}
\newcommand{\eq}{\end{equation}}
\newcommand{\e}{\varepsilon}
\newcommand{\pa}{\partial}

\newcommand{\calA}{\mathcal A}

\newcommand{\calC}{\mathcal C}
\newcommand{\calD}{\mathcal D}
\newcommand{\calE}{\mathcal E}

\newcommand{\calI}{\mathcal I}
\newcommand{\calJ}{\mathcal J}

\newcommand{\calL}{\mathcal L}
\newcommand{\calM}{\mathcal M}

\newcommand{\calR}{\mathcal R}

\newcommand{\calT}{\mathcal T}

\newcommand{\rd}{\textnormal{d}}

\newcommand{\rdx}{\textnormal{d}x}

\newcommand{\rdt}{\textnormal{d}t}

\newcommand{\rdta}{\textnormal{d}\tau}
\newcommand{\rds}{\textnormal{d}s}
\newcommand{\rdp}{\textnormal{d}p}

\begin{document}

\allowdisplaybreaks

\date{\today}

\keywords{Quantum kinetic theory, Fokker--Planck equation, global well-posedness, macro--micro decomposition, algebraic decay, Pauli exclusion principle}

\begin{abstract}
We investigate a nonlinear quantum Fokker--Planck equation with self-consistent collision frequency, bulk velocity, and temperature. In contrast to quantum Fokker--Planck equations with prescribed diffusion and friction coefficients, the macroscopic quantities are nonlinear functionals of the distribution function. The equation preserves mass, momentum, and kinetic energy, admits a quantum entropy dissipation structure, and propagates the Pauli admissible range in the fermionic case. Its collision operator is also formally connected to the quantum Landau equation. For the Cauchy problem in the three-dimensional whole space, we prove the global-in-time existence and uniqueness of strong solutions near a global quantum equilibrium. The proof is based on a perturbative macro--micro energy method that combines microscopic coercivity, estimates for nonlinear velocity moments, and a macroscopic dissipation argument. We further establish the propagation of nonnegativity and the fermionic Pauli upper bound. Under an additional negative Sobolev assumption on the initial perturbation, we obtain algebraic decay rates toward equilibrium.
\end{abstract}

\maketitle \centerline{\date}

\tableofcontents

%
%
%
%
%
%
%
%
%
%

\section{Introduction} 

The Fokker--Planck equation is one of the fundamental equations in statistical physics and kinetic theory. It describes the evolution of a distribution function under the combined effects of transport, diffusion, and friction. It is also closely related to the Landau equation, which arises as the grazing-collision limit of the Boltzmann equation; see, for instance, \cite{CDW22,DR01,Gou97}. Owing to this connection, Fokker--Planck-type equations retain important structural features of collisional kinetic models while remaining more tractable from the analytical point of view. We refer to \cite{Cha03,Cha04_2,Cha04,Kan01} for derivations and structural aspects of kinetic equations in the physics literature.

The Cauchy problem for kinetic equations has been extensively studied. For general background on the mathematical theory of kinetic equations and their asymptotic analysis, we refer to \cite{BGP00,Gla96,Vil02}. In particular, the global existence of perturbative solutions to the Boltzmann equation near equilibrium has been investigated in \cite{Dua08,Guo04,LYY04} and related works. For the Landau equation, we refer to \cite{Vil96,Vil02} and the references therein. We also mention the recent breakthrough result \cite{GS25}, which rules out finite-time blow-up for the spatially homogeneous Landau equation, including the Coulomb case. For the spatially inhomogeneous Landau and Boltzmann equations, the recent work \cite{GHS26} establishes new weighted pointwise estimates and continuation criteria without assuming bounds on the hydrodynamic quantities. The trend to equilibrium for the linear kinetic Fokker--Planck equation has been studied by various methods, including entropy and hypocoercivity arguments; see, for instance, \cite{DV01,DMS15}. Nonlinear kinetic Fokker--Planck equations with self-consistent macroscopic fields have also been considered. In \cite{Cho16}, global classical solutions near equilibrium and their large-time behavior were established for a nonlinear Vlasov--Fokker--Planck equation in which the diffusion coefficient is fixed while the drift velocity is determined by a local velocity average of the distribution function. The stability of the global Maxwellian for a fully nonlinear Fokker--Planck equation preserving mass, momentum, and energy was studied in \cite{LY21}, while global weak solutions with large initial data were constructed for a related equation in \cite{CG04, CHY25, CS25}. We also refer to \cite{Bed17,BZZ25} for related results on Vlasov--Poisson equations with nonlinear Fokker--Planck collisions in the weakly collisional regime.
 
Quantum kinetic equations incorporate quantum statistics through inclusion or exclusion factors. Such equations were introduced and studied in the physics literature; see, for instance, \cite{Kan95,KQ93,KQ94}. The sign of the statistical parameter distinguishes bosonic and fermionic statistics. For the quantum Boltzmann equation, spatially homogeneous Fermi--Dirac models were studied in \cite{Lu01,LW03}. Global well-posedness results near equilibrium or vacuum for spatially inhomogeneous quantum Boltzmann equations were obtained in \cite{BJY21,OW22,WXZ23}; see also the references therein. The quantum Landau equation has also been studied extensively, particularly in the fermionic case. For the spatially homogeneous Landau--Fermi--Dirac equation, well-posedness results were established for hard potentials in \cite{Bag04}, for moderately soft potentials in \cite{ABDL22}, and for the Coulomb potential in \cite{GGZ22}. The long-time behavior for hard potentials was investigated in \cite{ABL21}, while entropy production estimates adapted to the Fermi--Dirac structure were developed in \cite{ABDL21}. These estimates are uniform with respect to the quantum parameter and recover corresponding estimates for the classical Landau equation in the classical limit. More recently, global weak solutions to the spatially inhomogeneous Landau--Fermi--Dirac equation with Coulomb potential were constructed in \cite{Sam24}. The semiclassical limit of these solutions as the quantum parameter tends to zero was subsequently studied in \cite{Sam25}. We also refer to the recent work \cite{GPTW25}, where the quantum Landau operator is rigorously derived from the quantum Boltzmann operator through a weak-coupling limit for both Fermi--Dirac and Bose--Einstein statistics.
 
Quantum Fokker--Planck equations posed on a classical kinetic phase space have been studied in several settings. Well-posedness and long-time behavior for spatially homogeneous models were investigated in \cite{CCLR16,CLR09,CRS08}. Linearized quantum and relativistic Fokker--Planck--Landau equations were considered in \cite{Lem00}. For spatially inhomogeneous kinetic models, perturbative global classical solutions and stability of equilibria were established in \cite{LZ15,NS05,NS07}. A recent work \cite{APT26} proves exponential convergence toward equilibrium for a related class of spatially inhomogeneous bosonic and fermionic Fokker--Planck equations, under the assumption that global solutions exist. 

A typical equation considered in this context is
\bq\label{linear}
\pa_tf+p\cdot\nabla_x(f+\sigma\hbar\kappa f^2)=\nabla_p\cdot\lt\{\nabla_p f+pf(1+\hbar\kappa f)\rt\},
\eq
where $\sigma\in\{0,1\}$ distinguishes two related equations studied in the literature. When $\sigma=1$, the quantum correction appears in both the transport term and the collision operator, whereas, when $\sigma=0$, the transport term retains its classical form and the quantum correction appears only in the collision operator. These two equations share the same equilibrium distributions; see \cite{NS07} for further details. Here and below, $\kappa=-1$ corresponds to fermions, $\kappa=1$ to bosons. Formally, the choice $\kappa=0$ recovers the classical case. Throughout the paper, we focus on the quantum cases $\kappa\in\{-1,1\}$. The parameter $\hbar>0$ is a dimensionless quantum parameter associated with the reduced Planck constant after rescaling. More precisely, in the quantum Fokker--Planck--Landau model considered in \cite{HJY12}, the corresponding statistical factor is written as $f(1\pm\vartheta f)$, where $\vartheta=\bar\hbar^{d_v}$, $\bar\hbar$ denotes the rescaled Planck constant, and $d_v$ is the velocity dimension. In the present three-dimensional setting, this corresponds to $\vartheta=\bar\hbar^3$. We use the notation $\hbar$ for the resulting dimensionless coefficient appearing in the quantum statistical factor. The classical regime is formally recovered as $\hbar\to0$. In equations of the form \eqref{linear}, the diffusion coefficient and friction field are prescribed independently of the macroscopic quantities of the distribution. 

Another class of quantum Fokker--Planck equations is formulated in terms of Wigner functions or quantum dynamical semigroups. These models are related to open quantum systems and are structurally different from the nonlinear kinetic equations considered above. We refer to \cite{AFN08,AGGMMS12} and the references therein for the Lindblad and Wigner approaches and for the analysis of stationary states and large-time behavior of Wigner--Fokker--Planck equations.
 
%
%
%
%
%
%
%
%
%
%

\subsection{Nonlinear quantum Fokker--Planck equation}

The purpose of the present paper is to study a nonlinear quantum Fokker--Planck equation in which the collision frequency, bulk velocity, and temperature are determined self-consistently by the distribution function. More precisely, we consider
\bq\label{eq:main}
\pa_tf+p\cdot\nabla_xf=\rho\nabla_p\cdot\lt\{\Theta\nabla_p f+(p-u)f(1+\hbar\kappa f)\rt\},
\eq
where $f=f(t,x,p)$ denotes the one-particle distribution function at time $t\ge0$ and phase point $(x,p)\in\mR^3\times\mR^3$. The macroscopic quantities are defined by
\[
\rho(t,x)=\intr f\,\rdp,\quad u(t,x)=\frac{\intr pf(1+\hbar\kappa f)\,\rdp}{\intr f(1+\hbar\kappa f)\,\rdp},\quad \Theta(t,x)=\frac{\intr(|p|^2-u\cdot p)f(1+\hbar\kappa f)\,\rdp}{3\intr f\,\rdp}.
\]

The main feature of \eqref{eq:main} is the nonlinear feedback between the distribution function and the macroscopic quantities $\rho$, $u$, and $\Theta$. In contrast to the representative quantum Fokker--Planck equations of the form \eqref{linear}, in which the diffusion coefficient and friction field are prescribed, the bulk velocity and diffusion temperature in \eqref{eq:main} depend nonlinearly on quantum-weighted moments of the distribution, while the collision frequency is given by the local density. This self-consistent structure makes \eqref{eq:main} compatible with the conservation of mass, momentum, and kinetic energy. The equation also satisfies a quantum entropy dissipation identity and, in the fermionic case, preserves the Pauli admissible range.

The particular form of the collision operator in \eqref{eq:main} is not introduced merely for analytical convenience. In Section \ref{ssec:f_deri}, we show that it arises naturally from a formal reduction of the quantum Landau operator in the Maxwellian-molecule case under a radial symmetry ansatz around the quantum-weighted bulk velocity. The self-consistent macroscopic fields $u$ and $\Theta$ are selected from the quantum-weighted moments in a manner consistent with the conservation of mass, momentum, and kinetic energy.

As shown in Section \ref{ssec:lqe} below, the local quantum equilibria associated with \eqref{eq:main} are given by
\[
\mcF(\theta,u,\Theta;p)=\lt(e^{\frac{|p-u(t,x)|^2}{2\Theta(t,x)}+\theta(t,x)}-\hbar\kappa\rt)^{-1}.
\]
For a fixed constant $\theta_0>0$, we define the global equilibrium
\bq\label{F_0}
\mcF_\hbar (p):=\mcF(\theta_0,0,1;p)=\lt(e^{\frac{|p|^2}{2}+\theta_0}-\hbar\kappa\rt)^{-1}.
\eq
Throughout the paper, we assume that
\bq\label{eq:ht-cond}
\hbar e^{-\theta_0}<1.
\eq
In the bosonic case, this condition prevents the denominator in \eqref{F_0} from vanishing. In the fermionic case, it also ensures the positivity of a coefficient appearing in the perturbative formulation.

%
%
%
%
%
%
%
%
%
%

\subsection{Main results}

We study perturbations of the global equilibrium $\mcF_\hbar $ of the form
\[
f=\mcF_\hbar +\sqrt{\mu_\hbar }\,g,\quad \mu_\hbar :=\mcF_\hbar +\hbar\kappa\mcF_\hbar ^2.
\]
The weight $\mu_\hbar $ is naturally associated with the quantum statistical factor and reveals the symmetric structure of the linearized collision operator. Substituting this perturbation ansatz into \eqref{eq:main}, we obtain
\bq\label{IVP}
\pa_tg+p\cdot\nabla_xg=L(g)+\Gamma(g),
\eq
where $L$ is the linearized collision operator and $\Gamma$ denotes the nonlinear remainder. Their precise forms are derived in Section \ref{ssec:pre_mac}.

The contributions of this work are both structural and analytical. First, we provide a formal reduction of the quantum Landau operator in the Maxwellian-molecule case that explains the particular self-consistent definitions of the collision frequency, bulk velocity, and diffusion temperature in \eqref{eq:main}. The resulting operator retains the Bose--Einstein or Fermi--Dirac statistical correction while preserving mass, momentum, and kinetic energy and admitting the corresponding quantum entropy structure. Second, we develop a global perturbative well-posedness theory for this fully self-consistent quantum Fokker--Planck equation in the whole space. In contrast to models with prescribed drift and diffusion coefficients, the collision frequency is determined by a velocity moment of the distribution, while the bulk velocity and diffusion temperature depend nonlinearly through rational expressions of quantum-weighted moments. Our analysis also propagates the nonnegativity of the distribution and, in the fermionic case, the Pauli upper bound. These admissibility properties are proved dynamically and are not inferred solely from the smallness of the perturbation. Third, we identify the nonstandard structure of the linearized collision operator. The self-consistent bulk velocity and temperature produce finite-rank correction terms whose generating modes do not directly coincide with the canonical momentum and energy modes associated with the collision invariants. By exploiting exact identities satisfied by the quantum equilibrium weights, we characterize the five-dimensional null space of the linearized operator and establish coercivity on its orthogonal microscopic complement. Finally, we combine the microscopic coercivity with a macroscopic dissipation argument and negative Sobolev estimates to derive a hierarchy of algebraic decay rates toward equilibrium in the whole space.

For the negative Sobolev norm in the spatial variable, we use the notation
\[
\|g\|_{L^2_p\dot H^{-\tilde s}_x}:=\lt(\intr\|\Lambda_x^{-\tilde s}g(\cdot,p)\|_{L^2_x}^2\,\rdp\rt)^{\frac12},
\]
where $\Lambda_x^{-\tilde s}$ denotes the Fourier multiplier with symbol $|\xi|^{-\tilde s}$.

\begin{theorem}\label{ge}
Let $s\ge4$. There exists a sufficiently small constant $\delta_{\rm in}>0$ such that, for any initial datum $g_0\in H^s(\R^3\times\R^3)$ satisfying  $\|g_0\|_{H^s}\le\delta_{\rm in}$ and that  $f_0(x,p):=\mcF_\hbar (p)+\sqrt{\mu_\hbar (p)}\,g_0(x,p)\ge0$ for almost every $(x,p)\in\mR^3\times\mR^3$, the Cauchy problem \eqref{IVP} with initial datum $g_0$ admits a unique global solution $g\in \calC([0,\infty);H^s(\mR^3\times\mR^3))$. Moreover,
\[
\sup_{t\ge0}\|g(t)\|_{H^s}\le C\|g_0\|_{H^s}.
\]
The corresponding distribution $f(t,x,p):=\mcF_\hbar (p)+\sqrt{\mu_\hbar (p)}\,g(t,x,p)$  satisfies $f(t,x,p)\ge0$ for almost every $(x,p)\in\mR^3\times\mR^3$ and every $t\ge0$.

In the fermionic case $\kappa=-1$, if in addition $f_0(x,p)\le\frac1\hbar$ for almost every $(x,p)\in\mR^3\times\mR^3$, then
\[
f(t,x,p)\le\frac1\hbar
\]
for almost every $(x,p)\in\mR^3\times\mR^3$ and every $t\ge0$.

If, in addition, $g_0\in L^2_p\dot H^{-\tilde s}_x$ for some $0<\tilde s<\frac32$, then $g \in L^\infty(0,\infty; L^2_p\dot H^{-\tilde s}_x)$. Furthermore, for every integer $l$ satisfying $0\le l\le s-1$, there exists a constant $C_l>0$ such that
\[
 \sum_{l\le|\alpha|\le s}\|\pa_x^\alpha g(t)\|_{L^2_{x,p}}^2 \le C_l(1+t)^{-(l+\tilde s)}
\]
for all $t\ge0$.
\end{theorem}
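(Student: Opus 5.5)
The proof will follow the perturbative macro--micro energy method in the style of Guo and Duan, adapted to the self-consistent fields $\rho,u,\Theta$. I will establish the pieces in the following order.

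\emph{Structure of $L$.} I first use the exact identities satisfied by $\mcF_\hbar$ and $\mu_\hbar$, namely $\nabla_p\mcF_\hbar=-p\,\mu_\hbar$ and $\int|p|^2\mu_\hbar\,\rdp=3\int\mcF_\hbar\,\rdp$ after integration by parts. These show that $L$ is self-adjoint on $L^2_p$. Its null space is five-dimensional and spanned by $\sqrt{\mu_\hbar}\{1,p,|p|^2\}$. Let $P$ denote the orthogonal projection onto this null space. The main analytic fact I need is microscopic coercivity,
\[
-\langle Lg,g\rangle\ge\lambda\|\{I-P\}g\|_{\nu}^2,
\]
where $\|\cdot\|_\nu$ is the natural Fokker--Planck dissipation norm controlling $\nabla_p$ and $\langle p\rangle$ weights. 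The finite-rank corrections coming from $u$ and $\Theta$ are handled by writing $L=L_0+K$. Here $L_0$ is the weighted Ornstein--Uhlenbeck operator, which has a spectral gap, and $K$ is a finite-rank operator that exactly cancels the components along $p\sqrt{\mu_\hbar}$ and $|p|^2\sqrt{\mu_\hbar}$. Coercivity then follows from a compactness argument, or from an explicit computation on these finitely many modes.

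\emph{Nonlinear estimates.} The macroscopic quantities are rational functions of moments $\int\phi(p)\sqrt{\mu_\hbar}\,g\,\rdp$ and $\int\phi\,\hbar\kappa\,\mu_\hbar g^2\,\rdp$, with denominators close to positive constants. Using $s\ge4$ and the Sobolev embedding $H^2_x\subset L^\infty_x$, I will prove
\[
|\langle\pa^\alpha_\beta\Gamma(g),h\rangle|\lesssim\|g\|_{H^s}\,\mcD(g)^{1/2}\|h\|_\nu .
\]
Here $\mcD$ is the dissipation functional. Terms with nonzero $p$-derivatives are treated using $\{I-P\}$, since commutators with $\pa_\beta$ produce lower-order terms.

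\emph{Energy inequality.} The macroscopic part is recovered from the local conservation laws together with a thirteen-moment (or similar) system for $(a,b,c)$, the coefficients of $Pg$. A standard interactive functional then yields $\|\nabla_x(a,b,c)\|_{H^{s-1}}^2$ dissipation. Combined with the above, this gives
\[
\frac{\rd}{\rdt}\mcE(t)+\lambda\mcD(t)\le C\mcE^{1/2}\mcD ,\qquad \mcE\sim\|g\|_{H^s}^2 .
\]
Local existence and uniqueness come from an iteration scheme on a linearized problem with frozen macroscopic fields. A continuity argument then gives the global solution and the bound $\sup_{t\ge0}\|g(t)\|_{H^s}\le C\|g_0\|_{H^s}$.

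\emph{Admissibility.} Nonnegativity is proved on the iteration scheme. Each iterate solves a linear Fokker--Planck equation with smooth coefficients $\rho,\Theta>0,u$ in the form
\[
\pa_tf+p\cdot\nabla_xf=\rho\,\nabla_p\cdot\{\Theta\nabla_pf+(p-u)(1+\hbar\kappa f^{old})f\}.
\]
A weighted $L^2$ estimate on $f_-$ (the negative part) shows $f_-\equiv0$, and the property passes to the limit. For the Pauli bound with $\kappa=-1$, I set $h=1/\hbar-f$. Then $f(1-\hbar f)=\hbar fh$, so $h$ satisfies a structurally identical equation with drift $-(p-u)\hbar f$, and the same negative-part argument gives $h\ge0$. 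The main difficulty is to set up the scheme so that both properties are preserved simultaneously while the coefficients remain controlled by the small $H^s$ norm.

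\emph{Decay.} I apply $\Lambda_x^{-\tilde s}$ to the equation and estimate the nonlinear terms using the Hardy--Littlewood--Sobolev inequality $\|\Lambda^{-\tilde s}F\|_{L^2}\lesssim\|F\|_{L^{p}}$ with $\frac1p=\frac12+\frac{\tilde s}3$. For $\tilde s<3/2$ this yields
\[
\frac{\rd}{\rdt}\|\Lambda^{-\tilde s}g\|^2\lesssim\|g\|_{H^s}\,\mcD\,\|\Lambda^{-\tilde s}g\|,
\]
which closes to give boundedness of $\|\Lambda^{-\tilde s}g\|$. Running the energy estimate only on derivatives of order $\ge l$ gives $\frac{\rd}{\rdt}\mcE_l+\lambda\mcD_l\le0$. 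Interpolation then gives
\[
\|\nabla^l g\|\le\|\nabla^{l+1}g\|^{\theta}\,\|\Lambda^{-\tilde s}g\|^{1-\theta}
\]
for the appropriate $\theta$. This produces $\mcE_l\lesssim\mcD_l^{\,\theta'}$ with $\theta'<1$, and integrating the resulting ordinary differential inequality yields the decay rate $(1+t)^{-(l+\tilde s)}$.

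I expect two steps to be the main obstacles. The first is coercivity of $L$, because the modes introduced by the self-consistent $u$ and $\Theta$ are not the canonical collision invariants. The second is the nonlinear moment estimates, since $\Gamma$ contains rational functions of moments.
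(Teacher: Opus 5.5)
Your overall architecture matches the paper's, but the propagation of $\|\Lambda_x^{-\tilde s}g\|_{L^2_{x,p}}$ does not work as you state it. The inequality $\frac{\mathrm d}{\mathrm dt}\|\Lambda_x^{-\tilde s}g\|^2\lesssim\|g\|_{H^s}\mathcal D\,\|\Lambda_x^{-\tilde s}g\|$ is not what the estimates give. $\Gamma$ is quadratic and contains purely macroscopic, derivative-free terms of the form $(a,b,c)^2\xi(p)$, while the dissipation only controls $\nabla_x(a,b,c)$. So if you bound the pairing by $\|\Lambda_x^{-\tilde s}\Gamma(g)\|\,\|\Lambda_x^{-\tilde s}g\|$, what you actually get is $\|g\|_{L^2_{x,p}}\mathcal D_s^{1/2}\|\Lambda_x^{-\tilde s}g\|$, and $\int_0^\infty\mathcal D_s^{1/2}\,\mathrm dt$ is not controlled.

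The missing idea is to use $P\Gamma(g)=0$ together with the microscopic dissipation at negative regularity. The pairing equals $\langle\Lambda_x^{-\tilde s}\Gamma(g),\Lambda_x^{-\tilde s}(I-P)g\rangle$. The flux form and Hardy--Littlewood--Sobolev bound it by $C\|\Lambda_x^{-\tilde s}(I-P)g\|_D\,\|g\|_{L^2_{x,p}}\big(\|(I-P)g\|_{L^{3/\tilde s}_xD_p}+\|(a,b,c)\|_{L^{3/\tilde s}_x}\big)$. The first factor is absorbed into $\lambda_0\|\Lambda_x^{-\tilde s}(I-P)g\|_D^2$, leaving a right-hand side free of $\|\Lambda_x^{-\tilde s}g\|$.

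Even then the estimate closes in one step only for $0<\tilde s\le\frac12$. There $3/\tilde s\ge6$, Gagliardo--Nirenberg controls $\|(a,b,c)\|_{L^{3/\tilde s}_x}$ by $\nabla_x(a,b,c)$ and $\nabla_x^2(a,b,c)$, and the right-hand side is $\le C\delta_{\rm in}^2\mathcal D_s$. For $\frac12<\tilde s<\frac32$ one has $3/\tilde s<6$, so the interpolation must involve the undissipated $\|(a,b,c)\|_{L^2_x}$. The residual $\|g\|^2\|(a,b,c)\|^{2\tilde s-1}\|\nabla_x(a,b,c)\|^{3-2\tilde s}$ then has a time integral the energy inequality does not control, because the power $3-2\tilde s$ is below $2$.

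The paper closes this with a two-stage argument your plan lacks. Since $g_0\in L^2$ and $\Lambda_x^{-\tilde s}g_0\in L^2$ give $\Lambda_x^{-1/2}g_0\in L^2$, it first runs the whole decay argument at $\tilde s=\frac12$. This yields $\|\nabla_x^kg\|+\|\nabla_x^k(a,b,c)\|\lesssim(1+t)^{-(1+2k)/4}$ for $k=0,1$. The residual then decays like $(1+t)^{-(5/2-\tilde s)}$, which is integrable exactly because $\tilde s<\frac32$. Hence $\|\Lambda_x^{-\tilde s}g\|$ stays bounded and the interpolation can be rerun for the full range.

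Two further points. First, ``spectral gap for $L_0$ plus finite-rank $K$'' does not yield $L\le0$; here $K=P_1$ is nonnegative and works against you. Yet your compactness argument needs both $L\le0$ and $\ker L=\mcN$. The paper gets them from an exact identity. With $h=g/\sqrt{\mu_\hbar}$, integration by parts turns the two correction moments into $\int\mu_\hbar\nabla_ph\,\mathrm dp$ and $\int p\mu_\hbar\cdot\nabla_ph\,\mathrm dp$. Hence $\langle Lg,g\rangle=-M\|\mathcal J\|_{L^2_p}^2$, where $\mathcal J$ is the component of $\sqrt{\mu_\hbar}\nabla_ph$ orthogonal to $\sqrt{\mu_\hbar}\,\mathfrak{e}_i$ and $p\sqrt{\mu_\hbar}$. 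Then $\mathcal J=0$ forces $\nabla_ph$ to be affine, i.e.\ $g\in\mcN$.

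Second, your frozen-coefficient iteration is a reasonable alternative to the paper's scheme $\partial_tg^{n+1}=Bg^{n+1}+\Gamma(g^n)$, but it does not propagate the Pauli bound. For $\kappa=-1$, $h^{n+1}=1/\hbar-f^{n+1}$ solves an equation with the extra source $-\rho^n\nabla_p\cdot\{(p-u^n)h^n\}$, which has no sign on $\{h^{n+1}<0\}$. No special scheme is needed: prove both bounds, as the paper does, directly for the limit solution of the nonlinear equation. There the drift flux is $(p-u)F(f)$ with $F(f)=f(1+\hbar\kappa f)$ a function of $f$ alone. Testing with $\chi_\delta'(f)$ therefore produces exactly $3\int\rho\,\Psi_\delta(f)$, where $\Psi_\delta(z)=\int_{z_0}^zF(s)\chi_\delta''(s)\,\mathrm ds$ and $z_0\in\{0,1/\hbar\}$. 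This term is $O(f_-^2)$ for the lower bound (Gr\"onwall) and $\le0$ for the upper bound once $\rho\ge0$ is known, and it requires no control of $p$-derivatives of the coefficients. Your substitution $h=1/\hbar-f$, applied to the nonlinear equation itself, is equivalent to this.
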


\begin{remark}
The restriction $0<\tilde s<\frac32$ in the large-time behavior result is intrinsic to the negative Sobolev estimate used in the proof. Indeed, in order to estimate the nonlinear term after applying $\Lambda_x^{-\tilde s}$, we use the Hardy--Littlewood--Sobolev inequality in the form
\[
\|\Lambda_x^{-\tilde s}F\|_{L^2_x}\le C\|F\|_{L^q_x},\quad \frac1q=\frac12+\frac{\tilde s}{3}.
\]
The strong-type estimate requires $1<q<2$, which is equivalent to $0<\tilde s<\frac32$. At the endpoint $\tilde s=\frac32$, one has $q=1$, and the corresponding strong-type Hardy--Littlewood--Sobolev estimate is no longer available. Consequently, the present $L^2$-based energy method does not extend directly to $\tilde s\ge\frac32$.
\end{remark}

The proof of Theorem \ref{ge} is based on a perturbative energy method in the spirit of the classical macro--micro framework for collisional kinetic equations. We first establish the local well-posedness of the perturbation equation and propagate the admissible bounds on the distribution function.   A central structural difficulty already appears at the linearized level. Since the self-consistent bulk velocity and temperature are defined through quantum-weighted moments, the linearized collision operator takes the form
\[
L=L_0+P_1,
\]
where $L_0$ is the dissipative Fokker--Planck part satisfying
\[
\ker L_0=\mathrm{span}\{\sqrt{\mu_\hbar}\},
\]
whereas $P_1$ is a finite-rank correction generated by the four quantum-weighted modes
\[
p_j\eta_\hbar\sqrt{\mu_\hbar}, \ j=1,2,3, \quad\text{and}\quad (|p|^2\eta_\hbar-3)\sqrt{\mu_\hbar}.
\]
These modes do not coincide directly with the canonical momentum and energy modes generated by the collision invariants. More precisely, the canonical macroscopic space associated with the collision invariants is
\[
\mcN = \mathrm{span}\{\sqrt{\mu_\hbar },\,p_1\sqrt{\mu_\hbar },\,p_2\sqrt{\mu_\hbar },\, p_3\sqrt{\mu_\hbar },\,|p|^2\sqrt{\mu_\hbar }\}.
\]
It is thus not immediate that the finite-rank correction $P_1$ recovers exactly the four additional collision-invariant directions in the null space, without introducing any spurious null modes. By exploiting exact moment identities and integration-by-parts relations satisfied by the quantum equilibrium weights, we prove that
\[
\ker L=\mcN
\]
and establish the coercivity of $L$ on the orthogonal microscopic complement $\mcN^\perp$; see Section \ref{sec:lin-str}. Since this coercivity acts directly only on the microscopic component, we recover the dissipation of the macroscopic variables from the balance relations satisfied by the coefficients of the macroscopic projection and suitable temporal interaction functionals. At the nonlinear level, the self-consistent fields $\rho$, $u$, and $\Theta$ generate quantum-weighted moment terms and rational expressions of these moments. Their estimates require uniform control of the denominators together with high-order Sobolev estimates for the resulting nonlinear composition terms. Finally, when velocity derivatives are included, derivatives fall on the quantum equilibrium weights and on the macroscopic projection, producing lower-order commutator terms. These terms are controlled through a weighted triangular hierarchy in the number of velocity derivatives, which allows us to close the global energy estimate.

The large-time analysis requires a further treatment of the low spatial frequencies. Since the problem is posed in the whole space, no Poincar\'e inequality is available, and the global energy estimate alone does not yield a quantitative decay rate. We thus propagate a negative Sobolev norm in the spatial variable and combine it with the higher-order dissipation estimates through interpolation inequalities, following the general strategy developed in \cite{GW12}. In the present equation, however, the negative Sobolev estimate is complicated by the fully nonlinear dependence of the collision frequency, bulk velocity, and temperature on the distribution function. In particular, after applying $\Lambda_x^{-\tilde s}$, one must control products involving quantum-weighted velocity moments, rational macroscopic coefficients, and the microscopic part of the solution in spatial Lebesgue spaces below $L^2_x$. We establish estimates for these terms by combining the Hardy--Littlewood--Sobolev inequality with the previously obtained high-order energy bounds and the smallness of the perturbation. This yields the uniform propagation of the negative Sobolev norm. Interpolating this negative regularity with the spatial dissipation recovered from the macro--micro system then produces a hierarchy of algebraic decay estimates for successive spatial derivatives. Thus, the decay argument is coupled essentially to the nonstandard coercive structure and the macroscopic dissipation mechanism developed for the self-consistent quantum collision operator, rather than being a direct application of the classical linear Fokker--Planck theory.

%
%
%
%
%
%
%
%
%
%
\subsection{Organization of the paper}

The rest of the paper is organized as follows. In Section \ref{sec:deri}, we formally derive the nonlinear quantum Fokker--Planck equation from the quantum Landau equation and discuss its structural properties. In Section \ref{sec:lin-str}, we derive the perturbative formulation, identify the macroscopic space, and establish the coercivity properties of the linearized operator. In Section \ref{sec:non-lwp}, we estimate the nonlinear terms and prove the local well-posedness of the perturbation equation. In Section \ref{sec:global-energy}, we establish the global energy estimate by combining microscopic coercivity with a macroscopic dissipation argument. In Section \ref{sec:gwp-lt}, we prove the global existence of perturbative solutions and derive their algebraic decay rates. Appendix \ref{app:lwp} contains the details of the local construction and the propagation of the admissible bounds. Appendix \ref{app:mac-diss} provides the coefficient-level calculations used in the macroscopic dissipation estimate.

%
%
%
%
%
%
%
%
%
%

\section{Derivation and structural properties}\label{sec:deri}

In this section, we explain the origin of the nonlinear quantum Fokker--Planck equation and provide its basic structural properties. The purpose is twofold. First, we formally derive the equation from the quantum Landau equation in the Maxwellian-molecule case under a radial symmetry ansatz around the quantum-weighted bulk velocity. Second, we show that the resulting equation preserves the fundamental features expected from quantum kinetic theory, namely conservation of mass, momentum, and energy, the local quantum equilibria, the entropy dissipation structure, and the Pauli exclusion principle in the fermionic case.

Throughout this section, for notational simplicity, we write
\[
F(f):=f(1+\hbar\kappa f).
\]
In the fermionic case $\kappa=-1$, this expression is understood on the Pauli admissible range, which will be discussed in Section \ref{ssec:pauli}.

%
%
%
%
%
%
%
%
%
%
\subsection{Formal derivation from the quantum Landau equation}\label{ssec:f_deri}

We begin with a formal derivation of the nonlinear quantum Fokker--Planck operator from the quantum Landau equation. Although the main analysis of this paper is carried out in dimension three, we present the derivation in general dimension $d\ge2$. This also makes clear the origin of the constants appearing in the reduced equation. The computation below is inspired by Villani's derivation of Fokker--Planck-type models from the Landau collision operator in the Maxwellian-molecule case \cite{Vil98}. In the present setting, we adapt this idea to the quantum Landau operator, keeping the quantum statistical factor $1+\hbar\kappa f$ throughout the derivation.

The quantum Landau operator \cite{ABDL21, ABDL22} is formally given by
\[
Q_{\rm L}(f) = \nabla_p\cdot \int_{\R^d} q(p-p^*) \lt\{ F(f_*)\nabla_p f - F(f)\nabla_{p^*}f_* \rt\} \rdp^*,
\]
where $f=f(p)$, $f_*=f(p^*)$, and
\[
q(z)=|z|^{\gamma+2} \lt(\mI_d-\frac{z\otimes z}{|z|^2}\rt), \quad \gamma=\frac{\nu-(2d-1)}{s-1}.
\]
In the Maxwellian-molecule case $\nu=2d-1$, we have $\gamma=0$, and thus
\[
q(p-p^*) = |p-p^*|^2\mI_d-(p-p^*)\otimes(p-p^*).
\]
In this case, the operator can be written as
\bq\label{eq:lan-AB}
Q_{\rm L}(f) = \sum_{i,j=1}^d\pa_{p_i} \lt\{\bar A_{ij}\pa_{p_j}f\rt\} + \sum_{i=1}^d\pa_{p_i} \lt\{\bar B_i F(f)\rt\},
\eq
where $\bar A_{ij} = \lt(|p|^2\delta_{ij}-p_i p_j\rt)*F(f)$, $\bar B_i = (d-1)p_i*f$. Here and below, the convolution is taken with respect to the velocity variable. More precisely,
\[
\lt(|p|^2\delta_{ij}-p_i p_j\rt)*F(f) := \int_{\R^d} \lt( |p-p^*|^2\delta_{ij} - (p_i-p_i^*)(p_j-p_j^*) \rt)F(f_*)\,\rdp^*
\]
and
\[
p_i*f := \int_{\R^d} (p_i-p_i^*)f_*\,\rdp^*.
\]

We introduce the classical and quantum-weighted macroscopic quantities
\begin{align*}
    \rho&=\int_{\R^d} f\,\rdp, & \rho U&=\int_{\R^d} pf\,\rdp, & d\rho \Theta+\rho|U|^2&=\int_{\R^d} |p|^2f\,\rdp, \cr
    \tilde\rho&=\int_{\R^d} F(f)\,\rdp, & \tilde\rho\tilde U&=\int_{\R^d} pF(f)\,\rdp, & d\rho\tilde \Theta+\tilde\rho|\tilde U|^2&=\int_{\R^d} |p|^2F(f)\,\rdp.
\end{align*}
Notice that $\tilde \Theta$ is normalized with respect to the classical mass $\rho$, not the quantum-weighted mass $\tilde\rho$. This is not the usual temperature associated with the measure $F(f)\,\rdp$; rather, it is the effective diffusion temperature selected so that the reduced nonlinear Fokker--Planck operator preserves the classical kinetic energy. Equivalently,
\[
d\rho\tilde \Theta = \int_{\R^d}|p-\tilde U|^2F(f)\,\rdp.
\] 
This convention is consistent with the nonlinear Fokker--Planck equation considered in this paper.

Using the identity
\[
\int_{\R^d} p_i p_j F(f)\,\rdp = \tilde\rho\,\tilde U_i\tilde U_j + \int_{\R^d} (p_i-\tilde U_i)(p_j-\tilde U_j)F(f)\,\rdp,
\]
we compute
\bq\label{eq:Aij-ex}
    \bar A_{ij} =  \lt(\tilde\rho|p-\tilde U|^2+d\rho\tilde \Theta\rt)\delta_{ij} - \tilde\rho(p_i-\tilde U_i)(p_j-\tilde U_j) - \int_{\R^d} (p_i^*-\tilde U_i)(p_j^*-\tilde U_j)F(f_*)\,\rdp^*.
\eq
Similarly, since
\[
\bar B_i=(d-1)\int_{\R^d} (p_i-p_i^*)f_*\,\rdp^* = (d-1)\rho(p_i-U_i),
\]
the drift part is determined by the classical bulk velocity $U$.

Substituting \eqref{eq:Aij-ex} into \eqref{eq:lan-AB}, we obtain
\begin{align}\label{eq:lan-ex}
\begin{aligned}
    Q_{\rm L}(f) &= \sum_{i,j=1}^d \lt(\tilde\rho|p-\tilde U|^2+d\rho\tilde \Theta\rt)\delta_{ij} \pa^2_{p_ip_j}f  - \sum_{i,j=1}^d \tilde\rho(p_i-\tilde U_i)(p_j-\tilde U_j) \pa^2_{p_ip_j}f \cr
    &\quad - \sum_{i,j=1}^d \int_{\R^d} (p_i^*-\tilde U_i)(p_j^*-\tilde U_j)F(f_*)\,\rdp^* \pa^2_{p_ip_j}f - (d-1)\tilde\rho(p-\tilde U)\cdot\nabla_p f \cr
    &\quad + d(d-1)\rho F(f) + (d-1)\rho(p-U)\cdot\nabla_p F(f).
\end{aligned}
\end{align}
Here the fourth term comes from differentiating the diffusion matrix $\bar A_{ij}$ with respect to $p$. Indeed,
\[
\sum_{i=1}^d\pa_{p_i}\bar A_{ij} = - (d-1)\tilde\rho(p_j-\tilde U_j).
\]
The last two terms come from expanding the drift part $ \nabla_p\cdot\{(d-1)\rho(p-U)F(f)\}$. 

We now impose a radial symmetry assumption around the quantum-weighted bulk velocity $\tilde U$. More precisely, we assume that
\bq\label{eq:rad-ans}
    f=\Phi(v), \quad v=\frac{|p-\tilde U|^2}{2\tilde \Theta}.
\eq
Then $F(f)=f(1+\hbar\kappa f)$ is also radially symmetric around $\tilde U$. Consequently, its second-order central moment is isotropic:
\bq\label{eq:iso-clo}
\int_{\R^d} (p-\tilde U)\otimes(p-\tilde U)F(f)\,\rdp = \rho\tilde \Theta\,\mI_d.
\eq
Equivalently,
\[
    \int_{\R^d} (p_i-\tilde U_i)(p_j-\tilde U_j)F(f)\,\rdp=0 \quad\text{if } i\neq j,
\]
and
\bq\label{eq:iso-clo-dia}
    \int_{\R^d} (p_i-\tilde U_i)^2F(f)\,\rdp=\rho\tilde \Theta \quad\text{for } i=1,\dots,d.
\eq
Indeed, by radial symmetry the matrix on the left-hand side of \eqref{eq:iso-clo} is a scalar multiple of $\mI_d$, and its trace is
\[
\int_{\R^d} |p-\tilde U|^2F(f)\,\rdp=d\rho\tilde \Theta
\]
by the definition of $\tilde \Theta$.

Under \eqref{eq:rad-ans}, we have
\bq\label{eq:Phi-derivatives}
    \pa_{p_i}f = \frac{p_i-\tilde U_i}{\tilde \Theta}\Phi', 
    \quad  
    \pa^2_{p_ip_j}f = \frac{(p_i-\tilde U_i)(p_j-\tilde U_j)}{\tilde \Theta^2}\Phi'' + \frac{\delta_{ij}}{\tilde \Theta}\Phi', 
    \quad
    \Delta_p f = \frac d{\tilde \Theta}\Phi' + \frac{|p-\tilde U|^2}{\tilde \Theta^2}\Phi''.
\eq
 Using \eqref{eq:iso-clo}, \eqref{eq:iso-clo-dia}, and \eqref{eq:Phi-derivatives} in \eqref{eq:lan-ex}, we get
\begin{align*}
Q_{\rm L}(f) &= \lt(\tilde\rho|p-\tilde U|^2+d\rho\tilde \Theta\rt) \lt( \frac d{\tilde \Theta}\Phi' + \frac{|p-\tilde U|^2}{\tilde \Theta^2}\Phi'' \rt) - \tilde\rho \lt( \frac{|p-\tilde U|^4}{\tilde \Theta^2}\Phi'' + \frac{|p-\tilde U|^2}{\tilde \Theta}\Phi' \rt) \cr
&\quad - \rho\tilde \Theta \lt( \frac{|p-\tilde U|^2}{\tilde \Theta^2}\Phi'' + \frac d{\tilde \Theta}\Phi' \rt) - (d-1)\tilde\rho\frac{|p-\tilde U|^2}{\tilde \Theta}\Phi' \cr
&\quad + d(d-1)\rho F(f) + (d-1)\rho(p-U)\cdot\nabla_p F(f).
\end{align*}
The terms involving $\tilde\rho$ cancel. This implies
\[
Q_{\rm L}(f) = d(d-1)\rho\Phi' + (d-1)\rho\frac{|p-\tilde U|^2}{\tilde \Theta}\Phi'' + d(d-1)\rho F(f) + (d-1)\rho(p-U)\cdot\nabla_p F(f).
\]

Since
\[
\tilde \Theta\Delta_p f = d\Phi' + \frac{|p-\tilde U|^2}{\tilde \Theta}\Phi'',
\]
we obtain
\bq\label{eq:for-Q0}
Q_{\rm L}(f) = (d-1)\rho\tilde \Theta\Delta_p f + (d-1)\rho\nabla_p\cdot\{(p-U)F(f)\}.
\eq
Finally, under the radial ansatz \eqref{eq:rad-ans}, the classical and quantum-weighted bulk velocities coincide. Indeed, by the oddness,
\[
\rho U = \int_{\R^d} pf\,\rdp = \int_{\R^d} (q+\tilde U)\Phi\lt(\frac{|q|^2}{2\tilde \Theta}\rt)\,\textnormal{d}q = \rho\tilde U,
\]
and hence $U=\tilde U$. Therefore, \eqref{eq:for-Q0} becomes
\[
Q_{\rm L}(f) = (d-1)\rho\nabla_p\cdot \lt\{ \tilde \Theta\nabla_p f+(p-\tilde U)F(f) \rt\}.
\]
The dimensional constant $d-1$ can be absorbed into the normalization of the collision frequency. Thus, at the level of the collision operator, we are led to the normalized nonlinear quantum Fokker--Planck operator
\[
Q_{\rm L}(f) = \rho\nabla_p\cdot \lt\{ \tilde \Theta\nabla_p f+(p-\tilde U)F(f) \rt\}.
\]
For notational consistency with the rest of the paper, we write $u:=\tilde U$, $\Theta:=\tilde \Theta$, that is,
\[
\rho(t,x)=\int_{\R^d} f\,\rdp, 
\quad 
u(t,x)=\frac{\int_{\R^d} pf(1+\hbar\kappa f)\,\rdp}{\int_{\R^d} f(1+\hbar\kappa f)\,\rdp}, 
\quad 
\Theta(t,x)= \frac{\int_{\R^d} (|p|^2-u\cdot p)f(1+\hbar\kappa f)\,\rdp}{d\int_{\R^d} f\,\rdp}.
\]
Consequently, the corresponding inhomogeneous kinetic equation is
\bq\label{eq:nqfp-s2}
\pa_t f+p\cdot\nabla_x f = \rho\nabla_p\cdot \lt\{ \Theta\nabla_p f+(p-u)f(1+\hbar\kappa f) \rt\}.
\eq
In the rest of the paper, we work in dimension $d=3$, so that the denominator in the definition of $\Theta$ is $3\intr f\,\rdp$.

\begin{remark}
The preceding computation should not be interpreted as a rigorous asymptotic limit from the quantum Landau equation. Rather, it identifies a natural Fokker--Planck-type reduction of the Maxwellian-molecule quantum Landau operator under a radial symmetry ansatz around the quantum-weighted bulk velocity. The resulting operator is distinguished by two structural features:
the diffusion temperature is selected from the quantum-weighted second moment but normalized by the classical density, and the drift contains the quantum statistical factor $f(1+\hbar\kappa f)$. These choices are not arbitrary; they are precisely what makes the reduced operator compatible with the conservation of mass, momentum, and energy, while retaining the Bose--Einstein/Fermi--Dirac correction inherited from the quantum Landau equation. In this sense, the derivation provides a structural justification for the nonlinear quantum Fokker--Planck equation studied in this paper.
\end{remark}

%
%
%
%
%
%
%
%
%
%

\subsection{Pauli exclusion principle}\label{ssec:pauli}

We now discuss the fermionic admissible range. When $\kappa=-1$, the collision operator becomes
\[
Q(f)= \rho\nabla_p\cdot \lt\{ \Theta\nabla_p f+(p-u)f(1-\hbar f) \rt\}.
\]
In this case, the physically admissible interval is
\[
0\le f\le \frac1{\hbar}.
\]
This interval is compatible with the structure of the equation. Indeed, the nonlinear factor $f(1-\hbar f)$ vanishes at both endpoints $f=0$ and $f=\frac1\hbar$, so the nonlinear drift degenerates on the boundary of the admissible interval.

To see the barrier structure more explicitly, suppose formally that $f$ is a smooth solution and that the macroscopic quantities are regular with $\rho \Theta>0$. Since $\rho$, $u$, and $\Theta$ depend only on $(t,x)$, the collision operator can be written in the non-divergence form
\[
Q(f) = \rho \Theta\Delta_p f + \rho(p-u)(1-2\hbar f)\cdot\nabla_p f + 3\rho f(1-\hbar f).
\]
Assume that the solution leaves the admissible interval for the first time at $t=t_0$. If the first exit occurs through the lower barrier, then there exists a contact point $(x_0,p_0)$ such that $f(t_0,x_0,p_0)=0$. At this point, $f(t_0,\cdot,\cdot)$ attains its minimum in the $(x,p)$ variables. Hence, we have
\[
\nabla_x f(t_0,x_0,p_0)=0,
\quad
\nabla_p f(t_0,x_0,p_0)=0,
\quad \text{and} \quad
\Delta_p f(t_0,x_0,p_0)\ge0.
\]
Evaluating the full kinetic equation at $(t_0,x_0,p_0)$, we obtain
\[
\pa_t f(t_0,x_0,p_0) = \rho \Theta\Delta_p f(t_0,x_0,p_0) \ge 0,
\]
which formally rules out crossing the lower barrier. Similarly, if the first exit occurs through the upper barrier, then $f(t_0,x_0,p_0)=\frac1\hbar$ at a maximum point in the $(x,p)$ variables. This implies
\[
\nabla_x f(t_0,x_0,p_0)=0,
\quad
\nabla_p f(t_0,x_0,p_0)=0,
\quad \text{and} \quad
\Delta_p f(t_0,x_0,p_0)\le0,
\]
and hence
\[
\pa_t f(t_0,x_0,p_0) = \rho \Theta\Delta_p f(t_0,x_0,p_0) \le 0.
\]
This formal barrier argument suggests that the interval
\[
0 \le f \le\frac1\hbar
\]
is invariant under the evolution. A rigorous preservation result for the perturbative solutions constructed in this paper is proved later in Lemma \ref{lem:local-pos-pauli} by an $L^2$ truncation argument.

\begin{remark}
In the perturbative theory around the global equilibrium $\mcF_\hbar $, the nonnegativity of the distribution is not inferred solely from the smallness of the perturbation. Instead, assuming that the initial distribution satisfies $f_0\ge0$, we prove that this property is propagated by the evolution; see Lemma \ref{lem:local-pos-pauli}. In the fermionic case, the upper Pauli bound is propagated in the same way, provided that $f_0\le \frac1\hbar$. The perturbative smallness condition ensures that the macroscopic coefficients remain regular and that $\rho \Theta$ stays uniformly positive.
\end{remark}
%
%
%
%
%
%
%
%
%
%

\subsection{Conservation laws}

We next investigate the conservation laws of \eqref{eq:nqfp-s2}. For simplicity, we assume that $f$ decays sufficiently fast in $p$ and $x$ so that all integrations by parts below are justified. The rigorous identities can be obtained by a standard approximation argument for sufficiently regular solutions.

Let
\[
Q(f):= \rho\nabla_p\cdot \lt\{ \Theta\nabla_p f+(p-u)f(1+\hbar\kappa f) \rt\}.
\]
Then \eqref{eq:nqfp-s2} becomes
\[
\pa_t f+p\cdot\nabla_x f=Q(f).
\]

First, integrating in $(x,p)$ gives the conservation of mass:
\[
    \frac {\rd}{\rdt}\inttr f\,\rdx\rdp =0.
\]
Indeed, both the transport term and the collision term vanish after integration by parts.

For the momentum, we compute
\[
\frac {\rd}{\rdt}\inttr p f\,\rdx\rdp  = \inttr p Q(f)\,\rdx\rdp  = -\intr \rho \intr \lt\{ \Theta\nabla_p f+(p-u)f(1+\hbar\kappa f) \rt\} \rdp\,\rdx.
\]
Since $\intr \nabla_p f\,\rdp=0$ and
\[
\intr (p-u)f(1+\hbar\kappa f)\,\rdp = \intr p f(1+\hbar\kappa f)\,\rdp - u\intr f(1+\hbar\kappa f)\,\rdp =0
\]
by the definition of $u$, we obtain
\[
    \frac {\rd}{\rdt}\inttr p f\,\rdx\rdp =0.
\]

For the energy, we find
\[
\frac {\rd}{\rdt}\inttr |p|^2 f\,\rdx\rdp  = \inttr |p|^2Q(f)\,\rdx\rdp   = -2\intr \rho  \intr p\cdot \lt\{ \Theta\nabla_p f+(p-u)f(1+\hbar\kappa f) \rt\} \rdp\,\rdx.
\]
Using
\[
\intr p\cdot\nabla_p f\,\rdp=-3\intr f\,\rdp=-3\rho
\]
and the definition of $\Theta$,
\[
\intr (|p|^2-u\cdot p)f(1+\hbar\kappa f)\,\rdp = 3\rho \Theta,
\]
we get
\[
\intr |p|^2Q(f)\,\rdp = -2\rho \lt\{ -3\rho \Theta + \intr (|p|^2-u\cdot p)f(1+\hbar\kappa f)\,\rdp \rt\} = 0.
\]
Hence, we have
\[
    \frac {\rd}{\rdt}\inttr |p|^2 f\,\rdx\rdp =0.
\]
Combining the above observations concludes
\[
\frac{\rd}{\rdt}\inttr f\,\rdx\rdp =0,
\quad
\frac{\rd}{\rdt}\inttr pf\,\rdx\rdp =0,
\quad
\frac{\rd}{\rdt}\inttr |p|^2f\,\rdx\rdp =0.
\]

%
%
%
%
%
%
%
%
%
%
 
\subsection{Local quantum equilibria and entropy dissipation}\label{ssec:lqe}

We now identify the local quantum equilibria and derive the entropy dissipation structure. For $z>0$ satisfying $1+\hbar\kappa z>0$, we define
\[
\Xi_\kappa(z):=\ln \frac{z}{1+\hbar\kappa z}.
\]
Since
\[
\nabla_p\Xi_\kappa(f)=\frac{\nabla_p f}{f(1+\hbar\kappa f)},
\]
the collision flux can be written as
\[
\Theta\nabla_p f+(p-u)f(1+\hbar\kappa f)=f(1+\hbar\kappa f)\lt\{\Theta\nabla_p\Xi_\kappa(f)+(p-u)\rt\}.
\]

We first characterize the distributions for which the collision flux vanishes. The condition
\bq\label{eq:zero-fe}
\Theta\nabla_p f+(p-u)f(1+\hbar\kappa f)=0
\eq
is equivalent to
\[
\nabla_p\Xi_\kappa(f)=-\frac{p-u}{\Theta}.
\]
Integrating in $p$, we obtain
\[
\ln\frac{f}{1+\hbar\kappa f}=-\frac{|p-u|^2}{2\Theta}-\theta
\]
for some scalar function $\theta=\theta(t,x)$. Solving this relation for $f$ gives the local quantum equilibrium
\bq\label{eq:loc-qua-e}
\mcF(\theta,u,\Theta;p)=\lt(e^{\frac{|p-u(t,x)|^2}{2\Theta(t,x)}+\theta(t,x)}-\hbar\kappa\rt)^{-1}.
\eq
Conversely, a direct computation shows that \eqref{eq:loc-qua-e} satisfies \eqref{eq:zero-fe}. Thus, within the class of positive distributions satisfying $1+\hbar\kappa f>0$, the vanishing of the collision flux is equivalent to $f$ being a local quantum equilibrium of the form \eqref{eq:loc-qua-e}.

Let $\mcF=\mcF(\theta,u,\Theta;p)$ be a local quantum equilibrium with the same velocity $u$ and temperature $\Theta$. Since
\[
\nabla_p\Xi_\kappa(\mcF)=-\frac{p-u}{\Theta},
\]
we have
\[
\Theta\nabla_p f+(p-u)f(1+\hbar\kappa f)=\Theta f(1+\hbar\kappa f)\nabla_p\lt\{\Xi_\kappa(f)-\Xi_\kappa(\mcF)\rt\}.
\]
Equivalently,
\bq\label{eq:Q-log-form}
Q(f)=\rho\nabla_p\cdot\lt\{\Theta f(1+\hbar\kappa f)\nabla_p\ln\lt(\frac{f(1+\hbar\kappa\mcF)}{\mcF(1+\hbar\kappa f)}\rt)\rt\}.
\eq
In the classical case $\hbar=0$, this reduces to the familiar Fokker--Planck form
\[
Q(f)=\rho\nabla_p\cdot\lt\{\Theta f\nabla_p\ln\frac{f}{\mcF}\rt\}.
\]

We define the quantum entropy functional by
\bq\label{eq:ent-fl}
\mathscr{H}[f]=\inttr\lt\{f\ln f-\frac{\kappa}{\hbar}(1+\hbar\kappa f)\ln(1+\hbar\kappa f)\rt\}\,\rdx\rdp .
\eq
In the classical case $\hbar=0$, this should be understood in the limiting sense and reduces, up to an irrelevant affine term, to the classical entropy $\inttr f\ln f\,\rdx\rdp $.

We formally derive the entropy identity for smooth solutions satisfying the admissibility condition $f\ge0$, $1+\hbar\kappa f\ge0$. In the fermionic case, this condition is precisely the Pauli admissible range discussed in Section \ref{ssec:pauli}. For the computation below, one may first assume the strict inequalities $f>0$, $1+\hbar\kappa f>0$, and then pass to the general case by a standard approximation argument.

Differentiating \eqref{eq:ent-fl} along smooth solutions of \eqref{eq:nqfp-s2}, the transport term vanishes after integration by parts, and we obtain
\[
\frac{\rd}{\rdt}\mathscr{H}[f]=\inttr Q(f)\Xi_\kappa(f)\,\rdx\rdp .
\]
On the other hand,
\[
\Xi_\kappa(\mcF)=-\frac{|p-u(t,x)|^2}{2\Theta(t,x)}-\theta(t,x),
\]
which is a linear combination of $1$, $p$, and $|p|^2$. By the conservation properties of the collision operator,
\[
\intr Q(f)\Xi_\kappa(\mcF)\,\rdp=0.
\]
Hence, we have
\[
\frac{\rd}{\rdt}\mathscr{H}[f]=\inttr Q(f)\lt\{\Xi_\kappa(f)-\Xi_\kappa(\mcF)\rt\}\,\rdx\rdp .
\]
Using \eqref{eq:Q-log-form} and integrating by parts in $p$ yield
\[
\frac{\rd}{\rdt}\mathscr{H}[f]+\mathscr{D}[f]=0,
\]
where
\bq\label{eq:ent-dis0}
\mathscr{D}[f]:=\inttr \rho(t,x)\Theta(t,x)f(1+\hbar\kappa f)\lt|\nabla_p\ln\lt(\frac{f(1+\hbar\kappa\mcF)}{\mcF(1+\hbar\kappa f)}\rt)\rt|^2 \rdx\rdp \ge0.
\eq
Consequently,
\[
\frac{\rd}{\rdt}\mathscr{H}[f]\le0.
\]
If $\rho \Theta>0$, equality holds if and only if the collision flux vanishes, which is equivalent to $f$ being a local quantum equilibrium of the form \eqref{eq:loc-qua-e}.

%
%
%
%
%
%
%
%
%
%
\subsection{Summary of the structural properties}

The nonlinear quantum Fokker--Planck equation \eqref{eq:nqfp-s2} therefore satisfies the following formal structural properties:
\begin{enumerate}
\item \emph{Pauli exclusion principle.} For smooth solutions, if $\kappa=-1$ and $0\le f_0\le\frac1\hbar$, then the admissible interval is preserved:
\[
0\le f(t,x,p)\le\frac1{\hbar}.
\]

\item \emph{Conservation laws.} Smooth decaying solutions satisfy
\[
\frac{\rd}{\rdt}\inttr f\,\rdx\rdp =0,
\quad
\frac{\rd}{\rdt}\inttr pf\,\rdx\rdp =0,
\quad
\frac{\rd}{\rdt}\inttr |p|^2f\,\rdx\rdp =0.
\]

 \item \emph{Collision equilibria.} Within the class of positive admissible distributions with $\rho \Theta>0$, the vanishing of the collision flux, equivalently the vanishing of the entropy dissipation, characterizes the local quantum equilibria
\[
\mcF(\theta,u,\Theta;p) = \lt( e^{\frac{|p-u(t,x)|^2}{2\Theta(t,x)}+\theta(t,x)} - \hbar\kappa \rt)^{-1}.
\]

\item \emph{Entropy inequality.} The entropy functional \eqref{eq:ent-fl} satisfies
    \[
\frac{\rd}{\rdt}\mathscr{H}[f]+\mathscr{D}[f]=0,
    \quad
   \mathscr{D}[f]\ge0,
    \]
    where $\mathscr{D}[f]$ is given by \eqref{eq:ent-dis0}. In particular,
    \[
\frac{\rd}{\rdt}\mathscr{H}[f]\le0,
    \]
    and equality holds precisely when the collision flux vanishes, equivalently when $f$ is a local quantum equilibrium.
\end{enumerate}

%
%
%
%
%
%
%
%
%
%

\section{Perturbative formulation and structure of the linearized operator}
\label{sec:lin-str}

In this section, we derive the perturbation equation around the global quantum equilibrium and analyze the basic structure of the linearized collision operator. The goal is to identify the finite-dimensional macroscopic modes, the dissipative microscopic part, and the nonlinear terms generated by the self-consistent dependence of $\rho$, $u$, and $\Theta$ on the distribution function.

%
%
%
%
%
%
%
%
%
%
\subsection{Preliminaries and macroscopic expansions}\label{ssec:pre_mac}

We recall the global equilibrium introduced in \eqref{F_0}:
\[
\mcF_\hbar (p) = \lt(e^{\frac{|p|^2}{2}+\theta_0}-\hbar\kappa\rt)^{-1}.
\]
We impose the admissibility condition \eqref{eq:ht-cond}. For fixed equilibrium parameter $\theta_0$, this can be achieved by taking $\hbar>0$ sufficiently small. In the bosonic case $\kappa=1$, this condition prevents the denominator of $\mcF_\hbar $ from vanishing. In the fermionic case $\kappa=-1$, it ensures that $\eta_\hbar =1-2\hbar\mcF_\hbar $ is uniformly positive.

We define
\[
\mu_\hbar :=\mcF_\hbar +\hbar\kappa\mcF_\hbar ^2,
\quad
\eta_\hbar :=1+2\hbar\kappa\mcF_\hbar .
\]
Then $\eta_\hbar ^2=1+4\hbar\kappa\mu_\hbar $. A direct computation gives
\bq\label{eq:elem}
\nabla_p\mcF_\hbar =-p\mu_\hbar ,
\quad
\nabla_p\sqrt{\mu_\hbar }=-\frac12p\eta_\hbar \sqrt{\mu_\hbar },
\eq
and
\bq\label{eq:mom-id-mu0}
\intr |p|^2\mu_\hbar \,\rdp=3\intr \mcF_\hbar \,\rdp,
\quad
\intr |p|^2\eta_\hbar \mu_\hbar \,\rdp=3\intr \mu_\hbar \,\rdp.
\eq
These identities will be used repeatedly in the linearization and in the analysis of the linearized operator.

We perturb the solution around $\mcF_\hbar $ by writing
\[
    f=\mcF_\hbar +\sqrt{\mu_\hbar }\,g.
\]
Then
\bq\label{eq:F-exp}
    f(1+\hbar\kappa f) = \mu_\hbar +\eta_\hbar g\sqrt{\mu_\hbar } +\hbar\kappa\mu_\hbar  g^2.
\eq
For notational convenience, we set
\[
    M:=\intr \mcF_\hbar \,\rdp.
\]
Since $\mcF_\hbar >0$, we have $M>0$.

We use the following notation for function spaces and norms.   When there is no ambiguity, we write $L^2:=L^2(\R^3\times\R^3)$. We use the subscripts $x$ and $p$ only when the variable of integration needs to be specified. For an integer $s\ge0$, $H^s$ denotes the usual Sobolev space on $\mR^3\times\mR^3$. We use multi-indices $\alpha=(\alpha_1,\alpha_2,\alpha_3)$ and
$\beta=(\beta_1,\beta_2,\beta_3)$, and write
\[
\pa_x^\alpha\pa_p^\beta = \pa_{x_1}^{\alpha_1}\pa_{x_2}^{\alpha_2} \pa_{x_3}^{\alpha_3} \pa_{p_1}^{\beta_1}\pa_{p_2}^{\beta_2} \pa_{p_3}^{\beta_3}.
\]
The $L^2_p$ inner product is denoted by
\[
\lal \varphi,\psi\ral_{L^2_p} := \intr \varphi(p)\psi(p)\,\rdp.
\]

We also define the weighted dissipation norm
\[
    |g|_D^2 :=  \|\nabla_p g\|_{L^2_p}^2 + \|p\eta_\hbar  g\|_{L^2_p}^2,
\]
and, for functions depending on both $x$ and $p$,
\[
 \|g\|_D^2 := \|\nabla_p g\|_{L^2_{x,p}}^2 + \|p\eta_\hbar  g\|_{L^2_{x,p}}^2.
\]
Under \eqref{eq:ht-cond}, $\eta_\hbar $ is bounded from below. Indeed, if $\kappa=1$, then $\eta_\hbar =1+2\hbar\mcF_\hbar >1$. If $\kappa=-1$, then
\[
\eta_\hbar  = 1-2\hbar\mcF_\hbar  = \frac{e^{\frac{|p|^2}{2}+\theta_0}-\hbar}{e^{\frac{|p|^2}{2}+\theta_0}+\hbar} \ge \frac{1-\hbar e^{-\theta_0}}{1+\hbar e^{-\theta_0}}>0.
\]
For $\kappa=0$, we simply have $\eta_\hbar =1$. Thus there exists $c_0>0$ such that
\[
    \eta_\hbar (p)\ge c_0>0
    \quad\text{for all }p\in\mR^3.
\]
Consequently, the $D$-norm controls the $L^2_p$ norm. Indeed,
\[
|g|_D^2 = \|\nabla_p g\|_{L^2_p}^2+\|p\eta_\hbar g\|_{L^2_p}^2 \ge 2\intr |p\eta_\hbar  g|\,|\nabla_p g|\,\rdp \ge -2c_0\intr pg\cdot\nabla_p g\,\rdp = 3c_0\|g\|_{L^2_p}^2,
\]
where the last identity follows by integration by parts. Thus $|\cdot|_D$ is a genuine norm which controls the $L^2_p$ norm.

We finally introduce a compact notation for the $p$-dependent coefficients generated by differentiating $\mcF_\hbar $, $\mu_\hbar $, and $\eta_\hbar $. For a function $\varphi(p)$ belonging to the class
\[
\lt\{\eta_\hbar ^i\mu_\hbar ^j,\, p\eta_\hbar ^i\mu_\hbar ^j,\, |p|^2\eta_\hbar ^i\mu_\hbar ^j,\, |p|^2p\eta_\hbar ^i\mu_\hbar ^j\rt\}_{i\in\mN,\,j\in\mQ^{++}}, \quad \Q^{++}:= \{ r \in \Q : r > 0\}
\]
we denote by $\mcP_{\varphi,|\beta|}$ a finite linear combination of terms obtained by differentiating $\varphi$ up to order $|\beta|$ in $p$. More precisely, $\mcP_{\varphi,|\beta|}$ denotes a finite sum of terms of the form
\[
C_{\ell}\,p^{\ell}\eta_\hbar ^{i_\ell}\mu_\hbar ^{j_\ell},
\]
where $C_\ell$ is a constant, $\ell$ is a multi-index, $i_\ell\in\mN$, and $j_\ell\in\mQ^{++}$. The powers and coefficients may depend on $\varphi$ and $|\beta|$, but their number is finite.

We claim that these coefficients are uniformly bounded and have exponential decay in $p$. Indeed,
\[
\mu_\hbar  = \mcF_\hbar +\hbar\kappa\mcF_\hbar ^2 = \frac{e^{\frac{|p|^2}{2}+\theta_0}}{\lt(e^{\frac{|p|^2}{2}+\theta_0}-\hbar\kappa\rt)^2} = \frac{e^{-\lt(\frac{|p|^2}{2}+\theta_0\rt)}}{ \lt(1-\hbar\kappa e^{-\lt(\frac{|p|^2}{2}+\theta_0\rt)}\rt)^2}.
\]
By \eqref{eq:ht-cond}, there exist constants $0<c_{\hbar,\kappa,\theta_0}<C_{\hbar,\kappa,\theta_0}<\infty$ such that
\bq\label{eq:mu0-comp}
c_{\hbar,\kappa,\theta_0} e^{-\frac{|p|^2}{2}} \le \mu_\hbar (p) \le C_{\hbar,\kappa,\theta_0} e^{-\frac{|p|^2}{2} }.
\eq
Together with the boundedness of $\eta_\hbar $ and the identities \eqref{eq:elem}, this shows that each $p$-derivative of $\mcF_\hbar $, $\mu_\hbar $, and $\eta_\hbar $ is a finite sum of products of polynomials in $p$ and powers of $\mu_\hbar $ and $\eta_\hbar $. Consequently, each term appearing in $\mcP_{\varphi,|\beta|}$ is bounded by a polynomial times an exponentially decaying function, except for harmless bounded powers of $\eta_\hbar $. Hence
\bq\label{eq:P-coeff-b}
    \|\mcP_{\varphi,|\beta|}\|_{L^2_p}
    +
    \|\mcP_{\varphi,|\beta|}\|_{L^\infty_p}
    \le C_\beta.
\eq
This observation will be used repeatedly below without further comment when $p$-derivatives fall on the coefficients $\mcF_\hbar $, $\mu_\hbar $, $\eta_\hbar $, or on the basis functions of the macroscopic space.

We next expand the macroscopic fields $\rho$, $u$, and $\rho \Theta$ in terms of the perturbation $g$.

\begin{lemma}\label{lem:decomp}
Let $f=\mcF_\hbar +\sqrt{\mu_\hbar }g$. Then the macroscopic fields satisfy
\bq\label{def of u}
\rho = M+\intr g\sqrt{\mu_\hbar }\,\rdp,
\quad 
 u = \frac{1}{\intr\mu_\hbar \,\rdp}\intr p\eta_\hbar g\sqrt{\mu_\hbar }\,\rdp + N_u,
 \quad
\rho \Theta = M + \frac13\intr |p|^2\eta_\hbar g\sqrt{\mu_\hbar }\,\rdp + N_\Theta.
\eq
Here $N_u$ and $N_\Theta$ are nonlinear with respect to $g$ and are given by
\bq\label{eq:Nu-def}
N_u := - \frac{\intr p\eta_\hbar g\sqrt{\mu_\hbar }\,\rdp}{\intr\mu_\hbar \,\rdp}R_1(g) + \frac{\hbar\kappa\intr p\mu_\hbar  g^2\,\rdp}{\intr\mu_\hbar \,\rdp}\lt(1-R_1(g)\rt),
\eq
and
\bq\label{eq:NT-def}
N_\Theta := \frac{\hbar\kappa}{3}\intr |p|^2\mu_\hbar  g^2\,\rdp - \frac13 \lt( \frac{1}{\intr\mu_\hbar \,\rdp}\intr p\eta_\hbar g\sqrt{\mu_\hbar }\,\rdp + N_u \rt) \cdot R_2(g),
\eq
where
\begin{align}\label{eq:R1R2-def}
\begin{aligned}
R_1(g) &:= \frac{A(g)}{\intr\mu_\hbar \,\rdp} - \frac{A(g)^2}{\intr\mu_\hbar \,\rdp\intr\{\mu_\hbar +\eta_\hbar g\sqrt{\mu_\hbar }+\hbar\kappa\mu_\hbar g^2\}\,\rdp},\cr
R_2(g) &:= \intr p\eta_\hbar g\sqrt{\mu_\hbar }\,\rdp + \hbar\kappa\intr p\mu_\hbar g^2\,\rdp,
\end{aligned}
\end{align}
with
\bq\label{eq:A-def}
A(g) := \intr \eta_\hbar g\sqrt{\mu_\hbar }\,\rdp + \hbar\kappa\intr \mu_\hbar g^2\,\rdp.
\eq
\end{lemma}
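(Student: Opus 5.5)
The identities \eqref{def of u}--\eqref{eq:A-def} are pure algebra: substitute the expansion \eqref{eq:F-exp} of $f(1+\hbar\kappa f)$ into the definitions of $\rho$, $u$ and $\Theta$, then separate linear and nonlinear parts. For $\rho$ this is immediate, since $\intr f\,\rdp = M+\intr g\sqrt{\mu_\hbar}\,\rdp$ by the definition of $M$.

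For $u$, I would write the denominator as $\intr F(f)\,\rdp = m+A(g)$, where $m:=\intr\mu_\hbar\,\rdp$ and $A(g)$ is as in \eqref{eq:A-def}. For the numerator I would note $\intr p\mu_\hbar\,\rdp=0$ by oddness, so that it equals $R_2(g)$. The key algebraic step is the exact identity
\[
\frac{1}{m+A}=\frac1m\Bigl(1-\frac Am+\frac{A^2}{m(m+A)}\Bigr)=\frac1m\bigl(1-R_1(g)\bigr),
\]
which can be checked by putting everything over $m^2(m+A)$. I would then expand $u=R_2(g)(1-R_1(g))/m$, using $R_2=\intr p\eta_\hbar g\sqrt{\mu_\hbar}\,\rdp+\hbar\kappa\intr p\mu_\hbar g^2\,\rdp$. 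The product splits into the linear term $m^{-1}\intr p\eta_\hbar g\sqrt{\mu_\hbar}\,\rdp$ and the two remaining pieces, which are exactly $N_u$ in \eqref{eq:Nu-def}. Here I have to double-check the sign convention, since $1-R_1$ should multiply both pieces and $R_1$ carries the $A^2/(m(m+A))$ correction. I expect this bookkeeping to be the main (if modest) obstacle: matching the precise grouping in \eqref{eq:R1R2-def}, in particular the denominator $m\intr\{\mu_\hbar+\eta_\hbar g\sqrt{\mu_\hbar}+\hbar\kappa\mu_\hbar g^2\}\,\rdp = m(m+A)$.

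For $\rho\Theta$, the definition gives
\[
3\rho\Theta=\intr |p|^2F(f)\,\rdp-u\cdot\intr pF(f)\,\rdp .
\]
By \eqref{eq:F-exp}, the first integral equals $\intr|p|^2\mu_\hbar\,\rdp+\intr|p|^2\eta_\hbar g\sqrt{\mu_\hbar}\,\rdp+\hbar\kappa\intr|p|^2\mu_\hbar g^2\,\rdp$. By \eqref{eq:mom-id-mu0} its leading term is $3M$. The second term is $u\cdot R_2(g)$, with $u$ replaced by its expansion from the previous step. Dividing by $3$ then yields $M+\frac13\intr|p|^2\eta_\hbar g\sqrt{\mu_\hbar}\,\rdp+N_\Theta$, with $N_\Theta$ exactly as in \eqref{eq:NT-def}.

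Finally, I would remark that $N_u$ and $N_\Theta$ are at least quadratic in $g$. This holds because $R_1$, $R_2$, $A$ all vanish at $g=0$ and each appears multiplied by another factor that is $O(g)$. The formulas are well defined whenever $m+A(g)>0$, which holds for admissible $f$ since $F(f)\ge0$ and $F(f)$ is not identically zero. This gives the lemma.
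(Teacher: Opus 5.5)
Your proposal is correct and follows the paper's proof step by step. You substitute \eqref{eq:F-exp}, drop $\intr p\mu_\hbar\,\rdp$ by oddness, invert the denominator through the exact identity $\frac{1}{m+A}=\frac1m\bigl(1-R_1(g)\bigr)$, and for $\rho\Theta$ use $\frac13\intr|p|^2\mu_\hbar\,\rdp=M$ from \eqref{eq:mom-id-mu0} together with the expansion of $u$.

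Your closing remark on the positivity of $m+A(g)$ goes beyond the paper, which secures it only later via smallness of $g$. The justification ``$F(f)$ is not identically zero'' would itself need an argument, but the algebraic identities of the lemma do not depend on it.
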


\begin{proof}
Substituting $f=\mcF_\hbar +\sqrt{\mu_\hbar }g$, we first obtain
\bq\label{eq:F-exp-macro}
f(1+\hbar\kappa f) = \mu_\hbar +\eta_\hbar g\sqrt{\mu_\hbar } +\hbar\kappa\mu_\hbar g^2.
\eq
The density expansion follows immediately:
\[
\rho=\intr f\,\rdp = \intr\mcF_\hbar \,\rdp+\intr g\sqrt{\mu_\hbar }\,\rdp = M+\intr g\sqrt{\mu_\hbar }\,\rdp.
\]

We next compute $u$. By \eqref{eq:F-exp-macro} and oddness,
\[
\intr p f(1+\hbar\kappa f)\,\rdp = \intr p\eta_\hbar g\sqrt{\mu_\hbar }\,\rdp + \hbar\kappa\intr p\mu_\hbar g^2\,\rdp,
\]
where we used $\intr p\mu_\hbar \,\rdp=0$. Moreover,
\[
\intr f(1+\hbar\kappa f)\,\rdp = \intr\mu_\hbar \,\rdp+A(g),
\]
where $A(g)$ is defined in \eqref{eq:A-def}. Thus,
\[
u = \frac{\intr p\eta_\hbar g\sqrt{\mu_\hbar }\,\rdp + \hbar\kappa\intr p\mu_\hbar g^2\,\rdp}{\intr\mu_\hbar \,\rdp+A(g)}.
\]
Using the algebraic identity
\[
\frac1{c+x} = \frac1c \lt\{1-\frac{x}{c} + \frac{x^2}{c(c+x)}\rt\},
\]
with $c=\intr \mu_\hbar \,\rdp$ and $x=A(g)$, we get
\[
\frac1{\intr\mu_\hbar \,\rdp+A(g)} = \frac1{\intr\mu_\hbar \,\rdp} \lt(1-R_1(g)\rt),
\]
where $R_1(g)$ is defined in \eqref{eq:R1R2-def}. Hence,
\[
u = \frac{1}{\intr\mu_\hbar \,\rdp} \intr p\eta_\hbar g\sqrt{\mu_\hbar }\,\rdp - \frac{\intr p\eta_\hbar g\sqrt{\mu_\hbar }\,\rdp}{\intr\mu_\hbar \,\rdp}R_1(g) + \frac{\hbar\kappa\intr p\mu_\hbar g^2\,\rdp}{\intr\mu_\hbar \,\rdp} \lt(1-R_1(g)\rt),
\]
which gives \eqref{def of u} and \eqref{eq:Nu-def}.

It remains to expand $\rho \Theta$. By definition,
\[
\rho \Theta = \frac13\intr (|p|^2-u\cdot p)f(1+\hbar\kappa f)\,\rdp.
\]
Using \eqref{eq:F-exp-macro}, we have
\[
\rho \Theta = \frac13 \intr |p|^2 \lt\{ \mu_\hbar +\eta_\hbar g\sqrt{\mu_\hbar } +\hbar\kappa\mu_\hbar g^2 \rt\} \rdp - \frac13 u\cdot \intr p \lt\{ \mu_\hbar +\eta_\hbar g\sqrt{\mu_\hbar }+\hbar\kappa\mu_\hbar g^2 \rt\} \rdp.
\]
By oddness, $\intr p\mu_\hbar \,\rdp=0$. Also, by \eqref{eq:mom-id-mu0}, we find
\[
\frac13\intr |p|^2\mu_\hbar \,\rdp=M.
\]
Hence, we have
\[
\rho \Theta = M + \frac13\intr |p|^2\eta_\hbar g\sqrt{\mu_\hbar }\,\rdp + \frac{\hbar\kappa}{3}\intr |p|^2\mu_\hbar g^2\,\rdp - \frac13  u\cdot \lt( \intr p\eta_\hbar g\sqrt{\mu_\hbar }\,\rdp + \hbar\kappa\intr p\mu_\hbar g^2\,\rdp \rt).
\]
Since
\[
u= \frac{1}{\intr\mu_\hbar \,\rdp} \intr p\eta_\hbar g\sqrt{\mu_\hbar }\,\rdp + N_u
\]
and the last parenthesis is precisely $R_2(g)$, we obtain
\[
\rho \Theta = M + \frac13\intr |p|^2\eta_\hbar g\sqrt{\mu_\hbar }\,\rdp + N_\Theta,
\]
with $N_\Theta$ given by \eqref{eq:NT-def}. This completes the proof.
\end{proof}

%
%
%
%
%
%
%
%
%
%

\subsection{The linearized equation}

We now derive the equation satisfied by the perturbation $g$. Recall that $f=\mcF_\hbar +\sqrt{\mu_\hbar }g$. For notational simplicity, we use the following abbreviations:
\[
\mathscr a(g):=\intr g\sqrt{\mu_\hbar }\,\rdp,
\quad
\mathscr b(g):=\intr p\eta_\hbar g\sqrt{\mu_\hbar }\,\rdp,
\quad
\mathscr c(g):=\intr |p|^2\eta_\hbar g\sqrt{\mu_\hbar }\,\rdp.
\]

We first define the linearized collision operator. Let
\begin{align}\label{eq:L-def}
\begin{aligned}
    L(g) &:= \frac{M}{\sqrt{\mu_\hbar }} \nabla_p\cdot \lt\{ \lt(\nabla_p g+\frac12p\eta_\hbar g\rt)\sqrt{\mu_\hbar } - \frac{p\mu_\hbar }{\intr |p|^2\mu_\hbar \,\rdp} \intr (|p|^2\eta_\hbar -3)g\sqrt{\mu_\hbar }\,\rdp  \rt.  \cr
    &\hspace{6cm}\lt. -    \frac{\mu_\hbar }{\intr \mu_\hbar \,\rdp}   \intr p\eta_\hbar g\sqrt{\mu_\hbar }\,\rdp    \rt\}.
\end{aligned}
\end{align}
Equivalently, we may write
\[
    L=L_0+P_1,
\]
where
\[
L_0(g) := \frac{M}{\sqrt{\mu_\hbar }} \nabla_p\cdot \lt\{ \mu_\hbar \nabla_p\lt(\frac{g}{\sqrt{\mu_\hbar }}\rt) \rt\} = M\lt[ \Delta_p g + g\lt\{ \frac32\eta_\hbar   - |p|^2\lt(\frac14\eta_\hbar ^2+\hbar\kappa\mu_\hbar \rt) \rt\}\rt],
\]
and $P_1$ is the finite-dimensional operator
\[
P_1(g) := \sum_{i=1}^4\lal g,v_i\ral_{L^2_p}v_i,
\]
with
\[
v_i := \sqrt{M}\frac{p_i\eta_\hbar \sqrt{\mu_\hbar }}{\lt(\intr \mu_\hbar \,\rdp\rt)^{\frac12}} \quad (i=1,2,3),
\quad
v_4 := \sqrt{M}\frac{(|p|^2\eta_\hbar -3)\sqrt{\mu_\hbar }} {\lt(\intr |p|^2\mu_\hbar \,\rdp\rt)^{\frac12}}.
\]

The nonlinear remainder will be written in flux form. Define
\begin{align}\label{eq:non-flux}
\begin{aligned}
\calR(g)  &:= \frac{\mathscr c(g)}{3}\nabla_p(g\sqrt{\mu_\hbar }) + N_\Theta\lt\{-p\mu_\hbar +\nabla_p(g\sqrt{\mu_\hbar })\rt\} + \hbar\kappa M p\mu_\hbar g^2 \cr
    &\quad
    -  M\frac{\mathscr b(g)}{\intr\mu_\hbar \,\rdp} \lt\{ \eta_\hbar g\sqrt{\mu_\hbar } + \hbar\kappa\mu_\hbar g^2 \rt\} -    M N_u \lt\{ \mu_\hbar +\eta_\hbar g\sqrt{\mu_\hbar }+\hbar\kappa\mu_\hbar g^2 \rt\} \cr
    &\quad
    + \mathscr a(g)p \lt\{ \eta_\hbar g\sqrt{\mu_\hbar } + \hbar\kappa\mu_\hbar g^2\rt\} -\mathscr a(g)  \lt( \frac{\mathscr b(g)}{\intr\mu_\hbar \,\rdp} + N_u \rt) \lt\{ \mu_\hbar +\eta_\hbar g\sqrt{\mu_\hbar }+\hbar\kappa\mu_\hbar g^2\rt\}.
\end{aligned}
\end{align}
Here $N_u$ and $N_\Theta$ are the nonlinear terms defined in Lemma \ref{lem:decomp}. We then define
\bq\label{eq:Gam-def}
    \Gamma(g):=\frac{1}{\sqrt{\mu_\hbar }}\nabla_p\cdot\calR(g) .
\eq

\begin{proposition} 
Let $f=\mcF_\hbar +\sqrt{\mu_\hbar }g$. Then the nonlinear quantum Fokker--Planck equation \eqref{eq:main} can be written in the perturbative form
\bq\label{eq:linearized}
\pa_t g+p\cdot\nabla_x g = L(g)+\Gamma(g),
\eq
where $L$ is given by \eqref{eq:L-def} and $\Gamma$ is given by \eqref{eq:Gam-def}.
\end{proposition}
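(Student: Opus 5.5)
The plan is to substitute $f=\mcF_\hbar+\sqrt{\mu_\hbar}\,g$ directly into \eqref{eq:main} and sort the collision flux by order in $g$. Since $\mcF_\hbar$ is independent of $(t,x)$, the left-hand side is simply $\sqrt{\mu_\hbar}(\pa_tg+p\cdot\nabla_xg)$. It therefore suffices to prove the flux identity
\[
\rho\lt\{\Theta\nabla_pf+(p-u)f(1+\hbar\kappa f)\rt\}=\mathcal{J}_{\rm lin}(g)+\calR(g),
\]
where $\frac1{\sqrt{\mu_\hbar}}\nabla_p\cdot\mathcal{J}_{\rm lin}(g)=L(g)$. The first step is to rewrite the flux as $(\rho\Theta)\nabla_pf+\rho(p-u)f(1+\hbar\kappa f)$. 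This form matters because Lemma \ref{lem:decomp} expands $\rho\Theta$ itself, not $\Theta$, and so no division by $\rho$ is needed.

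Next I would insert the expansions. From \eqref{eq:elem},
\[
\nabla_pf=-p\mu_\hbar+\nabla_p(g\sqrt{\mu_\hbar}),
\]
and from \eqref{eq:F-exp}, $f(1+\hbar\kappa f)=\mu_\hbar+\eta_\hbar g\sqrt{\mu_\hbar}+\hbar\kappa\mu_\hbar g^2$. Lemma \ref{lem:decomp} gives
\[
\rho=M+\mathscr a(g),\qquad u=\frac{\mathscr b(g)}{\intr\mu_\hbar\,\rdp}+N_u,\qquad \rho\Theta=M+\frac{\mathscr c(g)}3+N_\Theta.
\]
I will then expand the two products term by term.

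\textbf{Zeroth order.} The constant terms are $M(-p\mu_\hbar)+Mp\mu_\hbar=0$, so they cancel.

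\textbf{First order.} The linear terms are
\[
M\nabla_p(g\sqrt{\mu_\hbar})-\frac{\mathscr c(g)}{3}p\mu_\hbar+Mp\eta_\hbar g\sqrt{\mu_\hbar}+\mathscr a(g)p\mu_\hbar-M\frac{\mathscr b(g)}{\intr\mu_\hbar\,\rdp}\mu_\hbar .
\]
Using \eqref{eq:elem}, $\nabla_p(g\sqrt{\mu_\hbar})+p\eta_\hbar g\sqrt{\mu_\hbar}=\sqrt{\mu_\hbar}\bigl(\nabla_pg+\tfrac12p\eta_\hbar g\bigr)$, which gives the first term of \eqref{eq:L-def}. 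Combining $-\mathscr c/3$ with $\mathscr a$, the $p\mu_\hbar$ coefficient becomes $-\frac13\intr(|p|^2\eta_\hbar-3)g\sqrt{\mu_\hbar}\,\rdp$. By \eqref{eq:mom-id-mu0}, $\frac13=M/\intr|p|^2\mu_\hbar\,\rdp$, so this matches the second term of \eqref{eq:L-def}. The $\mathscr b$ term is the third. This also confirms the decomposition $L=L_0+P_1$: expanding $\nabla_p\cdot$ of the last two fluxes and using $\nabla_p\cdot(p\mu_\hbar)=(3-|p|^2\eta_\hbar)\mu_\hbar$ and $\nabla_p\mu_\hbar=-p\eta_\hbar\mu_\hbar$ produces exactly $\lal g,v_i\ral v_i$.

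\textbf{Higher order.} Everything left is collected and compared with \eqref{eq:non-flux}. From $(\rho\Theta)\nabla_pf$ the remaining terms are $\frac{\mathscr c}{3}\nabla_p(g\sqrt{\mu_\hbar})$ and $N_\Theta\{-p\mu_\hbar+\nabla_p(g\sqrt{\mu_\hbar})\}$. Expanding
\[
(M+\mathscr a)\Bigl(p-\frac{\mathscr b}{\intr\mu_\hbar\,\rdp}-N_u\Bigr)\bigl(\mu_\hbar+\eta_\hbar g\sqrt{\mu_\hbar}+\hbar\kappa\mu_\hbar g^2\bigr)
\]
and discarding the terms already placed in the zeroth- and first-order parts leaves $\hbar\kappa Mp\mu_\hbar g^2$, $-M\frac{\mathscr b}{\intr\mu_\hbar}(\eta_\hbar g\sqrt{\mu_\hbar}+\hbar\kappa\mu_\hbar g^2)$, $-MN_u(\cdots)$, $\mathscr a\,p(\eta_\hbar g\sqrt{\mu_\hbar}+\hbar\kappa\mu_\hbar g^2)$, and $-\mathscr a(\frac{\mathscr b}{\intr\mu_\hbar}+N_u)(\cdots)$. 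These are precisely the lines of \eqref{eq:non-flux}.

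The final step is to divide by $\sqrt{\mu_\hbar}$. The main obstacle is bookkeeping, namely making sure every cross term is assigned exactly once. The one substantive point is the first-order identification, which relies on the moment identity \eqref{eq:mom-id-mu0} to merge $\mathscr a$ and $\mathscr c$ into the single mode $(|p|^2\eta_\hbar-3)$.
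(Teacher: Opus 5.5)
Your proposal is correct and follows the paper's proof essentially step by step. You substitute $f$, expand $\rho\Theta\nabla_pf$ and $\rho(p-u)f(1+\hbar\kappa f)$ using Lemma \ref{lem:decomp} and \eqref{eq:F-exp}, cancel the $\pm Mp\mu_\hbar$ terms, merge the $\mathscr a$ and $\mathscr c$ contributions into the $(|p|^2\eta_\hbar-3)$ mode via $\intr|p|^2\mu_\hbar\,\rdp=3M$, and identify the leftover terms with $\calR(g)$, exactly as the paper does. Your extra check that the two finite-rank fluxes produce exactly $\sum_i\lal g,v_i\ral_{L^2_p}v_i$ is correct (the paper only asserts this with ``Equivalently''), but it is not needed for the proposition itself.
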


\begin{proof}
Substituting $f=\mcF_\hbar +\sqrt{\mu_\hbar }g$ into the left-hand side of \eqref{eq:main}, we get
\bq\label{eq:lhs}
    \pa_t f+p\cdot\nabla_x f    =    \sqrt{\mu_\hbar }\lt(\pa_t g+p\cdot\nabla_x g\rt).
\eq

By Lemma \ref{lem:decomp}, we have
\[
\rho \Theta=M+\frac13\mathscr c(g)+N_\Theta,
\]
and hence
\begin{align}\label{eq:1st-term}
\begin{aligned}
\rho \Theta\nabla_p f &= \lt(M+\frac13\mathscr c(g)+N_\Theta\rt) \lt\{-p\mu_\hbar +\nabla_p(g\sqrt{\mu_\hbar })\rt\} \cr
&=M\lt\{ -p\mu_\hbar +\nabla_p(g\sqrt{\mu_\hbar }) -\frac{1}{3M}p\mu_\hbar \mathscr c(g) \rt\} + \frac13\mathscr c(g)\nabla_p(g\sqrt{\mu_\hbar }) + N_\Theta\lt\{-p\mu_\hbar +\nabla_p(g\sqrt{\mu_\hbar })\rt\}.
\end{aligned}
\end{align}
Similarly, using
\[
\rho=M+ \mathscr a(g),
\quad
u=\frac{\mathscr b(g)}{\intr\mu_\hbar \,\rdp}+N_u,
\]
and \eqref{eq:F-exp}, we obtain
\begin{align}\label{eq:2nd-term}
\begin{aligned}
\rho(p-u)f(1+\hbar\kappa f) &= \lt(M+ \mathscr a(g)\rt) \lt( p-\frac{\mathscr b(g)}{\intr\mu_\hbar \,\rdp}-N_u \rt) \lt\{ \mu_\hbar +\eta_\hbar g\sqrt{\mu_\hbar }+\hbar\kappa\mu_\hbar g^2\rt\} \cr
&= M\lt\{ p\mu_\hbar  + p\eta_\hbar g\sqrt{\mu_\hbar } - \frac{\mu_\hbar }{\intr\mu_\hbar \,\rdp}\mathscr b(g) + \frac{1}{M} \mathscr a(g)p\mu_\hbar  \rt\} \cr
&\quad + \hbar\kappa M p\mu_\hbar g^2 - M\frac{\mathscr b(g)}{\intr\mu_\hbar \,\rdp} \lt\{ \eta_\hbar g\sqrt{\mu_\hbar } + \hbar\kappa\mu_\hbar g^2 \rt\} \cr
&\quad - M N_u \lt\{ \mu_\hbar +\eta_\hbar g\sqrt{\mu_\hbar }+\hbar\kappa\mu_\hbar g^2 \rt\}  + \mathscr a(g)p \lt\{ \eta_\hbar g\sqrt{\mu_\hbar } + \hbar\kappa\mu_\hbar g^2 \rt\} \cr
&\quad - \mathscr a(g) \lt( \frac{\mathscr b(g)}{\intr\mu_\hbar \,\rdp} + N_u \rt) \lt\{ \mu_\hbar +\eta_\hbar g\sqrt{\mu_\hbar }+\hbar\kappa\mu_\hbar g^2 \rt\}.
\end{aligned}
\end{align}
The first line on the right-hand side of \eqref{eq:2nd-term} contains the linear terms, while the remaining terms are nonlinear.

We now add \eqref{eq:1st-term} and \eqref{eq:2nd-term}. The zeroth-order terms $-Mp\mu_\hbar $ and $Mp\mu_\hbar $ cancel. Moreover,
\[
\nabla_p(g\sqrt{\mu_\hbar })+p\eta_\hbar g\sqrt{\mu_\hbar }
=
\lt(\nabla_p g+\frac12p\eta_\hbar g\rt)\sqrt{\mu_\hbar } = \mu_\hbar  \nabla_p \lt( \frac{g}{\sqrt{\mu_\hbar }} \rt).
\]
Using $\intr |p|^2\mu_\hbar \,\rdp=3M$, we also have
\[
 \mathscr a(g)p\mu_\hbar -\frac13p\mu_\hbar \mathscr c(g) = -M\frac{p\mu_\hbar }{\intr |p|^2\mu_\hbar \,\rdp} \intr (|p|^2\eta_\hbar -3)g\sqrt{\mu_\hbar }\,\rdp.
\]
Hence,
\begin{align*} 
&\rho \Theta\nabla_p f+\rho(p-u)f(1+\hbar\kappa f) \cr
&\quad = M\lt\{ \lt(\nabla_pg+\frac12p\eta_\hbar g\rt)\sqrt{\mu_\hbar } - \frac{p\mu_\hbar }{\intr |p|^2\mu_\hbar \,\rdp} \intr (|p|^2\eta_\hbar -3)g\sqrt{\mu_\hbar }\,\rdp \rt. \cr
&\hspace{5.8cm}\lt. - \frac{\mu_\hbar }{\intr\mu_\hbar \,\rdp} \intr p\eta_\hbar g\sqrt{\mu_\hbar }\,\rdp \rt\} + \calR(g) .
\end{align*}
Applying $\nabla_p\cdot$ to the above, dividing by $\sqrt{\mu_\hbar }$, and using \eqref{eq:lhs}, we obtain
\[
\pa_t g+p\cdot\nabla_xg = L(g)+\frac1{\sqrt{\mu_\hbar }}\nabla_p\cdot\calR(g) .
\]
By the definition \eqref{eq:Gam-def}, this is precisely \eqref{eq:linearized}. This completes the proof.
\end{proof}

%
%
%
%
%
%
%
%
%
%
\subsection{Macroscopic space and coercivity of the linearized operator}

In this subsection, we identify the null space of the linearized operator and establish its coercivity on the microscopic component. We first introduce the macroscopic space
\[
\mcN := \mathrm{span}\{\sqrt{\mu_\hbar },\,p_1\sqrt{\mu_\hbar },\,p_2\sqrt{\mu_\hbar },\, p_3\sqrt{\mu_\hbar },\,|p|^2\sqrt{\mu_\hbar }\}.
\]
Let
\[
\ph := \frac{\int_{\mR^3}|p|^2\mu_\hbar \,\rdp} {\int_{\mR^3}\mu_\hbar \,\rdp}.
\]
Notice that $\ph\to3$ as $\hbar\to0$. We choose the following orthonormal basis
of $\mcN$ in $L^2_p$:
\[
    e_1 := \frac{\sqrt{\mu_\hbar }} {\lt(\int_{\mR^3}\mu_\hbar \,\rdp\rt)^{\frac12}},
    \quad   
    e_{1+j}:= \frac{p_j\sqrt{\mu_\hbar }} {\lt(\frac13\int_{\mR^3}|p|^2\mu_\hbar \,\rdp\rt)^{\frac12}}  \ (j=1,2,3),
 \quad
    e_5 := \frac{(|p|^2-\ph)\sqrt{\mu_\hbar }} {\lt(\int_{\mR^3}(|p|^2-\ph)^2\mu_\hbar \,\rdp\rt)^{\frac12}}.
\]
By the radial symmetry of $\mu_\hbar $ and the oddness of the corresponding integrands, one checks directly that
\[
\lal e_i,e_j\ral_{L^2_p} = 
\begin{cases}
1,& i=j,\\
0,& i\neq j.
\end{cases}
\]
We define
\[
    P_0g:=\lal g,e_1\ral_{L^2_p}e_1,
    \quad
    Pg:=\sum_{i=1}^5\lal g,e_i\ral_{L^2_p}e_i.
\]
Thus $P$ is the orthogonal projection onto $\mcN$.

 We now collect the basic spectral and coercive properties of the linearized collision operator. The operator $L_0$ is the dissipative Fokker--Planck part, whereas $P_1$ is a finite-rank correction generated by the self-consistent dependence of the macroscopic fields $u$ and $\Theta$ on $f$. This correction enlarges the null space from the one-dimensional space $\mathrm{span}\{\sqrt{\mu_\hbar }\}$ of $L_0$ to the full macroscopic space $\mcN$. The following lemma identifies this null space and gives the microscopic coercivity estimate that will be used throughout the energy argument.

 \begin{lemma} \label{lem:Prop-L}
Let $f,g\in H^1(\mR^3_p)$. Then the linear operators $L_0$ and $L$ satisfy the following properties. 
\begin{enumerate}
    \item $L_0$ and $L$ are self-adjoint:
    \[
    \lal L_0f,g\ral_{L^2_p} = \lal f,L_0g\ral_{L^2_p},
    \quad
    \lal Lf,g\ral_{L^2_p} = \lal f,Lg\ral_{L^2_p}.
    \]
    \item The null spaces are given by
    \[
    \ker L_0=\mathrm{span}\{\sqrt{\mu_\hbar }\},
    \quad
    \ker L=\mcN.
    \]
    \item There exists a positive constant $\lambda_0>0$ such that
    \bq\label{eq:L-coercivity}
        \lal Lg,g\ral_{L^2_p}  \le -\lambda_0 |(I-P)g|_D^2.
    \eq
\end{enumerate}
In particular, $L$ is nonpositive in $L^2_p$.
\end{lemma}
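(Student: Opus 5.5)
The plan is to rewrite everything in terms of $h:=g/\sqrt{\mu_\hbar}$ and the weighted space $L^2(\mu_\hbar\,\rdp;\mR^3)$. There the correction $P_1$ turns out to be exactly the orthogonal projection of $\nabla_p h$ onto a four-dimensional space of vector fields. Self-adjointness, nonpositivity and the kernel then follow from one identity, and coercivity reduces to a spectral-gap argument.

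\emph{Step 1 (quadratic form).} From the definition of $L_0$ and one integration by parts,
\[
\lal L_0 f,g\ral_{L^2_p}=-M\intr \mu_\hbar \nabla_p\Big(\tfrac{f}{\sqrt{\mu_\hbar}}\Big)\cdot\nabla_p\Big(\tfrac{g}{\sqrt{\mu_\hbar}}\Big)\rdp .
\]
This is symmetric, so $L_0$ is self-adjoint. Since $P_1$ is a sum of rank-one orthogonal projections, it is symmetric too, and hence so is $L$. Its kernel is $\ker L_0=\mathrm{span}\{\sqrt{\mu_\hbar}\}$, since $\nabla_p h=0$ forces $h$ to be constant.

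\emph{Step 2 (the modes $v_i$ through $h$).} Using $\nabla_p\mu_\hbar=-p\eta_\hbar\mu_\hbar$ from \eqref{eq:elem}, we have $p_j\eta_\hbar\sqrt{\mu_\hbar}=-\pa_{p_j}\mu_\hbar/\sqrt{\mu_\hbar}$ and $(|p|^2\eta_\hbar-3)\sqrt{\mu_\hbar}=-\nabla_p\cdot(p\mu_\hbar)/\sqrt{\mu_\hbar}$. Integrating by parts then gives
\[
\lal g,v_j\ral=\frac{\sqrt M}{(\intr\mu_\hbar)^{1/2}}\intr\mu_\hbar\,\pa_{p_j}h\,\rdp,\qquad
\lal g,v_4\ral=\frac{\sqrt M}{(\intr|p|^2\mu_\hbar)^{1/2}}\intr\mu_\hbar\, p\cdot\nabla_p h\,\rdp .
\]
In $L^2(\mu_\hbar\,\rdp;\mR^3)$ the constant fields $\mathbf e_1,\mathbf e_2,\mathbf e_3$ and the field $p$ are mutually orthogonal, by oddness. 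Their squared norms are $\intr\mu_\hbar$ and $\intr|p|^2\mu_\hbar$, which are exactly the normalizations in $v_i$. Hence $\sum_{i=1}^4\lal g,v_i\ral^2=M\|\Pi\nabla_p h\|^2_{L^2(\mu_\hbar)}$, where $\Pi$ is the orthogonal projection onto $\mathrm{span}\{\mathbf e_1,\mathbf e_2,\mathbf e_3,p\}$. Therefore
\[
\lal Lg,g\ral_{L^2_p}=-M\|(I-\Pi)\nabla_p h\|^2_{L^2(\mu_\hbar)}\le 0 .
\]
This already yields nonpositivity. Moreover, $\lal Lg,g\ral=0$ iff $\nabla_p h\in\mathrm{span}\{\mathbf e_j,p\}$, i.e. $h=a+b\cdot p+c|p|^2$. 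So the null space of the form is $\mcN$, and since $-L\ge0$ is self-adjoint, $\ker L=\mcN$.

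As a consistency check, this agrees with a direct computation. One has $L_0(p_j\sqrt{\mu_\hbar})=-Mp_j\eta_\hbar\sqrt{\mu_\hbar}$, which is cancelled by $P_1$ via $\intr p_j^2\eta_\hbar\mu_\hbar=\intr\mu_\hbar$ from \eqref{eq:mom-id-mu0}. Likewise $L_0(|p|^2\sqrt{\mu_\hbar})=-2M(|p|^2\eta_\hbar-3)\sqrt{\mu_\hbar}$ is cancelled using $\intr|p|^4\eta_\hbar\mu_\hbar=5\intr|p|^2\mu_\hbar$, which follows from integrating $\nabla_p\cdot(|p|^2p)=5|p|^2$ against $\mu_\hbar$.

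\emph{Step 3 (coercivity).} This is the main obstacle, because the identity in Step 2 is only degenerate-elliptic. I would first prove the $L^2_p$ gap $-\lal Lg,g\ral\ge\lambda\|(I-P)g\|^2_{L^2_p}$. The operator $-L_0/M=-\Delta_p+V$, with $V=|p|^2(\tfrac14\eta_\hbar^2+\hbar\kappa\mu_\hbar)-\tfrac32\eta_\hbar$, is a Schrödinger operator whose potential grows like $c|p|^2$ because $\eta_\hbar\ge c_0$. It therefore has compact resolvent. Since $P_1$ has finite rank, $-L$ also has purely discrete spectrum, contained in $[0,\infty)$ by Step 2. Its first eigenvalue on $\mcN^\perp$ is then positive, because the zero eigenspace is exactly $\mcN$; this gives $\lambda$.

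Next I would upgrade to the $D$-norm. Expanding $-\lal L_0 g,g\ral=M\|\nabla_pg\|^2+M\intr Vg^2$ and using $V\ge\tfrac18|p\eta_\hbar|^2-C$ on the range of $\eta_\hbar$ gives $-\lal L_0g,g\ral\ge c|g|_D^2-C\|g\|^2_{L^2_p}$, while $0\le\lal P_1g,g\ral\le C\|g\|^2_{L^2_p}$. Applying these to $(I-P)g$ (note $L g=L(I-P)g$) and taking a convex combination with the $L^2$ gap, $-\lal Lg,g\ral\ge \epsilon(c|(I-P)g|_D^2-C'\|(I-P)g\|^2)+(1-\epsilon)\lambda\|(I-P)g\|^2$, yields \eqref{eq:L-coercivity} for small $\epsilon$. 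The potential risk is the compactness claim for $-\Delta_p+V$, but that is standard for potentials tending to $+\infty$.
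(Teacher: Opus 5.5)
Your proposal is correct. For parts (1) and (2) it follows the paper. The paper's identity $\frac1M\lal Lg,g\ral_{L^2_p}=-\|\mcJ\|_{L^2_p}^2$ is exactly your $-\|(I-\Pi)\nabla_p h\|^2_{L^2(\mu_\hbar)}$, because $\mcJ=\sqrt{\mu_\hbar}\,(I-\Pi)\nabla_p h$ written out componentwise. Your projection language explains why the cross terms in the paper's expansion of $\|\mcJ\|^2$ cancel.

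The only organizational difference there concerns the kernel. The paper proves $\mcN\subset\ker L$ by the direct five-mode computation (your ``consistency check'') and uses the identity only for $\ker L\subset\mcN$. You read both inclusions off the null space of the form. For $\mcN\subset\ker L$ this tacitly uses the Cauchy--Schwarz inequality for the nonnegative form $-\lal L\,\cdot,\cdot\ral_{L^2_p}$, which is standard.

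The real difference is how you get the $L^2_p$ spectral gap.
\begin{itemize}
\item The paper argues by contradiction. It normalizes in the seminorm $\|g\|_K^2=\lal -L_0g,g\ral_{L^2_p}$ and uses the weighted Poincar\'e inequality \eqref{eq:wei-P-mu0}, valid since $\mu_\hbar$ is comparable to a Gaussian, to get $L^2_p$ bounds. It then extracts a weak limit and passes to the limit in $\lal P_1h_n,h_n\ral$ because $P_1$ has finite rank. This produces a nonzero element of $\ker L\cap\mcN^\perp$, a contradiction. Poincar\'e is used once more to turn $K$-coercivity into $L^2_p$-coercivity.
\item You instead note that $-L_0/M=-\Delta_p+V$ with $V\ge c|p|^2-C$ has compact resolvent, and that a finite-rank (indeed any bounded) perturbation keeps the spectrum discrete. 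The gap is then the first positive eigenvalue of $-L$ on $\mcN^\perp$.
\end{itemize}
Your route replaces the weighted Poincar\'e inequality and the weak-compactness bookkeeping with Rellich's criterion and Weyl's theorem. Neither route gives an explicit $\lambda_0$, and the final convex-combination upgrade to the $D$-norm is the same in both.

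Two minor wording fixes:
\begin{itemize}
\item The rank-one pieces $g\mapsto\lal g,v_i\ral_{L^2_p}v_i$ are symmetric and nonnegative but not orthogonal projections, since the $v_i$ are not normalized in general. This does not affect the symmetry argument.
\item The bound $V\ge c|p|^2-C$ uses $\eta_\hbar\ge c_0$ and also that $|p|^2\mu_\hbar$ is bounded. The latter is needed because $\hbar\kappa|p|^2\mu_\hbar$ is negative in the fermionic case.
\end{itemize}
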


\begin{proof}
We prove the assertions in several steps.

\medskip
\noindent
{\bf Step 1. Self-adjointness.} For $L_0$, integration by parts gives 
\begin{align}\label{eq:L0-sym-f}
\begin{aligned}
    \lal L_0f,g\ral_{L^2_p} &= \int_{\mR^3} \frac{M}{\sqrt{\mu_\hbar }} \nabla_p\cdot  \lt\{ \mu_\hbar \nabla_p\lt(\frac{f}{\sqrt{\mu_\hbar }}\rt) \rt\}  g\,\rdp \cr
    &= M\int_{\mR^3}  \nabla_p\cdot \lt\{ \mu_\hbar \nabla_p\lt(\frac{f}{\sqrt{\mu_\hbar }}\rt) \rt\} \frac{g}{\sqrt{\mu_\hbar }}\,\rdp \cr
    &= -M\int_{\mR^3} \mu_\hbar  \nabla_p\lt(\frac{f}{\sqrt{\mu_\hbar }}\rt) \cdot \nabla_p\lt(\frac{g}{\sqrt{\mu_\hbar }}\rt)\,\rdp.
\end{aligned}
\end{align}
This expression is symmetric in $f$ and $g$, and hence
\[
\lal L_0f,g\ral_{L^2_p} = \lal f,L_0g\ral_{L^2_p}.
\]
For $P_1$, by its definition,
\[
    \lal P_1f,g\ral_{L^2_p} =  \lt\lal  \sum_{i=1}^4\lal f,v_i\ral_{L^2_p}v_i, g \rt\ral_{L^2_p}  = \sum_{i=1}^4 \lal f,v_i\ral_{L^2_p} \lal v_i,g\ral_{L^2_p} =  \lt\lal  f,  \sum_{i=1}^4\lal g,v_i\ral_{L^2_p}v_i
\rt\ral_{L^2_p} = \lal f,P_1g\ral_{L^2_p}.
\]
Thus $L=L_0+P_1$ is self-adjoint.

\medskip
\noindent
{\bf Step 2. Inclusion $\mcN\subset\ker L$.} From \eqref{eq:L0-sym-f}, we have
\[
\lal L_0g,g\ral_{L^2_p} = -M\int_{\mR^3} \mu_\hbar  \lt| \nabla_p\lt(\frac{g}{\sqrt{\mu_\hbar }}\rt) \rt|^2\,\rdp.
\]
Thus, if $g\in\ker L_0$, then $g/\sqrt{\mu_\hbar }$ is constant, and hence
\[
\ker L_0=\mathrm{span}\{\sqrt{\mu_\hbar }\}.
\]

We next show that $\mcN\subset\ker L$. Using \eqref{eq:elem} and the moment identities \eqref{eq:mom-id-mu0}, we compute the action of $L$ on the five collision invariants.

First,
\[
\nabla_p\sqrt{\mu_\hbar } = -\frac12p\eta_\hbar \sqrt{\mu_\hbar }.
\]
Thus the first term in the flux of $L$ gives
\[
\lt(\nabla_p\sqrt{\mu_\hbar } +\frac12p\eta_\hbar \sqrt{\mu_\hbar }\rt)\sqrt{\mu_\hbar }=0.
\]
Moreover,
\[
\int_{\mR^3}p\eta_\hbar \mu_\hbar \,\rdp=0,
\quad
\int_{\mR^3}(|p|^2\eta_\hbar -3)\mu_\hbar \,\rdp=0,
\]
where the second identity follows from
\[
\int_{\mR^3}|p|^2\eta_\hbar \mu_\hbar \,\rdp = 3\int_{\mR^3}\mu_\hbar \,\rdp.
\]
Hence, we have $L(\sqrt{\mu_\hbar })=0$. 

For $g=p_i\sqrt{\mu_\hbar }$, we have
\[
\nabla_p(p_i\sqrt{\mu_\hbar }) = \mfe_i\sqrt{\mu_\hbar } -\frac12p_i p\eta_\hbar \sqrt{\mu_\hbar },
\]
and subsequently,
\[
\lt(\nabla_p(p_i\sqrt{\mu_\hbar }) +\frac12p\eta_\hbar p_i\sqrt{\mu_\hbar }\rt)\sqrt{\mu_\hbar } = \mfe_i\mu_\hbar ,
\]
where $\{\mfe_i\}_{i=1}^3$ denotes the standard basis of $\R^3$. The temperature correction term vanishes by oddness:
\[
\int_{\mR^3}(|p|^2\eta_\hbar -3)p_i\mu_\hbar \,\rdp=0.
\]
The velocity correction term is
\[
-\frac{\mu_\hbar }{\int_{\mR^3}\mu_\hbar \,\rdp} \int_{\mR^3}p\eta_\hbar p_i\mu_\hbar \,\rdp.
\]
By radial symmetry and \eqref{eq:mom-id-mu0},
\[
\int_{\mR^3}p_jp_i\eta_\hbar \mu_\hbar \,\rdp = \delta_{ij}\frac13\int_{\mR^3}|p|^2\eta_\hbar \mu_\hbar \,\rdp = \delta_{ij}\int_{\mR^3}\mu_\hbar \,\rdp.
\]
Thus the two terms cancel, and $L(p_i\sqrt{\mu_\hbar })=0$ for $i=1,2,3$.

Finally, for $g=|p|^2\sqrt{\mu_\hbar }$, we compute
\[
\nabla_p(|p|^2\sqrt{\mu_\hbar }) = 2p\sqrt{\mu_\hbar } -\frac12|p|^2p\eta_\hbar \sqrt{\mu_\hbar },
\]
and hence
\[
\lt(\nabla_p(|p|^2\sqrt{\mu_\hbar }) +\frac12p\eta_\hbar |p|^2\sqrt{\mu_\hbar }\rt)\sqrt{\mu_\hbar } = 2p\mu_\hbar .
\]
The velocity correction term vanishes by oddness:
\[
\int_{\mR^3}p\eta_\hbar  |p|^2\mu_\hbar \,\rdp=0.
\]
The temperature correction term is
\[
-\frac{p\mu_\hbar }{\int_{\mR^3}|p|^2\mu_\hbar \,\rdp} \int_{\mR^3}(|p|^2\eta_\hbar -3)|p|^2\mu_\hbar \,\rdp.
\]
We claim that
\bq\label{eq:tem-m-i}
\int_{\mR^3}(|p|^2\eta_\hbar -3)|p|^2\mu_\hbar \,\rdp = 2\int_{\mR^3}|p|^2\mu_\hbar \,\rdp.
\eq
Indeed, using integration by parts and $\nabla_p\mu_\hbar =-p\eta_\hbar \mu_\hbar $, we have
\[
0 = \int_{\mR^3}\nabla_p\cdot(p|p|^2\mu_\hbar )\,\rdp = 5\int_{\mR^3}|p|^2\mu_\hbar \,\rdp - \int_{\mR^3}|p|^4\eta_\hbar \mu_\hbar \,\rdp.
\]
Therefore,
\[
\int_{\mR^3}|p|^4\eta_\hbar \mu_\hbar \,\rdp
=
5\int_{\mR^3}|p|^2\mu_\hbar \,\rdp,
\]
which implies \eqref{eq:tem-m-i}. Thus the temperature correction term is $-2p\mu_\hbar $, and it cancels the term $2p\mu_\hbar $. Hence, we have $L(|p|^2\sqrt{\mu_\hbar })=0$. This implies $\mcN\subset\ker L$.

\medskip
\noindent
{\bf Step 3. Dissipation identity.} Let
\[
h:=\frac{g}{\sqrt{\mu_\hbar }}.
\]
Using the definition of $L$, the identities
\[
\nabla_p\sqrt{\mu_\hbar } = -\frac12p\eta_\hbar \sqrt{\mu_\hbar },
\quad
\nabla_p\mu_\hbar  = -p\eta_\hbar \mu_\hbar ,
\]
and integration by parts, we first obtain
\begin{align}\label{projection}
\begin{aligned}
\frac{1}{M}\lal Lg,g\ral_{L^2_p} &= -\int_{\mR^3}\mu_\hbar |\nabla_p h|^2\,\rdp + \frac{1}{\int_{\mR^3}|p|^2\mu_\hbar \,\rdp} \lt( \int_{\mR^3}(|p|^2\eta_\hbar -3)g\sqrt{\mu_\hbar }\,\rdp \rt)^2 \cr
&\quad + \frac{1}{\int_{\mR^3}\mu_\hbar \,\rdp} \lt| \int_{\mR^3}p\eta_\hbar g\sqrt{\mu_\hbar }\,\rdp \rt|^2.
\end{aligned}
\end{align}
We rewrite the last two terms in terms of $h$. Since $g=h\sqrt{\mu_\hbar }$,
\[
\int_{\mR^3}p\eta_\hbar g\sqrt{\mu_\hbar }\,\rdp = \int_{\mR^3}p\eta_\hbar  h\mu_\hbar \,\rdp = -\int_{\mR^3}h\nabla_p\mu_\hbar \,\rdp = \int_{\mR^3}\mu_\hbar \nabla_p h\,\rdp.
\]
Similarly,
\[
\int_{\mR^3}(|p|^2\eta_\hbar -3)g\sqrt{\mu_\hbar }\,\rdp = \int_{\mR^3}(|p|^2\eta_\hbar -3)h\mu_\hbar \,\rdp.
\]
Using $\nabla_p\cdot(p\mu_\hbar )=3\mu_\hbar -|p|^2\eta_\hbar \mu_\hbar $, we get
\[
\int_{\mR^3}(|p|^2\eta_\hbar -3)h\mu_\hbar \,\rdp = -\int_{\mR^3}h\nabla_p\cdot(p\mu_\hbar )\,\rdp = \int_{\mR^3}p\mu_\hbar \cdot\nabla_p h\,\rdp.
\]
Therefore \eqref{projection} becomes
\bq\label{projection-h}
\frac{1}{M}\lal Lg,g\ral_{L^2_p} = -\int_{\mR^3}\mu_\hbar |\nabla_p h|^2\,\rdp + \frac{1}{\int_{\mR^3}|p|^2\mu_\hbar \,\rdp} \lt( \int_{\mR^3}p\mu_\hbar \cdot\nabla_p h\,\rdp \rt)^2 + \frac{1}{\int_{\mR^3}\mu_\hbar \,\rdp} \lt| \int_{\mR^3}\mu_\hbar \nabla_p h\,\rdp \rt|^2.
\eq

Define the vector-valued function
\[
\mcJ := \sqrt{\mu_\hbar }\nabla_p h - \frac{\int_{\mR^3}p\mu_\hbar \cdot\nabla_p h\,\rdp} {\int_{\mR^3}|p|^2\mu_\hbar \,\rdp} p\sqrt{\mu_\hbar } - \frac{\int_{\mR^3}\mu_\hbar \nabla_p h\,\rdp}{\int_{\mR^3}\mu_\hbar \,\rdp}\sqrt{\mu_\hbar }.
\]
Here the last term is understood componentwise. By expanding the square, we obtain
\begin{align*}
\|\mcJ\|_{L^2_p}^2 &= \int_{\mR^3}\mu_\hbar |\nabla_p h|^2\,\rdp + \frac{ \lt( \int_{\mR^3}p\mu_\hbar \cdot\nabla_p h\,\rdp \rt)^2}{\lt(\int_{\mR^3}|p|^2\mu_\hbar \,\rdp\rt)^2} \int_{\mR^3}|p|^2\mu_\hbar \,\rdp + \frac{ \lt| \int_{\mR^3}\mu_\hbar \nabla_p h\,\rdp\rt|^2}{\lt(\int_{\mR^3}\mu_\hbar \,\rdp\rt)^2}\int_{\mR^3}\mu_\hbar \,\rdp \cr
&\quad - \frac{2}{\int_{\mR^3}|p|^2\mu_\hbar \,\rdp}\lt(\int_{\mR^3}p\mu_\hbar \cdot\nabla_p h\,\rdp\rt)^2 - \frac{2}{\int_{\mR^3}\mu_\hbar \,\rdp} \lt|\int_{\mR^3}\mu_\hbar \nabla_p h\,\rdp\rt|^2 \cr
&\quad + \frac{2}{\lt(\int_{\mR^3}|p|^2\mu_\hbar \,\rdp\rt) \lt(\int_{\mR^3}\mu_\hbar \,\rdp\rt)} \lt(\int_{\mR^3}p\mu_\hbar \cdot\nabla_p h\,\rdp\rt) \cdot \lt(\int_{\mR^3}\mu_\hbar \nabla_p h\,\rdp\rt) \int_{\mR^3}p\mu_\hbar \,\rdp.
\end{align*}
The last term vanishes since $\int p\mu_\hbar \,\rdp=0$. Hence
\bq\label{eq:J-iden}
\|\mcJ\|_{L^2_p}^2 = \int_{\mR^3}\mu_\hbar |\nabla_p h|^2\,\rdp - \frac{1}{\int_{\mR^3}|p|^2\mu_\hbar \,\rdp} \lt( \int_{\mR^3}p\mu_\hbar \cdot\nabla_p h\,\rdp \rt)^2 - \frac{1}{\int_{\mR^3}\mu_\hbar \,\rdp} \lt| \int_{\mR^3}\mu_\hbar \nabla_p h\,\rdp \rt|^2.
\eq
Comparing \eqref{projection-h} and \eqref{eq:J-iden}, we get
\bq\label{eq:J-iden-f}
\frac{1}{M}\lal Lg,g\ral_{L^2_p} = -\|\mcJ\|_{L^2_p}^2 \le0.
\eq

\medskip
\noindent
{\bf Step 4. Inclusion $\ker L\subset\mcN$.} Let $g\in\ker L$. Then $\lal Lg,g\ral_{L^2_p} = 0$. By the dissipation identity \eqref{eq:J-iden-f}, we have $\mcJ=0$,
where
\[
\mcJ = \sqrt{\mu_\hbar }\nabla_p h - \frac{\int_{\mR^3}p\mu_\hbar \cdot\nabla_p h\,\rdp} {\int_{\mR^3}|p|^2\mu_\hbar \,\rdp} p\sqrt{\mu_\hbar } - \frac{\int_{\mR^3}\mu_\hbar \nabla_p h\,\rdp} {\int_{\mR^3}\mu_\hbar \,\rdp} \sqrt{\mu_\hbar },
\quad
h=\frac{g}{\sqrt{\mu_\hbar }}.
\]
Thus $\nabla_p h$ is an affine function of $p$, and hence $h=a+b\cdot p+c|p|^2$ for some constants $a,c\in\mR$ and $b\in\mR^3$. Consequently, $g=(a+b\cdot p+c|p|^2)\sqrt{\mu_\hbar }\in\mcN$. This proves $\ker L\subset\mcN$. Since the opposite inclusion was shown in Step 2, we conclude 
\[
\ker L=\mcN.
\]

\medskip
\noindent
{\bf Step 5. Microscopic coercivity.} We first provide a weighted Poincar\'e inequality associated with $\mu_\hbar $. Set
\[
\calM_0(p) := e^{-\lt(\frac{|p|^2}{2}+\theta_0\rt)} = e^{-U_0(p)},
\quad
U_0(p):=\frac{|p|^2}{2}+\theta_0.
\]
Then $\nabla_p^2U_0 = \mI_3$. Moreover, by \eqref{eq:mu0-comp}, the weight $\mu_\hbar $ is uniformly comparable
to $\calM_0$. Thus, the weighted Poincar\'e inequality in \cite[Corollary 3.4]{Lem00} yields
\bq\label{eq:wei-P-mu0}
\int_{\mR^3}|\psi|^2\mu_\hbar \,\rdp \le C \int_{\mR^3}|\nabla_p\psi|^2\mu_\hbar \,\rdp
\eq
for every $\psi$ satisfying $\int_{\mR^3}\psi\mu_\hbar \,\rdp=0$. 

Since $L$ is self-adjoint and $\ker L=\mcN$, we have
\[
\lal Lg,g\ral_{L^2_p} = \lal L(I-P)g,(I-P)g\ral_{L^2_p}.
\]
We consider the following semi-norm:
\[
\|g\|_K^2 := \lal -L_0g,g\ral_{L^2_p} = M\int_{\mR^3}\mu_\hbar  \lt| \nabla_p\lt(\frac{g}{\sqrt{\mu_\hbar }}\rt) \rt|^2\,\rdp.
\]
We first claim that there exists $\delta>0$ such that
\bq\label{coercivity 1}
\lal -Lg,g\ral_{L^2_p} \ge \delta\|(I-P)g\|_K^2.
\eq
Suppose the contrary. Then there exists a sequence $g_n$ such that, setting
\[
h_n:=\frac{(I-P)g_n}{\|(I-P)g_n\|_K},
\]
we have
\[
h_n\in\mcN^\perp,
\quad
\|h_n\|_K=1,
\quad
\lal -Lh_n,h_n\ral_{L^2_p}\to0.
\]
The orthogonality $h_n\in\mcN^\perp$ means
\bq\label{orthogonal}
\int_{\mR^3}h_n\sqrt{\mu_\hbar }\,\rdp = \int_{\mR^3}p_i h_n\sqrt{\mu_\hbar }\,\rdp = \int_{\mR^3}(|p|^2-\ph)h_n\sqrt{\mu_\hbar }\,\rdp = 0
\eq
for $i=1,2,3$.

We apply \eqref{eq:wei-P-mu0} to $\psi_n:=\frac{h_n}{\sqrt{\mu_\hbar }}$ with weight $\mu_\hbar $. Since
\[
\int_{\mR^3}\psi_n\mu_\hbar \,\rdp = \int_{\mR^3}h_n\sqrt{\mu_\hbar }\,\rdp = 0,
\]
there exists $C>0$ such that
\[
\|h_n\|_{L^2_p}^2 = \int_{\mR^3}|\psi_n|^2\mu_\hbar \,\rdp \le C\int_{\mR^3}|\nabla_p\psi_n|^2\mu_\hbar \,\rdp \le C\|h_n\|_K^2 = C.
\]
Thus, up to a subsequence, $h_n$ converges weakly in $L^2_p$ and weakly with respect to the $K$-norm to some $h_0$. Moreover, since $P_1$ is a finite-rank operator, the weak convergence of $h_n$ implies $P_1h_n\to P_1h_0$ strongly in $L^2_p$. Consequently,
\[
\lal P_1h_n,h_n\ral_{L^2_p} \to \lal P_1h_0,h_0\ral_{L^2_p}.
\]
Since
\[
\lal -Lh_n,h_n\ral_{L^2_p} = \lal -L_0h_n,h_n\ral_{L^2_p} - \lal P_1h_n,h_n\ral_{L^2_p} = 1-\lal P_1h_n,h_n\ral_{L^2_p},
\]
we obtain $\lal P_1h_0,h_0\ral_{L^2_p}=1$. On the other hand, by lower semicontinuity, $\|h_0\|_K^2\le1$. Hence,
\[
\lal -Lh_0,h_0\ral_{L^2_p} = \|h_0\|_K^2-\lal P_1h_0,h_0\ral_{L^2_p} \le0.
\]
By \eqref{eq:J-iden-f}, $\lal -Lh_0,h_0\ral_{L^2_p}\ge0$, and thus $\lal -Lh_0,h_0\ral_{L^2_p}=0$, that is, $h_0\in\ker L=\mcN$. Passing to the limit in the orthogonality relations \eqref{orthogonal}, we also have $h_0\in\mcN^\perp$. Hence $h_0=0$. This contradicts $\lal P_1h_0,h_0\ral_{L^2_p}=1$. Therefore \eqref{coercivity 1} holds.

We next upgrade \eqref{coercivity 1} to the $D$-norm. Applying \eqref{eq:wei-P-mu0} to $\psi:=\frac{(I-P)g}{\sqrt{\mu_\hbar }}$,
we note that
\[
\int_{\mR^3}\psi\mu_\hbar \,\rdp = \int_{\mR^3}(I-P)g\sqrt{\mu_\hbar }\,\rdp = 0,
\]
since $\sqrt{\mu_\hbar }\in\mcN$. Hence there exists a constant $C>0$ such that
\bq\label{ineq}
\|(I-P)g\|_{L^2_p}^2 \le C\|(I-P)g\|_K^2.
\eq
Combining \eqref{coercivity 1} and \eqref{ineq}, we obtain
\bq\label{eq:L-L2-coer}
\lal -Lg,g\ral_{L^2_p} \ge C\|(I-P)g\|_{L^2_p}^2.
\eq

We also derive an explicit estimate involving the $D$-norm. From the formula
for $L_0$, we have
\begin{align}\label{ineq 2}
\begin{aligned}
\lal -L_0h,h\ral_{L^2_p} &= M\int_{\mR^3} \lt| \nabla_p h+\frac12p\eta_\hbar h \rt|^2\,\rdp \cr
&= M\int_{\mR^3} |\nabla_p h|^2 + \frac14|p\eta_\hbar h|^2 + p\eta_\hbar  h\cdot\nabla_p h \,\rdp \cr
&\ge \frac{M}{4}|h|_D^2 - C\|h\|_{L^2_p}^2.
\end{aligned}
\end{align}
Here, the last inequality follows from the boundedness of $\eta_\hbar $ and its derivatives, after integrating the cross term by parts. Moreover,
\bq\label{ineq 3}
\lal P_1h,h\ral_{L^2_p} = M\lt[ \frac{ \lt(\int_{\mR^3}(|p|^2\eta_\hbar -3)h\sqrt{\mu_\hbar }\,\rdp\rt)^2} {\int_{\mR^3}|p|^2\mu_\hbar \,\rdp} + \frac{ \lt|\int_{\mR^3}p\eta_\hbar h\sqrt{\mu_\hbar }\,\rdp\rt|^2}{\int_{\mR^3}\mu_\hbar \,\rdp}\rt] \le C\|h\|_{L^2_p}^2.
\eq
Taking $h=(I-P)g$, we combine \eqref{ineq 2} and \eqref{ineq 3} to obtain
\begin{align*} 
\lal -Lg,g\ral_{L^2_p} &= \lal -L_0(I-P)g,(I-P)g\ral_{L^2_p} - \lal P_1(I-P)g,(I-P)g\ral_{L^2_p} \cr
&\ge \frac{M}{4}|(I-P)g|_D^2 - C\|(I-P)g\|_{L^2_p}^2.
\end{align*}
Finally, for sufficiently small $\e_0>0$, using \eqref{eq:L-L2-coer} we write
\begin{align*}
\lal -Lg,g\ral_{L^2_p} &= (1-\e_0)\lal -Lg,g\ral_{L^2_p} + \e_0\lal -Lg,g\ral_{L^2_p} \cr
&\ge (1-\e_0)C\|(I-P)g\|_{L^2_p}^2 + \e_0 \lt\{ \frac{M}{4}|(I-P)g|_D^2 - C\|(I-P)g\|_{L^2_p}^2 \rt\}.
\end{align*}
Choosing $\e_0>0$ sufficiently small so that the coefficient of $\|(I-P)g\|_{L^2_p}^2$ remains nonnegative, we obtain
\[
\lal -Lg,g\ral_{L^2_p} \ge \lambda_0 |(I-P)g|_D^2
\]
for some $\lambda_0>0$. This proves \eqref{eq:L-coercivity}. The nonpositivity of $L$ follows immediately, see also \eqref{eq:J-iden-f}.
\end{proof}

The previous lemma gives the basic microscopic coercivity of the linearized operator. In the higher-order energy estimates, however, $p$-derivatives fall on the coefficients of $L$ and on the macroscopic projection $P$. We therefore provide the following derivative version of the coercivity estimate.

\begin{lemma}
Let $\beta$ be a multi-index with $|\beta|\ge1$. Then there exist positive constants $\lambda_1$ and $C_\beta$ such that
\bq\label{eq:L-p-der-coer}
\lt\lal \pa_p^\beta Lg, \pa_p^\beta g \rt\ral_{L^2_p} \le -\lambda_1 \lt| \pa_p^\beta(I-P)g \rt|_D^2 + C_\beta \lt( \|g\|_{L^2_p}^2 + \sum_{1\le|\nu|\le|\beta|} \lt| \pa_p^{\beta-\nu}(I-P)g \rt|_D^2 \rt).
\eq
\end{lemma}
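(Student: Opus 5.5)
We decompose $g=Pg+(I-P)g$ and use linearity to write $\pa_p^\beta Lg=\pa_p^\beta L(I-P)g$, since $L(Pg)=0$ by Lemma \ref{lem:Prop-L}. Setting $h:=(I-P)g$, we have
\[
\lal \pa_p^\beta Lg,\pa_p^\beta g\ral_{L^2_p}
=\lal \pa_p^\beta Lh,\pa_p^\beta h\ral_{L^2_p}+\lal \pa_p^\beta Lh,\pa_p^\beta Pg\ral_{L^2_p}.
\]
The second term is harmless. Integrating by parts $|\beta|$ times moves all derivatives onto $\pa_p^\beta Pg$. This is a finite combination of $\lal g,e_i\ral_{L^2_p}$ times functions of type $\mcP_{\varphi,|\beta|}$, so by \eqref{eq:P-coeff-b} it is bounded by $C\|g\|_{L^2_p}$ in every weighted Sobolev norm. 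Since $L$ is self-adjoint, this term equals $\lal h, L(\pa_p^{\beta}\pa_p^\beta Pg)\ral$ up to sign, and is bounded by $C\|g\|_{L^2_p}\|h\|_{L^2_p}\le C\|g\|^2_{L^2_p}$.

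For the main term we split $L=L_0+P_1$. The finite-rank part is direct. We have $\pa_p^\beta P_1h=\sum_i\lal h,v_i\ral\pa_p^\beta v_i$, and $\pa_p^\beta v_i$ is a bounded, rapidly decaying coefficient. Hence $|\lal\pa_p^\beta P_1h,\pa_p^\beta h\ral|\le C\|h\|_{L^2_p}\,|\pa^\beta_p h|_{L^2_p}$. Integrating by parts once more gives the bound $C\|h\|_{L^2_p}\|\pa_p^{\beta-\nu}h\|_{L^2_p}$ with $|\nu|=1$. Both factors are controlled by $\|g\|_{L^2_p}$ and $|\pa_p^{\beta-\nu}h|_D$, using that $|\cdot|_D$ dominates $L^2_p$.

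For $L_0$ we use the explicit form
\[
L_0h=M\big[\Delta_p h+V(p)h\big],\qquad V:=\tfrac32\eta_\hbar-|p|^2\big(\tfrac14\eta_\hbar^2+\hbar\kappa\mu_\hbar\big).
\]
Then $\pa_p^\beta L_0h=L_0\pa_p^\beta h+M\sum_{0<\nu\le\beta}\binom{\beta}{\nu}\pa_p^\nu V\,\pa_p^{\beta-\nu}h$. The commutator-free piece satisfies, by \eqref{ineq 2},
\[
\lal -L_0\pa_p^\beta h,\pa_p^\beta h\ral\ge \tfrac M4|\pa_p^\beta h|_D^2-C\|\pa_p^\beta h\|_{L^2_p}^2.
\]
The lower-order term $\|\pa_p^\beta h\|_{L^2_p}^2$ is bounded by $\|\nabla_p\pa_p^{\beta-\nu}h\|^2$ for some $|\nu|=1$, hence by $|\pa_p^{\beta-\nu}h|_D^2$, so it is admissible on the right-hand side.

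The commutator terms are the main obstacle, because $V$ grows like $|p|^2$. We write $\pa_p^\nu V$ for $|\nu|=1$ as $-\tfrac12 p_j\eta_\hbar^2$ plus $\mcP$-type terms. More generally, $|\pa_p^\nu V|\le C(1+|p|\eta_\hbar)$: derivatives of $\eta_\hbar$ and $\mu_\hbar$ produce exponentially decaying coefficients, and $\eta_\hbar$ is bounded above and below. Hence
\[
|\lal \pa_p^\nu V\,\pa_p^{\beta-\nu}h,\pa_p^\beta h\ral|\le C\big(\|p\eta_\hbar\pa_p^{\beta-\nu}h\|+\|\pa_p^{\beta-\nu}h\|\big)\|\pa_p^\beta h\|_{L^2_p}
\le \epsilon|\pa_p^\beta h|_D^2+C_\epsilon|\pa_p^{\beta-\nu}h|_D^2 .
\]
Absorbing the $\epsilon$-terms, with $\lambda_1=M/8$, yields \eqref{eq:L-p-der-coer}. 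We must also check that $\pa_p^\beta h=\pa_p^\beta(I-P)g$ is exactly the quantity in the statement, which holds by definition. The $|p|$-growth in the commutator is exactly what forces the $D$-norm rather than the $L^2$ norm on the lower-order terms $\pa_p^{\beta-\nu}(I-P)g$.
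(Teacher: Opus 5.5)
Your proof is correct. The overall skeleton matches the paper's: set $h=(I-P)g$, split $L=L_0+P_1$, commute $\pa_p^\beta$ with $L_0$ through the potential $V$, bound the commutator using $|\pa_p^\nu V|\le C(1+|p|\eta_\hbar)$, and treat $P_1$ as a trivial finite-rank term.

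The genuine difference is in the principal term $\lal L_0\pa_p^\beta h,\pa_p^\beta h\ral_{L^2_p}$.
\begin{itemize}
\item The paper rewrites it as $\lal L\pa_p^\beta h,\pa_p^\beta h\ral_{L^2_p}-\lal P_1\pa_p^\beta h,\pa_p^\beta h\ral_{L^2_p}$. It then applies the full microscopic coercivity \eqref{eq:L-coercivity} to $\pa_p^\beta h$. Since $\pa_p^\beta h\notin\mcN^\perp$ in general, it must separately show $|P\pa_p^\beta h|_D\le C\|g\|_{L^2_p}$, and it drops the $P_1$ term by nonnegativity. This yields the principal part with only an $\|g\|_{L^2_p}^2$ error.
\item You instead use only the lower bound \eqref{ineq 2} for $L_0$. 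You absorb the resulting $C\|\pa_p^\beta h\|_{L^2_p}^2$ into $|\pa_p^{\beta-\mfe_j}h|_D^2$ via $\|\pa_{p_j}\phi\|_{L^2_p}\le|\phi|_D$.
\end{itemize}

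Your step is legitimate precisely because the statement already allows lower-order $D$-norms on the right-hand side, and the $V$-commutator generates such terms in both proofs anyway. So nothing is lost. What your route buys is that it never uses the compactness/contradiction argument behind \eqref{eq:L-coercivity}, nor $P_1\ge0$. It needs only $\mcN\subset\ker L$, self-adjointness of $L$, and \eqref{ineq 2}, which makes it more elementary and self-contained.

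Your handling of the cross term with $\pa_p^\beta Pg$ is also somewhat cleaner. You move all derivatives onto $Pg$ and use self-adjointness of the full $L$, getting a bound $C\|g\|_{L^2_p}^2$ in one step. The paper instead spreads the $Pg$ contributions over $I_1$ (via the divergence form of $L_0$), $I_3$ and $I_4$. Each of those is paired with $|\pa_p^\beta h|_D$ and closed by Young's inequality.
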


\begin{proof}
Since $L(Pg)=0$, we have $\pa_p^\beta Lg = \pa_p^\beta L((I-P)g)$. Set $h := (I-P)g$. Recall that $L=L_0+P_1$,
where
\[
L_0 h = M\lt[ \Delta_p h + h\lt\{ \frac32\eta_\hbar  - |p|^2 \lt( \frac14\eta_\hbar ^2+\hbar\kappa\mu_\hbar  \rt) \rt\}\rt].
\]
For notational simplicity, set
\[
V(p) := \frac32\eta_\hbar  - |p|^2 \lt( \frac14\eta_\hbar ^2+\hbar\kappa\mu_\hbar  \rt).
\]
Then
\[
\pa_p^\beta L_0h = L_0(\pa_p^\beta h) + M \sum_{1\le|\nu|\le|\beta|} C_{\beta,\nu} \pa_p^{\beta-\nu}h\, \pa_p^\nu V.
\]
Using $\pa_p^\beta g = \pa_p^\beta h + \pa_p^\beta Pg$, we decompose
\begin{align*} 
\lt\lal \pa_p^\beta Lg, \pa_p^\beta g \rt\ral_{L^2_p} &= \lt\lal L_0(\pa_p^\beta h), \pa_p^\beta Pg \rt\ral_{L^2_p} + \lt\lal L_0(\pa_p^\beta h), \pa_p^\beta h \rt\ral_{L^2_p} \cr
&\quad + M \int_{\mR^3} \sum_{1\le|\nu|\le|\beta|} C_{\beta,\nu} \pa_p^{\beta-\nu}h\, \pa_p^\nu V\, \pa_p^\beta g \,\rdp + \lt\lal \pa_p^\beta P_1h, \pa_p^\beta g \rt\ral_{L^2_p} \cr
&=: I_1+I_2+I_3+I_4.
\end{align*}

\medskip
\noindent
{\it Estimate of $I_1$.}
Using the divergence form of $L_0$, we have
\begin{align*}
I_1 &= M \int_{\mR^3} \frac{\pa_p^\beta Pg}{\sqrt{\mu_\hbar }} \nabla_p\cdot \lt\{ \mu_\hbar  \nabla_p \lt( \frac{\pa_p^\beta h}{\sqrt{\mu_\hbar }} \rt) \rt\} \,\rdp \cr
&= - M \int_{\mR^3} \lt\{ \nabla_p\pa_p^\beta(Pg) + \frac12p\eta_\hbar \pa_p^\beta(Pg) \rt\} \cdot \lt\{ \nabla_p\pa_p^\beta h + \frac12p\eta_\hbar \pa_p^\beta h \rt\} \,\rdp.
\end{align*}
Since $Pg$ is a finite linear combination of the basis functions of $\mcN$, the coefficient bound \eqref{eq:P-coeff-b} gives
\[
\lt\| \nabla_p\pa_p^\beta(Pg) + \frac12p\eta_\hbar \pa_p^\beta(Pg) \rt\|_{L^2_p} \le C_\beta \|g\|_{L^2_p}.
\]
Therefore, for any $\eta>0$,
\bq\label{eq:I1-est}
|I_1| \le C_\beta \|g\|_{L^2_p} \lt| \pa_p^\beta h \rt|_D \le \eta \lt| \pa_p^\beta h \rt|_D^2 + C_{\beta,\eta} \|g\|_{L^2_p}^2.
\eq

\medskip
\noindent
{\it Estimate of $I_2$.}
We first apply the microscopic coercivity obtained in Lemma \ref{lem:Prop-L} to $\pa_p^\beta h$. Since $\pa_p^\beta h$ is not necessarily orthogonal to $\mcN$, we have
\[
\lt\lal L(\pa_p^\beta h), \pa_p^\beta h \rt\ral_{L^2_p} \le -\lambda_0 \lt| (I-P)\pa_p^\beta h \rt|_D^2.
\]
The macroscopic part of $\pa_p^\beta h$ is lower order. Indeed, for each basis function $e_i$ of $\mcN$, $\lt\lal \pa_p^\beta h, e_i \rt\ral_{L^2_p} = (-1)^{|\beta|} \lt\lal h, \pa_p^\beta e_i \rt\ral_{L^2_p}$. By the coefficient bound \eqref{eq:P-coeff-b}, we get  $\lt| \lt\lal \pa_p^\beta h, e_i \rt\ral_{L^2_p} \rt| \le C_\beta \|h\|_{L^2_p} \le C_\beta \|g\|_{L^2_p}$. This gives
\[
\lt| P\pa_p^\beta h \rt|_D^2 \le C_\beta \|g\|_{L^2_p}^2.
\]
Consequently,
\[
\lt\lal L(\pa_p^\beta h), \pa_p^\beta h \rt\ral_{L^2_p} \le -\lambda_2 \lt| \pa_p^\beta h \rt|_D^2 + C_\beta \|g\|_{L^2_p}^2
\]
for some $\lambda_2>0$.

Since
\[
I_2 = \lt\lal L_0(\pa_p^\beta h), \pa_p^\beta h \rt\ral_{L^2_p} = \lt\lal L(\pa_p^\beta h), \pa_p^\beta h \rt\ral_{L^2_p} - \lt\lal P_1(\pa_p^\beta h), \pa_p^\beta h \rt\ral_{L^2_p},
\]
and since $P_1$ is nonnegative, we obtain
\bq\label{eq:I2-est}
I_2 \le -\lambda_2 \lt| \pa_p^\beta h \rt|_D^2 + C_\beta \|g\|_{L^2_p}^2.
\eq

\medskip
\noindent
{\it Estimate of $I_3$.} By the coefficient bound \eqref{eq:P-coeff-b} and the definition of the $D$-norm, we have
\[
\lt\| \pa_p^{\beta-\nu}h\, \pa_p^\nu V \rt\|_{L^2_p} \le C_\beta \lt| \pa_p^{\beta-\nu}h \rt|_D
\]
for every multi-index $\nu$ satisfying $1\le|\nu|\le|\beta|$. Moreover,
\bq\label{eq:pb-g-L2-b}
\|\pa_p^\beta g\|_{L^2_p} \le \|\pa_p^\beta h\|_{L^2_p} + \|\pa_p^\beta Pg\|_{L^2_p} \le C \lt| \pa_p^\beta h \rt|_D + C_\beta \|g\|_{L^2_p}.
\eq
Therefore,
\[
|I_3| \le C_\beta \sum_{1\le|\nu|\le|\beta|} \lt| \pa_p^{\beta-\nu}h \rt|_D \lt( \lt| \pa_p^\beta h \rt|_D + \|g\|_{L^2_p} \rt).
\]
Applying Young's inequality, for any $\eta>0$, we obtain
\bq\label{eq:I3-est}
|I_3| \le \eta \lt| \pa_p^\beta h \rt|_D^2 + C_{\beta,\eta} \lt( \|g\|_{L^2_p}^2 + \sum_{1\le|\nu|\le|\beta|} \lt| \pa_p^{\beta-\nu}h \rt|_D^2 \rt).
\eq

\medskip
\noindent
{\it Estimate of $I_4$.} By the definition of $P_1$, we find
\[
\pa_p^\beta P_1h = \sum_{i=1}^4 \lt\lal h,v_i \rt\ral_{L^2_p} \pa_p^\beta v_i.
\]
Since the functions $\pa_p^\beta v_i$ are controlled by \eqref{eq:P-coeff-b}, we have $\|\pa_p^\beta P_1h\|_{L^2_p} \le C_\beta \|h\|_{L^2_p} \le C_\beta \|g\|_{L^2_p}$. Using \eqref{eq:pb-g-L2-b}, we obtain
\bq\label{eq:I4-est}
|I_4|  \le C_\beta \|g\|_{L^2_p} \|\pa_p^\beta g\|_{L^2_p}  \le C_\beta \|g\|_{L^2_p} \lt( \lt| \pa_p^\beta h \rt|_D + \|g\|_{L^2_p} \rt)  \le \eta \lt| \pa_p^\beta h \rt|_D^2 + C_{\beta,\eta} \|g\|_{L^2_p}^2.
\eq

Combining \eqref{eq:I1-est}, \eqref{eq:I2-est}, \eqref{eq:I3-est}, and \eqref{eq:I4-est}, and choosing $\eta>0$ sufficiently small, we obtain
\[
\lt\lal \pa_p^\beta Lg, \pa_p^\beta g \rt\ral_{L^2_p} \le -\lambda_1 \lt| \pa_p^\beta h \rt|_D^2 + C_\beta \lt( \|g\|_{L^2_p}^2 + \sum_{1\le|\nu|\le|\beta|} \lt| \pa_p^{\beta-\nu}h \rt|_D^2 \rt)
\]
for some $\lambda_1>0$. Since $h=(I-P)g$, this gives \eqref{eq:L-p-der-coer}.
\end{proof}

%
%
%
%
%
%
%
%
%
%
\section{Nonlinear estimates and local well-posedness}\label{sec:non-lwp}
%
%
%
%
%
%
%
%
%
%
\subsection{Estimates for the nonlinear operator}
\label{ssec:non-est}

In this subsection, we collect estimates for the nonlinear operator $\Gamma$ appearing in the perturbative equation \eqref{eq:linearized}. These estimates will be used both in the local well-posedness argument and later in the global energy estimates and large-time analysis. We first recall several Sobolev and Moser-type inequalities.

\begin{lemma} \label{Sobolev ineq}
Let $s\ge4$. The following estimates hold.
\begin{enumerate}
    \item For $d\ge3$ and $f\in H^{[\frac d2]+1}(\mR^d)$, we have
    \[
    \|f\|_{L^\infty}     \le    C\|\nabla f\|_{H^{[\frac d2]}}.
    \]
    \item If $f,h\in H^s\cap L^\infty$, then for $|\gamma|\le s$,
    \[
    \|\pa^\gamma(fh)\|_{L^2}    \le    C\|f\|_{L^\infty}\|\pa^\gamma h\|_{L^2}    +    C\|\pa^\gamma f\|_{L^2}\|h\|_{L^\infty}.
    \]
    \item Let $|\gamma|\in\mN$, $1\le q\le\infty$, and let $H\in C^{|\gamma|}(B(0,\|f\|_{L^\infty}))$. If $f\in L^\infty\cap W^{|\gamma|,q}$, then there exists a constant $C=C(|\gamma|,q,H)$ such that
    \[
    \|\pa^\gamma H(f)\|_{L^q}    \le    C(1+\|f\|_{L^\infty}^{|\gamma|-1})\|\pa^\gamma f\|_{L^q}.
    \]
    \item If $f \in H^s$, then for any multi-indices $\alpha$ and $\gamma$ satisfying $\gamma\le\alpha$ and $|\alpha|\le s-1$, 
\bq\label{GN ineq 1}
\|\pa_x^{\alpha-\gamma} f\|_{L^3_x} \|\pa_x^\gamma f\|_{L^6_x}    \le    C\|f\|_{H^s}\|\pa_x^\alpha\nabla_x f\|_{L^2_x}.
\eq

    \item  If $f \in H^s$, then for any multi-indices $\alpha$ and $\gamma$ satisfying $\gamma\le\alpha$ and $|\alpha|\le s$,  
\bq\label{GN ineq 2}
\|\pa_x^\gamma f\|_{L^3_x}\|\pa_x^{\alpha-\gamma} f\|_{L^6_x}    \le    C\|f\|_{H^s}\|\pa_x^\alpha f\|_{L^2_x}.
\eq
\end{enumerate}
\end{lemma}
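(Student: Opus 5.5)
All five items are standard Sobolev, Moser and Gagliardo--Nirenberg type inequalities. I would prove each directly by Fourier analysis or by Gagliardo--Nirenberg interpolation, treating $f$ as a function of the spatial variable only (for (4)--(5) this means $f=f(x)$, the $p$-variable being a parameter).

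\emph{Item (1).} I would use the Fourier inversion bound $\|f\|_{L^\infty}\le C\|\hat f\|_{L^1}$ and split into $|\xi|\le1$ and $|\xi|>1$. On $|\xi|\le1$, Cauchy--Schwarz gives the bound
\[
\Big(\int_{|\xi|\le1}|\xi|^{-2}\,\rd\xi\Big)^{1/2}\|\nabla f\|_{L^2},
\]
and the integral is finite exactly because $d\ge3$. On $|\xi|>1$, I would write $|\hat f|=|\xi|^{-(k+1)}\,|\xi|^{k+1}|\hat f|$ with $k=[\frac d2]$. Since $2(k+1)>d$, the factor $|\xi|^{-2(k+1)}$ is integrable there, and the remaining factor is controlled by $\|\nabla^{k+1}f\|_{L^2}\le\|\nabla f\|_{H^{k}}$.

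\emph{Items (2) and (3).} These are the classical Moser estimates.
\begin{itemize}
\item For (2), I would expand $\pa^\gamma(fh)$ by Leibniz. Each term $\pa^{\beta}f\,\pa^{\gamma-\beta}h$ is bounded by H\"older in $L^{2m/|\beta|}\times L^{2m/(m-|\beta|)}$, where $m=|\gamma|$. Then I would apply the Gagliardo--Nirenberg interpolation
\[
\|\pa^\beta f\|_{L^{2m/|\beta|}}\le C\|f\|_{L^\infty}^{1-|\beta|/m}\|\nabla^m f\|_{L^2}^{|\beta|/m},
\]
the analogous bound for $h$, and finally Young's inequality.
\item For (3), I would use the Fa\`a di Bruno formula. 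It writes $\pa^\gamma H(f)$ as a sum of $H^{(j)}(f)\prod_{i=1}^j\pa^{\gamma_i}f$ with $\sum\gamma_i=\gamma$. The factor $H^{(j)}(f)$ is bounded using $H\in C^{|\gamma|}$ on $B(0,\|f\|_{L^\infty})$. Each product is bounded in $L^q$ by the same $L^\infty$--$W^{m,q}$ interpolation, which produces the factor $\|f\|_{L^\infty}^{j-1}\|\pa^\gamma f\|_{L^q}$, and $j\le|\gamma|$.
\end{itemize}

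\emph{Items (4) and (5).} In three dimensions I would use the homogeneous embeddings $\dot H^{1/2}\hookrightarrow L^3$ and $\dot H^1\hookrightarrow L^6$.
\begin{itemize}
\item For (4) when $\gamma=\alpha$: $\|\pa^\alpha f\|_{L^6}\le C\|\nabla\pa^\alpha f\|_{L^2}$ directly, and $\|f\|_{L^3}\le C\|f\|_{H^1}\le C\|f\|_{H^s}$.
\item For (4) when $\gamma\ne\alpha$: set $m=|\alpha|+1$, $i=|\alpha-\gamma|$, $j=|\gamma|$, so $i+j=m-1$. I would interpolate each factor between a low-order homogeneous norm $\dot H^{l}$ and $\dot H^{m}$:
\[
\|\nabla^j f\|_{L^6}\lesssim\|f\|_{\dot H^{l}}^{1-a}\|\nabla^m f\|_{L^2}^{a},\quad a=\frac{j+1-l}{m-l},
\]
\[
\|\nabla^i f\|_{L^3}\lesssim\|f\|_{\dot H^{l}}^{1-a'}\|\nabla^m f\|_{L^2}^{a'},\quad a'=\frac{i+\frac12-l}{m-l}.
\]
With $i+j=m-1$ we get $a+a'=\frac{m+\frac12-2l}{m-l}$, which equals $1$ exactly when $l=\frac12$. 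The product is then $\|f\|_{\dot H^{1/2}}\|\nabla^m f\|_{L^2}\le C\|f\|_{H^s}\|\nabla\pa^\alpha f\|_{L^2}$. The derivative count $m\le s$ keeps every intermediate norm controlled by $\|f\|_{H^s}$.
\item Item (5) follows the same scheme with target order $m=|\alpha|\le s$. Here $i+j=m$ and the $L^3$ and $L^6$ roles are exchanged, so the exponents sum to one with a low endpoint of order $l=\frac32$ in this bookkeeping.
\end{itemize}

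\emph{Main obstacle.} The only delicate point is the exponent bookkeeping in (4)--(5). For small $i$ or $j$ the required condition $0\le a,a'\le1$ can fail, e.g.\ when $i+\frac12<l$. In those edge cases I would bound the low-order factor directly by $\|f\|_{H^s}$, using Sobolev embedding with one derivative to spare. The other factor is then estimated by the top-order norm, with interpolation only against $L^2$, and I would check case by case that the resulting powers still combine to give exactly one power of the top-order norm.
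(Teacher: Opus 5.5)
\textbf{The gap is in the endpoint cases $\gamma=0$ and $\gamma=\alpha$ of (5)}, which you defer to a ``case by case'' check. That check cannot close. For $\gamma=0$ the factor $\|\partial^\alpha f\|_{L^6}$ has the scaling of $\dot H^{|\alpha|+1}$. For $\gamma=\alpha$ the factor $\|\partial^\alpha f\|_{L^3}$ has the scaling of $\dot H^{|\alpha|+1/2}$. Both lie strictly above the order of $\|\partial^\alpha f\|_{L^2}$, so no interpolation between $L^2$ and the top-order norm reaches them: one interpolation exponent exceeds $1$ and the other is negative. This is not a bookkeeping issue, because the inequality is false there.
\begin{itemize}
\item For $|\alpha|=s$, $f\in H^s$ does not give $\partial^\alpha f\in L^6$ or $L^3$. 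For example, take $f=\chi(x)|x|^{s-5/4}$ with $\chi$ a smooth cutoff: then $\partial^\alpha f\sim|x|^{-5/4}$ near $0$, which is in $L^2$ but not in $L^3$ or $L^6$. So the left side is infinite while the right side is finite.
\item Below top order it fails as well. For $|\alpha|=s-1$ and $\gamma=0$, consider $f=\phi(x/R)+R^{3/2}\lambda^{3/2-s}\psi(\lambda x)$ with $\lambda=R^{1/2+\delta}$, $0<\delta<1$, and $\phi,\psi$ fixed nonzero Schwartz functions. Then the left side is $\gtrsim R^{5/2}$ while the right side is $\lesssim R^{5/2-\delta}$ as $R\to\infty$.
\end{itemize}
The paper's own proof has the same defect. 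It reduces ``by symmetry'' to $1\le|\gamma|\le[\frac{|\alpha|}2]$, although the inequality is not symmetric under $\gamma\leftrightarrow\alpha-\gamma$ (the $L^3$ and $L^6$ factors play different roles). It also calls the case $|\gamma|=0$ ``immediate from Sobolev embedding'', which it is not. The correct statement restricts (5) to $0<\gamma<\alpha$. In the applications, e.g.\ the top-order case of Lemma \ref{lem:Nu-NT-est}, the two endpoint products should instead be handled by an $L^\infty\times L^2$ pairing, such as $\|f\|_{L^\infty}\|\partial^\alpha f\|_{L^2}\le C\|f\|_{H^2}\|\partial^\alpha f\|_{L^2}$, together with (3) for the composed factor.

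Apart from this, your plan is sound and in places cleaner than the paper's.
\begin{itemize}
\item Items (1)--(3) are only cited in the paper (Sobolev embedding and Majda's Moser estimates). Your Fourier splitting and Gagliardo--Nirenberg/Fa\`a di Bruno arguments are the standard proofs.
\item In (4), your common low endpoint $l=\frac12$ works for every $\gamma\le\alpha$. With $m=|\alpha|+1$, $j=|\gamma|$, $i=m-1-j$, the exponents are $\frac{j+1/2}{m-1/2}$ and $\frac{i}{m-1/2}$, both in $[0,1]$, so the edge cases you worry about never arise in (4). The paper instead interpolates the $L^6$ factor against $\|f\|_{L^2}$ and the $L^3$ factor against a fractional norm of order $\frac{|\alpha|+1}{2(|\gamma|+1)}$, with the same outcome.
\item In (5), $l=\frac32$ gives exponent $\frac{|\gamma|-1}{|\alpha|-3/2}$ for the $L^3$ factor and $\frac{|\alpha-\gamma|-1/2}{|\alpha|-3/2}$ for the $L^6$ factor. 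These are admissible exactly for $1\le|\gamma|\le|\alpha|-1$, so the whole interior range is covered at once, without any symmetry reduction.
\end{itemize}
One minor point is shared with the paper. Your interpolations control the full gradient $\|\nabla^{|\alpha|+1}f\|_{L^2}$, but the step $\|\nabla^m f\|_{L^2}\le C\|\nabla\partial^\alpha f\|_{L^2}$ is false for a fixed multi-index; take $\alpha=(1,0,0)$ and $f$ slowly varying in $x_1$. The estimates should be read with the full gradient of that order. This is harmless, since they are only used summed over $|\alpha|$.
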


\begin{proof}
The first inequality is the standard Sobolev embedding. The second and third are Moser-type estimates; see, for instance, \cite[Proposition 2.1]{Maj84}. We prove only the interpolation estimates \eqref{GN ineq 1} and \eqref{GN ineq 2}, following the strategy used in \cite{GW12}.

Let $|\alpha|\le s-1$. By the Gagliardo--Nirenberg interpolation inequality,
\[
\|\pa_x^{\alpha-\gamma} f\|_{L^3_x} \|\pa_x^\gamma f\|_{L^6_x} \le C \|\pa_x^{\omega_1}f\|_{L^2_x}^{\frac{|\gamma|+1}{|\alpha|+1}} \|\pa_x^\alpha\nabla_x f\|_{L^2_x}^{1-\frac{|\gamma|+1}{|\alpha|+1}} \|f\|_{L^2_x}^{1-\frac{|\gamma|+1}{|\alpha|+1}} \|\pa_x^\alpha\nabla_x f\|_{L^2_x}^{\frac{|\gamma|+1}{|\alpha|+1}}.
\]
Here $\omega_1$ is determined by
\[
\frac{|\alpha|-|\gamma|}{3}-\frac13 = \lt(\frac{|\omega_1|}{3}-\frac12\rt) \frac{|\gamma|+1}{|\alpha|+1} + \lt(\frac{|\alpha|+1}{3}-\frac12\rt) \lt(1-\frac{|\gamma|+1}{|\alpha|+1}\rt).
\]
Thus
\[
|\omega_1|
=
\frac{|\alpha|+1}{|\gamma|+1}
\lt(\frac32-1\rt)
\le
\frac{|\alpha|+1}{2}
\le s.
\]
This gives $\|\pa_x^{\omega_1}f\|_{L^2_x} + \|f\|_{L^2_x} \le C\|f\|_{H^s}$, and hence
\[
\|\pa_x^\gamma f\,\pa_x^{\alpha-\gamma}f\|_{L^2_x}
\le
C\|f\|_{H^s}\|\pa_x^\alpha\nabla_x f\|_{L^2_x}.
\]
This proves \eqref{GN ineq 1}.

We next prove \eqref{GN ineq 2}. By symmetry, we may assume $1\le|\gamma|\le [\frac{|\alpha|}2]$; the case $|\gamma|=0$ is immediate from Sobolev embedding. By the Gagliardo--Nirenberg interpolation inequality,
\[
\|\pa_x^\gamma f\|_{L^3_x}\|\pa_x^{\alpha-\gamma} f\|_{L^6_x} \le C \|\pa_x^{\omega_2}f\|_{L^2_x}^{1-\frac{|\gamma|-1}{|\alpha|}} \|\pa_x^\alpha f\|_{L^2_x}^{\frac{|\gamma|-1}{|\alpha|}} \|f\|_{L^2_x}^{\frac{|\gamma|-1}{|\alpha|}} \|\pa_x^\alpha f\|_{L^2_x}^{1-\frac{|\gamma|-1}{|\alpha|}}.
\]
Here $\omega_2$ is determined by
\[
\frac{|\gamma|}{3}-\frac13 = \lt(\frac{|\omega_2|}{3}-\frac12\rt) \lt(1-\frac{|\gamma|-1}{|\alpha|}\rt) + \lt(\frac{|\alpha|}{3}-\frac12\rt) \frac{|\gamma|-1}{|\alpha|}.
\]
Thus
\[
|\omega_2| = \frac{3|\alpha|}{2(|\alpha|-|\gamma|+1)} \le 3.
\]
Since $s\ge4$, we have  $\|\pa_x^{\omega_2}f\|_{L^2_x} + \|f\|_{L^2_x} \le C\|f\|_{H^s}$. Consequently,
\[
\|\pa_x^\gamma f\,\pa_x^{\alpha-\gamma}f\|_{L^2_x} \le C\|f\|_{H^s}\|\pa_x^\alpha f\|_{L^2_x}.
\]
This completes the proof.
\end{proof}
The ranges of $\alpha$ in the inequalities \eqref{GN ineq 1} and \eqref{GN ineq 2} overlap, but the two inequalities are used in different places depending on their usage. In particular, \eqref{GN ineq 2} can be used in the top-order case $|\alpha|=s$.

We next state the nonlinear estimates needed below. We first isolate estimates for the nonlinear macroscopic remainders $N_u$ and $N_\Theta$. These bounds will be used repeatedly in the estimates of $\Gamma(g)$. 
\begin{lemma}\label{lem:Nu-NT-est}
Let $T_*>0$ and $s\ge4$ and suppose that $g\in \mcC([0,T_*];H^s(\R^3 \times \R^3))$ satisfies $\|g\|_{H^s}\ll1$. Then, for every multi-index $\alpha$ with $|\alpha|= s$, the nonlinear macroscopic remainders $N_u$ and $N_\Theta$ satisfy
\bq\label{Nu-NT-bound-top}
\|\pa_x^\alpha N_u\|_{L^2_x} + \|\pa_x^\alpha N_\Theta\|_{L^2_x} \le C\|g\|_{H^s}\|\pa_x^\alpha g\|_{L^2_{x,p}}.
\eq
Moreover, if $|\alpha|\le s-1$, then
\bq\label{Nu-NT-bound-low}
\|\pa_x^\alpha N_u\|_{L^2_x} + \|\pa_x^\alpha N_\Theta\|_{L^2_x} \le C\|g\|_{H^s}\|\pa_x^\alpha\nabla_x g\|_{L^2_{x,p}}. 
\eq
\end{lemma}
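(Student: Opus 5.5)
The plan is to view $N_u$ and $N_\Theta$ as smooth functions of finitely many velocity moments of $g$, namely
\[
\mathscr b(g),\ \mathscr c(g),\ \mathscr m_0(g):=\intr \eta_\hbar g\sqrt{\mu_\hbar}\,\rdp,\ \intr \mu_\hbar g^2\,\rdp,\ \intr p\mu_\hbar g^2\,\rdp,\ \intr |p|^2\mu_\hbar g^2\,\rdp,
\]
each of which is a function of $x$ only. Every term of $N_u$ and $N_\Theta$ is at least quadratic in these moments (counting a quadratic moment as quadratic). This follows from \eqref{eq:Nu-def}--\eqref{eq:A-def}: $R_1(g)$ is linear-plus-quadratic in $A(g)$, divided by $\intr\mu_\hbar\,\rdp+A(g)$; each term of $N_u$ multiplies a linear moment by $R_1$ or contains a $g^2$-moment; and $N_\Theta$ is a $g^2$-moment plus $u\cdot R_2$. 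Let $m(x)$ denote the vector of linear moments and $q(x)$ the vector of quadratic moments.

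First I will record pointwise-in-$x$ bounds. By Cauchy--Schwarz in $p$ and \eqref{eq:P-coeff-b}, $|\pa_x^\gamma m|\le C\|\pa_x^\gamma g\|_{L^2_p}$. By Leibniz,
\[
|\pa_x^\gamma q|\le C\sum_{\gamma'\le\gamma}\|\pa_x^{\gamma'}g\|_{L^2_p}\|\pa_x^{\gamma-\gamma'}g\|_{L^2_p},
\]
since the weights $\mu_\hbar,p\mu_\hbar,|p|^2\mu_\hbar$ are bounded. Sobolev embedding (Lemma \ref{Sobolev ineq}(1)) in $x$ applied to $\|g\|_{L^2_p}$ gives $\|m\|_{L^\infty_x}+\|q\|_{L^\infty_x}\le C\|g\|_{H^s}\ll1$. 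Hence $\intr\mu_\hbar\,\rdp+A(g)\ge \frac12\intr\mu_\hbar\,\rdp$, so the rational function $H(y)=1/(\intr\mu_\hbar\,\rdp+y)$ is smooth on the relevant ball. Lemma \ref{Sobolev ineq}(3) then controls $\pa_x^\gamma H(A)$ in $L^2_x$ by $C\|\pa_x^\gamma A\|_{L^2_x}$ for $\gamma\neq0$, and $H(A)$ is bounded in $L^\infty$.

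Next I will differentiate each product by Leibniz and estimate each term in $L^2_x$. A typical term is $\pa_x^{\gamma}X\,\pa_x^{\alpha-\gamma}Y$, where $X,Y$ are moments or composite factors $H(A)$ (higher-degree factors carry an extra $L^\infty$ factor $\le C$). For the top-order estimate \eqref{Nu-NT-bound-top} with $|\alpha|=s$, I put the lower-derivative factor in $L^3_x$ and the other in $L^6_x$, and apply \eqref{GN ineq 2} to $\|g\|_{L^2_p}$ (Minkowski moves the $L^r_x$ norm inside). Extreme terms with $\gamma=0$ or $\gamma=\alpha$ are handled by $L^\infty\times L^2$ together with Lemma \ref{Sobolev ineq}(1). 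This yields $C\|g\|_{H^s}\|\pa_x^\alpha g\|_{L^2_{x,p}}$. For a quadratic moment one extra Leibniz layer in $p$-integrals appears; it is handled by the same Cauchy--Schwarz bound.

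For \eqref{Nu-NT-bound-low} with $|\alpha|\le s-1$, I will instead use \eqref{GN ineq 1}, putting $\pa_x^{\alpha-\gamma}$ in $L^3_x$ and $\pa_x^\gamma$ in $L^6_x$. The $\gamma=0$ endpoint then becomes $\|g\|_{L^3_xL^2_p}\|\pa_x^\alpha g\|_{L^6_xL^2_p}\le C\|g\|_{H^s}\|\pa_x^\alpha\nabla_xg\|_{L^2}$, using $\dot H^1\subset L^6$. Since every term is at least bilinear, there is always a factor available to absorb a derivative this way. This is why the linear parts $\mathscr b/\intr\mu_\hbar\,\rdp$ and $\frac13\mathscr c$ were split off in Lemma \ref{lem:decomp}.

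The main obstacle I expect is bookkeeping the composite terms such as $\mathscr b\,A^2H(A)$ and $(\mathscr b+N_u)\cdot R_2$. Derivatives distribute over three or more factors, one of which is a nonlinear composition. I will handle this by first showing $\|\pa_x^\gamma N_u\|$ type bounds for lower orders inductively, or more simply by treating all factors beyond the two carrying the most derivatives in $L^\infty_x$. This is legitimate because at most one factor can carry more than $s/2\ge2$ derivatives, and the remaining factors with at most $s-2$ derivatives are in $L^\infty_x$ by Sobolev. For $\pa_x^\gamma H(A)$ I will use the Faà di Bruno expansion combined with Lemma \ref{Sobolev ineq}(3), reducing each term to products of derivatives of moments, which are estimated as above.
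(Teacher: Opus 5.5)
Your framework is the paper's: write $N_u,N_\Theta$ through velocity moments, keep the denominator away from zero by smallness, apply Leibniz in $x$, then use \eqref{GN ineq 1} for $|\alpha|\le s-1$ and \eqref{GN ineq 2} for $|\alpha|=s$. It works for genuinely bilinear terms. The gap is your treatment of terms with three or more factors.

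Placing every factor other than the two most differentiated ones in $L^\infty_x$ and bounding it by $C\|g\|_{H^s}$ throws away the derivatives those factors carry. Let $k$ be the number of derivatives landing on the $L^\infty_x$ factors. The remaining bilinear piece then carries only $|\alpha|-k$ derivatives, so the best \eqref{GN ineq 1} can return is a norm of order $|\alpha|-k+1$. That is below the order $|\alpha|+1$ required in \eqref{Nu-NT-bound-low} as soon as $k\ge1$, and below the order $s$ required in \eqref{Nu-NT-bound-top} as soon as $k\ge2$. On $\R^3$ such lower-order norms cannot be traded for top-order ones.

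This already fails for $s=4$. Take $\frac{\mathscr b(g)}{\intr\mu_\hbar\,\rdp}R_1(g)=\frac{\mathscr b(g)\,A(g)}{\intr\mu_\hbar\,\rdp\,(\intr\mu_\hbar\,\rdp+A(g))}$ with $|\alpha|=3$, and the Leibniz term with one derivative on each of the three factors. Your rule leaves $\|\pa_x\mathscr b(g)\|_{L^3_x}\|\pa_x A(g)\|_{L^6_x}$. Test with $g=\epsilon F(\lambda x)\phi(p)$ and $\lambda\to0$: this quantity scales like $\epsilon^2\lambda^{1/2}$, while $\|g\|_{H^4}\|\nabla_x^4 g\|_{L^2_{x,p}}$ scales like $\epsilon^2\lambda$. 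So no bound of the required form holds for this piece, even under the smallness assumption. The four-factor terms of $N_\Theta$ coming from $N_u\cdot R_2$, such as $(\mathscr b\cdot\mathscr b)\,A\,(\intr\mu_\hbar\,\rdp+A)^{-1}$, fail the same way for \eqref{Nu-NT-bound-top} when one derivative falls on each factor.

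Your remark that at most one factor carries more than $s/2$ derivatives only shows the $L^\infty_x$ norms are finite; it says nothing about where the discarded derivatives go. The same defect returns if you expand the composite factor by Fa\`a di Bruno and estimate the products ``as above''.

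What is missing is a derivative-preserving bound for composite blocks in the Lebesgue spaces your Leibniz scheme uses. The paper writes $R_1=\calA(g)-H(\calA(g))$, with $\calA(g)=A(g)/\intr\mu_\hbar\,\rdp$ and $H(z)=z^2/(1+z)$. It then uses $\|\calA(g)\|_{L^\infty_x}<\frac12$ and Lemma \ref{Sobolev ineq}(2)--(3) to get $\|\pa_x^\gamma\calA(g)\|_{L^q_x}+\|\pa_x^\gamma H(\calA(g))\|_{L^q_x}\le C\|\pa_x^\gamma g\|_{L^q_xL^2_p}$ for $q=3,6$; you state the composition bound only in $L^2_x$.

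With this bound every term of $N_u$ becomes $\pa_x^{\alpha-\gamma}(\text{block})\,\pa_x^{\gamma}(\text{block})$, with derivative orders adding up exactly to $|\alpha|$. Only then are \eqref{GN ineq 1} and \eqref{GN ineq 2} applied. For $N_\Theta$, take $\frac{\mathscr b(g)}{\intr\mu_\hbar\,\rdp}+N_u$ and $R_2$ as the two blocks; this needs the $N_u$ bound in the same $L^q_x$ form, and is the precise version of your ``inductive'' alternative. Equivalently, any extra factor that carries derivatives must be interpolated by Gagliardo--Nirenberg between $L^\infty_x$ and the top-order norm, not bounded crudely by $\|g\|_{H^s}$.
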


\begin{proof}
We only give the proof for $N_u$, since $N_\Theta$ is estimated in the same way. Recall from Lemma \ref{lem:decomp} that
\[
N_u = - \frac{\intr p\eta_\hbar g\sqrt{\mu_\hbar }\,\rdp}{\intr\mu_\hbar \,\rdp}R_1(g) + \frac{\hbar\kappa\intr p\mu_\hbar  g^2\,\rdp}{\intr\mu_\hbar \,\rdp} \lt(1-R_1(g)\rt),
\]
where
\[
R_1(g) = \frac{A(g)}{\intr\mu_\hbar \,\rdp} -  \frac{A(g)^2}{\intr\mu_\hbar \,\rdp\intr\{\mu_\hbar +\eta_\hbar g\sqrt{\mu_\hbar }+\hbar\kappa\mu_\hbar g^2\}\,\rdp},
\]
and
\[
A(g) = \intr \eta_\hbar g\sqrt{\mu_\hbar }\,\rdp + \hbar\kappa\intr \mu_\hbar g^2\,\rdp.
\]
By the exponential decay of $\mu_\hbar $ and the boundedness of $\eta_\hbar $, all the moments appearing above are controlled by the corresponding $L^2_p$ norms. In particular,
\[
\lt| \intr (1+|p|^2)\eta_\hbar g\sqrt{\mu_\hbar }\,\rdp \rt| + \lt| \intr (1+|p|^2)\mu_\hbar g^2\,\rdp \rt| \le C\lt(\|g\|_{L^2_p}+\|g\|_{L^2_p}^2\rt).
\]
Since $\|g\|_{H^s}\ll1$, the denominator
\[
\intr \{\mu_\hbar +\eta_\hbar g\sqrt{\mu_\hbar }+\hbar\kappa\mu_\hbar g^2\}\,\rdp
\]
is bounded away from zero. Hence, the map $z\mapsto \frac{z^2}{1+z}$ is evaluated only in a small neighborhood of the origin when applied to the normalized moment $A(g)/\intr\mu_\hbar \,\rdp$.

We only present the estimate for one of the most involved nonlinear terms. The remaining terms in $N_u$ and $N_\Theta$ have the same structure or are simpler, and can be treated by the same Sobolev--Moser estimates. Set
\[
\calA(g) := \frac{ \intr \lt( \eta_\hbar g\sqrt{\mu_\hbar } + \hbar\kappa\mu_\hbar g^2 \rt)\,\rdp }{\intr \mu_\hbar \,\rdp} = \frac{A(g)}{\intr \mu_\hbar \,\rdp}.
\]
By the exponential decay of $\mu_\hbar $, the boundedness of $\eta_\hbar $, and the Moser product estimate in Lemma \ref{Sobolev ineq}, we have, for $0\le|\gamma|\le s$ and $1\le q\le\infty$,
\[
\|\pa_x^\gamma\calA(g)\|_{L^q_x} \le C \lt( 1+\|g\|_{L^\infty_{x,p}} \rt) \|\pa_x^\gamma g\|_{L^q_xL^2_p} \le C \|\pa_x^\gamma g\|_{L^q_xL^2_p},
\]
where the last inequality follows from the smallness of $\|g\|_{H^s}$. In particular, by the Sobolev embedding in $x$, we get $\|\calA(g)\|_{L^\infty_x} \le C \lt( \|g\|_{H^s} + \|g\|_{H^s}^2 \rt)$. Hence, by taking $\|g\|_{H^s}$ sufficiently small, we may assume that
\[
\|\calA(g)\|_{L^\infty_x}<\frac12.
\]

We now consider the function
\[
H(z):=\frac{z^2}{1+z}.
\]
Since $H$ is smooth on the range of $\calA(g)$, the composition estimate in Lemma \ref{Sobolev ineq} gives, for $1\le|\gamma|\le s$,
\[
\|\pa_x^\gamma H(\calA(g))\|_{L^q_x} \le C \lt( 1+\|\calA(g)\|_{L^\infty_x}^{|\gamma|-1} \rt) \|\pa_x^\gamma\calA(g)\|_{L^q_x} \le C \|\pa_x^\gamma g\|_{L^q_xL^2_p}.
\]
For $|\gamma|=0$, using
\[
|H(z)| \le \frac{|z|^2}{1-|z|} \le 2|z|^2 \quad \text{for } |z|<\frac12,
\]
we obtain
\[
\|H(\calA(g))\|_{L^q_x} \le 2\|\calA(g)\|_{L^\infty_x} \|\calA(g)\|_{L^q_x} \le C\|g\|_{H^s}\|g\|_{L^q_xL^2_p}  \le C\|g\|_{L^q_xL^2_p}.
\]
Consequently, for every multi-index $\gamma$ with $|\gamma|\le s$, we have
\bq\label{eq:H-A-deri-b}
\|\pa_x^\gamma H(\calA(g))\|_{L^q_x} \le C \|\pa_x^\gamma g\|_{L^q_xL^2_p}.
\eq

One component of $N_u$ has the form
\[
\frac{\intr p\eta_\hbar g\sqrt{\mu_\hbar }\,\rdp}{\intr \mu_\hbar \,\rdp} H(\calA(g)).
\]
For this term, by Leibniz's rule, H\"older's inequality, Minkowski's inequality, and \eqref{eq:H-A-deri-b}, we have
\begin{align*}
\lt\| \pa_x^\alpha \lt\{ \frac{\intr p\eta_\hbar g\sqrt{\mu_\hbar }\,\rdp}{\intr\mu_\hbar \,\rdp} H(\calA(g)) \rt\} \rt\|_{L^2_x} 
&\le C \sum_{\gamma\le\alpha} \lt\| \pa_x^{\alpha-\gamma} \lt( \frac{\intr p\eta_\hbar g\sqrt{\mu_\hbar }\,\rdp}{\intr\mu_\hbar \,\rdp} \rt) \pa_x^\gamma H(\calA(g)) \rt\|_{L^2_x} \cr
&\le C \sum_{\gamma\le\alpha} \|\pa_x^{\alpha-\gamma}g\|_{L^3_xL^2_p} \|\pa_x^\gamma g\|_{L^6_xL^2_p}.
\end{align*}
If $|\alpha|\le s-1$, then the Gagliardo--Nirenberg estimate
\eqref{GN ineq 1} gives
\[
\sum_{\gamma\le\alpha} \|\pa_x^{\alpha-\gamma}g\|_{L^3_xL^2_p} \|\pa_x^\gamma g\|_{L^6_xL^2_p} \le C\|g\|_{H^s} \|\pa_x^\alpha\nabla_xg\|_{L^2_{x,p}}.
\]
If $|\alpha|=s$, then \eqref{GN ineq 2} gives
\[
\sum_{\gamma\le\alpha} \|\pa_x^{\alpha-\gamma}g\|_{L^3_xL^2_p} \|\pa_x^\gamma g\|_{L^6_xL^2_p} \le C\|g\|_{H^s} \|\pa_x^\alpha g\|_{L^2_{x,p}}.
\]
 
The term displayed above is one of the higher-order contributions to $N_u$. The leading nonlinear terms are quadratic in $g$; for instance, they contain a product of a linear moment of $g$ with
$\calA(g)/(1+\calA(g))$. All remaining components are estimated in the same way, or more directly, since they contain either an explicit quadratic moment of $g$ or a product of one linear moment and one quadratic remainder. Therefore,
\[
\|\pa_x^\alpha N_u\|_{L^2_x}
\le
\begin{cases}
C\|g\|_{H^s}\|\pa_x^\alpha\nabla_xg\|_{L^2_{x,p}},
& |\alpha|\le s-1,\\[1mm]
C\|g\|_{H^s}\|\pa_x^\alpha g\|_{L^2_{x,p}},
& |\alpha|=s.
\end{cases}
\]

It remains only to comment on $N_\Theta$. From Lemma \ref{lem:decomp},
\[
N_\Theta = \frac{\hbar\kappa}{3}\intr |p|^2\mu_\hbar g^2\,\rdp - \frac13 \lt( \frac{\intr p\eta_\hbar g\sqrt{\mu_\hbar }\,\rdp}{\intr\mu_\hbar \,\rdp} + N_u \rt) \cdot \lt( \intr p\eta_\hbar g\sqrt{\mu_\hbar }\,\rdp + \hbar\kappa\intr p\mu_\hbar g^2\,\rdp \rt).
\]
Thus each term in $N_\Theta$ is either quadratic in $g$ or contains the already estimated factor $N_u$ multiplied by a linear or quadratic moment of $g$. Repeating the preceding product estimates gives the same bounds for $N_\Theta$. This proves \eqref{Nu-NT-bound-top} and \eqref{Nu-NT-bound-low}.
\end{proof}

We now state the nonlinear estimates for $\Gamma$. The first one is a general bound for $\Gamma(g)$ tested against an arbitrary function and will be used in the local well-posedness argument.

 For later use, we introduce the notation
\[
\|g\|_{D,s}^2 := \sum_{|\alpha|+|\beta|\le s} \|\pa_x^\alpha\pa_p^\beta g\|_D^2.
\]

\begin{lemma}\label{Gamma 1}
Let $T_*>0$ and $s\ge4$. Suppose that $g,h\in \mcC([0,T_*];H^s(\R^3 \times \R^3))$ satisfy $\|g\|_{D,s}<\infty$ and $\|g\|_{H^s}\ll1$. Then, for $|\alpha|+|\beta|\le s$, we have
\bq\label{eq:Gamma1-est}
\lt| \lt\lal \pa_x^\alpha\pa_p^\beta\Gamma(g), \pa_x^\alpha\pa_p^\beta h \rt\ral_{L^2_{x,p}} \rt| \le C\|g\|_{H^s} \lt( \|\pa_x^\alpha\pa_p^\beta (I-P)h\|_D + \|\pa_x^\alpha h\|_{L^2_{x,p}} \rt) \|g\|_{D,s}.
\eq
\end{lemma}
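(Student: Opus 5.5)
The proof will exploit the divergence structure $\Gamma(g)=\mu_\hbar^{-1/2}\nabla_p\cdot\calR(g)$. After applying $\pa_x^\alpha\pa_p^\beta$, we integrate by parts once in $p$ so that a single $p$-derivative lands on the test function. We write
\[
\frac{1}{\sqrt{\mu_\hbar}}\nabla_p\cdot\calR = \nabla_p\cdot\Big(\frac{\calR}{\sqrt{\mu_\hbar}}\Big) - \frac12 p\eta_\hbar\cdot\frac{\calR}{\sqrt{\mu_\hbar}},
\]
so that
\[
\lal \pa_x^\alpha\pa_p^\beta\Gamma(g),\pa_x^\alpha\pa_p^\beta h\ral
= -\Big\lal \pa_x^\alpha\pa_p^\beta\Big(\frac{\calR}{\sqrt{\mu_\hbar}}\Big),\Big(\nabla_p+\frac12p\eta_\hbar\Big)\pa_x^\alpha\pa_p^\beta h\Big\ral + \text{(commutators)}.
\]
Here the commutators come from $\pa_p^\beta$ hitting $p\eta_\hbar$; they are bounded through \eqref{eq:P-coeff-b}. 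The operator $\nabla_p+\frac12 p\eta_\hbar$ applied to $\pa^\alpha_x\pa^\beta_p h$ is controlled in $L^2$ by the $D$-norm.

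Next we split $h=(I-P)h+Ph$. For the microscopic part, the pairing is bounded by $\|\pa_x^\alpha\pa_p^\beta(I-P)h\|_D$. For the macroscopic part, $\pa_p^\beta(\nabla_p+\frac12p\eta_\hbar)Pe_i$ has coefficients of the type $\mcP_{\varphi,|\beta|+1}$. That contribution is therefore bounded by $\|\pa^\alpha_x h\|_{L^2_{x,p}}$ times a weighted $L^2_p$ norm of $\pa_x^\alpha\pa_p^\beta(\calR/\sqrt{\mu_\hbar})$, since the projection coefficients $\lal\pa^\alpha_x h,e_i\ral$ satisfy $\|\lal\pa^\alpha_x h,e_i\ral\|_{L^2_x}\le\|\pa^\alpha_x h\|_{L^2_{x,p}}$.

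It thus remains to show
\[
\Big\|\pa_x^\alpha\pa_p^\beta\Big(\frac{\calR(g)}{\sqrt{\mu_\hbar}}\Big)\Big\|_{L^2_{x,p}}\le C\|g\|_{H^s}\|g\|_{D,s}.
\]
We go term by term through \eqref{eq:non-flux}. Each term of $\calR/\sqrt{\mu_\hbar}$ is a product of an $x$-dependent moment factor (one of $\mathscr a,\mathscr b,\mathscr c,N_u,N_\Theta$) with one of the following $p$-profiles: a Maxwellian-weighted function such as $p\sqrt{\mu_\hbar}$ or $\eta_\hbar g$ (or $\sqrt{\mu_\hbar}g^2$), or the gradient term $\nabla_p g-\frac12p\eta_\hbar g$. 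Moment factors are estimated with Lemma~\ref{lem:Nu-NT-est} together with $\|\pa^\gamma_x\mathscr a(g)\|_{L^q_x}\le C\|\pa^\gamma_x g\|_{L^q_xL^2_p}$. Terms purely quadratic in moments, times a fixed profile, are handled by Leibniz in $x$, the $L^3$--$L^6$ splitting, and \eqref{GN ineq 2}.

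For terms such as $\mathscr c(g)\nabla_p g$, $N_\Theta\nabla_p g$, $\mathscr a(g)p\eta_\hbar g$ and $\mathscr b(g)\eta_\hbar g$, we distribute derivatives by Leibniz. When all (or most) derivatives fall on $g$, we put the moment factor in $L^\infty_x$, using Lemma~\ref{Sobolev ineq}(1) bounded by $\|g\|_{H^s}$, and the remaining factor $\nabla_p\pa^\alpha_x\pa^\beta_p g$ or $p\eta_\hbar\pa^{\alpha}_x\pa^\beta_p g$ in $L^2$, which is bounded by $\|g\|_{D,s}$. The unbounded weight $p$ is exactly why the $D$-norm rather than the $H^s$ norm appears. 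When many $x$-derivatives fall on the moment, we use $L^3_x\times L^6_x$ together with Sobolev in $x$ ($H^1\subset L^3,L^6$). The terms cubic in $g$ (e.g.\ $N_u\hbar\kappa\mu_\hbar g^2$) are similar, with an extra factor $\|g\|_{L^\infty}\le C\|g\|_{H^s}$ absorbed by smallness.

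The main obstacle is the top-order bookkeeping when $|\alpha|+|\beta|=s$. In the products $\mathscr c(g)\nabla_p g$ and $p\,\mathscr a(g)\eta_\hbar g$, a full $s$ derivatives plus one $p$-derivative (or a factor $p$) can fall on $g$. This is precisely $\|\pa^\alpha_x\pa^\beta_p g\|_D$, which is why the integration by parts is performed first: it prevents a second $p$-derivative from landing on $g$. The intermediate cases, where derivatives split between the moment factor and the $g$ factor, require choosing for each pair which factor goes into $L^\infty_x$ or $L^3_x$. We take the factor with at most $s-2$ $x$-derivatives into $L^\infty_x$ and otherwise use $L^3$--$L^6$, always keeping a $D$-type norm on the $g$ factor carrying the weight $p$ or the gradient $\nabla_p$. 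The bound is then summed over all terms of $\calR$.
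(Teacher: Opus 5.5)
Your proposal is correct and follows essentially the same route as the paper. You rewrite $\Gamma=\calT(\calR/\sqrt{\mu_\hbar})$ and integrate by parts once via $\lal\calT F,\phi\ral_{L^2_p}=-\lal F,\calA_+\phi\ral_{L^2_p}$, with commutators from $\pa_p^\beta$ hitting $p\eta_\hbar$. You split the test function into $(I-P)h$ and $Ph$, and close with Leibniz in $x$, a low--high Sobolev splitting, and Lemma~\ref{lem:Nu-NT-est} for $N_u,N_\Theta$.

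Two small bookkeeping points. First, the flux term $\hbar\kappa Mp\mu_\hbar g^2$ carries no moment factor, but it is covered by the same Moser bound you use for the cubic terms. Second, in your low--high rule, count the total number of $x$- and $p$-derivatives on the $g$-factor, not just the $x$-derivatives. Since $s\ge4$, one factor then always carries at most $s/2\le s-2$ of them, so the $L^3$--$L^6$ fallback is never triggered.
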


\begin{proof}
We first prove the estimate for smooth functions with sufficient decay in $p$. The general case follows by a standard density argument.

The proof is based on the moment bounds, the exponential decay of the $p$-dependent coefficients, and the Sobolev--Moser estimates in Lemma \ref{Sobolev ineq}. Since all terms in $\Gamma(g)$ are estimated in a similar way, we give the details only for two representative contributions from \eqref{eq:non-flux}, suppressing harmless constants and signs:
\bq\label{nonlinear terms}
\Gamma_1 := \frac{1}{\sqrt{\mu_\hbar }} \nabla_p\cdot \lt[ \mathscr c(g)\nabla_p(g\sqrt{\mu_\hbar }) \rt], 
\quad 
\Gamma_2 := -\frac{1}{\sqrt{\mu_\hbar }} \nabla_p\cdot \lt[ N_u \lt\{ \mu_\hbar +\eta_\hbar g\sqrt{\mu_\hbar } +\hbar\kappa\mu_\hbar g^2 \rt\} \rt].
\eq
The remaining terms are handled in the same manner.

We introduce the first-order operators
\[
\calA_\pm \phi := \nabla_p\phi\pm \frac12p\eta_\hbar \phi.
\]
By \eqref{eq:elem}, we have 
\bq\label{eq:Apm-iden}
\nabla_p\lt(\frac{\phi}{\sqrt{\mu_\hbar }}\rt) = \frac{1}{\sqrt{\mu_\hbar }}\calA_+\phi,
\quad
\nabla_p(\phi\sqrt{\mu_\hbar }) = \sqrt{\mu_\hbar }\calA_-\phi.
\eq

For a sufficiently regular vector-valued function $F=F(p)$, define
\[
\calT F := \frac{1}{\sqrt{\mu_\hbar }} \nabla_p\cdot(\sqrt{\mu_\hbar }F).
\]
Using \eqref{eq:Apm-iden} and integration by parts in $p$, we obtain
\bq\label{eq:wei-f-pair}
\lt\lal \calT F,\phi \rt\ral_{L^2_p} = - \lt\lal F,\calA_+\phi \rt\ral_{L^2_p}.
\eq
 
Since
\[
\calT F = \nabla_p\cdot F - \frac12p\eta_\hbar \cdot F,
\]
we have
\[
\pa_p^\beta\calT F = \calT(\pa_p^\beta F) - \frac12 \sum_{0<\nu\le\beta} C_{\beta,\nu} \pa_p^\nu(p\eta_\hbar ) \cdot \pa_p^{\beta-\nu}F.
\]
Using \eqref{eq:wei-f-pair}, we obtain
\bq\label{eq:T-com-pair}
\lt\lal \pa_p^\beta\calT F, \pa_p^\beta\phi \rt\ral_{L^2_p} = - \lt\lal \pa_p^\beta F, \calA_+\pa_p^\beta\phi \rt\ral_{L^2_p} - \frac12 \sum_{0<\nu\le\beta} C_{\beta,\nu} \lt\lal \pa_p^\nu(p\eta_\hbar ) \cdot \pa_p^{\beta-\nu}F, \pa_p^\beta\phi \rt\ral_{L^2_p}.
\eq

We next estimate the two factors involving the test function. First, since $\phi=(I-P)\phi+P\phi$, we have
\bq\label{eq:Apl-te-b}
\lt\| \calA_+\pa_p^\beta\phi \rt\|_{L^2_p} \le \lt\| \calA_+\pa_p^\beta(I-P)\phi \rt\|_{L^2_p} + \lt\| \calA_+\pa_p^\beta P\phi \rt\|_{L^2_p} \le C \lt( |\pa_p^\beta(I-P)\phi|_D + \|\phi\|_{L^2_p} \rt).
\eq
Indeed, the first term is controlled by the definition of the $D$-norm. For the second term, we use that $P\phi$ is a finite linear combination of the basis functions of $\mcN$, whose $p$-derivatives are controlled by \eqref{eq:P-coeff-b}.

Similarly, since the $D$-norm controls the $L^2_p$ norm, we have
\bq\label{eq:L2-tes-b}
\|\pa_p^\beta\phi\|_{L^2_p} \le \|\pa_p^\beta(I-P)\phi\|_{L^2_p} + \|\pa_p^\beta P\phi\|_{L^2_p} \le C \lt( |\pa_p^\beta(I-P)\phi|_D + \|\phi\|_{L^2_p} \rt).
\eq

Moreover, by \eqref{eq:P-coeff-b},
\bq\label{eq:p-eta-deri-b}
\|\pa_p^\nu(p\eta_\hbar )\|_{L^\infty_p} \le C_\nu
\quad
\text{for every multi-index $\nu$ with }|\nu|\ge1.
\eq
Hence, using \eqref{eq:T-com-pair},
\eqref{eq:Apl-te-b}, \eqref{eq:L2-tes-b}, and
\eqref{eq:p-eta-deri-b}, we obtain
\begin{align}\label{eq:wei-f-com}
\begin{aligned}
\lt| \lt\lal \pa_p^\beta\calT F, \pa_p^\beta\phi \rt\ral_{L^2_p} \rt| 
&\le \|\pa_p^\beta F\|_{L^2_p} \lt\| \calA_+\pa_p^\beta\phi \rt\|_{L^2_p} + C_\beta \sum_{0<\nu\le\beta} \|\pa_p^{\beta-\nu}F\|_{L^2_p} \|\pa_p^\beta\phi\|_{L^2_p} \cr
&\le C  \lt( |\pa_p^\beta(I-P)\phi|_D + \|\phi\|_{L^2_p} \rt) \sum_{\nu\le\beta} \|\pa_p^\nu F\|_{L^2_p}.
\end{aligned}
\end{align}
 
\medskip
\noindent
{\it Estimate of $\Gamma_1$.} Using \eqref{eq:Apm-iden}, we rewrite $\Gamma_1$ as
\[
\Gamma_1 = \calT \lt( \mathscr c(g)\calA_-g \rt).
\]
Since $\mathscr c(g)$ depends only on $x$, Leibniz's rule in $x$ and \eqref{eq:wei-f-com} yield
\[
\lt| \lt\lal \pa_x^\alpha\pa_p^\beta\Gamma_1, \pa_x^\alpha\pa_p^\beta h \rt\ral_{L^2_{x,p}} \rt| \le C \lt( \|\pa_x^\alpha\pa_p^\beta(I-P)h\|_D + \|\pa_x^\alpha h\|_{L^2_{x,p}} \rt) \sum_{\substack{\gamma\le\alpha\\ \nu\le\beta}} \lt\| \lt| \mathscr c(\pa_x^{\alpha-\gamma}g) \rt| \, \lt\| \pa_p^\nu \calA_-(\pa_x^\gamma g) \rt\|_{L^2_p} \rt\|_{L^2_x}.
\]
By the definition of $\mathscr c(g)$, the boundedness of $\eta_\hbar $, and the
exponential decay of $\mu_\hbar $, we have
\bq\label{eq:c-mom-b}
\lt| \mathscr c(\pa_x^{\alpha-\gamma}g)\rt| = \lt| \int_{\mR^3} |p|^2\eta_\hbar  \pa_x^{\alpha-\gamma}g \sqrt{\mu_\hbar }\,\rdp \rt| \le C \|\pa_x^{\alpha-\gamma}g\|_{L^2_p}.
\eq
Moreover, by Leibniz's rule and \eqref{eq:P-coeff-b},
\bq\label{eq:Ami-D-b}
\lt\| \pa_p^\nu\calA_-\phi \rt\|_{L^2_p} \le C_\nu \sum_{\nu'\le\nu} |\pa_p^{\nu'}\phi|_D.
\eq
Consequently, we obtain
\[
\lt| \lt\lal \pa_x^\alpha\pa_p^\beta\Gamma_1, \pa_x^\alpha\pa_p^\beta h \rt\ral_{L^2_{x,p}} \rt| \le C \lt( \|\pa_x^\alpha\pa_p^\beta(I-P)h\|_D + \|\pa_x^\alpha h\|_{L^2_{x,p}} \rt)
\sum_{\substack{\gamma\le\alpha\\ \nu\le\beta}} \lt\| \|\pa_x^{\alpha-\gamma}g\|_{L^2_p} \sum_{\nu'\le\nu} |\pa_x^\gamma\pa_p^{\nu'}g|_D \rt\|_{L^2_x}.
\]
 
To estimate the last expression, we use a standard low--high splitting in the $x$ variable. For each $\gamma\le\alpha$ and $\nu'\le\nu\le\beta$, we distinguish two cases.

If $|\alpha-\gamma|\le2$, then the Sobolev embedding in the $x$ variable gives
\begin{align*}
\lt\| \|\pa_x^{\alpha-\gamma}g\|_{L^2_p}  |\pa_x^\gamma\pa_p^{\nu'}g|_D \rt\|_{L^2_x} &\le \|\pa_x^{\alpha-\gamma}g\|_{L^\infty_xL^2_p} \|\pa_x^\gamma\pa_p^{\nu'}g\|_D \\
&\le C
\sum_{1\le |\ell|\le2} \|\pa_x^{\alpha-\gamma+\ell}g\|_{L^2_{x,p}}   \|\pa_x^\gamma\pa_p^{\nu'}g\|_D \le C\|g\|_{H^s}\|g\|_{D,s}.
\end{align*}
Here we used $|\alpha-\gamma|+2\le4\le s$.

If $|\alpha-\gamma|\ge3$, then we place the second factor in $L^\infty_x$. Again by the Sobolev embedding in the $x$ variable, we obtain
\begin{align*}
\lt\| \|\pa_x^{\alpha-\gamma}g\|_{L^2_p} |\pa_x^\gamma\pa_p^{\nu'}g|_D \rt\|_{L^2_x} &\le \|\pa_x^{\alpha-\gamma}g\|_{L^2_{x,p}} \|\pa_x^\gamma\pa_p^{\nu'}g\|_{L^\infty_xD} \\ 
&\le C \|\pa_x^{\alpha-\gamma}g\|_{L^2_{x,p}} \sum_{1\le |\ell|\le2} \|\pa_x^{\gamma+\ell}\pa_p^{\nu'}g\|_D \\ 
&\le C\|g\|_{H^s}\|g\|_{D,s}.
\end{align*}
Here, $\|\cdot\|_{L^\infty_xD}$ denotes the $L^\infty$ norm in $x$ of the corresponding $D$-norm in $p$. Indeed, since $|\alpha-\gamma|\ge3$ and
$|\alpha|+|\beta|\le s$, we have $|\gamma|+|\nu'|+2 \le |\gamma|+|\beta|+2 \le |\alpha|+|\beta|-1 \le s-1$. Combining the two cases and summing over $\gamma$, $\nu$, and $\nu'$, we obtain
\bq\label{eq:gg-Ds}
\sum_{\substack{\gamma\le\alpha\\ \nu\le\beta}} \lt\| \|\pa_x^{\alpha-\gamma}g\|_{L^2_p} \sum_{\nu'\le\nu} |\pa_x^\gamma\pa_p^{\nu'}g|_D \rt\|_{L^2_x} \le C\|g\|_{H^s}\|g\|_{D,s}.
\eq
Therefore, we have
\[
\lt| \lt\lal \pa_x^\alpha\pa_p^\beta\Gamma_1, \pa_x^\alpha\pa_p^\beta h \rt\ral_{L^2_{x,p}} \rt| \le C\|g\|_{H^s} \lt( \|\pa_x^\alpha\pa_p^\beta(I-P)h\|_D + \|\pa_x^\alpha h\|_{L^2_{x,p}} \rt) \|g\|_{D,s}.
\]

\medskip
\noindent
{\it Estimate of $\Gamma_2$.} For convenience, set
\[
G(g) := \sqrt{\mu_\hbar } + \eta_\hbar g + \hbar\kappa\sqrt{\mu_\hbar }g^2.
\]
Then $\mu_\hbar +\eta_\hbar g\sqrt{\mu_\hbar }+\hbar\kappa\mu_\hbar g^2 = \sqrt{\mu_\hbar }G(g)$, and hence
\[
\Gamma_2 = - \calT\lt(N_uG(g)\rt).
\]
Applying \eqref{eq:wei-f-com}, we obtain
\bq\label{eq:Gamma2-s-g1-p}
\lt| \lt\lal \pa_x^\alpha\pa_p^\beta\Gamma_2, \pa_x^\alpha\pa_p^\beta h \rt\ral_{L^2_{x,p}} \rt| \le C \lt( \|\pa_x^\alpha\pa_p^\beta(I-P)h\|_D + \|\pa_x^\alpha h\|_{L^2_{x,p}} \rt)
\sum_{\nu\le\beta} \lt\| \pa_x^\alpha\pa_p^\nu \lt\{ N_uG(g) \rt\} \rt\|_{L^2_{x,p}}.
\eq
Since $N_u$ depends only on $x$, the $p$-derivatives fall only on $G(g)$. By Leibniz's rule,
\[
\sum_{\nu\le\beta} \lt\| \pa_x^\alpha\pa_p^\nu \lt\{ N_uG(g) \rt\} \rt\|_{L^2_{x,p}} \le C 
\sum_{\substack{\gamma\le\alpha\\ \nu\le\beta}} \lt\| \pa_x^\gamma N_u\, \pa_x^{\alpha-\gamma}\pa_p^\nu G(g) \rt\|_{L^2_{x,p}}.
\]

We decompose $G(g)=G_0+G_1(g)+G_2(g)$, where
\[
G_0:=\sqrt{\mu_\hbar },
\quad
G_1(g):=\eta_\hbar g,
\quad
G_2(g):=\hbar\kappa\sqrt{\mu_\hbar }g^2.
\]
Accordingly, we estimate the three contributions separately.

First, since $G_0$ is independent of $x$, we get  $\pa_x^{\alpha-\gamma}\pa_p^\nu G_0=0$ unless $\gamma=\alpha$. Thus, by Lemma \ref{lem:Nu-NT-est} and the exponential decay of
$\mu_\hbar $,
\bq\label{eq:Gamma2-G0-b}
\sum_{\substack{\gamma\le\alpha\\ \nu\le\beta}} \lt\| \pa_x^\gamma N_u\, \pa_x^{\alpha-\gamma}\pa_p^\nu G_0 \rt\|_{L^2_{x,p}} \le C \|\pa_x^\alpha N_u\|_{L^2_x} \sum_{\nu\le\beta} \|\pa_p^\nu\sqrt{\mu_\hbar }\|_{L^2_p}  \le C\|g\|_{H^s} \|\pa_x^\alpha g\|_{L^2_{x,p}} \le C\|g\|_{H^s}\|g\|_{D,s}.
\eq

We next consider the terms involving $G_1(g)$ and $G_2(g)$. Summing the estimates in Lemma \ref{lem:Nu-NT-est} over $|\gamma|\le s$, we first note that
\bq\label{eq:Nu-Hs-b}
\|N_u\|_{H^s_x}
\le
C\|g\|_{H^s}^2.
\eq

We will use the following product estimate. Let $H=H(x,p)$ satisfy $\|H\|_{D,s}<\infty$. Then, by using almost the same argument as in \eqref{eq:gg-Ds},
\bq\label{eq:Nu-H-pro-b}
\sum_{\substack{\gamma\le\alpha\\ \nu\le\beta}} \lt\| \pa_x^\gamma N_u\, \pa_x^{\alpha-\gamma}\pa_p^\nu H \rt\|_{L^2_{x,p}}  \le C \|N_u\|_{H^s_x} \|H\|_{D,s}.
\eq

We now apply \eqref{eq:Nu-H-pro-b} to $G_1(g)=\eta_\hbar g$. By Leibniz's rule,
\[
\pa_x^\gamma\pa_p^\nu G_1(g) = \sum_{\nu_1\le\nu} C_{\nu,\nu_1} \pa_p^{\nu_1}\eta_\hbar \, \pa_x^\gamma \pa_p^{\nu-\nu_1}g.
\]
Since $L^\infty$-norms of $\eta_\hbar $ and its derivatives are controlled by \eqref{eq:P-coeff-b}, we have
\bq\label{eq:G1-D-b}
\|G_1(g)\|_{D,s} \le C\|g\|_{D,s}.
\eq
Therefore, by  \eqref{eq:Nu-Hs-b}, \eqref{eq:Nu-H-pro-b}, and
\eqref{eq:G1-D-b}, we obtain
\bq\label{eq:Gamma2-G1-b}
 \sum_{\substack{\gamma\le\alpha\\ \nu\le\beta}} \lt\| \pa_x^\gamma N_u\, \pa_x^{\alpha-\gamma} \pa_p^\nu G_1(g) \rt\|_{L^2_{x,p}}  \le C \|N_u\|_{H^s_x} \|G_1(g)\|_{D,s}  \le C \|g\|_{H^s}^2 \|g\|_{D,s}.
\eq

We next consider $G_2(g)=\hbar\kappa\sqrt{\mu_\hbar }g^2$. By Leibniz's rule, the coefficient bound \eqref{eq:P-coeff-b}, the
Sobolev embedding, and the Moser product estimate in Lemma
\ref{Sobolev ineq}, we have
\bq\label{eq:G2-D-b1}
\|G_2(g)\|_{D,s} \le C \|g\|_{H^s} \|g\|_{D,s}.
\eq
Applying \eqref{eq:Nu-H-pro-b} with $H=G_2(g)$, and using
\eqref{eq:Nu-Hs-b} and \eqref{eq:G2-D-b1}, we obtain
\bq\label{eq:Gam2-G2-b}
\sum_{\substack{\gamma\le\alpha\\ \nu\le\beta}} \lt\| \pa_x^\gamma N_u\, \pa_x^{\alpha-\gamma} \pa_p^\nu G_2(g) \rt\|_{L^2_{x,p}} \le C \|N_u\|_{H^s_x} \|G_2(g)\|_{D,s}  \le C \|g\|_{H^s}^3 \|g\|_{D,s}.
\eq
Since $\|g\|_{H^s}\ll1$, the estimates \eqref{eq:Gamma2-G1-b} and \eqref{eq:Gam2-G2-b} imply that
\[
\sum_{\substack{\gamma\le\alpha\\ \nu\le\beta}} \lt\| \pa_x^\gamma N_u\, \pa_x^{\alpha-\gamma} \pa_p^\nu \lt\{ G_1(g)+G_2(g) \rt\} \rt\|_{L^2_{x,p}}  \le C \|g\|_{H^s} \|g\|_{D,s}.
\]
Combining this with \eqref{eq:Gamma2-G0-b}, we conclude that
\[
\sum_{\nu\le\beta} \lt\| \pa_x^\alpha\pa_p^\nu \lt\{ N_uG(g) \rt\} \rt\|_{L^2_{x,p}} \le C \|g\|_{H^s} \|g\|_{D,s}.
\]
This together with \eqref{eq:Gamma2-s-g1-p} yields
\[
\lt| \lt\lal \pa_x^\alpha\pa_p^\beta\Gamma_2, \pa_x^\alpha\pa_p^\beta h \rt\ral_{L^2_{x,p}} \rt|  \le C \|g\|_{H^s} \lt( \|\pa_x^\alpha\pa_p^\beta(I-P)h\|_D + \|\pa_x^\alpha h\|_{L^2_{x,p}} \rt) \|g\|_{D,s}.
\]

The remaining terms in $\Gamma(g)$ are estimated in the same way. They contain either moment factors as in $\Gamma_1$, nonlinear macroscopic remainders $N_u$ and $N_\Theta$ as in $\Gamma_2$, or lower-order products with exponentially decaying $p$-dependent coefficients. Combining all the estimates gives \eqref{eq:Gamma1-est}.
\end{proof}

Since $P$ is a finite-dimensional projection and the basis functions of $\mcN$ have exponential decay, we have
\[
\|g\|_{D,s} \le \lt( \sum_{|\alpha|+|\beta|\le s} \|\pa_x^\alpha\pa_p^\beta(I-P)g\|_D^2 \rt)^{\frac12} + C\|g\|_{H^s}.
\]
Thus the right-hand side of \eqref{eq:Gamma1-est} can be estimated in terms of the microscopic dissipation and the Sobolev energy.

The following estimate is more refined. It uses the interpolation strategy employed in \cite{GW12}: when $|\alpha|\le s-1$, the nonlinear term can be bounded by one additional spatial derivative, whereas at the top order $|\alpha|=s$ one only uses derivatives of the same order.

\begin{lemma}\label{Gamma 2}
Let $T_*>0$ and $s\ge4$ and suppose that $g\in \mcC([0,T_*];H^s(\R^3 \times \R^3))$ satisfies $\|g\|_{H^s}\ll1$. Then, for any $0<\delta\le1$, the following estimates hold. 

If $|\alpha|\le s-1$, then
\bq\label{eq:Gamma2-low}
\lt| \lt\lal \pa_x^\alpha\Gamma(g), \pa_x^\alpha g \rt\ral_{L^2_{x,p}} \rt| \le C\|g\|_{H^s} \lt( \frac1\delta \|\pa_x^\alpha(I-P)g\|_D^2 + \delta \|\pa_x^\alpha\nabla_x(I-P)g\|_D^2  +
\delta \|\pa_x^\alpha\nabla_x g\|_{L^2_{x,p}}^2 \rt).
\eq

If $|\alpha|=s$, then
\bq\label{eq:Gamma2-top}
\lt| \lt\lal \pa_x^\alpha\Gamma(g), \pa_x^\alpha g \rt\ral_{L^2_{x,p}} \rt| \le C\|g\|_{H^s} \lt( \frac1\delta \|\pa_x^\alpha(I-P)g\|_D^2 + \delta \|\pa_x^\alpha g\|_{L^2_{x,p}}^2 \rt).
\eq
\end{lemma}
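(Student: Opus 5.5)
The key structural fact is that $\Gamma(g)=\calT\calR(g)$ is a weighted divergence, with $\calT F=\mu_\hbar^{-1/2}\nabla_p\cdot(\sqrt{\mu_\hbar}F)$. Hence by \eqref{eq:wei-f-pair},
\[
\lal \pa_x^\alpha\Gamma(g),\pa_x^\alpha g\ral_{L^2_{x,p}}=-\lal \pa_x^\alpha\calR(g),\calA_+\pa_x^\alpha g\ral_{L^2_{x,p}}.
\]
Split $g=(I-P)g+Pg$ in the second slot. Since $P\pa_x^\alpha g$ is a combination of $e_1,\dots,e_5$, a direct computation gives $\calA_+$ applied to these basis functions explicitly:
\[
\calA_+e_1=p\eta_\hbar\sqrt{\mu_\hbar}/c,\qquad \calA_+(p_j\sqrt{\mu_\hbar})=\mfe_j\sqrt{\mu_\hbar}+p_jp\eta_\hbar\sqrt{\mu_\hbar},
\]
and similarly for $e_5$. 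The microscopic part contributes
\[
|\lal\pa_x^\alpha\calR(g),\calA_+\pa_x^\alpha(I-P)g\ral|\le \|\pa_x^\alpha\calR(g)\|_{L^2_{x,p}}\,\|\pa_x^\alpha(I-P)g\|_D.
\]
The macroscopic part contributes terms of the form $\int_{\mR^3}\pa_x^\alpha(\text{coefficient of } Pg)\,\lal\pa_x^\alpha\calR(g),\phi_i\ral_{L^2_p}\,\rdx$ with fixed exponentially decaying $\phi_i$.

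For the microscopic pairing, the task is to bound $\|\pa_x^\alpha\calR(g)\|_{L^2_{x,p}}$. Every term of \eqref{eq:non-flux} is at least quadratic. Schematically, each term is either a product of an $x$-dependent moment ($\mathscr a,\mathscr b,\mathscr c,N_u,N_\Theta$) with a $p$-dependent factor ($\calA_-g$, $\eta_\hbar g\sqrt{\mu_\hbar}$, $\mu_\hbar g^2$, $p\mu_\hbar$, $\mu_\hbar$), or an explicit $\mu_\hbar g^2$ term. Terms such as $N_\Theta p\mu_\hbar$ and $MN_u\mu_\hbar$ carry only a moment, and for these I use Lemma \ref{lem:Nu-NT-est} directly. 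This gives the bound $C\|g\|_{H^s}\|\pa_x^\alpha\nabla_xg\|$ for $|\alpha|\le s-1$ and $C\|g\|_{H^s}\|\pa_x^\alpha g\|$ at top order. For the genuine products, Leibniz in $x$ together with the $L^3_x\times L^6_x$ splitting of \eqref{GN ineq 1}--\eqref{GN ineq 2} gives the same bound, with $|\cdot|_D$ replacing $L^2_p$ on the factor carrying $\calA_-$. The factor $\|\pa_x^\gamma g\|$ in the $D$-norm (or its $\nabla_x$-shifted version) is then split as $(I-P)g+Pg$. The $Pg$ part is bounded by $L^2_{x,p}$ norms because the basis functions are smooth and decaying. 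This yields
\[
\|\pa_x^\alpha\calR(g)\|_{L^2_{x,p}}\le C\|g\|_{H^s}\big(\|\pa_x^\alpha\nabla_x(I-P)g\|_D+\|\pa_x^\alpha\nabla_xg\|_{L^2_{x,p}}\big)
\]
for $|\alpha|\le s-1$, and the analogous bound without $\nabla_x$ and without the $D$ term at top order. For the top-order case, one $p$-derivative sits on a factor carrying all $s$ spatial derivatives, so the integration by parts must be set up so that derivative lands on the test function. Young's inequality with weights $\delta^{-1},\delta$ then gives the first terms in \eqref{eq:Gamma2-low} and \eqref{eq:Gamma2-top}.

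The main obstacle is the macroscopic pairing, because the test factor $\pa_x^\alpha Pg$ carries no $(I-P)$ dissipation. Here I would exploit the conservation structure. By the conservation laws, for $\psi\in\{1,p,|p|^2\}$ the zeroth moment of $\calT\calR$ against $\psi\sqrt{\mu_\hbar}$ equals $-\intr\calR\cdot\nabla_p\psi\,\sqrt{\mu_\hbar}\,\rdp$. Against $1$ this vanishes. Against $p$ and $|p|^2$ the identities $\intr(p-u)F(f)\,\rdp=0$ and $\intr(|p|^2-u\cdot p)F(f)\,\rdp=3\rho\Theta$ from the definitions of $u$ and $\Theta$ should make the total momentum and energy flux of the full operator vanish. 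By linearity this means $\lal\Gamma(g),\psi\sqrt{\mu_\hbar}\ral_{L^2_p}=-\lal Lg,\psi\sqrt{\mu_\hbar}\ral_{L^2_p}$, and the latter is zero by self-adjointness and $\ker L=\mcN$. Hence $\Gamma(g)\perp\mcN$ pointwise in $x$, so $\lal\pa_x^\alpha\Gamma(g),\pa_x^\alpha Pg\ral=0$ and the macroscopic pairing vanishes identically. I would verify this orthogonality first. If it holds, only the microscopic estimate remains, and it produces exactly the stated right-hand sides.
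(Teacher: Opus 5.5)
Your strategy is the paper's. You show $\Gamma(g)\perp\mcN$ pointwise in $x$ (conservation, plus self-adjointness of $L$ and $\ker L=\mcN$), so only the pairing with $\pa_x^\alpha(I-P)g$ survives. That pairing is handled by the flux pairing \eqref{eq:wei-f-pair}, Leibniz in $x$, the $L^3_x\times L^6_x$ splitting, the decomposition of the $D$-norm factor into microscopic and macroscopic parts, and Young's inequality.

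\textbf{The top-order step fails as written.} For $|\alpha|=s$ the bound $\|\pa_x^\alpha(\calR(g)/\sqrt{\mu_\hbar})\|_{L^2_{x,p}}\le C\|g\|_{H^s}\|\pa_x^\alpha g\|_{L^2_{x,p}}$ is false. The piece $\frac13\mathscr c(g)\calA_-\pa_x^\alpha g$ contains $\nabla_p\pa_x^\alpha g$, of total order $s+1$, and the unbounded weight $\frac12p\eta_\hbar\pa_x^\alpha g$; neither is controlled by $\|g\|_{H^s}$. An integration by parts cannot repair this. After \eqref{eq:wei-f-pair} the test side already carries $\calA_+\pa_x^\alpha(I-P)g$, and moving another $p$-derivative there would produce order $s+2$. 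The fix, which is exactly what the paper does, is to keep the $D$-norm:
$$\|\calA_-\pa_x^\alpha g\|_{L^2_p}\le C|\pa_x^\alpha g|_D\le C|\pa_x^\alpha(I-P)g|_D+C\|\pa_x^\alpha g\|_{L^2_p}.$$
The resulting contribution $C\|g\|_{H^s}\|\pa_x^\alpha(I-P)g\|_D^2$ is dominated by the $\frac1\delta$-term because $\delta\le1$. This is why the top-order microscopic $D$-norm appears on the right of \eqref{eq:Gamma2-top}, and why no extra integration by parts is needed.

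\textbf{Smaller points.}
\begin{enumerate}
\item The macroscopic pairing vanishes identically; the paper records this at the outset as \eqref{eq:Gam-mic-t}. Your formulas for $\calA_+$ on the basis of $\mcN$ therefore play no role, but they are wrong. Since $\calA_+\phi=\sqrt{\mu_\hbar}\nabla_p(\phi/\sqrt{\mu_\hbar})$, one has $\calA_+\sqrt{\mu_\hbar}=0$ and $\calA_+(p_j\sqrt{\mu_\hbar})=\mfe_j\sqrt{\mu_\hbar}$.
\item The normalization is $\Gamma(g)=\calT(\calR(g)/\sqrt{\mu_\hbar})$, not $\calT\calR(g)$.
\item A caution that applies equally to the paper's \eqref{eq:Gam1-ref-inter}. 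The inequalities \eqref{GN ineq 1}--\eqref{GN ineq 2} interpolate a single function, so they do not give a mixed bound with $L^2_p$ on one factor and $|\cdot|_D$ on the other. The $D$-norm carries the weight $p\eta_\hbar$, which $\|g\|_{H^s}$ does not control.

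If you split the $D$-factor first, the macroscopic piece is fine: pointwise in $x$, $|\pa_x^\gamma Pg|_D\le C\|\pa_x^\gamma g\|_{L^2_p}$, so \eqref{GN ineq 1} applies to $g$. For $|\alpha|\le s-1$, however, the microscopic piece only yields, for each $\gamma\le\alpha$,
$$\|\pa_x^{\alpha-\gamma}g\|_{L^3_xL^2_p}\|\pa_x^\gamma(I-P)g\|_{L^6_xD}\le C\|g\|_{H^s}\|\pa_x^\gamma\nabla_x(I-P)g\|_D.$$
This is microscopic dissipation at every level from $1$ to $|\alpha|+1$, not only the levels displayed in \eqref{eq:Gamma2-low}; the same happens at top order. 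That weaker form is what the argument, yours and the paper's, actually delivers. It suffices for Lemma \ref{priori 1}, where the estimate is summed over $|\alpha|\le s$ and the prefactor $\|g\|_{H^s}$ is small.
\end{enumerate}
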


\begin{proof}
Recall that the perturbative decomposition of the collision operator is given by
\[
\frac{1}{\sqrt{\mu_\hbar }} Q\lt(\mcF_\hbar +\sqrt{\mu_\hbar }g\rt) = L(g)+\Gamma(g).
\]
Since the collision operator preserves mass, momentum, and energy, we get $L(g)+\Gamma(g)\in\mcN^\perp$. On the other hand, since $L$ is self-adjoint and $\ker L=\mcN$
by Lemma \ref{lem:Prop-L}, we have $\lal Lg,\psi\ral_{L^2_p} = \lal g,L\psi\ral_{L^2_p} = 0$ for every $\psi\in\mcN$. Thus,  $L(g)\in\mcN^\perp$, and consequently $\Gamma(g)\in\mcN^\perp$.
This deduces
\bq\label{eq:Gam-mic-t}
\lt\lal \pa_x^\alpha\Gamma(g), \pa_x^\alpha g \rt\ral_{L^2_{x,p}} = \lt\lal \pa_x^\alpha\Gamma(g), \pa_x^\alpha(I-P)g \rt\ral_{L^2_{x,p}}.
\eq

We now split the right-hand side of \eqref{eq:Gam-mic-t} according to the individual terms in the nonlinear flux \eqref{eq:non-flux}. Due to the exponential decay of $\mu_\hbar $, each moment appearing in \eqref{eq:non-flux} is controlled by the corresponding $L^2_p$-norm. For example,
\bq\label{eq:mom-con-ex}
\lt| \intr (1+|p|^2)\eta_\hbar \sqrt{\mu_\hbar }\, \pa_x^\alpha g\,\rdp \rt| \le C\|\pa_x^\alpha g\|_{L^2_p}.
\eq
To avoid repetition, we estimate only the representative terms $\Gamma_1$ and $\Gamma_2$ defined in \eqref{nonlinear terms}. The remaining terms are handled in the same way or more directly.

\medskip
\noindent
{\it Estimate of $\Gamma_1$.} Using \eqref{eq:Apm-iden}, we get $\Gamma_1=\calT\lt(\mathscr c(g)\calA_-g\rt)$. By \eqref{eq:wei-f-pair}, \eqref{eq:Gam-mic-t}, and
Leibniz's rule in $x$, we obtain
\[
\lt\lal \pa_x^\alpha\Gamma_1, \pa_x^\alpha (I-P)g \rt\ral_{L^2_{x,p}} = - \sum_{\gamma\le\alpha} C_\gamma \intr \mathscr c(\pa_x^{\alpha-\gamma}g) \lt( \intr \calA_-(\pa_x^\gamma g) \cdot \calA_+\pa_x^\alpha(I-P)g \,\rdp \rt)\rdx.
\]
By \eqref{eq:c-mom-b} and the definition of the $D$-norm, we have $\lt| \mathscr c(\pa_x^{\alpha-\gamma}g) \rt| \le C\|\pa_x^{\alpha-\gamma}g\|_{L^2_p}$ and
\[
\|\calA_-(\pa_x^\gamma g)\|_{L^2_p} \le C|\pa_x^\gamma g|_D,
\quad
\|\calA_+\pa_x^\alpha(I-P)g\|_{L^2_p} \le C|\pa_x^\alpha(I-P)g|_D.
\]
This yields
\bq\label{eq:Gam1-ref-pre}
\lt| \lt\lal \pa_x^\alpha\Gamma_1, \pa_x^\alpha (I-P)g \rt\ral_{L^2_{x,p}} \rt| \le C \sum_{\gamma\le\alpha} \lt\| \|\pa_x^{\alpha-\gamma}g\|_{L^2_p} |\pa_x^\gamma g|_D \rt\|_{L^2_x} \|\pa_x^\alpha(I-P)g\|_D.
\eq

We now apply H\"older's inequality in the $x$ variable, \eqref{GN ineq 1}, and \eqref{GN ineq 2} to get
\bq\label{eq:Gam1-ref-inter}
\sum_{\gamma\le\alpha} \lt\| \|\pa_x^{\alpha-\gamma}g\|_{L^2_p} |\pa_x^\gamma g|_D \rt\|_{L^2_x} \le \sum_{\gamma\le\alpha} \|\pa_x^{\alpha-\gamma}g\|_{L^3_xL^2_p} \|\pa_x^\gamma g\|_{L^6_xD} 
\le
\begin{cases}
C\|g\|_{H^s}
\|\pa_x^\alpha\nabla_x g\|_D,
& |\alpha|\le s-1,\\[1mm]
C\|g\|_{H^s}
\|\pa_x^\alpha g\|_D,
& |\alpha|=s.
\end{cases}
\eq
Since $P$ is finite-dimensional and the basis functions of $\mcN$ have exponential decay, we deduce
\[
\|\pa_x^\alpha\nabla_x g\|_D \le \|\pa_x^\alpha\nabla_x(I-P)g\|_D + C\|\pa_x^\alpha\nabla_x g\|_{L^2_{x,p}},
\]
and
\[
\|\pa_x^\alpha g\|_D \le \|\pa_x^\alpha(I-P)g\|_D + C\|\pa_x^\alpha g\|_{L^2_{x,p}}.
\]
Combining these bounds with \eqref{eq:Gam1-ref-pre} and
\eqref{eq:Gam1-ref-inter}, and applying Young's inequality, we
obtain, for $|\alpha|\le s-1$,
\[
 \lt| \lt\lal \pa_x^\alpha\Gamma_1, \pa_x^\alpha (I-P)g \rt\ral_{L^2_{x,p}} \rt| \le C\|g\|_{H^s} \lt( \frac1\delta \|\pa_x^\alpha(I-P)g\|_D^2 + \delta \|\pa_x^\alpha\nabla_x(I-P)g\|_D^2 + \delta \|\pa_x^\alpha\nabla_x g\|_{L^2_{x,p}}^2 \rt),
\]
while for $|\alpha|=s$,
\[
 \lt| \lt\lal \pa_x^\alpha\Gamma_1, \pa_x^\alpha (I-P)g \rt\ral_{L^2_{x,p}} \rt|  \le C\|g\|_{H^s} \lt( \frac1\delta \|\pa_x^\alpha(I-P)g\|_D^2 + \delta \|\pa_x^\alpha g\|_{L^2_{x,p}}^2 \rt).
\]

\medskip
\noindent
{\it Estimate of $\Gamma_2$.}
Recall from the proof of Lemma \ref{Gamma 1} that $G(g) = G_0+G_1(g)+G_2(g)$, where $G_0=\sqrt{\mu_\hbar }$, $G_1(g)=\eta_\hbar g$, $G_2(g)=\hbar\kappa\sqrt{\mu_\hbar }g^2$. We also recall that $ \Gamma_2 = -\calT\lt(N_uG(g)\rt)$.
Using \eqref{eq:wei-f-pair} and Leibniz's rule in $x$, we obtain
\[
\lt\lal \pa_x^\alpha\Gamma_2, \pa_x^\alpha(I-P)g \rt\ral_{L^2_{x,p}}  = \sum_{\gamma\le\alpha} C_\gamma \inttr \pa_x^\gamma N_u\, \pa_x^{\alpha-\gamma}G(g) \cdot \calA_+\pa_x^\alpha(I-P)g \,\rdp\rdx 
\]
up to harmless signs. Consequently,
\bq\label{eq:Gam2-ref-pre}
 \lt| \lt\lal \pa_x^\alpha\Gamma_2, \pa_x^\alpha(I-P)g \rt\ral_{L^2_{x,p}} \rt|  \le C \|\pa_x^\alpha(I-P)g\|_D \sum_{\gamma\le\alpha} \lt\| \pa_x^\gamma N_u\, \pa_x^{\alpha-\gamma}G(g) \rt\|_{L^2_{x,p}}.
\eq

We now derive a refined estimate for the last sum. Since $G_0$ is independent of $x$, only the term with $\gamma=\alpha$ remains. Hence, by Lemma \ref{lem:Nu-NT-est} and the exponential decay of $\mu_\hbar $, we have
\[
 \sum_{\gamma\le\alpha} \lt\| \pa_x^\gamma N_u\, \pa_x^{\alpha-\gamma}G_0 \rt\|_{L^2_{x,p}}
 \le
\begin{cases}
C\|g\|_{H^s}
\|\pa_x^\alpha\nabla_x g\|_{L^2_{x,p}},
& |\alpha|\le s-1,\\[1mm]
C\|g\|_{H^s}
\|\pa_x^\alpha g\|_{L^2_{x,p}},
& |\alpha|=s.
\end{cases}
\]

For the terms involving $G_1(g)$ and $G_2(g)$, we use the same product decomposition as in the proof of Lemma \ref{Gamma 1}, together with the $x$-variable interpolation argument used above. Since $G_1(g)=\eta_\hbar g$ and $\eta_\hbar $ is independent of $x$ and bounded, Lemma \ref{lem:Nu-NT-est} gives
\[
\sum_{\gamma\le\alpha} \lt\| \pa_x^\gamma N_u\, \pa_x^{\alpha-\gamma}G_1(g) \rt\|_{L^2_{x,p}}
 \le
\begin{cases}
C\|g\|_{H^s}^2
\|\pa_x^\alpha\nabla_x g\|_{L^2_{x,p}},
& |\alpha|\le s-1,\\[1mm]
C\|g\|_{H^s}^2
\|\pa_x^\alpha g\|_{L^2_{x,p}},
& |\alpha|=s.
\end{cases}
\]
Similarly, since $G_2(g)=\hbar\kappa\sqrt{\mu_\hbar }g^2$, the exponential decay of $\sqrt{\mu_\hbar }$ and the Moser product estimate in Lemma \ref{Sobolev ineq} yield
\[
 \sum_{\gamma\le\alpha} \lt\| \pa_x^\gamma N_u\, \pa_x^{\alpha-\gamma}G_2(g) \rt\|_{L^2_{x,p}}
 \le
\begin{cases}
C\|g\|_{H^s}^3
\|\pa_x^\alpha\nabla_x g\|_{L^2_{x,p}},
& |\alpha|\le s-1,\\[1mm]
C\|g\|_{H^s}^3
\|\pa_x^\alpha g\|_{L^2_{x,p}},
& |\alpha|=s.
\end{cases}
\]

Since $\|g\|_{H^s}\ll1$, combining the above estimates, we obtain
\begin{align}\label{eq:Gam2-ref-pro-b}
\sum_{\gamma\le\alpha} \lt\| \pa_x^\gamma N_u\, \pa_x^{\alpha-\gamma}G(g) \rt\|_{L^2_{x,p}}
\le
\begin{cases}
C\|g\|_{H^s}
\|\pa_x^\alpha\nabla_x g\|_{L^2_{x,p}},
& |\alpha|\le s-1,\\[1mm]
C\|g\|_{H^s}
\|\pa_x^\alpha g\|_{L^2_{x,p}},
& |\alpha|=s.
\end{cases}
\end{align}
Combining \eqref{eq:Gam2-ref-pre} and \eqref{eq:Gam2-ref-pro-b}, and applying Young's inequality, we obtain the bounds \eqref{eq:Gamma2-low} and \eqref{eq:Gamma2-top} for $\Gamma_2$. The remaining terms in \eqref{eq:non-flux} are handled in the same manner, using the moment bound \eqref{eq:mom-con-ex}, the coefficient bound \eqref{eq:P-coeff-b}, Lemma \ref{lem:Nu-NT-est}, and the Sobolev--Moser estimates in Lemma \ref{Sobolev ineq}. This completes the proof.
\end{proof}

We next provide an estimate involving $p$-derivatives. This estimate will be used in the higher-order energy estimates.

\begin{lemma}\label{Gamma 3}
Let $T_*>0$ and $s\ge4$ and suppose that $g\in \mcC([0,T_*];H^s(\R^3 \times \R^3))$ satisfies $\|g\|_{H^s}\ll1$. Then, for $|\alpha|+|\beta|\le s$ with
$|\beta|\ge1$ and for any $0<\delta\le1$, we have
\begin{align}\label{eq:Gamma3-est}
\begin{aligned}
&\lt| \lt\lal \pa_x^\alpha\pa_p^\beta\Gamma(g), \pa_x^\alpha\pa_p^\beta(I-P)g \rt\ral_{L^2_{x,p}} \rt| \cr
&\quad \le C\|g\|_{H^s} \lt( \delta \sum_{1\le|\gamma|\le s} \|\pa_x^\gamma g\|_{L^2_{x,p}}^2 + \frac1\delta \sum_{|\gamma|\le s-|\beta|} \|\pa_x^\gamma\pa_p^\beta(I-P)g\|_D^2 + \frac1\delta \sum_{\substack{|\gamma|+|\nu|\le s\\|\nu|\le|\beta|-1}} \|\pa_x^\gamma\pa_p^\nu(I-P)g\|_D^2 \rt).
\end{aligned}
\end{align}
\end{lemma}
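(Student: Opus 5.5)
As in Lemmas \ref{Gamma 1} and \ref{Gamma 2}, I would reduce to the representative terms $\Gamma_1=\calT(\mathscr c(g)\calA_-g)$ and $\Gamma_2=-\calT(N_uG(g))$ from \eqref{nonlinear terms}. The remaining terms of \eqref{eq:non-flux} have the same structure: an $x$-dependent moment factor times a $p$-dependent flux.

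The first step applies the commutator pairing \eqref{eq:T-com-pair}, with $F$ the relevant flux and $\phi=\pa_x^\alpha(I-P)g$. The test function is now already microscopic, so no splitting $\phi=(I-P)\phi+P\phi$ is needed. Instead, $\|\calA_+\pa_p^\beta\phi\|_{L^2_p}$ and $\|\pa_p^\beta\phi\|_{L^2_p}$ are bounded directly by $|\pa_x^\alpha\pa_p^\beta(I-P)g|_D$. This gives
\[
\lt|\lt\lal \pa_x^\alpha\pa_p^\beta\Gamma,\pa_x^\alpha\pa_p^\beta(I-P)g\rt\ral\rt|\le C\|\pa_x^\alpha\pa_p^\beta(I-P)g\|_D\sum_{\nu\le\beta}\|\pa_x^\alpha\pa_p^\nu F\|_{L^2_{x,p}}.
\]
By Young's inequality with weight $\delta$, the first factor produces the term $\frac1\delta\|\pa_x^\alpha\pa_p^\beta(I-P)g\|_D^2$. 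It then remains to show that $\sum_\nu\|\pa_x^\alpha\pa_p^\nu F\|^2$ is bounded by $\|g\|_{H^s}^2$ times the three dissipative quantities on the right of \eqref{eq:Gamma3-est}.

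The second step treats the flux. Since moment factors depend only on $x$, Leibniz's rule in $x$ gives products of the form $\pa_x^{\alpha-\gamma}(\text{moment})\cdot \pa_x^\gamma\pa_p^\nu(\text{$p$-function})$. Each moment is a linear or quadratic moment of $g$, or $N_u,N_\Theta$. Crucially, each moment integrates $g$ against an exponentially decaying weight, and $\mcN^\perp$ is invisible only to $\mcN$-modes, so the moment does not see $(I-P)$. Hence we keep the moment factors in terms of the full $g$. In the $p$-factor, $\calA_-\pa_p^\nu\pa_x^\gamma g$, I would decompose $g=Pg+(I-P)g$. The $(I-P)$ part with $|\nu|\le|\beta|$ gives $|\pa_x^\gamma\pa_p^\nu(I-P)g|_D$. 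When $\nu=\beta$, this is a term in the second sum, and when $|\nu|\le|\beta|-1$ it lies in the third sum; the extra derivative from $\calA_-$ is absorbed by the $D$-norm. The $Pg$ part has all $p$-derivatives landing on the basis functions, which are bounded by \eqref{eq:P-coeff-b}, leaving only $x$-derivatives of the macroscopic coefficients, controlled by $\|\pa_x^\gamma g\|_{L^2}$.

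The third step is the $x$-product estimate. I would use the low--high splitting of \eqref{eq:gg-Ds}, placing the lower-order factor in $L^\infty_x$ by Sobolev embedding and the other in $L^2_x$, or alternatively the $L^3$--$L^6$ bounds \eqref{GN ineq 1}--\eqref{GN ineq 2}. The moment factors are controlled by Lemma \ref{lem:Nu-NT-est} and by \eqref{eq:c-mom-b}. Since $|\alpha|\le s-|\beta|\le s-1$, there is always room for the embedding. The whole product is then bounded by $\|g\|_{H^s}$ times one of the dissipative quantities. For $\Gamma_2$, the $G_0=\sqrt{\mu_\hbar}$ contribution reduces to $\|\pa_x^\alpha N_u\|_{L^2_x}$. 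By \eqref{Nu-NT-bound-low} this is at most $C\|g\|_{H^s}\|\pa_x^\alpha\nabla_xg\|$, which belongs to the $\delta\sum_{1\le|\gamma|\le s}$ term.

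The main obstacle is precisely the pure macroscopic contributions without any $x$-derivative, such as $\alpha=0$ with $Pg$ in both factors. These would produce $\|g\|_{L^2}^2$, which does not appear on the right-hand side. To handle this, I would pair each zero-derivative $L^2$-type factor, in $L^3_x$ or $L^\infty_x$, against a factor carrying at least one $x$-derivative. For $\alpha=0$, for example, I would use $\|Pg\|_{L^3_x}\|Pg\|_{L^6_x}\le C\|g\|_{H^s}\|\nabla_x g\|$, invoking \eqref{GN ineq 1} with one derivative gained. This converts the bound into $\|g\|_{H^s}\sum_{1\le|\gamma|\le s}\|\pa_x^\gamma g\|$, which closes the estimate. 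The weight $\delta$ is placed on these macroscopic pieces and $\frac1\delta$ on the microscopic ones.
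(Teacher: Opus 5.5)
Your proposal is correct and follows the paper's own proof of this lemma. The shared steps are the reduction to $\Gamma_1,\Gamma_2$, the weighted flux pairing \eqref{eq:wei-f-com} against the already microscopic test function, the macro--micro split of the flux factor via \eqref{eq:P-D-b-Gam3}, and the low--high splitting in $x$ combined with Lemma \ref{lem:Nu-NT-est}. Your explicit $L^3_x$--$L^6_x$ treatment of the zero-derivative macroscopic products via \eqref{GN ineq 1} makes precise a step that the paper handles only by the one-line remark after \eqref{eq:Gam3-pro-b}.
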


\begin{proof}
As in the proof of Lemma \ref{Gamma 1}, we split the nonlinear flux into its individual components and estimate only the contributions arising from the representative terms $\Gamma_1$ and $\Gamma_2$ defined in \eqref{nonlinear terms}. The remaining terms are handled in the same way or more directly.

\medskip
\noindent
{\it Estimate of $\Gamma_1$.}
Recall that $\Gamma_1=\calT\lt(\mathscr c(g)\calA_-g\rt)$. Applying \eqref{eq:wei-f-com} and Leibniz's rule in $x$, we
obtain
\begin{align*} 
&\lt| \lt\lal \pa_x^\alpha\pa_p^\beta\Gamma_1, \pa_x^\alpha\pa_p^\beta(I-P)g \rt\ral_{L^2_{x,p}} \rt|  \le C  \|\pa_x^\alpha\pa_p^\beta(I-P)g\|_D  \sum_{\substack{\gamma\le\alpha\\ \nu\le\beta}} \lt\| \lt| \mathscr c(\pa_x^{\alpha-\gamma}g) \rt| \lt\|
\pa_p^\nu \calA_-(\pa_x^\gamma g) \rt\|_{L^2_p} \rt\|_{L^2_x}.
\end{align*}
Since the $D$-norm controls the $L^2_p$-norm, we get $ \|\pa_x^\alpha(I-P)g\|_{L^2_{x,p}} \le C\|\pa_x^\alpha(I-P)g\|_D$. Moreover, by \eqref{eq:c-mom-b} and \eqref{eq:Ami-D-b},
\[
\lt| \mathscr c(\pa_x^{\alpha-\gamma}g) \rt| \le C\|\pa_x^{\alpha-\gamma}g\|_{L^2_p} \quad \text{and} \quad \lt\| \pa_p^\nu \calA_-(\pa_x^\gamma g) \rt\|_{L^2_p} \le C_\nu \sum_{\nu'\le\nu} |\pa_x^\gamma\pa_p^{\nu'}g|_D.
\]
Consequently,
\begin{align}\label{eq:Gam3-Gam1-2}
\begin{aligned}
&\lt| \lt\lal \pa_x^\alpha\pa_p^\beta\Gamma_1, \pa_x^\alpha\pa_p^\beta(I-P)g \rt\ral_{L^2_{x,p}} \rt|  \le C \|\pa_x^\alpha\pa_p^\beta(I-P)g\|_D  
\sum_{\substack{\gamma\le\alpha\\ \nu\le\beta}} \lt\| \|\pa_x^{\alpha-\gamma}g\|_{L^2_p} |\pa_x^\gamma\pa_p^{\nu}g|_D \rt\|_{L^2_x}.
\end{aligned}
\end{align}

We decompose $\pa_x^\gamma\pa_p^{\nu}g=\pa_x^\gamma\pa_p^{\nu}(I-P)g+\pa_x^\gamma\pa_p^{\nu}Pg$. Since $P$ is finite-dimensional and the basis functions of $\mcN$ have
exponential decay, we have
\bq\label{eq:P-D-b-Gam3}
|\pa_x^\gamma\pa_p^{\nu}Pg|_D \le C_{\nu} \|\pa_x^\gamma g\|_{L^2_p}.
\eq
Using \eqref{eq:P-D-b-Gam3}, the Sobolev embedding in the $x$ variable, and the same low--high splitting as in the proof of Lemma \ref{Gamma 1}, we obtain
\begin{align}\label{eq:Gam3-pro-b}
\begin{aligned}
& \sum_{\substack{\gamma\le\alpha\\ \nu\le\beta}} \lt\| \|\pa_x^{\alpha-\gamma}g\|_{L^2_p} |\pa_x^\gamma\pa_p^{\nu}g|_D \rt\|_{L^2_x} \cr
&\quad\le C\|g\|_{H^s} \lt( \sum_{1\le|\gamma|\le s} \|\pa_x^\gamma g\|_{L^2_{x,p}}  + \sum_{|\gamma|\le s-|\beta|} \|\pa_x^\gamma\pa_p^\beta(I-P)g\|_D + \sum_{\substack{|\gamma|+|\nu|\le s\\|\nu|\le|\beta|-1}} \|\pa_x^\gamma\pa_p^\nu(I-P)g\|_D \rt).
\end{aligned}
\end{align}
Here, the terms without a positive number of $x$-derivatives are included in the microscopic $D$-norm sums on the right-hand side.

Combining \eqref{eq:Gam3-Gam1-2} and \eqref{eq:Gam3-pro-b}, we obtain
\begin{align*} 
&\lt| \lt\lal \pa_x^\alpha\pa_p^\beta\Gamma_1, \pa_x^\alpha\pa_p^\beta(I-P)g \rt\ral_{L^2_{x,p}} \rt| \cr
&\quad\le C\|g\|_{H^s}  \|\pa_x^\alpha\pa_p^\beta(I-P)g\|_D  \cr
&\quad\quad \times \lt( \sum_{1\le|\gamma|\le s} \|\pa_x^\gamma g\|_{L^2_{x,p}} + \sum_{|\gamma|\le s-|\beta|} \|\pa_x^\gamma\pa_p^\beta(I-P)g\|_D + \sum_{\substack{|\gamma|+|\nu|\le s\\|\nu|\le|\beta|-1}} \|\pa_x^\gamma\pa_p^\nu(I-P)g\|_D \rt).
\end{align*}
Applying Young's inequality gives
\begin{align*} 
&\lt| \lt\lal \pa_x^\alpha\pa_p^\beta\Gamma_1, \pa_x^\alpha\pa_p^\beta(I-P)g \rt\ral_{L^2_{x,p}} \rt| \cr
&\quad\le C\|g\|_{H^s} \lt( \delta \sum_{1\le|\gamma|\le s} \|\pa_x^\gamma g\|_{L^2_{x,p}}^2 + \frac1\delta \sum_{|\gamma|\le s-|\beta|} \|\pa_x^\gamma\pa_p^\beta(I-P)g\|_D^2 + \frac1\delta \sum_{\substack{|\gamma|+|\nu|\le s\\|\nu|\le|\beta|-1}} \|\pa_x^\gamma\pa_p^\nu(I-P)g\|_D^2 \rt).
\end{align*}

\medskip
\noindent
{\it Estimate of $\Gamma_2$.}
Recall from the proof of Lemma \ref{Gamma 1} that $G(g) =G_0+G_1(g)+G_2(g)$, where $G_0=\sqrt{\mu_\hbar }$, $G_1(g)=\eta_\hbar g$, $G_2(g)=\hbar\kappa\sqrt{\mu_\hbar }g^2$, and $ \Gamma_2 = -\calT\lt(N_uG(g)\rt)$. Applying \eqref{eq:wei-f-com} and Leibniz's rule in $x$, we obtain
\begin{align}\label{eq:Gam3-Gam2-0}
\begin{aligned}
&\lt| \lt\lal \pa_x^\alpha\pa_p^\beta\Gamma_2, \pa_x^\alpha\pa_p^\beta(I-P)g \rt\ral_{L^2_{x,p}} \rt|  \le C  \|\pa_x^\alpha\pa_p^\beta(I-P)g\|_D   
\sum_{\substack{\gamma\le\alpha\\ \nu\le\beta}} \lt\| \pa_x^\gamma N_u\, \pa_x^{\alpha-\gamma}\pa_p^\nu G(g) \rt\|_{L^2_{x,p}}.
\end{aligned}
\end{align}
Since $N_u$ depends only on $x$, all $p$-derivatives fall on $G(g)$. Using the coefficient bound \eqref{eq:P-coeff-b}, Lemma \ref{lem:Nu-NT-est}, the Sobolev--Moser estimates in Lemma
\ref{Sobolev ineq}, and the same low--high splitting in the $x$ variable as above, we obtain
\begin{align}\label{eq:Gam3-Gam2-pro}
\begin{aligned}
&\sum_{\substack{\gamma\le\alpha\\ \nu\le\beta}} \lt\| \pa_x^\gamma N_u\, \pa_x^{\alpha-\gamma}\pa_p^\nu G(g) \rt\|_{L^2_{x,p}} \cr
&\quad\le C\|g\|_{H^s} \lt( \sum_{1\le|\gamma|\le s} \|\pa_x^\gamma g\|_{L^2_{x,p}} + \sum_{|\gamma|\le s-|\beta|} \|\pa_x^\gamma\pa_p^\beta(I-P)g\|_D + \sum_{\substack{|\gamma|+|\nu|\le s\\|\nu|\le|\beta|-1}} \|\pa_x^\gamma\pa_p^\nu(I-P)g\|_D \rt).
\end{aligned}
\end{align}
Here, the terms involving $G_0$ are controlled directly by Lemma \ref{lem:Nu-NT-est}. The terms involving $G_1(g)$ are estimated using the boundedness of $\eta_\hbar $ and its derivatives. Finally, the terms involving $G_2(g)$ are of higher order and are controlled by Leibniz's rule and the Moser product estimate.

Combining \eqref{eq:Gam3-Gam2-0} and \eqref{eq:Gam3-Gam2-pro}, and applying Young's inequality, we obtain
\begin{align*} 
&\lt| \lt\lal \pa_x^\alpha\pa_p^\beta\Gamma_2, \pa_x^\alpha\pa_p^\beta(I-P)g \rt\ral_{L^2_{x,p}} \rt| \cr
&\quad\le C\|g\|_{H^s} \lt( \delta \sum_{1\le|\gamma|\le s} \|\pa_x^\gamma g\|_{L^2_{x,p}}^2 + \frac1\delta \sum_{|\gamma|\le s-|\beta|} \|\pa_x^\gamma\pa_p^\beta(I-P)g\|_D^2 + \frac1\delta\sum_{\substack{|\gamma|+|\nu|\le s\\|\nu|\le|\beta|-1}} \|\pa_x^\gamma\pa_p^\nu(I-P)g\|_D^2 \rt).
\end{align*}

The remaining terms in $\Gamma(g)$ are estimated in the same manner. They contain either moment factors as in $\Gamma_1$, nonlinear macroscopic remainders $N_u$ and $N_\Theta$ as in $\Gamma_2$, or lower-order products with exponentially decaying $p$-dependent coefficients. Combining the preceding estimates gives \eqref{eq:Gamma3-est}.
\end{proof}

Finally, we show the Lipschitz continuity of the nonlinear operator in the perturbative regime.

\begin{lemma}\label{Gamma 1.5}
Let $T_* >0$ and $s\ge4$. Suppose that $g,h\in \mcC([0,T_*];H^s(\R^3 \times \R^3))$ satisfy $\|g\|_{D,s}+\|h\|_{D,s}<\infty$ and $\|g\|_{H^s}+\|h\|_{H^s}\ll1$. Then, for any test function $\phi$ with $\|\phi\|_D<\infty$, we have
\bq\label{eq:Gam-diff-est}
\lt| \lt\lal \Gamma(g)-\Gamma(h),\phi \rt\ral_{L^2_{x,p}} \rt|  \le C \lt[ \lt( \|g\|_{H^s}+\|h\|_{H^s} \rt) \|g-h\|_D + \lt( \|g\|_{D,s}+\|h\|_{D,s} \rt) \|g-h\|_{L^2_{x,p}} \rt] \|\phi\|_D.
\eq
\end{lemma}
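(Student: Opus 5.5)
The plan is to write $\Gamma(g)-\Gamma(h)=\calT\bigl(\mu_\hbar^{-1/2}(\calR(g)-\calR(h))\bigr)$ and first move the divergence onto the test function. By \eqref{eq:wei-f-pair}, this gives
\[
\lal \Gamma(g)-\Gamma(h),\phi\ral_{L^2_{x,p}} = -\bigl\lal \mu_\hbar^{-1/2}(\calR(g)-\calR(h)),\calA_+\phi\bigr\ral_{L^2_{x,p}}.
\]
Since $\|\calA_+\phi\|_{L^2_{x,p}}\le C\|\phi\|_D$, it suffices to prove
\[
\|\mu_\hbar^{-1/2}(\calR(g)-\calR(h))\|_{L^2_{x,p}}\le C\bigl[(\|g\|_{H^s}+\|h\|_{H^s})\|g-h\|_D+(\|g\|_{D,s}+\|h\|_{D,s})\|g-h\|_{L^2_{x,p}}\bigr].
\]
Every term of \eqref{eq:non-flux}, divided by $\sqrt{\mu_\hbar}$, is a product of an $x$-dependent factor and a $p$-dependent factor. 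The $x$-dependent factors are moments $\mathscr a,\mathscr b,\mathscr c$, quadratic moments, or $N_u,N_\Theta$. The $p$-dependent factors are of the form $\calA_-g$, $\eta_\hbar g$, $p\eta_\hbar g$, $\sqrt{\mu_\hbar}$, $p\sqrt{\mu_\hbar}$, $\sqrt{\mu_\hbar}g^2$, and so on.

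For each product $X(g)Y(g)$ I will use the telescoping identity
\[
X(g)Y(g)-X(h)Y(h)=(X(g)-X(h))Y(g)+X(h)(Y(g)-Y(h)).
\]
Then I place the factors in $L^\infty$ or $L^2$ as follows.

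\textbf{Moment factor in $L^\infty_x$, $p$-factor in $L^2_{x,p}$.} A linear moment such as $\mathscr c(h)$ satisfies $\|\mathscr c(h)\|_{L^\infty_x}\le C\|h\|_{L^\infty_xL^2_p}\le C\|h\|_{H^s}$ by Lemma \ref{Sobolev ineq}. It multiplies $\calA_-(g-h)$, and $\|\calA_-(g-h)\|_{L^2_{x,p}}\le C\|g-h\|_D$. This gives the first type of term on the right of \eqref{eq:Gam-diff-est}.

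\textbf{Difference in the moment, $p$-factor in $L^\infty_x$.} When the difference falls on the moment, $|\mathscr c(g-h)|\le C\|g-h\|_{L^2_p}$ holds pointwise in $x$. The other factor is then put in $L^\infty_x$ of its $D$-norm, and the Sobolev embedding gives $\|\calA_-g\|_{L^\infty_xL^2_p}\le C\sum_{|\ell|\le2}\|\pa_x^\ell g\|_D\le C\|g\|_{D,s}$. This gives the second type of term.

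\textbf{Pure $p$-weighted differences.} The quadratic pieces $\hbar\kappa p\sqrt{\mu_\hbar}g^2$ and $\sqrt{\mu_\hbar}g^2$ are treated through $g^2-h^2=(g+h)(g-h)$. Using $\sqrt{\mu_\hbar}(1+|p|)\in L^\infty_p$ and $\|g+h\|_{L^\infty_{x,p}}\le C(\|g\|_{H^s}+\|h\|_{H^s})$, they are controlled by the first type of term. Cubic and higher terms contribute an extra small factor, which is harmless.

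The main obstacle is the Lipschitz estimate for $N_u$ and $N_\Theta$. Their pointwise values are not just quadratic moments but contain the rational function $H(\calA(g))$ and the denominator $\intr\{\mu_\hbar+\eta_\hbar g\sqrt{\mu_\hbar}+\hbar\kappa\mu_\hbar g^2\}\rdp$. I plan to prove two pointwise-in-$x$ bounds. The first is
\[
|N_u(g)-N_u(h)|+|N_\Theta(g)-N_\Theta(h)|\le C\bigl(\|g\|_{L^2_p}+\|h\|_{L^2_p}\bigr)\|g-h\|_{L^2_p}.
\]
I will obtain it from the mean value theorem for the smooth maps $z\mapsto z/(1+z)$ and $z\mapsto z^2/(1+z)$ on $|z|<\tfrac12$; this range is ensured as in Lemma \ref{lem:Nu-NT-est}. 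The moment differences $|\calA(g)-\calA(h)|$ are bounded by $C(1+\|g\|_{L^2_p}+\|h\|_{L^2_p})\|g-h\|_{L^2_p}$. The second bound is $|N_u(h)|\le C\|h\|_{L^2_p}^2$.

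With these, I take the $L^\infty_x$ norm of $\|g\|_{L^2_p}+\|h\|_{L^2_p}$, which is at most $C(\|g\|_{H^s}+\|h\|_{H^s})$. The remaining factor satisfies $\|g-h\|_{L^2_{x,p}}\le C\|g-h\|_D$. The $p$-factor $\sqrt{\mu_\hbar}+\eta_\hbar g+\cdots$ is bounded in $L^\infty_p$ near $\sqrt{\mu_\hbar}$, and in $L^2_p$ only through the $D$-norm. This yields the first type of term. The companion pieces, where $N_u(h)$ multiplies $\eta_\hbar(g-h)$, are bounded by $C\|h\|_{H^s}^2\|g-h\|_{L^2_{x,p}}$, using the $L^\infty_x$ bound on $\|h\|_{L^2_p}$. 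Collecting all terms gives \eqref{eq:Gam-diff-est}.
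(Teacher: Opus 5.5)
Your proposal is correct and follows essentially the same route as the paper. You move the divergence onto $\calA_+\phi$ via \eqref{eq:wei-f-pair} and telescope each product in $\calR(g)/\sqrt{\mu_\hbar}$; the undifferenced moment or $N_u(h),N_\Theta(h)$ goes in $L^\infty_x$ against $\|g-h\|_D$, and the differenced factor goes in $L^2_x$ against an $L^\infty_xL^2_p$ bound controlled by $\|g\|_{D,s}$. Your pointwise Lipschitz bound for $N_u,N_\Theta$ via the mean value theorem just supplies the detail behind \eqref{eq:Nu-NT-diff-b}, which the paper only asserts; note that the factor multiplying $N_u(g)-N_u(h)$ must be measured in $L^\infty_xL^2_p$ (not $L^\infty_p$), which $s\ge4$ provides.
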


\begin{proof}
Set  $w:=g-h$. Recall that
\[
\Gamma(g) = \frac{1}{\sqrt{\mu_\hbar }} \nabla_p\cdot\calR(g) .
\]
Using \eqref{eq:wei-f-pair}, we have
\bq\label{eq:Gam-diff-wei-pair}
\lt\lal \Gamma(g)-\Gamma(h),\phi \rt\ral_{L^2_{x,p}} = - \lt\lal \frac{\calR(g) -\mathcal R(h)}{\sqrt{\mu_\hbar }}, \calA_+\phi \rt\ral_{L^2_{x,p}}.
\eq
Since $\|\calA_+\phi\|_{L^2_{x,p}} \le C\|\phi\|_D$, it is enough to estimate
\[
\lt\| \frac{\calR(g) -\mathcal R(h)}{\sqrt{\mu_\hbar }} \rt\|_{L^2_{x,p}}.
\]

We first provide several elementary bounds. By the exponential decay of $\mu_\hbar $ and the boundedness of $\eta_\hbar $, we have, pointwise in $x$,
\bq\label{eq:mom-diff-b}
|\mathscr a(w)|+|\mathscr b(w)|+|\mathscr c(w)| \le C\|w\|_{L^2_p}.
\eq
Moreover, the explicit formulas for $N_u$ and $N_\Theta$, the smallness assumption, and the same moment and product estimates as in Lemma \ref{lem:Nu-NT-est} give
\bq\label{eq:Nu-NT-diff-b}
\|N_u(g)-N_u(h)\|_{L^2_x} + \|N_\Theta(g)-N_\Theta(h)\|_{L^2_x} \le C \lt( \|g\|_{H^s} + \|h\|_{H^s} \rt) \|w\|_{L^2_{x,p}}.
\eq
We also have
\bq\label{eq:Nu-NT-Hs-b-diff-pr}
\|N_u(g)\|_{H^s_x} + \|N_\Theta(g)\|_{H^s_x} \le C\|g\|_{H^s}^2,
\eq
and the analogous estimate with $g$ replaced by $h$.

We estimate a representative contribution involving a moment factor. By \eqref{eq:Apm-iden}, the first term in $\calR(g) /\sqrt{\mu_\hbar }$ is, up to a harmless constant, $\mathscr c(g)\calA_-g$. We write 
\[
\mathscr c(g)\calA_-g-\mathscr c(h)\calA_-h = \mathscr c(h)\calA_-w + \mathscr c(h)\calA_-g. 
\]
For the first term, using \eqref{eq:mom-diff-b} and the Sobolev embedding in the $x$ variable, we obtain
\[
\|\mathscr c(h)\calA_-w\|_{L^2_{x,p}} \le \|\mathscr c(h)\|_{L^\infty_x} \|\calA_-w\|_{L^2_{x,p}} \le C \|h\|_{H^s} \|w\|_D.
\]
For the second term, we place the moment factor in $L^2_x$ and the remaining factor in $L^\infty_xL^2_p$. Thus,
\[
\|\mathscr c(h)\calA_-g\|_{L^2_{x,p}} \le \|\mathscr c(h)\|_{L^2_x} \|\calA_-g\|_{L^\infty_xL^2_p} \le C \|w\|_{L^2_{x,p}} \|g\|_{D,s}.
\]
Consequently,
\[
 \|\mathscr c(g)\calA_-g-\mathscr c(h)\calA_-h\|_{L^2_{x,p}}  \le C \lt[ \lt( \|g\|_{H^s} + \|h\|_{H^s} \rt) \|w\|_D + \lt( \|g\|_{D,s} + \|h\|_{D,s} \rt) \|w\|_{L^2_{x,p}} \rt].
\]

We next consider a representative term involving the nonlinear macroscopic remainder $N_u$. Recall that $G(g) = \sqrt{\mu_\hbar } + \eta_\hbar g + \hbar\kappa\sqrt{\mu_\hbar }g^2$.
We decompose
\[
N_u(g)G(g)-N_u(h)G(h) = \lt\{ N_u(g)-N_u(h) \rt\} G(g) + N_u(h) \lt\{ G(g)-G(h) \rt\}.
\]
By \eqref{eq:Nu-NT-diff-b}, the coefficient bounds, and the Sobolev--Moser estimates, we have
\[
\lt\| \lt\{ N_u(g)-N_u(h) \rt\} G(g) \rt\|_{L^2_{x,p}}  \le C \lt( \|g\|_{H^s} + \|h\|_{H^s} \rt) \|w\|_{L^2_{x,p}}.
\]
Moreover, $G(g)-G(h) = \eta_\hbar w + \hbar\kappa\sqrt{\mu_\hbar }(g+h)w$. Using \eqref{eq:Nu-NT-Hs-b-diff-pr}, the Sobolev embedding, and
the Moser product estimate, we obtain
\[
\lt\| N_u(h) \lt\{ G(g)-G(h) \rt\} \rt\|_{L^2_{x,p}}  \le C \|h\|_{H^s}^2(1 + \|g\|_{H^s} + \|h\|_{H^s}) \|w\|_D   \le C \|h\|_{H^s}^2 \|w\|_D.
\]
Hence, we have
\[
 \|N_u(g)G(g)-N_u(h)G(h)\|_{L^2_{x,p}}  \le C \lt[ \lt( \|g\|_{H^s} + \|h\|_{H^s} \rt) \|w\|_D + \lt( \|g\|_{D,s} + \|h\|_{D,s} \rt) \|w\|_{L^2_{x,p}} \rt].
\]
Here we used the fact that the $D,s$-norm controls the $H^s$-norm.

The term involving $N_\Theta$ is estimated in the same way. The remaining contributions to $\calR(g) -\mathcal R(h)$ consist of products of moment factors, nonlinear macroscopic remainders, and coefficients with exponential decay in $p$. This yields  
\bq\label{eq:R-diff-b}
 \lt\| \frac{\calR(g) -\mathcal R(h)}{\sqrt{\mu_\hbar }} \rt\|_{L^2_{x,p}}  \le C \lt[ \lt( \|g\|_{H^s} + \|h\|_{H^s} \rt) \|w\|_D + \lt( \|g\|_{D,s} + \|h\|_{D,s} \rt)
 \|w\|_{L^2_{x,p}} \rt].
\eq
Combining \eqref{eq:Gam-diff-wei-pair} and \eqref{eq:R-diff-b} gives \eqref{eq:Gam-diff-est}. This completes the proof.
\end{proof}

%
%
%
%
%
%
%
%
%
%

\subsection{Local well-posedness}

We now establish the local well-posedness of the perturbative equation \eqref{eq:linearized}. The proof is based on the semigroup formulation for the linearized operator, an iterative approximation scheme, and the nonlinear estimates obtained in Section \ref{ssec:non-est}. We present the main construction here and defer the detailed uniform estimates, the Cauchy argument, and the preservation of the physically admissible range to Appendix \ref{app:lwp}.

\begin{theorem}\label{LE}
Let $s\ge4$. There exist sufficiently small constants $\delta_{\rm in}>0$ and $T_*>0$ such that, for any initial datum $g_0\in H^s(\mR^3\times\mR^3)$ satisfying $\|g_0\|_{H^s}\le\delta_{\rm in}$ and $f_0:=\mcF_\hbar +g_0\sqrt{\mu_\hbar }\ge0$ for almost every $(x,p)\in\mR^3\times\mR^3$, the Cauchy problem \eqref{eq:linearized} with initial datum $g_0$ admits a unique solution $g\in \mcC([0,T_*];H^s(\R^3 \times \R^3))$. Moreover,
\bq\label{eq:local-b-main}
\sup_{0\le t\le T_*}\|g(t)\|_{H^s}^2 + \sum_{|\alpha|+|\beta|\le s} \int_0^{T_*} \|\pa_x^\alpha\pa_p^\beta(I-P)g(\tau)\|_D^2\,\rdta \le \delta_{\rm in}.
\eq
The corresponding distribution $f(t,x,p) := \mcF_\hbar (p)+g(t,x,p)\sqrt{\mu_\hbar (p)}$ satisfies $f(t,x,p)\ge0$ for almost every $(x,p)\in\mR^3\times\mR^3$ and all $0\le t\le T_*$. In the fermionic case $\kappa=-1$, if in addition $f_0(x,p)\le\frac1{\hbar}$ for almost every $(x,p)\in\mR^3\times\mR^3$, then we have
\[
    f(t,x,p)\le\frac1{\hbar}
\]
for almost every $(x,p)\in\mR^3\times\mR^3$ and all
$0\le t\le T_*$.
\end{theorem}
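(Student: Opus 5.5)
We construct the solution by an iteration scheme linearized around the previous iterate. Set $g^0(t)\equiv g_0$. Given $g^n$, let $g^{n+1}$ solve the linear problem
\[
\pa_t g^{n+1}+p\cdot\nabla_x g^{n+1}=L(g^{n+1})+\Gamma(g^n),\qquad g^{n+1}(0)=g_0 .
\]
Since $L=L_0+P_1$, with $L_0$ a Fokker--Planck operator with smooth confining potential and $P_1$ bounded and finite rank, the operator $-p\cdot\nabla_x+L$ generates a $\mcC_0$-semigroup on $H^s$. The source $\Gamma(g^n)$ is controlled through Lemma \ref{Gamma 1} in the dual of the $D$-norm. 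Hence each iterate exists and belongs to $\mcC([0,T_*];H^s)$.

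\textbf{Step 1 (uniform bounds).} We apply $\pa_x^\alpha\pa_p^\beta$ for $|\alpha|+|\beta|\le s$ and take the $L^2_{x,p}$ inner product with $\pa_x^\alpha\pa_p^\beta g^{n+1}$. The transport term contributes nothing when $\beta=0$. For $\beta\ne0$ it produces the commutator $\pa_x^{\alpha+e_i}\pa_p^{\beta-e_i}$, which is bounded by $C\|g^{n+1}\|_{H^s}^2$.

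For $\beta=0$, Lemma \ref{lem:Prop-L} gives dissipation $-\lambda_0\|\pa_x^\alpha(I-P)g^{n+1}\|_D^2$. For $\beta\ne0$, \eqref{eq:L-p-der-coer} gives the analogous dissipation up to lower-order terms. Summing over $|\beta|$ with the weighted triangular hierarchy absorbs these lower-order terms, at the cost of $C\|g^{n+1}\|_{H^s}^2$.

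The nonlinear term is bounded by Lemma \ref{Gamma 1}, with $h=g^{n+1}$ and the smallness of $g^n$. Using Young's inequality together with the bound $\|g\|_{D,s}\le(\sum\|\pa^\alpha_x\pa^\beta_p(I-P)g\|_D^2)^{1/2}+C\|g\|_{H^s}$, this yields
\[
\frac{\rd}{\rdt}\mathcal E(g^{n+1})+\lambda\mathcal D(g^{n+1})\le C\mathcal E(g^{n+1})+C\|g^n\|_{H^s}^2\big(\mathcal D(g^n)+\mathcal E(g^n)\big),
\]
where $\mathcal E\sim\|\cdot\|_{H^s}^2$ and $\mathcal D$ is the microscopic $D$-dissipation. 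Gronwall's inequality on $[0,T_*]$ and induction then show that, for $\delta_{\rm in}$ and $T_*$ small (after adjusting constants, e.g. requiring $\|g_0\|^2_{H^s}e^{CT_*}$ small relative to $\delta_{\rm in}$), the bound \eqref{eq:local-b-main} holds uniformly in $n$.

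\textbf{Step 2 (convergence and uniqueness).} The difference $w^{n+1}=g^{n+1}-g^n$ solves the linear equation with source $\Gamma(g^n)-\Gamma(g^{n-1})$. An $L^2$ estimate, combined with Lemma \ref{Gamma 1.5} and the uniform bounds, gives
\[
\sup_t\|w^{n+1}\|_{L^2}^2+\int_0^{T_*}\|w^{n+1}\|_D^2\le \tfrac12\Big(\sup_t\|w^{n}\|_{L^2}^2+\int_0^{T_*}\|w^n\|_D^2\Big).
\]
For this we use the smallness of $\|g\|_{H^s}$ and take $T_*$ small enough that the factor $\int_0^{T_*}(\|g\|_{D,s}^2+\|h\|_{D,s}^2)\,\rdt$ is small. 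Therefore $g^n$ is Cauchy in $L^\infty L^2$. Interpolating with the uniform $H^s$ bound gives convergence in $\mcC([0,T_*];H^{s'})$ for $s'<s$. The $H^s$ continuity of the limit follows from weak-$*$ limits together with the energy identity, used in the standard way. Uniqueness follows from the same difference estimate applied to two solutions.

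\textbf{Step 3 (admissible range).} This is the main obstacle. Smallness of $g$ in $H^s$ does not give $f\ge0$ pointwise, because of the tails in $p$. We therefore argue at the level of $f$ using a Stampacchia truncation.

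The solution $f$ solves a linear equation with frozen coefficients $\rho,u,\Theta$; these are Lipschitz in $x$ and $\rho\Theta\ge c>0$ by smallness. We write the equation in the non-divergence form of Section \ref{ssec:pauli}, $\rho\Theta\Delta_pf+\rho(p-u)(1+2\hbar\kappa f)\cdot\nabla_pf+3\rho F(f)$. Multiplying by $f_-=\max(-f,0)$ and integrating, the diffusion term has a good sign. The drift term is handled by integrating by parts in $p$, which produces at most $C\int(1+|p|)\cdots$ contributions. To treat these, the cleanest choice is to test with $f_-\,w(p)$ for an integrable-weight-free approach, or simply to note that the factor $F(f)f_-$ is comparable to $f_-^2(1+\hbar|f_-|)$.

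This gives $\frac{\rd}{\rdt}\|f_-\|_{L^2}^2\le C\|f_-\|_{L^2}^2$, with $C$ depending on $\|\rho\|_{L^\infty}$, $\|u\|_{W^{1,\infty}}$ and $\|\nabla_x\cdot\|$ bounds. The delicate point is that $f_-$ must lie in $L^2$ even though $f$ itself does not decay like an $L^2$ function. This is fine: $\mcF_\hbar>0$, so $|f_-|\le\sqrt{\mu_\hbar}|g|\in L^2$. The unbounded drift coefficient $(p-u)$ is handled via $\nabla_p\cdot(p-u)=3$ after integration by parts, so no weight is needed.

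Since $f_-(0)=0$, Gronwall's inequality gives $f_-\equiv0$. For fermions we apply the same argument to $(f-\frac1\hbar)_+$, using that $F(f)$ vanishes at $f=1/\hbar$ and that $(f-1/\hbar)_+\le\sqrt{\mu_\hbar}|g|$, which holds because $\mcF_\hbar<1/\hbar$. To justify the integrations by parts, we carry out the argument on the iterates or on smooth approximations and pass to the limit.
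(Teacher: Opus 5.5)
Your overall route is the paper's: the iteration $\pa_t g^{n+1}=Bg^{n+1}+\Gamma(g^n)$, $H^s$ energy estimates using \eqref{eq:L-p-der-coer} with a triangular weighting in $|\beta|$, Lemma \ref{Gamma 1} for the source, an $L^2$ contraction via Lemma \ref{Gamma 1.5}, and an $L^2$ truncation on $f$ based on $\nabla_p\cdot(p-u)=3$ and $f_-,\,(f-\frac1\hbar)_+\le\sqrt{\mu_\hbar}|g|$. However, the induction as you set it up cannot start.

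With $g^0\equiv g_0$, your differential inequality needs, at $n=0$, the quantity $\int_0^{T_*}\sum_{|\alpha|+|\beta|\le s}\|\pa_x^\alpha\pa_p^\beta(I-P)g^0\|_D^2\,\rdta=T_*\sum_{|\alpha|+|\beta|\le s}\|\pa_x^\alpha\pa_p^\beta(I-P)g_0\|_D^2$ to be of order $\delta_{\rm in}$. Equivalently, Lemma \ref{Gamma 1} needs $\|g^0\|_{D,s}<\infty$. This quantity involves $s+1$ derivatives of $g_0$ and the weight $p\eta_\hbar$, so it is in general infinite for $g_0\in H^s$. Correspondingly, the first source $\Gamma(g_0)=\frac{1}{\sqrt{\mu_\hbar}}\nabla_p\cdot\calR(g_0)$ generally lies only in $H^{s-2}$, and the $H^s$ estimate for $g^1$ does not close.

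Mollifying $g_0$ does not help, because this quantity blows up as the mollification is removed, so $T_*$ would not be uniform. The remedy is the paper's choice $g^0=0$. Then $\Gamma(g^0)=0$ and $g^1=e^{tB}g_0$. The homogeneous linear energy estimate gives both $\sup_t\|g^1\|_{H^s}^2\lesssim\|g_0\|_{H^s}^2$ and the time-integrated microscopic dissipation that the induction step consumes. With this change your Steps 1 and 2 go through as written. Bounding the transport commutator directly by $C\|g^{n+1}\|_{H^s}^2$ is fine on a finite time interval.

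A smaller point in Step 3: the truncation cannot be run on the iterates. There $f^{n+1}=\mcF_\hbar+\sqrt{\mu_\hbar}g^{n+1}$ has right-hand side $\sqrt{\mu_\hbar}\,(Lg^{n+1}+\Gamma(g^n))$. This is not the collision operator $Q(f^{n+1})$, and it has no sign on $\{f^{n+1}<0\}$, so the barrier structure is lost. The argument must be applied to the limit solution of \eqref{eq:main}, justified by smooth convex approximations of the truncation together with cutoffs, as in Lemma \ref{lem:local-pos-pauli}. No weight $w(p)$ is needed. In divergence form the drift contributes exactly $3\int\rho\,\Psi(f)$ with $|\Psi(f)|\le C(f_-^2+f_-^3)$. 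The cubic part is absorbed using $\|f_-\|_{L^\infty}\le C\sqrt{\delta_{\rm in}}$ from the Sobolev embedding, which is where the smallness enters in the fermionic case.
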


\begin{proof}
We write the perturbative equation in the abstract form
\[
\pa_t g=Bg+\Gamma(g), \quad B:=L-p\cdot\nabla_x.
\]
We first recall the linear theory associated with $B$. Let $B_0$ be the operator defined on $\calC_c^\infty(\R^3\times\R^3)$ by $B_0h:=Lh-p\cdot\nabla_xh$. We use the same notation $B$ for its maximal realization in $L^2_{x,p}$:
\[
D(B):=\lt\{h\in L^2 : B_0h\in L^2\ \text{in the sense of distributions}\rt\}, \quad Bh:=B_0h.
\]
A standard cutoff and approximation argument shows that $B$ is densely defined and closed. Moreover, its adjoint is the maximal realization of the formal adjoint $B_0^\sharp h:=Lh+p\cdot\nabla_xh$. Since $L$ is self-adjoint and nonpositive and $p\cdot\nabla_x$ is skew-symmetric in $L^2$, an integration-by-parts argument, together with the same cutoff approximation, shows that both $B$ and $B^*$ are dissipative. Hence, by the Lumer--Phillips theorem, $B$ generates a contraction $\calC_0$-semigroup $\{e^{tB}\}_{t\ge0}$ on $L^2$; see \cite[Corollary 1.4.4]{Paz83}. We also refer to \cite[Lemma 3.4]{AGGMMS12} for a related maximal-realization argument for a kinetic Fokker--Planck operator.

In addition, by the microscopic coercivity of $L$ established in Lemma \ref{lem:Prop-L}, we have
\bq\label{eq:B-diss-local}
    \lal Bh,h\ral_{L^2_{x,p}} = \lal Lh,h\ral_{L^2_{x,p}} \le  -\lambda_0\|(I-P)h\|_D^2
\eq
for every sufficiently regular function $h$. This estimate will be used both in the uniform estimates and in the Cauchy argument.

We construct a solution by iteration. Set $g^0=0$ and define $\{g^n\}_{n\ge0}$ recursively by
\bq\label{eq:local-iter}
\pa_t g^{n+1} = Bg^{n+1}+\Gamma(g^n), \quad g^{n+1}|_{t=0} = g_0.
\eq
Equivalently, Duhamel's formula gives
\[
    g^{n+1}(t) = e^{tB}g_0 + \int_0^t e^{(t-\tau)B}\Gamma(g^n(\tau))\,\rdta.
\]
The scheme is first carried out for smooth approximations of the initial datum and the nonlinear source term. The estimates below are uniform with respect to the approximation parameters, which are suppressed in the notation. The corresponding technical details are given in Appendix \ref{app:lwp}.

For fixed $T_*>0$ and $\delta_{\rm in}>0$, define
\[
\mcI(s,T_*; \delta_{\rm in}) := \lt\{ g\in \mcC([0,T_*];H^s) : \sup_{0\le t\le T_*} \|g(t)\|_{H^s}^2 + \sum_{|\alpha|+|\beta|\le s} \int_0^{T_*} \|\pa_x^\alpha\pa_p^\beta(I-P)g(\tau)\|_D^2\,\rdta \le \delta_{\rm in} \rt\}.
\]

We first establish uniform estimates for the approximate sequence. The proof is divided into two steps. For the purely spatial derivatives, we apply $\pa_x^\alpha$ to \eqref{eq:local-iter}, take the
$L^2_{x,p}$-inner product with $\pa_x^\alpha g^{n+1}$, and use the skew-symmetry of the transport operator. This gives
\[
\frac12\frac{\rd}{\rdt} \|\pa_x^\alpha g^{n+1}\|_{L^2_{x,p}}^2  = \lt\lal L(\pa_x^\alpha g^{n+1}), \pa_x^\alpha g^{n+1} \rt\ral_{L^2_{x,p}} + \lt\lal \pa_x^\alpha\Gamma(g^n), \pa_x^\alpha g^{n+1} \rt\ral_{L^2_{x,p}}.
\]
By \eqref{eq:B-diss-local}, the linear term provides the microscopic dissipation
\[
\lt\lal L(\pa_x^\alpha g^{n+1}), \pa_x^\alpha g^{n+1} \rt\ral_{L^2_{x,p}} \le -\lambda_0 \|\pa_x^\alpha(I-P)g^{n+1}\|_D^2.
\]
The nonlinear term is estimated by Lemma \ref{Gamma 1}.

We next estimate mixed derivatives. For $ |\alpha|+|\beta|\le s$, $|\beta|\ge1$, we apply $\pa_x^\alpha\pa_p^\beta$ to
\eqref{eq:local-iter} and test the resulting equation against $\pa_x^\alpha\pa_p^\beta g^{n+1}$. The commutator $[\pa_p^\beta,p\cdot\nabla_x]g^{n+1}$ contains only strictly lower $p$-derivatives. Similarly, the $p$-derivative coercivity estimate for $L$ gives the same-order microscopic dissipation up to lower $p$-order terms. These contributions are handled by a triangular induction over $|\beta|$. Whenever the full $D$-norm of an approximate solution appears in the nonlinear estimates, we use the finite-dimensionality of $P$ and the exponential decay of the basis functions of $\mcN$ to write
\[
\|\pa_x^\alpha\pa_p^\beta g^n\|_D \le \|\pa_x^\alpha\pa_p^\beta(I-P)g^n\|_D + C_\beta \|\pa_x^\alpha g^n\|_{L^2_{x,p}}.
\]
Thus, on a finite time interval, the full $D$-norm is controlled by the microscopic dissipation and the Sobolev energy.

Combining the spatial and mixed derivative estimates with suitable weights, we obtain a closed energy inequality. As shown in Lemma \ref{lem:local-uni-b}, if $ \delta_{\rm in}>0$ and $T_*>0$ are sufficiently small, then
\bq\label{eq:local-uni-iter-b}
    g^n\in\mcI(s,T_*; \delta_{\rm in})    \quad    \text{for every }n\ge0.
\eq

We next prove the convergence of the approximate sequence. Set
\[
    w^{n+1} := g^{n+1}-g^n.
\]
For $n\ge1$, subtracting the equations for $g^{n+1}$ and $g^n$ gives
\[
\pa_t w^{n+1} = Bw^{n+1} + \Gamma(g^n)-\Gamma(g^{n-1}), \quad w^{n+1}|_{t=0}=0.
\]
Taking the $L^2_{x,p}$-inner product with $w^{n+1}$ and using \eqref{eq:B-diss-local}, we obtain
\[
\frac12\frac{\rd}{\rdt} \|w^{n+1}(t)\|_{L^2_{x,p}}^2 + \lambda_0 \|(I-P)w^{n+1}(t)\|_D^2  \le \lt| \lt\lal \Gamma(g^n)-\Gamma(g^{n-1}), w^{n+1} \rt\ral_{L^2_{x,p}} \rt|.
\]
Applying the difference estimate in Lemma \ref{Gamma 1.5}, splitting the $D$-norm into its microscopic and finite-dimensional macroscopic parts, and using \eqref{eq:local-uni-iter-b}, we obtain a contraction estimate after integration in time. More precisely, Lemma \ref{lem:local-cau} shows that $\{g^n\}_{n\ge0}$ is a Cauchy sequence in $\mcC([0,T_*];L^2(\R^3 \times \R^3))$. Moreover, $\{(I-P)g^n\}_{n\ge0}$ is a Cauchy sequence in the corresponding microscopic dissipation space.

By the uniform $H^s$ bound and interpolation, we obtain $g^n\to g$ in $\mcC([0,T_*];H^{s-1}(\R^3 \times \R^3)).$ Moreover, after passing to a subsequence if necessary, $g^n \rightharpoonup^\ast g$ in $L^\infty(0,T_*;H^s(\R^3 \times \R^3))$ and $\pa_x^\alpha\pa_p^\beta(I-P)g^n \rightharpoonup \pa_x^\alpha\pa_p^\beta(I-P)g$ weakly in the corresponding $L^2(0,T_*;D)$ spaces for $|\alpha|+|\beta|\le s$.

Passing to the limit in the approximate equations, we obtain a solution $g$ of \eqref{eq:linearized}. By weak lower semicontinuity,
\[
\sup_{0\le t\le T_*}\|g(t)\|_{H^s}^2 + \sum_{|\alpha|+|\beta|\le s} \int_0^{T_*} \|\pa_x^\alpha\pa_p^\beta(I-P)g(\tau)\|_D^2\,\rdta \le  \delta_{\rm in}.
\]
The weak continuity of $g$ in $H^s$ follows from the equation and the uniform $H^s$ bound. Applying the energy estimate on an arbitrary interval $[t_1,t_2]\subset[0,T_*]$ and then letting $t_2\to t_1$, we obtain the continuity of $\|g(t)\|_{H^s}$. Combining weak continuity with norm continuity yields $g\in C([0,T_*];H^s)$.

It remains to prove uniqueness. Let $g$ and $h$ be two solutions with the same initial datum and satisfying \eqref{eq:local-b-main}. Set $w:=g-h$. Then,
\[
    \pa_t w = Bw+\Gamma(g)-\Gamma(h), \quad w|_{t=0}=0.
\]
Taking the $L^2_{x,p}$-inner product with $w$, using \eqref{eq:B-diss-local}, and applying Lemma \ref{Gamma 1.5}, we obtain an inequality of the form
\[
\frac{\rd}{\rdt} \|w(t)\|_{L^2_{x,p}}^2 + \lambda \|(I-P)w(t)\|_D^2 \le A(t)\|w(t)\|_{L^2_{x,p}}^2,
\]
where $A\in L^1(0,T_*)$. Since $w(0)=0$, Gr\"onwall's inequality implies $w = 0$ on $[0,T_*]$. Therefore, the solution is unique.

Finally, the preservation of the nonnegativity of $f = \mcF_\hbar +g\sqrt{\mu_\hbar }$, and the upper bound $f\le\frac1{\hbar}$ in the fermionic case are proved in Lemma \ref{lem:local-pos-pauli}. This completes the proof.
\end{proof}

%
%
%
%
%
%
%
%
%
%
\section{Global energy estimates} \label{sec:global-energy}

In this section, we derive the global a priori estimates for the perturbative equation
\bq\label{eq:pert-glo}
    \pa_t g+p\cdot\nabla_x g=Lg+\Gamma(g).
\eq
Throughout this section, $g$ denotes a smooth solution to \eqref{eq:pert-glo} on a time interval $[0,T]$, satisfying the bootstrap smallness condition
\bq\label{eq:boots}
    \sup_{0\le t\le T}\|g(t)\|_{H^s}\le  \delta_{\rm in},
\eq
where $s\ge4$ and $0< \delta_{\rm in}\ll1$. The goal is to prove an energy inequality which controls both the microscopic dissipation and the spatial gradients of the macroscopic variables. More precisely, we shall construct an energy functional $\mcE_s(g)$, equivalent to $\|g\|_{H^s}^2$, and prove
\[
    \frac{\rd}{\rdt}\mcE_s(g)(t)+\lambda\mcD_s(g)(t)\le0,
\]
where
\[
    \mcD_s(g) := \sum_{|\alpha|+|\beta|\le s} \|\pa_x^\alpha\pa_p^\beta(I-P)g\|_D^2 + \|\nabla_x(a,b,c)\|_{H^{s-1}_x}^2.
\]
This estimate will be used in Section \ref{sec:gwp-lt} to extend the local solution globally in time and to derive its large-time behavior.

%
%
%
%
%
%
%
%
%
%
\subsection{Macro--micro decomposition and fluid variables}

We begin with the macro--micro decomposition. Recall that $g=Pg+(I-P)g$, where $P$ is the projection onto the null space $\mcN=\ker L$. We write
\bq\label{eq:Pg-abc}
    Pg = \{a+p\cdot b+|p|^2c\}\sqrt{\mu_\hbar },
\eq
where the scalar functions $a=a(t,x)$, $c=c(t,x)$, and the vector field $b=b(t,x)$ are defined by
\[
a := \frac{ \intr g\sqrt{\mu_\hbar }\,\rdp} { \intr\mu_\hbar \,\rdp} - \ph \frac{ \intr(|p|^2-\ph)g\sqrt{\mu_\hbar }\,\rdp} { \intr(|p|^2-\ph)^2\mu_\hbar \,\rdp}, 
\quad 
b := \frac{ \intr p\,g\sqrt{\mu_\hbar }\,\rdp} { \frac13\intr|p|^2\mu_\hbar \,\rdp}, 
\quad 
b_i := \frac{\ \intr p_i g\sqrt{\mu_\hbar }\,\rdp} {  \frac13\intr|p|^2\mu_\hbar \,\rdp},
\]
and
\[
c := \frac{ \intr(|p|^2-\ph)g\sqrt{\mu_\hbar }\,\rdp}{ \intr(|p|^2-\ph)^2\mu_\hbar \,\rdp}.
\]
We also introduce the constants
\[
    C_a:=\intr\mu_\hbar \,\rdp,
    \quad
    C_b:=\frac13\intr|p|^2\mu_\hbar \,\rdp,
    \quad
    C_c:=\intr|p|^4\mu_\hbar \,\rdp.
\]
These constants depend only on the global equilibrium and remain fixed throughout the paper.

By the radial symmetry of $\mu_\hbar $, the macroscopic moments are given by
\[
    \lal g,\sqrt{\mu_\hbar }\ral_{L^2_p} =  C_a a+3C_b c, 
    \quad 
    \lal g,p\sqrt{\mu_\hbar }\ral_{L^2_p} = C_b b,
\quad 
    \lal g,|p|^2\sqrt{\mu_\hbar }\ral_{L^2_p} =   3C_b a+C_c c.
\]
Moreover, for the second-order moments, we have
\begin{align*} 
\lal g,p_ip_j\sqrt{\mu_\hbar }\ral_{L^2_p} &= \lal Pg,p_ip_j\sqrt{\mu_\hbar }\ral_{L^2_p} + \lal (I-P)g,p_ip_j\sqrt{\mu_\hbar }\ral_{L^2_p} \\
&= \lt( C_ba+\frac{C_c}{3}c \rt)\delta_{ij} + \lal (I-P)g,p_ip_j\sqrt{\mu_\hbar }\ral_{L^2_p},
\end{align*}
and
\begin{align*} 
\lal g,p_i|p|^2\sqrt{\mu_\hbar }\ral_{L^2_p} &= \lal Pg,p_i|p|^2\sqrt{\mu_\hbar }\ral_{L^2_p} + \lal (I-P)g,p_i|p|^2\sqrt{\mu_\hbar }\ral_{L^2_p} \\
&= \frac{C_c}{3}b_i + \lal (I-P)g,p_i|p|^2\sqrt{\mu_\hbar }\ral_{L^2_p}.
\end{align*}

We next derive the balance laws for the macroscopic coefficients. Taking the $L^2_p$-inner products of \eqref{eq:pert-glo} with $\sqrt{\mu_\hbar }$, $p\sqrt{\mu_\hbar }$, and $|p|^2\sqrt{\mu_\hbar }$, respectively, and using the conservation properties of $L$ and $\Gamma$, we obtain
\begin{align}\label{eq:abc-bal-raw}
\begin{aligned}
    \pa_t(C_a a+3C_b c) + C_b\nabla_x\cdot b &=0, \cr
    C_b\pa_t b + \nabla_x \lt( C_ba+\frac{C_c}{3}c \rt) &= - \lt\lal p\cdot\nabla_x(I-P)g,p\sqrt{\mu_\hbar } \rt\ral_{L^2_p}, \cr
    \pa_t(3C_b a+C_c c) + \frac{C_c}{3}\nabla_x\cdot b &= - \lt\lal p\cdot\nabla_x(I-P)g,|p|^2\sqrt{\mu_\hbar } \rt\ral_{L^2_p}.
\end{aligned}
\end{align}
Let $D_0:=C_aC_c-9C_b^2$. Then, by the strict Cauchy--Schwarz inequality,
\bq\label{eq:Cabc}
    (3C_b)^2 = \lt( \intr|p|^2\mu_\hbar \,\rdp \rt)^2 <  \lt( \intr\mu_\hbar \,\rdp \rt) \lt( \intr|p|^4\mu_\hbar \,\rdp \rt) = C_aC_c.
\eq
The inequality is strict because the functions $1$ and $|p|^2$ are linearly independent in $L^2(\mR^3;\mu_\hbar (p)\,\rdp)$.
 
Solving \eqref{eq:abc-bal-raw} for the time derivatives, we obtain
\begin{align}\label{eq:abc_t}
\begin{aligned}
    \pa_t a &= \frac{3C_b}{D_0} \lt\lal p\cdot\nabla_x(I-P)g, |p|^2\sqrt{\mu_\hbar } \rt\ral_{L^2_p}, \cr
    \pa_t b &= - \nabla_x \lt( a+\frac{C_c}{3C_b}c \rt) - \frac1{C_b} \lt\lal p\cdot\nabla_x(I-P)g, p\sqrt{\mu_\hbar } \rt\ral_{L^2_p}, \cr
    \pa_t c &= - \frac13\nabla_x\cdot b - \frac{C_a}{D_0} \lt\lal p\cdot\nabla_x(I-P)g, |p|^2\sqrt{\mu_\hbar } \rt\ral_{L^2_p}.
\end{aligned}
\end{align}

We finally provide the equivalence between the macroscopic coefficients and the projection $Pg$. This equivalence is used repeatedly to replace macroscopic moments by the variables $a,b,c$, and conversely.

\begin{lemma} 
There exist constants $C_0,C_1>0$, depending only on $\mu_\hbar $, such that
\bq\label{eq:Pg-equiv}
    C_0\|(a,b,c)\|_{L^2_x}^2 \le \|Pg\|_{L^2_{x,p}}^2 \le C_1\|(a,b,c)\|_{L^2_x}^2.
\eq
Moreover, for every multi-index $\alpha$,
\bq\label{eq:Pg-equiv-der}
    C_0\|\pa_x^\alpha(a,b,c)\|_{L^2_x}^2 \le \|\pa_x^\alpha Pg\|_{L^2_{x,p}}^2 \le C_1\|\pa_x^\alpha(a,b,c)\|_{L^2_x}^2.
\eq
\end{lemma}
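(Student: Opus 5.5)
We compute $\|Pg\|_{L^2_{x,p}}^2$ directly in terms of $(a,b,c)$ and show that it is a positive definite quadratic form with constant coefficients. Since $Pg=\{a+p\cdot b+|p|^2c\}\sqrt{\mu_\hbar}$, we have, pointwise in $x$,
\[
\|Pg\|_{L^2_p}^2=\intr \lt(a+p\cdot b+|p|^2c\rt)^2\mu_\hbar\,\rdp .
\]
Radial symmetry of $\mu_\hbar$ removes all odd cross terms. The identity $\intr p_ip_j\mu_\hbar\,\rdp=C_b\delta_{ij}$ then gives
\[
\|Pg\|_{L^2_p}^2=C_aa^2+6C_bac+C_cc^2+C_b|b|^2 .
\]
The $b$-block is $C_b|b|^2$ with $C_b>0$. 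The $(a,c)$-block has the matrix
\[
\begin{pmatrix}C_a&3C_b\\3C_b&C_c\end{pmatrix},
\]
whose trace is positive and whose determinant $D_0=C_aC_c-9C_b^2$ is positive by the strict Cauchy--Schwarz inequality \eqref{eq:Cabc}. This block is therefore positive definite.

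Let $\lambda_{\min}$ and $\lambda_{\max}$ be the extreme eigenvalues of the full $5\times5$ block-diagonal matrix, and set $C_0:=\lambda_{\min}>0$ and $C_1:=\lambda_{\max}$. Both depend only on $\mu_\hbar$. They yield the pointwise bounds
\[
C_0|(a,b,c)|^2\le\|Pg\|_{L^2_p}^2\le C_1|(a,b,c)|^2
\]
for each $x$. Integrating in $x$ gives \eqref{eq:Pg-equiv}.

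For \eqref{eq:Pg-equiv-der}, note that the basis functions of $\mcN$ are independent of $x$ and $P$ acts only in $p$, so $\pa_x^\alpha$ commutes with $P$. Hence
\[
\pa_x^\alpha Pg=\{\pa_x^\alpha a+p\cdot\pa_x^\alpha b+|p|^2\pa_x^\alpha c\}\sqrt{\mu_\hbar},
\]
and the same pointwise quadratic-form bound applied to $(\pa_x^\alpha a,\pa_x^\alpha b,\pa_x^\alpha c)$, followed by integration in $x$, gives the claim with the same constants.

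The only nontrivial step is checking positive definiteness of the $(a,c)$-block, and \eqref{eq:Cabc} already supplies it. Everything else is routine bookkeeping of moments.
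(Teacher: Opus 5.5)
Your proposal is correct and follows essentially the paper's route: you expand $\|Pg\|_{L^2_p}^2=C_aa^2+6C_bac+C_cc^2+C_b|b|^2$ using the radial symmetry of $\mu_\hbar$, and you use the strict Cauchy--Schwarz inequality \eqref{eq:Cabc} to get positivity of the $(a,c)$-block. The only cosmetic difference is the lower bound: the paper applies a weighted Young inequality to the cross term $6C_b\int ac\,\rdx$ with a parameter $\delta_0\in(3C_b/C_c,\,C_a/(3C_b))$, whereas you read off positive definiteness from the trace and the determinant $D_0>0$; the two are equivalent.
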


\begin{proof}
Using \eqref{eq:Pg-abc}, the radial symmetry of $\mu_\hbar $, and the definitions of $C_a,C_b,C_c$, we compute
\[
\|Pg\|_{L^2_{x,p}}^2 = C_a\|a\|_{L^2_x}^2 + C_b\|b\|_{L^2_x}^2 + C_c\|c\|_{L^2_x}^2 + 6C_b\intr ac\,\rdx.
\]
The upper bound follows immediately from Young's inequality. For the lower bound, for any $\delta_0>0$,
\[
\|Pg\|_{L^2_{x,p}}^2 \ge (C_a-3\delta_0 C_b)\|a\|_{L^2_x}^2 + C_b\|b\|_{L^2_x}^2 + \lt( C_c-\frac{3C_b}{\delta_0} \rt)\|c\|_{L^2_x}^2.
\]
By \eqref{eq:Cabc}, we may choose $\delta_0>0$ such that
\[
    \frac{3C_b}{C_c}<\delta_0<\frac{C_a}{3C_b}.
\]
For this choice, all three coefficients are positive. This proves \eqref{eq:Pg-equiv}. Applying the same argument to $\pa_x^\alpha Pg$ gives \eqref{eq:Pg-equiv-der}.
\end{proof}

%
%
%
%
%
%
%
%
%
%
\subsection{Microscopic energy estimates} \label{ssec:micro-energy}

We next derive the microscopic energy estimates. These estimates control the microscopic component $(I-P)g$ through the coercivity of the linearized operator $L$. The spatial gradients of the macroscopic variables $\nabla_x(a,b,c)$ will be recovered in Section \ref{ssec:mac-diss}.

\begin{lemma}\label{priori 1}
Let $s\ge4$, and let $g$ satisfy the bootstrap condition \eqref{eq:boots}. Then there exist positive constants $C_1,C>0$ such that
\[
\frac{\rd}{\rdt}\|g\|_{L^2_p(H^s_x)}^2 + C_1 \sum_{|\alpha|\le s} \|\pa_x^\alpha(I-P)g\|_D^2 \le C \delta_{\rm in} \|\nabla_x(a,b,c)\|_{H^{s-1}_x}^2.
\]
\end{lemma}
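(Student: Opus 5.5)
We will prove the estimate by a purely spatial energy method. For each multi-index $\alpha$ with $|\alpha|\le s$, apply $\pa_x^\alpha$ to \eqref{eq:pert-glo} and take the $L^2_{x,p}$-inner product with $\pa_x^\alpha g$. Since $\pa_x^\alpha$ commutes with $p\cdot\nabla_x$ and with $L$ (whose coefficients depend only on $p$), the transport term vanishes by skew-symmetry. The linear term is handled by the coercivity \eqref{eq:L-coercivity} of Lemma \ref{lem:Prop-L}, applied pointwise in $x$ and using $(I-P)\pa_x^\alpha=\pa_x^\alpha(I-P)$. This gives
\[
\frac12\frac{\rd}{\rdt}\|\pa_x^\alpha g\|_{L^2_{x,p}}^2+\lambda_0\|\pa_x^\alpha(I-P)g\|_D^2\le \lt|\lt\lal \pa_x^\alpha\Gamma(g),\pa_x^\alpha g\rt\ral_{L^2_{x,p}}\rt|.
\]
Summing over $|\alpha|\le s$ then reduces the lemma to a suitable bound on the nonlinear terms.

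For the nonlinear term we use Lemma \ref{Gamma 2} with a fixed choice of $\delta$, say $\delta=1$, together with the bootstrap bound $\|g\|_{H^s}\le\delta_{\rm in}$.
\begin{itemize}
\item For $|\alpha|\le s-1$, \eqref{eq:Gamma2-low} gives the bound
\[
C\delta_{\rm in}\bigl(\|\pa_x^\alpha(I-P)g\|_D^2+\|\pa_x^\alpha\nabla_x(I-P)g\|_D^2+\|\pa_x^\alpha\nabla_x g\|_{L^2_{x,p}}^2\bigr).
\]
\item For $|\alpha|=s$, \eqref{eq:Gamma2-top} gives
\[
C\delta_{\rm in}\bigl(\|\pa_x^\alpha(I-P)g\|_D^2+\|\pa_x^\alpha g\|_{L^2_{x,p}}^2\bigr),
\]
where now $|\alpha|=s\ge1$.
\end{itemize}
In both cases the remaining $L^2$ term involves at least one spatial derivative. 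We split it as $\pa_x^\gamma g=\pa_x^\gamma Pg+\pa_x^\gamma(I-P)g$ with $1\le|\gamma|\le s$. The macroscopic part is controlled through \eqref{eq:Pg-equiv-der}:
\[
\|\pa_x^\gamma Pg\|_{L^2_{x,p}}\le C\|\pa_x^\gamma(a,b,c)\|_{L^2_x}\le C\|\nabla_x(a,b,c)\|_{H^{s-1}_x}.
\]
The microscopic part is controlled by its $D$-norm, since $|\cdot|_D$ dominates $\|\cdot\|_{L^2_p}$.

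Collecting these bounds, the right-hand side is at most
\[
C\delta_{\rm in}\sum_{|\alpha|\le s}\|\pa_x^\alpha(I-P)g\|_D^2+C\delta_{\rm in}\|\nabla_x(a,b,c)\|_{H^{s-1}_x}^2.
\]
Taking $\delta_{\rm in}$ small enough that $C\delta_{\rm in}\le\lambda_0/2$, the first sum is absorbed into the dissipation on the left. This yields the claim with $C_1=\lambda_0$ after multiplying by $2$.

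The main point to verify is that Lemma \ref{Gamma 2} produces no macroscopic term without a spatial derivative. In the low-order case the extra derivative comes from \eqref{GN ineq 1}. In the top-order case $|\alpha|=s\ge4$, so $\pa_x^\alpha g$ already contains derivatives. The $|\alpha|=0$ case falls under the low-order case and produces only $\nabla_x g$. It is this feature that allows the right-hand side to be expressed in terms of $\nabla_x(a,b,c)$ alone, which the macroscopic dissipation estimate of Section \ref{ssec:mac-diss} will later control.
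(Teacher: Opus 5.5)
Your proposal is correct and follows the paper's proof of Lemma \ref{priori 1} essentially step by step. The argument is the same: a spatial energy estimate, coercivity \eqref{eq:L-coercivity} for the linear term, Lemma \ref{Gamma 2} for $\Gamma$, and a macro--micro split of the leftover $L^2$ terms via \eqref{eq:Pg-equiv-der}, followed by absorption for small $\delta_{\rm in}$. The only difference is cosmetic. You fix $\delta=1$ in Lemma \ref{Gamma 2}, whereas the paper first fixes a small parameter and then shrinks $\delta_{\rm in}$; your choice is harmless because every nonlinear contribution already carries the factor $\|g\|_{H^s}\le\delta_{\rm in}$.
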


\begin{proof}
Applying $\pa_x^\alpha$, $|\alpha|\le s$, to \eqref{eq:pert-glo}, multiplying by $\pa_x^\alpha g$, and integrating over $\R^3 \times \R^3$, we obtain
\bq\label{eq:spat-ener-0}
\frac12\frac{\rd}{\rdt}\|\pa_x^\alpha g\|_{L^2_{x,p}}^2 = \lt\lal L(\pa_x^\alpha g), \pa_x^\alpha g \rt\ral_{L^2_{x,p}} + \lt\lal \pa_x^\alpha\Gamma(g), \pa_x^\alpha g \rt\ral_{L^2_{x,p}}  =: I_1+I_2.
\eq
By the coercivity estimate \eqref{eq:L-coercivity},
\bq\label{eq:I1-spat}
I_1 \le -\lambda_0 \|\pa_x^\alpha(I-P)g\|_D^2.
\eq

We estimate the nonlinear term by Lemma \ref{Gamma 2}. If $|\alpha|=s$, then
\[
|I_2| \le C\|g\|_{H^s} \lt( \frac1\eta \|\pa_x^\alpha(I-P)g\|_D^2 + \eta \|\pa_x^\alpha g\|_{L^2_{x,p}}^2 \rt)
\]
for any $\eta>0$. If $|\alpha|\le s-1$, then
\[
|I_2| \le C\|g\|_{H^s} \lt( \frac1\eta \|\pa_x^\alpha(I-P)g\|_D^2 + \eta \|\pa_x^\alpha\nabla_x(I-P)g\|_D^2 + \eta \|\pa_x^\alpha\nabla_x g\|_{L^2_{x,p}}^2 \rt).
\]
We choose $\eta>0$ fixed and sufficiently small, and then choose $ \delta_{\rm in}>0$ sufficiently small so that the terms involving $\|\pa_x^\alpha(I-P)g\|_D^2$ and $\|\pa_x^\alpha\nabla_x(I-P)g\|_D^2$ are absorbed by the linear dissipation after summing over $|\alpha|\le s$.

It remains to treat the terms involving spatial derivatives of $g$. By the decomposition $g=Pg+(I-P)g$ and the equivalence \eqref{eq:Pg-equiv-der}, for $|\alpha|\le s-1$,
\[
\|\pa_x^\alpha\nabla_x g\|_{L^2_{x,p}}^2 \le C\|\pa_x^\alpha\nabla_x(I-P)g\|_{L^2_{x,p}}^2 + C\|\pa_x^\alpha\nabla_x(a,b,c)\|_{L^2_x}^2.
\]
For the top-order term, we similarly use
\[
\|\pa_x^\alpha g\|_{L^2_{x,p}}^2 \le C \|\pa_x^\alpha(I-P)g\|_{L^2_{x,p}}^2 + C \|\pa_x^\alpha(a,b,c)\|_{L^2_x}^2, \quad |\alpha|=s.
\]
The microscopic term is controlled by the corresponding $D$-norm, whereas the macroscopic term is included in $\|\nabla_x(a,b,c)\|_{H^{s-1}_x}^2$. Therefore, summing \eqref{eq:spat-ener-0} over $|\alpha|\le s$, using \eqref{eq:I1-spat}, and choosing $ \delta_{\rm in}>0$ sufficiently small, we obtain
\[
\frac{\rd}{\rdt}\|g\|_{L^2_p(H^s_x)}^2 + C_1 \sum_{|\alpha|\le s} \|\pa_x^\alpha(I-P)g\|_D^2 \le C \delta_{\rm in}\|\nabla_x(a,b,c)\|_{H^{s-1}_x}^2.
\]
This completes the proof.
\end{proof}

\begin{lemma}\label{priori 2}
Let $s\ge4$, and let $g$ satisfy the bootstrap condition \eqref{eq:boots}. Then, for each fixed multi-index
$\beta$ with $1\le|\beta|\le s$, there exist constants $C_2,C>0$ such that
\begin{align}\label{eq:sum2}
\begin{aligned}
&\frac{\rd}{\rdt} \sum_{|\alpha|\le s-|\beta|} \|\pa_x^\alpha\pa_p^\beta(I-P)g\|_{L^2_{x,p}}^2 + C_2 \sum_{|\alpha|\le s-|\beta|} \|\pa_x^\alpha\pa_p^\beta(I-P)g\|_D^2 \cr
&\quad\le C \sum_{\substack{|\gamma|+|\nu|\le s\\|\nu|\le|\beta|-1}} \|\pa_x^\gamma\pa_p^\nu(I-P)g\|_D^2 + C \|\nabla_x(a,b,c)\|_{H^{s-|\beta|}_x}^2 + C \delta_{\rm in} \|\nabla_x(a,b,c)\|_{H^{s-1}_x}^2.
\end{aligned}
\end{align}
\end{lemma}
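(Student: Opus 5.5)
We apply $(I-P)$ to \eqref{eq:pert-glo} and work with the equation for the microscopic part $(I-P)g$, which reads
\[
\pa_t(I-P)g + p\cdot\nabla_x(I-P)g = L(I-P)g + \Gamma(g) + \big[P,p\cdot\nabla_x\big]g,
\]
where we used $LP=0$, $PL=0$ (self-adjointness and $\ker L=\mcN$) and $P\Gamma(g)=0$ (shown in the proof of Lemma \ref{Gamma 2}). The commutator equals $P(p\cdot\nabla_x g)-p\cdot\nabla_x Pg$. Its components are finite combinations of $\nabla_x(a,b,c)$ and of $x$-derivatives of moments of $(I-P)g$, multiplied by exponentially decaying functions of $p$ of the type $\mcP_{\varphi,|\beta|}$.

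For fixed $\beta$ with $1\le|\beta|\le s$ and $|\alpha|\le s-|\beta|$, we apply $\pa_x^\alpha\pa_p^\beta$ and test against $\pa_x^\alpha\pa_p^\beta(I-P)g$. The resulting terms are handled as follows.

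\emph{Transport term.} It contributes $[\pa_p^\beta,p\cdot\nabla_x]$, a sum of terms $\pa_{x_j}\pa_x^\alpha\pa_p^{\beta-\mfe_j}(I-P)g$. These have one fewer $p$-derivative and total order at most $s$, so by Cauchy--Schwarz and Young they are bounded by $\eta\|\pa_x^\alpha\pa_p^\beta(I-P)g\|^2_{L^2}$ plus $C_\eta$ times the lower $|\nu|\le|\beta|-1$ sum on the right of \eqref{eq:sum2}.

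\emph{Linear term.} We apply \eqref{eq:L-p-der-coer} with $g$ replaced by $\pa_x^\alpha(I-P)g$, noting $(I-P)\pa_x^\alpha(I-P)g=\pa_x^\alpha(I-P)g$. This yields $-\lambda_1\|\pa_x^\alpha\pa_p^\beta(I-P)g\|_D^2$ plus errors $\|\pa_x^\alpha(I-P)g\|^2_{L^2}$ and $\|\pa_x^\alpha\pa_p^{\beta-\nu}(I-P)g\|_D^2$ with $|\nu|\ge1$. Both error types belong to the lower-order sum, since $|\beta-\nu|\le|\beta|-1$ and the total order is at most $s$.

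\emph{Commutator term.} Since $p$-derivatives only hit the fixed decaying weights, it is bounded by $C(\|\pa_x^\alpha\nabla_x(a,b,c)\|_{L^2_x}+\|\pa_x^\alpha\nabla_x(I-P)g\|_{L^2})\,\|\pa_x^\alpha\pa_p^\beta(I-P)g\|_{L^2}$. For the $(I-P)$ piece we move the moment against the weight, so only a $p$-moment of $\pa_x^{\alpha}\nabla_x(I-P)g$ enters. This piece has zero $p$-derivatives and order $|\alpha|+1\le s$, so it lies in the lower sum. The macroscopic piece gives the term $C\|\nabla_x(a,b,c)\|^2_{H^{s-|\beta|}_x}$, because $|\alpha|\le s-|\beta|$.

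\emph{Nonlinear term.} We use Lemma \ref{Gamma 3} with $\delta$ fixed. Its microscopic sums appear with the prefactor $C\|g\|_{H^s}/\delta\le C\delta_{\rm in}/\delta$. The same-level $|\beta|$ sum is absorbed into the dissipation for $\delta_{\rm in}$ small, and the lower-level sum joins the right-hand side. The remaining term $\delta\sum_{1\le|\gamma|\le s}\|\pa_x^\gamma g\|^2$ splits via \eqref{eq:Pg-equiv-der} into $C\delta_{\rm in}\|\nabla_x(a,b,c)\|^2_{H^{s-1}_x}$ plus microscopic pieces with $\nu=0$; these are lower order because $|\beta|\ge1$.

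We sum over $|\alpha|\le s-|\beta|$, choose $\eta$ small, and use $\|\cdot\|_{L^2_p}\lesssim|\cdot|_D$ to absorb the $\eta$-terms; this gives \eqref{eq:sum2}. The main obstacle is the bookkeeping in the commutator and transport terms: we must check that every term lacking the full $\beta$ has strictly fewer $p$-derivatives and total order at most $s$, so that it fits the triangular hierarchy. We must also verify that the commutator $[P,p\cdot\nabla_x]$ produces only $\nabla_x(a,b,c)$ with at most $s-|\beta|$ further $x$-derivatives, rather than undifferentiated $(a,b,c)$. This holds since $p\cdot\nabla_x$ always carries one spatial derivative.
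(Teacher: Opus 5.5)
Your proposal is correct and follows the paper's proof essentially line by line. It uses the same microscopic equation and the same test function $\pa_x^\alpha\pa_p^\beta(I-P)g$. It treats the transport commutator, the $p$-derivative coercivity \eqref{eq:L-p-der-coer}, and the nonlinear term via Lemma \ref{Gamma 3} in the same way. The only cosmetic difference is notational. You write the paper's two source terms $P\{p\cdot\nabla_x(I-P)g\}-(I-P)(p\cdot\nabla_xPg)$ as the single commutator $[P,p\cdot\nabla_x]g$, which is the same expression and is estimated identically.
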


\begin{proof}
We first derive the microscopic equation. Applying $I-P$ to \eqref{eq:pert-glo}, using $L(Pg)=0$ and $\Gamma(g)\in\mcN^\perp$, we obtain
\bq\label{eq:mic-equa}
\pa_t(I-P)g + (I-P)(p\cdot\nabla_xg) = L(I-P)g+\Gamma(g).
\eq
Since $g=Pg+(I-P)g$, we may rewrite \eqref{eq:mic-equa} as
\bq\label{eq:mic-equa-exp}
\pa_t(I-P)g + p\cdot\nabla_x(I-P)g = L(I-P)g + \Gamma(g) + P\{p\cdot\nabla_x(I-P)g\} - (I-P)(p\cdot\nabla_xPg).
\eq

Fix a multi-index $\beta$ satisfying $1\le|\beta|\le s$. For each $|\alpha|\le s-|\beta|$, we apply $\pa_x^\alpha\pa_p^\beta$ to \eqref{eq:mic-equa-exp}, take the $L^2_{x,p}$-inner product with $\pa_x^\alpha\pa_p^\beta(I-P)g$, and obtain
\[
\frac12 \frac{\rd}{\rdt} \|\pa_x^\alpha\pa_p^\beta(I-P)g\|_{L^2_{x,p}}^2 = \sum_{j=1}^5J_j,
\]
where
\begin{align*}
J_1 &:= - \lt\lal \pa_x^\alpha\pa_p^\beta \{p\cdot\nabla_x(I-P)g\}, \pa_x^\alpha\pa_p^\beta(I-P)g \rt\ral_{L^2_{x,p}}, \cr
J_2 &:= \lt\lal \pa_x^\alpha\pa_p^\beta L(I-P)g, \pa_x^\alpha\pa_p^\beta(I-P)g \rt\ral_{L^2_{x,p}}, \cr
J_3 &:= \lt\lal \pa_x^\alpha\pa_p^\beta\Gamma(g), \pa_x^\alpha\pa_p^\beta(I-P)g \rt\ral_{L^2_{x,p}}, \cr
J_4 &:= \lt\lal \pa_x^\alpha\pa_p^\beta P\{p\cdot\nabla_x(I-P)g\}, \pa_x^\alpha\pa_p^\beta(I-P)g \rt\ral_{L^2_{x,p}}, \cr
J_5 &:= - \lt\lal \pa_x^\alpha\pa_p^\beta (I-P)(p\cdot\nabla_xPg), \pa_x^\alpha\pa_p^\beta(I-P)g \rt\ral_{L^2_{x,p}}.
\end{align*}

\medskip
\noindent
{\it Estimate of $J_1$.} Since $p\cdot\nabla_x$ is linear in $p$, we have
\[
[\pa_p^\beta,p\cdot\nabla_x]h = \sum_{\substack{1\le j\le3\\\beta_j\ge1}} \beta_j \pa_{x_j}\pa_p^{\beta-\mfe_j}h,
\]
where $\mfe_j$ denotes the $j$-th standard basis vector in $\mR^3$. The principal transport term vanishes by skew-symmetry. Thus,
\begin{align}\label{eq:J1-est-s5}
\begin{aligned}
|J_1| &\le C_\beta \sum_{\substack{1\le j\le3\\\beta_j\ge1}} \|\pa_x^{\alpha+\mfe_j} \pa_p^{\beta-\mfe_j}(I-P)g\|_{L^2_{x,p}} \|\pa_x^\alpha\pa_p^\beta(I-P)g\|_{L^2_{x,p}} \cr
&\le \eta \|\pa_x^\alpha\pa_p^\beta(I-P)g\|_D^2 + C_{\eta,\beta} \sum_{\substack{1\le j\le3\\\beta_j\ge1}} \|\pa_x^{\alpha+\mfe_j} \pa_p^{\beta-\mfe_j}(I-P)g\|_D^2
\end{aligned}
\end{align}
for any $\eta>0$. Since  $|\alpha+\mfe_j|+|\beta-\mfe_j| = |\alpha|+|\beta| \le s$ and $|\beta-\mfe_j| = |\beta|-1$, the last terms in \eqref{eq:J1-est-s5} are lower $p$-order contributions.

\medskip
\noindent
{\it Estimate of $J_2$.}
By the $p$-derivative coercivity estimate \eqref{eq:L-p-der-coer}, applied to $h := \pa_x^\alpha(I-P)g$, we have
\[
J_2 \le -\lambda_1 \|\pa_x^\alpha\pa_p^\beta(I-P)g\|_D^2 + C_\beta \sum_{|\nu|\le|\beta|-1} \|\pa_x^\alpha\pa_p^\nu(I-P)g\|_D^2.
\]

\medskip
\noindent
{\it Estimate of $J_3$.}
By Lemma \ref{Gamma 3}, for any $0<\eta\le1$, we have
\[
|J_3| \le C\|g\|_{H^s} \lt( \eta \sum_{1\le|\gamma|\le s} \|\pa_x^\gamma g\|_{L^2_{x,p}}^2 + \frac1\eta \sum_{|\gamma|\le s-|\beta|} \|\pa_x^\gamma\pa_p^\beta(I-P)g\|_D^2 + \frac1\eta \sum_{\substack{|\gamma|+|\nu|\le s\\|\nu|\le|\beta|-1}} \|\pa_x^\gamma\pa_p^\nu(I-P)g\|_D^2 \rt).
\]
Using the bootstrap assumption $\|g\|_{H^s} \le  \delta_{\rm in}$, the decomposition $g=Pg+(I-P)g$, and the equivalence \eqref{eq:Pg-equiv-der}, we have
\[
\sum_{1\le|\gamma|\le s} \|\pa_x^\gamma g\|_{L^2_{x,p}}^2 \le C \|\nabla_x(a,b,c)\|_{H^{s-1}_x}^2 + C \sum_{1\le|\gamma|\le s} \|\pa_x^\gamma(I-P)g\|_{L^2_{x,p}}^2.
\]
The last microscopic $L^2_{x,p}$-terms are controlled by the corresponding $D$-norms. Since they have no $p$-derivatives, they are included in
\[
\sum_{\substack{|\gamma|+|\nu|\le s\\|\nu|\le|\beta|-1}} \|\pa_x^\gamma\pa_p^\nu(I-P)g\|_D^2.
\]
Consequently,
\[
|J_3| \le \frac{C \delta_{\rm in}}{\eta} \sum_{|\gamma|\le s-|\beta|} \|\pa_x^\gamma\pa_p^\beta(I-P)g\|_D^2 + C_{\eta}\delta_{\rm in} \sum_{\substack{|\gamma|+|\nu|\le s\\|\nu|\le|\beta|-1}} \|\pa_x^\gamma\pa_p^\nu(I-P)g\|_D^2 + C\eta\delta_{\rm in} \|\nabla_x(a,b,c)\|_{H^{s-1}_x}^2.
\]

\medskip
\noindent
{\it Estimate of $J_4$.} By definition of $P$, we get
\[
P\{p\cdot\nabla_x(I-P)g\} = \sum_{i=1}^5 \lt\lal p\cdot\nabla_x(I-P)g,e_i \rt\ral_{L^2_p} e_i,
\]
where $\{e_i\}_{i=1}^5$ is a fixed basis of $\mcN$. By the exponential decay of the basis functions and their derivatives, we obtain
\bq\label{eq:J4-est-s5}
|J_4| \le C_\beta \|\pa_x^\alpha\nabla_x(I-P)g\|_{L^2_{x,p}} \|\pa_x^\alpha\pa_p^\beta(I-P)g\|_{L^2_{x,p}} 
\le \eta \|\pa_x^\alpha\pa_p^\beta(I-P)g\|_D^2 + C_{\eta,\beta} \|\pa_x^\alpha\nabla_x(I-P)g\|_D^2.
\eq
Since $|\alpha|+1 \le s-|\beta|+1 \le s$ and $|\beta|\ge1$, the last term in \eqref{eq:J4-est-s5} is also included in
\[
\sum_{\substack{|\gamma|+|\nu|\le s\\|\nu|\le|\beta|-1}} \|\pa_x^\gamma\pa_p^\nu(I-P)g\|_D^2.
\]

\medskip
\noindent
{\it Estimate of $J_5$.}
Using the explicit form \eqref{eq:Pg-abc}, we see that $\pa_x^\alpha\pa_p^\beta (I-P)(p\cdot\nabla_xPg)$ is a finite linear combination of fixed smooth exponentially decaying
functions of $p$, with coefficients involving $\pa_x^\alpha\nabla_x(a,b,c)$. Hence, we have
\[
|J_5| \le C_\beta \|\pa_x^\alpha\nabla_x(a,b,c)\|_{L^2_x} \|\pa_x^\alpha\pa_p^\beta(I-P)g\|_{L^2_{x,p}}  \le \eta \|\pa_x^\alpha\pa_p^\beta(I-P)g\|_D^2 + C_{\eta,\beta} \|\pa_x^\alpha\nabla_x(a,b,c)\|_{L^2_x}^2.
\]

Combining all of the above estimates, summing over $|\alpha|\le s-|\beta|$, and choosing $\eta>0$ sufficiently small, we obtain
\begin{align*} 
&\frac{\rd}{\rdt} \sum_{|\alpha|\le s-|\beta|} \|\pa_x^\alpha\pa_p^\beta(I-P)g\|_{L^2_{x,p}}^2 + \frac{\lambda_1}{2} \sum_{|\alpha|\le s-|\beta|} \|\pa_x^\alpha\pa_p^\beta(I-P)g\|_D^2 \cr
&\quad\le \frac{C\delta_{\rm in}}{\eta} \sum_{|\alpha|\le s-|\beta|} \|\pa_x^\alpha\pa_p^\beta(I-P)g\|_D^2 + C \sum_{\substack{|\gamma|+|\nu|\le s\\|\nu|\le|\beta|-1}} \|\pa_x^\gamma\pa_p^\nu(I-P)g\|_D^2 \cr
&\quad \quad + C \|\nabla_x(a,b,c)\|_{H^{s-|\beta|}_x}^2 + C\delta_{\rm in} \|\nabla_x(a,b,c)\|_{H^{s-1}_x}^2.
\end{align*}
Here the lower $p$-order terms arising from the estimate of $J_3$ have been included in the second term on the right-hand side, since $\eta>0$ is fixed and $\delta_{\rm in}>0$ is sufficiently small.

After fixing $\eta>0$, we choose $\delta_{\rm in}>0$ sufficiently small so that
\[
\frac{C\delta_{\rm in}}{\eta} \le \frac{\lambda_1}{4}.
\]
The current $p$-order term is then absorbed into the left-hand side. The remaining microscopic terms involve strictly fewer $p$-derivatives and will be absorbed later by a triangular weighted summation over $|\beta|$. Thus, we obtain \eqref{eq:sum2}. This completes the proof.
 \end{proof}

%
%
%
%
%
%
%
%
%
%
\subsection{Macroscopic dissipation estimate} \label{ssec:mac-diss}

We now derive a dissipation estimate for the macroscopic variables $a$, $b$, and $c$. The microscopic estimates in Section \ref{ssec:micro-energy} control the dissipative component $(I-P)g$, but they do not directly provide control of $\nabla_x(a,b,c)$. The purpose of this subsection is to obtain the missing macroscopic dissipation from the equations satisfied by the coefficients of $Pg$.

Substituting the decomposition $g=Pg+(I-P)g$ into \eqref{eq:pert-glo}, we obtain
\begin{align}\label{eq:expand-s5}
    (\pa_t+p\cdot\nabla_x)Pg = \ell+\mathfrak h,
\end{align}
where
\[
\ell := \lt\{ -\pa_t - p\cdot\nabla_x + L \rt\}(I-P)g,
\quad
\mathfrak h := \Gamma(g).
\]
Using \eqref{eq:Pg-abc}, we compute
\begin{align}\label{eq:mac-l-exp-s5}
\begin{aligned}
(\pa_t+p\cdot\nabla_x)Pg &= \Bigg[ \pa_ta + \sum_{i=1}^3p_i(\pa_tb_i+\pa_{x_i}a) + \sum_{i=1}^3p_i^2(\pa_tc+\pa_{x_i}b_i) \cr
&\quad \quad + \sum_{1\le i<j\le3} p_ip_j(\pa_{x_i}b_j+\pa_{x_j}b_i) + \sum_{i=1}^3p_i|p|^2\pa_{x_i}c \Bigg]\sqrt{\mu_\hbar }.
\end{aligned}
\end{align}
 
To extract the macroscopic relations from \eqref{eq:expand-s5}, we introduce an appropriate family of coefficient functionals. Let
\[
\mathscr V := \operatorname{span} \lt\{ 1,\ p_i,\ p_i^2,\ p_ip_j,\ p_i|p|^2 : 1\le i\le3,\ 1\le i<j\le3 \rt\} \sqrt{\mu_\hbar }.
\]
The functions in the preceding family are linearly independent. We denote them collectively by $\{\psi_A\}_{A\in\mathscr A}$, where $\mathscr A$ is the corresponding finite index set. Let $\{\psi_A^*\}_{A\in\mathscr A} \subset \mathscr V$ be the dual basis with respect to the $L^2_p$-inner product, so that $\lt\lal \psi_A,\psi_B^* \rt\ral_{L^2_p} = \delta_{AB}$.
For a sufficiently regular function $F=F(t,x,p)$, we define its coefficient functionals by $\mathfrak C_A[F](t,x) := \lt\lal F(t,x,\cdot), \psi_A^* \rt\ral_{L^2_p}$. Since the functions $\psi_A^*$ are fixed smooth exponentially decaying functions of $p$, each coefficient functional is a finite linear combination of moments against smooth exponentially decaying weights.

The left-hand side of \eqref{eq:expand-s5} belongs to $\mathscr V$. Applying the coefficient functionals $\mathfrak C_A$ to both sides of \eqref{eq:expand-s5}, we obtain the relations between the macroscopic variables and the corresponding moments of $\ell$ and $\mathfrak h$.

For convenience, we write
\[
l_c,\quad l_i,\quad l_{ij},\quad l_{bi},\quad l_a
\]
for the coefficient functionals of $\ell$ associated with
\[
p_i|p|^2\sqrt{\mu_\hbar },
\quad
p_i^2\sqrt{\mu_\hbar },
\quad
p_ip_j\sqrt{\mu_\hbar },
\quad
p_i\sqrt{\mu_\hbar },
\quad
\sqrt{\mu_\hbar },
\]
respectively. Here $l_c = (l_{c,1},l_{c,2},l_{c,3})$. We define
\[
h_c,\quad h_i,\quad h_{ij},\quad h_{bi},\quad h_a
\]
analogously from $\mathfrak h$.

\begin{lemma}\label{lem:lh-s5}
The macroscopic variables satisfy
\begin{align}\label{eq:mac-rel-s5}
\begin{aligned}
    \nabla_x c &= l_c+h_c, \cr
    \pa_t c+\pa_{x_i}b_i
    &=  l_i+h_i, \quad i=1,2,3, \cr
    \pa_{x_i}b_j+\pa_{x_j}b_i &= l_{ij}+h_{ij},  \quad 1\le i<j\le3, \cr
    \pa_tb_i+\pa_{x_i}a &= l_{bi}+h_{bi},    \quad i=1,2,3, \cr
    \pa_t a &= l_a+h_a.
\end{aligned}
\end{align}
Moreover, every $l$-coefficient is a finite linear combination of moments of the form
\bq\label{eq:form-I-s5}
    \intr \lt\{ -\pa_t - p\cdot\nabla_x + L \rt\} (I-P)g\,\xi(p)\,\rdp,
\eq
where $\xi$ ranges over a fixed finite family of smooth exponentially decaying functions of $p$. Similarly, every $h$-coefficient is a finite linear combination of moments of the form
\bq\label{eq:form-h-s5}
    \intr  \Gamma(g)\xi(p)\,\rdp.
\eq
\end{lemma}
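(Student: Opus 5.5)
The plan is to apply each coefficient functional $\mathfrak C_A$ to both sides of \eqref{eq:expand-s5} and read off the relations directly from the expansion \eqref{eq:mac-l-exp-s5}. The key observation is that the left-hand side $(\pa_t+p\cdot\nabla_x)Pg$ lies in $\mathscr V$ for each $(t,x)$. By \eqref{eq:mac-l-exp-s5} it equals
\[
\sum_{A\in\mathscr A} \mathfrak X_A\,\psi_A,
\]
where $\mathfrak X_A$ denotes the bracketed coefficient multiplying $\psi_A$. Concretely, the coefficients are: $\pa_ta$ for $\sqrt{\mu_\hbar}$; $\pa_tb_i+\pa_{x_i}a$ for $p_i\sqrt{\mu_\hbar}$; $\pa_tc+\pa_{x_i}b_i$ for $p_i^2\sqrt{\mu_\hbar}$; $\pa_{x_i}b_j+\pa_{x_j}b_i$ for $p_ip_j\sqrt{\mu_\hbar}$ with $i<j$; and $\pa_{x_i}c$ for $p_i|p|^2\sqrt{\mu_\hbar}$.

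Since $\{\psi_A^*\}$ is the dual basis in $\mathscr V$, we have $\mathfrak C_B[\sum_A \mathfrak X_A\psi_A] = \sum_A \mathfrak X_A\langle\psi_A,\psi_B^*\rangle_{L^2_p}=\mathfrak X_B$. Applying $\mathfrak C_B$ to the right-hand side of \eqref{eq:expand-s5} gives $\mathfrak C_B[\ell]+\mathfrak C_B[\mathfrak h]$. By the naming convention, these are the $l$- and $h$-coefficients associated with $\psi_B$. Equating the two sides for each $B$ yields the five lines of \eqref{eq:mac-rel-s5}. The first line, $\nabla_x c = l_c+h_c$, is the vector form of the three relations for $p_i|p|^2\sqrt{\mu_\hbar}$.

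Two preliminary points must be checked. The first is the linear independence of the family $\{1,p_i,p_i^2,p_ip_j\,(i<j),p_i|p|^2\}$ multiplied by $\sqrt{\mu_\hbar}$. This holds because these are $16$ distinct polynomials in $p$, independent as polynomials, multiplied by a positive weight. It guarantees that the Gram matrix $G_{AB}=\langle\psi_A,\psi_B\rangle_{L^2_p}$ is invertible, so that $\psi_B^*=\sum_C (G^{-1})_{CB}\psi_C$ is well defined. The second point is that the expansion \eqref{eq:mac-l-exp-s5} must be written in exactly this basis. In particular, $|p|^2c$ contributes $\pa_t c$ to each $p_i^2$ coefficient, and $p\cdot\nabla_x(|p|^2c)$ contributes $\pa_{x_i}c$ to $p_i|p|^2$. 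No term mixes in a way that obstructs the dual-basis extraction.

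Finally, the structural claims about the moments follow from the explicit form of $\psi_B^*$. Each $\psi_B^*$ is a finite linear combination of polynomials of degree at most three times $\sqrt{\mu_\hbar}$, hence smooth and exponentially decaying by \eqref{eq:mu0-comp}. Substituting the definitions of $\ell$ and $\mathfrak h$ gives
\[
\mathfrak C_B[\ell]=\intr\{-\pa_t-p\cdot\nabla_x+L\}(I-P)g\,\psi_B^*\,\rdp
\quad\text{and}\quad
\mathfrak C_B[\mathfrak h]=\intr\Gamma(g)\psi_B^*\,\rdp.
\]
These are of the forms \eqref{eq:form-I-s5} and \eqref{eq:form-h-s5} with $\xi$ ranging over the finite family $\{\psi_B^*\}$. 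No genuine obstacle arises: the argument is linear algebra. The only care needed is bookkeeping, to ensure the basis indexing matches the labels $l_c,l_i,l_{ij},l_{bi},l_a$, and to justify interchanging $\pa_t$ and $\nabla_x$ with the $p$-integration. The latter holds for the smooth solutions assumed in this section.
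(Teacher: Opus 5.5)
Your proposal is correct and takes essentially the same approach as the paper: apply the dual coefficient functionals $\mathfrak C_A$ to both sides of \eqref{eq:expand-s5}, read off the relations from the explicit expansion \eqref{eq:mac-l-exp-s5}, and take the functions $\xi$ to be the dual basis functions $\psi_A^*$. One arithmetic slip, which does not affect the argument: the family $\{1,p_i,p_i^2,p_ip_j\ (i<j),p_i|p|^2\}$ has $1+3+3+3+3=13$ members, not $16$.
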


\begin{proof}
By \eqref{eq:mac-l-exp-s5}, the left-hand side of \eqref{eq:expand-s5} is an element of $\mathscr V$. Applying the dual coefficient functionals $\mathfrak C_A$ to both sides of \eqref{eq:expand-s5}, we obtain
\[
\mathfrak C_A \lt[ (\pa_t+p\cdot\nabla_x)Pg \rt] = \mathfrak C_A[\ell] + \mathfrak C_A[\mathfrak h]
\]
for every $A\in\mathscr A$. Comparing this identity with the explicit expansion \eqref{eq:mac-l-exp-s5} gives \eqref{eq:mac-rel-s5}. Since every dual basis function $\psi_A^*$ is a fixed smooth exponentially decaying function of $p$, the definitions of $\ell$ and $\mathfrak h$ imply \eqref{eq:form-I-s5} and \eqref{eq:form-h-s5}. This completes the proof.
\end{proof}

We define
\[
\|\tilde h\|_{L^2_x}^2 := \|h_c\|_{L^2_x}^2 + \sum_{i=1}^3 \|h_i\|_{L^2_x}^2 + \sum_{1\le i<j\le3} \|h_{ij}\|_{L^2_x}^2 + \sum_{i=1}^3 \|h_{bi}\|_{L^2_x}^2 + \|h_a\|_{L^2_x}^2.
\]
For derivatives, $\|\pa_x^\alpha\tilde h\|_{L^2_x}$ is defined analogously.

The next lemma provides the macroscopic dissipation estimate. It collects the estimates for the spatial derivatives of the macroscopic variables $a$, $b$, and $c$, which are obtained from the coefficient relations \eqref{eq:mac-rel-s5} and suitable temporal interaction terms. The proof keeps the key cancellations in the body of the paper, while the purely algebraic coefficient comparison may be expanded in Appendix \ref{app:mac-diss} if desired.

\begin{lemma} \label{lem:mac-reco-s5}
Let $s\ge4$, and let $g$ satisfy the bootstrap condition \eqref{eq:boots}. Then, for every multi-index $\alpha$ with $|\alpha|\le s-1$, there exists an interaction functional $\calI_\alpha(t)$ such that 
\bq\label{eq:mac-rec-s5}
\|\pa_x^\alpha\nabla_x(a,b,c)\|_{L^2_x}^2 \le -\frac{\rd}{\rdt}\calI_\alpha(t) + C \|\pa_x^\alpha\nabla_x(I-P)g\|_{L^2_{x,p}}^2 + C \|\pa_x^\alpha(I-P)g\|_{L^2_{x,p}}^2 + C \|\pa_x^\alpha\tilde h\|_{L^2_x}^2.
\eq
More precisely, $\calI_\alpha$ is given by
\begin{align}\label{eq:Ialpha-s5}
\begin{aligned}
\calI_\alpha(t)
&:= \sum_{m=1}^{N_a} c_{a,m} \inttr \pa_x^\alpha(I-P)g\, \xi_{a,m}(p)\cdot \pa_x^\alpha\nabla_xa \,\rdp\rdx  \cr
&\quad + \sum_{m=1}^{N_b} c_{b,m} \inttr \pa_x^\alpha(I-P)g\, \xi_{b,m}(p): \pa_x^\alpha\nabla_xb \,\rdp\rdx  \cr
&\quad + \sum_{m=1}^{N_c} c_{c,m} \inttr \pa_x^\alpha(I-P)g\, \xi_{c,m}(p)\cdot \pa_x^\alpha\nabla_xc \,\rdp\rdx  \cr
&\quad + C_* \intr \pa_x^\alpha b\cdot \pa_x^\alpha\nabla_xa \,\rdx.
\end{aligned}
\end{align}
Here $N_a,N_b,N_c\in\mN$, the constants  $c_{a,m}$, $c_{b,m}$, $c_{c,m}$, $C_*$ are fixed, and $\xi_{a,m}$, $\xi_{b,m}$, $\xi_{c,m}$ are fixed smooth exponentially decaying functions of $p$ determined by the coefficient functionals introduced above. All these quantities depend only on the equilibrium $\mu_\hbar $.
\end{lemma}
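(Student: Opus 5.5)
The proof follows the classical Guo-type macroscopic recovery. The inputs are the coefficient relations \eqref{eq:mac-rel-s5} from Lemma \ref{lem:lh-s5} and the conservation-law form \eqref{eq:abc_t}. We treat $c$, $b$ and $a$ in turn, applying $\pa_x^\alpha$ with $|\alpha|\le s-1$ to each relation. Each $l$-coefficient contains a time derivative $\intr \pa_t(I-P)g\,\xi\,\rdp$. We never estimate this directly. Instead we pair it with a macroscopic gradient and move $\pa_t$ onto the macroscopic factor, producing $-\frac{\rd}{\rdt}$ of an interaction term plus a remainder. The remainder contains $\pa_t\pa_x^\alpha\nabla_x(a,b,c)$, and we control it using \eqref{eq:abc_t}. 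The pieces $-p\cdot\nabla_x(I-P)g$ and $L(I-P)g$ of $\ell$ are moments against exponentially decaying $\xi$. By \eqref{eq:P-coeff-b} and self-adjointness of $L$, they are bounded by $\|\pa_x^\alpha\nabla_x(I-P)g\|_{L^2_{x,p}}$ and $\|\pa_x^\alpha(I-P)g\|_{L^2_{x,p}}$ respectively.

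\emph{Estimate for $c$.} From $\nabla_x c=l_c+h_c$ we multiply by $\pa_x^\alpha\nabla_x c$ and integrate in $x$. The $\pa_t$ part gives
\[
-\frac{\rd}{\rdt}\inttr \pa_x^\alpha(I-P)g\,\xi_c\cdot\pa_x^\alpha\nabla_x c\,\rdp\rdx + \inttr \pa_x^\alpha(I-P)g\,\xi_c\cdot\pa_t\pa_x^\alpha\nabla_x c\,\rdp\rdx.
\]
By the third line of \eqref{eq:abc_t}, $\pa_t\nabla_x c$ equals $-\frac13\nabla_x\nabla_x\cdot b$ plus a microscopic term. We integrate by parts one $x$-derivative onto $(I-P)g$. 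This bounds the remainder by $\epsilon\|\pa_x^\alpha\nabla_x b\|^2 + C_\epsilon\|\pa_x^\alpha\nabla_x(I-P)g\|^2$.

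\emph{Estimate for $b$.} We use the identity
\[
\Delta b_j=\sum_i\pa_{x_i}(\pa_{x_i}b_j+\pa_{x_j}b_i)-\pa_{x_j}\pa_{x_i}b_i.
\]
The first bracket is given by the $l_{ij}$ relation for $i\ne j$ and by $2(l_i+h_i-\pa_t c)$ for $i=j$. Combining the $i=j$ relations also eliminates $\pa_t c$ in the combination $\pa_{x_i}b_i-\pa_{x_j}b_j$. We test against $\pa_x^\alpha b_j$ and integrate by parts, which produces the interaction functional with $\xi_{b,m}$. The remaining $\pa_t b$ term is expressed through the second line of \eqref{eq:abc_t} as $-\nabla_x(a+\frac{C_c}{3C_b}c)$ plus a microscopic term. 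This gives $\epsilon\|\pa_x^\alpha\nabla_x a\|^2 + C\|\pa_x^\alpha\nabla_x c\|^2$ plus microscopic terms, and the $c$ contribution is already controlled.

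\emph{Estimate for $a$.} We use $\pa_{x_i}a=l_{bi}+h_{bi}-\pa_t b_i$ and test against $\pa_x^\alpha\pa_{x_i}a$. The $\pa_t b_i$ term is rewritten as $-\frac{\rd}{\rdt}\intr \pa_x^\alpha b\cdot\pa_x^\alpha\nabla_x a\,\rdx$, which is the $C_*$ term in $\calI_\alpha$, plus $\intr \pa_x^\alpha b\cdot\pa_t\pa_x^\alpha\nabla_x a\,\rdx$. After integration by parts this becomes $-\intr \pa_x^\alpha\nabla_x\cdot b\,\pa_t\pa_x^\alpha a\,\rdx$. By the first line of \eqref{eq:abc_t}, $\pa_t a$ is purely microscopic, so this term is at most $\epsilon\|\pa_x^\alpha\nabla_x b\|^2+C\|\pa_x^\alpha\nabla_x(I-P)g\|^2$. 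The $\pa_t$ part of $l_{bi}$ is handled as in the $c$ step, using $\pa_t\nabla_x a$, which is again microscopic.

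\emph{Closing the estimate.} We take a weighted sum of the three estimates, $c$ with a large weight, then $b$, then $a$ with a small weight, and choose $\epsilon$ small. The cross terms are absorbed and \eqref{eq:mac-rec-s5} follows, with all $h$-terms collected into $C\|\pa_x^\alpha\tilde h\|_{L^2_x}^2$ by Cauchy--Schwarz.

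\textbf{Main obstacle.} The main obstacle is choosing the weights so that the cross terms are absorbed. The $b$-estimate generates $\|\nabla c\|$ and $\|\nabla a\|$, and the $a$-estimate generates $\|\nabla b\|$. In particular, the $\pa_t c$ contribution in the $i=j$ relations must be eliminated algebraically rather than estimated. One also has to check that no top-order term $\pa_x^\alpha\nabla_x(I-P)g$ with $|\alpha|+1>s$ appears; this holds because $|\alpha|\le s-1$.
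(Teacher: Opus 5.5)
Your proposal is correct and is essentially the paper's proof. It recovers $\nabla_x c$, $\nabla_x b$ (through an elliptic identity for each $b_j$ with $\pa_t c$ removed algebraically) and $\nabla_x a$ from \eqref{eq:mac-rel-s5}; it builds the interaction functionals by moving $\pa_t$ off the $l$-coefficients onto $\pa_x^\alpha\nabla_x(a,b,c)$ and integrating one $x$-derivative back onto $(I-P)g$ before using \eqref{eq:abc_t}; it uses the same $C_*\intr\pa_x^\alpha b\cdot\pa_x^\alpha\nabla_x a\,\rdx$ term for $a$; and it closes with a weighted sum that absorbs the cross terms.

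The one step to state more precisely is the $b$ estimate. After the $l_{ij}$ relations, your identity leaves
$2\pa_{x_j}^2b_j-\pa_{x_j}\nabla_x\cdot b=\sum_{i\ne j}\pa_{x_j}(\pa_{x_j}b_j-\pa_{x_i}b_i)-\pa_{x_j}^2b_j$.
The $\pa_t c$-free differences therefore cover everything except $-\pa_{x_j}^2b_j$. That term must be moved to the left-hand side, where it has the good sign. This gives exactly the paper's relation $\Delta_x b_j+\pa_{x_j}^2b_j=\sum_{i\ne j}\{\pa_{x_i}(l_{ij}+h_{ij})+\pa_{x_j}(l_j+h_j-l_i-h_i)\}$.
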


\begin{proof}
The detailed coefficient-level calculations are given in Appendix \ref{app:mac-diss}. We briefly explain the structure of the argument.

We first derive an estimate for $\nabla_x b$. The relations $\pa_tc+\pa_{x_i}b_i = l_i+h_i$ and $\pa_{x_i}b_j+\pa_{x_j}b_i = l_{ij}+h_{ij}$ yield an elliptic equation for each component $b_i$. As shown in
Lemma \ref{app:blh-section5}, we obtain
\bq\label{eq:b-reco-p-s5}
\|\pa_x^\alpha\nabla_xb\|_{L^2_x}^2 \le \calL_{\alpha,b} + C \|\pa_x^\alpha\tilde h\|_{L^2_x}^2,
\eq
where $\calL_{\alpha,b}$ denotes a finite sum of terms involving the $l_i$- and $l_{ij}$-coefficients paired with $\pa_x^\alpha\nabla_xb$.

The estimate of $\nabla_xc$ is more direct. From $\nabla_x c = l_c+h_c$, Lemma \ref{app:clh-section5} gives
\bq\label{eq:c-reco-p-s5}
\|\pa_x^\alpha\nabla_xc\|_{L^2_x}^2 \le \calL_{\alpha,c} + C \|\pa_x^\alpha\tilde h\|_{L^2_x}^2,
\eq
where $\calL_{\alpha,c}$ is the corresponding contribution involving $l_c$.

Finally, using $\pa_tb+\nabla_xa = l_b+h_b$, Lemma \ref{app:alh-section5} yields
\begin{align}\label{eq:a-reco-p-s5}
\begin{aligned}
\|\pa_x^\alpha\nabla_xa\|_{L^2_x}^2 &\le -\frac{\rd}{\rdt} \intr \pa_x^\alpha b\cdot \pa_x^\alpha\nabla_xa \,\rdx + C\calL_{\alpha,a} + C \|\pa_x^\alpha\nabla_xb\|_{L^2_x}^2 \cr
&\quad + C \|\pa_x^\alpha\nabla_x(I-P)g\|_{L^2_{x,p}}^2 + C \|\pa_x^\alpha\tilde h\|_{L^2_x}^2.
\end{aligned}
\end{align}

We now combine \eqref{eq:b-reco-p-s5}, \eqref{eq:c-reco-p-s5}, and \eqref{eq:a-reco-p-s5}. Multiplying the estimates for $b$ and $c$ by sufficiently large fixed constants, we absorb the term $C \|\pa_x^\alpha\nabla_xb\|_{L^2_x}^2$ appearing on the right-hand side of \eqref{eq:a-reco-p-s5}. Hence,
\bq\label{eq:mac-reco-be-ell}
\|\pa_x^\alpha\nabla_x(a,b,c)\|_{L^2_x}^2 \le -\frac{\rd}{\rdt} \lt\{ C_* \intr \pa_x^\alpha b\cdot \pa_x^\alpha\nabla_xa \,\rdx \rt\}
+ C\calL_\alpha + C \|\pa_x^\alpha\nabla_x(I-P)g\|_{L^2_{x,p}}^2 + C \|\pa_x^\alpha\tilde h\|_{L^2_x}^2,
\eq
where $C_*>0$ is a fixed constant and $\calL_\alpha := \calL_{\alpha,a} + \calL_{\alpha,b} + \calL_{\alpha,c}$.

By Lemma \ref{app:ell-terms-s5}, the total contribution of the $l$-coefficients satisfies
\bq\label{eq:ell-bound-main}
\calL_\alpha \le -\frac{\rd}{\rdt}\calJ_\alpha(t) + C \|\pa_x^\alpha\nabla_x(I-P)g\|_{L^2_{x,p}}^2 + C\|\pa_x^\alpha(I-P)g\|_{L^2_{x,p}}^2 + \eta \|\pa_x^\alpha\nabla_x(a,b,c)\|_{L^2_x}^2,
\eq
where
\begin{align*}
\calJ_\alpha(t) &= \sum_{m=1}^{N_a} c_{a,m} \inttr \pa_x^\alpha(I-P)g\, \xi_{a,m}(p)\cdot \pa_x^\alpha\nabla_xa \,\rdp\rdx  \cr
&\quad + \sum_{m=1}^{N_b} c_{b,m} \inttr \pa_x^\alpha(I-P)g\, \xi_{b,m}(p): \pa_x^\alpha\nabla_xb \,\rdp\rdx  \cr
&\quad + \sum_{m=1}^{N_c} c_{c,m} \inttr \pa_x^\alpha(I-P)g\, \xi_{c,m}(p)\cdot \pa_x^\alpha\nabla_xc \,\rdp\rdx .
\end{align*}
Substituting \eqref{eq:ell-bound-main} into \eqref{eq:mac-reco-be-ell} and choosing $\eta>0$ sufficiently small, we absorb the final term into the left-hand side.
Defining
\[
\calI_\alpha(t) := \calJ_\alpha(t) + C_* \intr \pa_x^\alpha b\cdot \pa_x^\alpha\nabla_xa \,\rdx,
\]
we obtain \eqref{eq:mac-rec-s5}. This completes the proof.
\end{proof}

We now estimate the nonlinear coefficient $\tilde h$.

\begin{lemma} 
Let $s\ge4$, and suppose that $g$ satisfies \eqref{eq:boots}. Then, for every multi-index $\alpha$
satisfying $|\alpha|\le s-1$, we have
\bq\label{eq:h-ineq-section5}
\|\pa_x^\alpha\tilde h\|_{L^2_x}^2 \le C\delta_{\rm in}^2 \lt( \|\pa_x^\alpha\nabla_x(a,b,c)\|_{L^2_x}^2 + \|\pa_x^\alpha\nabla_x(I-P)g\|_{L^2_{x,p}}^2 \rt).
\eq
\end{lemma}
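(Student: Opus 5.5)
By Lemma \ref{lem:lh-s5}, every component of $\tilde h$ is a finite linear combination of moments $\intr\Gamma(g)\xi(p)\,\rdp$, where $\xi$ ranges over a fixed finite family of smooth, exponentially decaying functions. The plan is to reduce the estimate to $L^2_x$ bounds on $\pa_x^\alpha$ of the nonlinear flux $\calR(g)/\sqrt{\mu_\hbar}$. Using the weighted pairing \eqref{eq:wei-f-pair} pointwise in $x$, I write
\[
\intr \pa_x^\alpha\Gamma(g)\,\xi\,\rdp = -\intr \frac{\pa_x^\alpha\calR(g)}{\sqrt{\mu_\hbar}}\cdot\calA_+\xi\,\rdp .
\]
Since $\calA_+\xi=\nabla_p\xi+\frac12 p\eta_\hbar\xi$ is again bounded and exponentially decaying by \eqref{eq:P-coeff-b}, Cauchy--Schwarz in $p$ gives
\[
\|\pa_x^\alpha\tilde h\|_{L^2_x}\le C\Big\|\pa_x^\alpha\frac{\calR(g)}{\sqrt{\mu_\hbar}}\Big\|_{L^2_{x,p}(w)},
\]
where the norm carries a decaying weight in $p$. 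This weight means no $D$-norm or $p$-derivative of $g$ is needed.

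Next I treat the terms of \eqref{eq:non-flux} one by one, for $|\alpha|\le s-1$. Each term is a product of two or more factors of three kinds: $x$-dependent moments ($\mathscr a,\mathscr b,\mathscr c$, which are linear), the remainders $N_u,N_\Theta$ (quadratic), or $p$-dependent pieces such as $\calA_-g$, $\eta_\hbar g$, $\sqrt{\mu_\hbar}g^2$ and fixed weights. The pure-weight term $N_\Theta p\sqrt{\mu_\hbar}$ and $N_u\sqrt{\mu_\hbar}$ I handle directly with \eqref{Nu-NT-bound-low} of Lemma \ref{lem:Nu-NT-est}, which gives $C\delta_{\rm in}\|\pa_x^\alpha\nabla_x g\|_{L^2_{x,p}}$. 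For the genuinely bilinear terms I apply Leibniz in $x$ and put $\pa_x^{\alpha-\gamma}(\text{moment})$ in $L^3_x$ and $\pa_x^\gamma(\cdot)$ in $L^6_x$. Because the $p$-weight is decaying, the factor $\calA_-\pa_x^\gamma g$ paired with $\calA_+\xi$ can be integrated by parts back onto $\xi$, so it is controlled by $\|\pa_x^\gamma g\|_{L^6_xL^2_p}$. The Gagliardo--Nirenberg bound \eqref{GN ineq 1} then gives $C\|g\|_{H^s}\|\pa_x^\alpha\nabla_xg\|_{L^2_{x,p}}$. Terms that are cubic or higher, or involve $g^2\sqrt{\mu_\hbar}$, follow in the same way using Moser's estimate and the smallness $\|g\|_{H^s}\le\delta_{\rm in}$.

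Squaring and using the decomposition $g=Pg+(I-P)g$ together with \eqref{eq:Pg-equiv-der}, I obtain
\[
\|\pa_x^\alpha\nabla_xg\|_{L^2}^2\le C\|\pa_x^\alpha\nabla_x(a,b,c)\|^2+C\|\pa_x^\alpha\nabla_x(I-P)g\|^2 ,
\]
which is exactly \eqref{eq:h-ineq-section5}.

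The main obstacle is the term $\mathscr c(g)\nabla_p(g\sqrt{\mu_\hbar})$, which contains a $p$-derivative of $g$. It must not produce a $D$-norm, and in particular no $\pa_x^\alpha\nabla_p$ of the top-order factor. I plan to move the $\nabla_p$ onto the test weight by one more integration by parts, using $\nabla_p(g\sqrt{\mu_\hbar})=\sqrt{\mu_\hbar}\calA_-g$ and the exponential decay of $\xi$. Every factor then appears only through weighted $L^2_p$ moments, each bounded by $\|\cdot\|_{L^2_p}$ as in \eqref{eq:mom-con-ex}. This also explains why the right-hand side of \eqref{eq:h-ineq-section5} contains only $L^2_{x,p}$ norms and gains one full spatial derivative.
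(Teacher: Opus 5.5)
Your proposal is correct and follows the paper's route. You write each coefficient of $\tilde h$ as a moment $\intr\Gamma(g)\xi\,\rdp$, apply the weighted flux pairing \eqref{eq:wei-f-pair}, and bound the products by $L^3_x\times L^6_x$ H\"older together with \eqref{GN ineq 1} and Lemma \ref{lem:Nu-NT-est}, which gives $C\|g\|_{H^s}\|\pa_x^\alpha\nabla_xg\|_{L^2_{x,p}}$; you then split $g=Pg+(I-P)g$ using \eqref{eq:Pg-equiv-der}. Your extra integration by parts, which moves $\calA_-$ onto the fixed weight $\calA_+\xi$ so that only $L^2_p$ norms (and no $D$-norms) appear, is exactly the detail the paper leaves implicit when it cites the argument of Lemma \ref{Gamma 2}.
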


\begin{proof}
By \eqref{eq:form-h-s5}, each coefficient of $\tilde h$ is a finite linear combination of moments of the form $\intr \Gamma(g)\xi(p)\,\rdp$, where $\xi$ belongs to a fixed finite family of smooth exponentially decaying functions of $p$.

Applying the same weighted flux estimate and low--high splitting in the $x$ variable as in the proof of Lemma \ref{Gamma 2}, we obtain, for $|\alpha|\le s-1$,
\[
\lt\| \intr  \pa_x^\alpha\Gamma(g)\xi(p)\,\rdp \rt\|_{L^2_x} \le C_\xi \|g\|_{H^s} \|\pa_x^\alpha\nabla_x g\|_{L^2_{x,p}}.
\]
Since the family of coefficient weights is finite, the corresponding constants can be absorbed into a uniform constant. Hence, $\|\pa_x^\alpha\tilde h\|_{L^2_x} \le C \|g\|_{H^s} \|\pa_x^\alpha\nabla_x g\|_{L^2_{x,p}}$. Using the bootstrap assumption \eqref{eq:boots} and the decomposition $g = Pg+(I-P)g$, together with \eqref{eq:Pg-equiv-der}, we have
\[
\|\pa_x^\alpha\nabla_xg\|_{L^2_{x,p}}^2 \le C \|\pa_x^\alpha\nabla_x(a,b,c)\|_{L^2_x}^2  + C \|\pa_x^\alpha\nabla_x(I-P)g\|_{L^2_{x,p}}^2.
\]
Combining the preceding estimates concludes the desired result. 
\end{proof}

Combining the previous estimates, we obtain the desired macroscopic
dissipation bound.

\begin{lemma} \label{lem:mac-diss-s5}
Let $s\ge4$, and suppose that $g$ satisfies \eqref{eq:boots}. Then, for $\delta_{\rm in}>0$ sufficiently small, there exists an interaction functional
\[
\calI(t) := \sum_{|\alpha|\le s-1} \calI_\alpha(t)
\]
such that
\[
\frac{\rd}{\rdt}\calI(t) + \frac12 \|\nabla_x(a,b,c)\|_{H^{s-1}_x}^2 \le C \sum_{|\alpha|\le s} \|\pa_x^\alpha(I-P)g\|_D^2.
\]
Moreover,
\bq\label{eq:I-low-ord}
|\calI(t)| \le C \|g(t)\|_{H^s}^2.
\eq
\end{lemma}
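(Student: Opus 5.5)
We sum the estimates of Lemma \ref{lem:mac-reco-s5} over all multi-indices $|\alpha|\le s-1$ and then use \eqref{eq:h-ineq-section5} to absorb the nonlinear coefficient contributions. Setting $\calI=\sum_{|\alpha|\le s-1}\calI_\alpha$, summation of \eqref{eq:mac-rec-s5} gives
\[
\|\nabla_x(a,b,c)\|_{H^{s-1}_x}^2 \le -\frac{\rd}{\rdt}\calI(t) + C\sum_{|\alpha|\le s-1}\Big(\|\pa_x^\alpha\nabla_x(I-P)g\|_{L^2_{x,p}}^2+\|\pa_x^\alpha(I-P)g\|_{L^2_{x,p}}^2\Big) + C\sum_{|\alpha|\le s-1}\|\pa_x^\alpha\tilde h\|_{L^2_x}^2.
\]
The first sum on the right contains only purely spatial derivatives of the microscopic part of order at most $s$. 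Since the $D$-norm controls the $L^2_p$-norm (by the bound $|g|_D^2\ge 3c_0\|g\|_{L^2_p}^2$ established earlier), it is bounded by $C\sum_{|\alpha|\le s}\|\pa_x^\alpha(I-P)g\|_D^2$.

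For the nonlinear part we apply \eqref{eq:h-ineq-section5} to each $|\alpha|\le s-1$ and sum:
\[
\sum_{|\alpha|\le s-1}\|\pa_x^\alpha\tilde h\|_{L^2_x}^2\le C\delta_{\rm in}^2\Big(\|\nabla_x(a,b,c)\|_{H^{s-1}_x}^2+\sum_{|\alpha|\le s}\|\pa_x^\alpha(I-P)g\|_D^2\Big).
\]
We then choose $\delta_{\rm in}$ so small that $C^2\delta_{\rm in}^2\le\frac12$. The macroscopic part of this bound is absorbed into the left-hand side, leaving the coefficient $\frac12$, and the microscopic part is merged with the previous one. This yields the asserted differential inequality. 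The only point requiring care is that the constants in Lemma \ref{lem:mac-reco-s5} and in \eqref{eq:h-ineq-section5} are independent of $\delta_{\rm in}$. This holds because they depend only on $\mu_\hbar$ and on the fixed finite families of weights $\xi$, so the absorption is legitimate.

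For \eqref{eq:I-low-ord}, we bound each term of \eqref{eq:Ialpha-s5} by Cauchy--Schwarz, using that $\xi_{a,m},\xi_{b,m},\xi_{c,m}$ are fixed exponentially decaying functions in $L^2_p$. A typical term satisfies
\[
\Big|\inttr \pa_x^\alpha(I-P)g\,\xi(p)\,\pa_x^\alpha\nabla_x a\,\rdp\rdx\Big|\le C\|\pa_x^\alpha(I-P)g\|_{L^2_{x,p}}\|\pa_x^\alpha\nabla_x a\|_{L^2_x}.
\]
Since $|\alpha|+1\le s$, the estimate \eqref{eq:Pg-equiv-der} together with $\|(I-P)g\|\le\|g\|$ bounds this by $C\|g\|_{H^s}^2$. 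The cross term $C_*\intr\pa_x^\alpha b\cdot\pa_x^\alpha\nabla_x a\,\rdx$ is handled the same way. Summing over the finitely many $\alpha$ gives $|\calI(t)|\le C\|g(t)\|_{H^s}^2$. The main obstacle was already overcome in Lemma \ref{lem:mac-reco-s5} and the $\tilde h$ bound, so this remaining step is a bookkeeping combination.
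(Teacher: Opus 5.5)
Your proposal is correct and follows the paper's proof essentially step by step: you sum \eqref{eq:mac-rec-s5} over $|\alpha|\le s-1$, absorb the $\tilde h$ contributions via \eqref{eq:h-ineq-section5} for small $\delta_{\rm in}$, and bound the microscopic $L^2$ terms by the $D$-norm. You then bound each term of $\calI$ by Cauchy--Schwarz together with \eqref{eq:Pg-equiv-der}. Your explicit smallness condition $C^2\delta_{\rm in}^2\le\frac12$, which tracks the product of the two constants, is slightly more careful than the paper's generic $C\delta_{\rm in}^2\le\frac12$.
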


\begin{proof}
Summing \eqref{eq:mac-rec-s5} over
$|\alpha|\le s-1$, we obtain
\begin{align*} 
\|\nabla_x(a,b,c)\|_{H^{s-1}_x}^2 &\le - \frac{\rd}{\rdt} \calI(t) + C \sum_{|\alpha|\le s-1} \|\pa_x^\alpha\nabla_x(I-P)g\|_{L^2_{x,p}}^2 \cr
&\quad + C \sum_{|\alpha|\le s-1} \|\pa_x^\alpha(I-P)g\|_{L^2_{x,p}}^2 + C \sum_{|\alpha|\le s-1} \|\pa_x^\alpha\tilde h\|_{L^2_x}^2.
\end{align*}
By \eqref{eq:h-ineq-section5}, we get
\[
\sum_{|\alpha|\le s-1} \|\pa_x^\alpha\tilde h\|_{L^2_x}^2 \le C\delta_{\rm in}^2 \|\nabla_x(a,b,c)\|_{H^{s-1}_x}^2 + C\delta_{\rm in}^2 \sum_{|\alpha|\le s-1} \|\pa_x^\alpha\nabla_x(I-P)g\|_{L^2_{x,p}}^2.
\]
Moreover, since the $D$-norm controls the $L^2_p$-norm, we have
\[
\sum_{|\alpha|\le s-1} \lt(\|\pa_x^\alpha\nabla_x(I-P)g\|_{L^2_{x,p}}^2 + \|\pa_x^\alpha(I-P)g\|_{L^2_{x,p}}^2 \rt) \le C \sum_{|\alpha|\le s} \|\pa_x^\alpha(I-P)g\|_D^2.
\]
Consequently, this yields
\[
\|\nabla_x(a,b,c)\|_{H^{s-1}_x}^2 \le - \frac{\rd}{\rdt} \calI(t) + C \sum_{|\alpha|\le s} \|\pa_x^\alpha(I-P)g\|_D^2 + C\delta_{\rm in}^2 \|\nabla_x(a,b,c)\|_{H^{s-1}_x}^2.
\]
Choosing $\delta_{\rm in}>0$ sufficiently small so that $C\delta_{\rm in}^2 \le \frac12$, we absorb the last term on the right-hand side into the left-hand side and obtain
\[
\frac12 \|\nabla_x(a,b,c)\|_{H^{s-1}_x}^2 \le - \frac{\rd}{\rdt} \calI(t) + C \sum_{|\alpha|\le s} \|\pa_x^\alpha(I-P)g\|_D^2.
\]
 
It remains to verify \eqref{eq:I-low-ord}. By the explicit form of $\calI_\alpha$ in \eqref{eq:Ialpha-s5}, $\calI(t)$ is a finite linear combination of terms of the form
\[
\inttr \pa_x^\alpha(I-P)g\, \xi(p)\, \pa_x^\alpha\nabla_xq \,\rdp\rdx , \quad q\in\{a,b,c\}, \quad \text{and} \quad \intr \pa_x^\alpha b\cdot \pa_x^\alpha\nabla_xa \,\rdx,
\]
where $|\alpha|\le s-1$ and $\xi$ is a fixed smooth exponentially decaying function of $p$. By Cauchy's inequality, the exponential decay of $\xi$, and \eqref{eq:Pg-equiv-der}, we have
\[
\lt| \inttr \pa_x^\alpha(I-P)g\, \xi(p)\, \pa_x^\alpha\nabla_xq \,\rdp\rdx  \rt|  \le C \|\pa_x^\alpha(I-P)g\|_{L^2_{x,p}} \|\pa_x^\alpha\nabla_xq\|_{L^2_x}  \le C \|g\|_{H^s}^2,
\]
and
\[
\lt| \intr \pa_x^\alpha b\cdot \pa_x^\alpha\nabla_xa \,\rdx \rt| \le \|\pa_x^\alpha b\|_{L^2_x} \|\pa_x^\alpha\nabla_xa\|_{L^2_x} \le C \|g\|_{H^s}^2.
\]
Since the number of interaction terms is finite, summing the preceding bounds gives \eqref{eq:I-low-ord}. This completes the proof.
\end{proof}

%
%
%
%
%
%
%
%
%
%

\subsection{Closure of the global energy estimate}

We now combine the microscopic energy estimates and the macroscopic dissipation estimate to close the global a priori estimate. Define
\[
    \mcD_s(g) := \sum_{|\alpha|+|\beta|\le s} \|\pa_x^\alpha\pa_p^\beta(I-P)g\|_D^2 + \|\nabla_x(a,b,c)\|_{H^{s-1}_x}^2.
\]
The energy functional will be obtained by adding a sufficiently small multiple of the interaction functional constructed in Lemma \ref{lem:mac-diss-s5} to an appropriate weighted microscopic energy.

\begin{lemma} \label{lem:glob-ener-s5}
Let $s\ge4$. There exist constants $\delta_{\rm in}>0$, $\lambda>0$, $C_E>1$ such that the following holds. Let $g$ be a smooth solution to
\eqref{eq:pert-glo} on $[0,T]$ satisfying
\[
    \sup_{0\le t\le T} \|g(t)\|_{H^s} \le \delta_{\rm in}.
\]
Then there exists an energy functional $\mcE_s(g)(t)$ satisfying
\bq\label{eq:ener-equi-s5}
    C_E^{-1} \|g(t)\|_{H^s}^2 \le \mcE_s(g)(t) \le C_E \|g(t)\|_{H^s}^2
\eq
and
\[
\frac{\rd}{\rdt} \mcE_s(g)(t) +  \lambda \mcD_s(g)(t) \le 0
\]
for all $0\le t\le T$.
\end{lemma}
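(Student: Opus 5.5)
The plan is to build $\mcE_s$ as a weighted combination of the spatial energy of Lemma \ref{priori 1}, the mixed microscopic energies of Lemma \ref{priori 2} arranged triangularly in $|\beta|$, and a small multiple of the interaction functional $\calI$ of Lemma \ref{lem:mac-diss-s5}. Concretely, for constants $1 \gg \epsilon_1 \gg \epsilon_2 \gg \cdots \gg \epsilon_s>0$ and $\kappa_0>0$ to be fixed, I would set
\[
\mcE_s(g) := \|g\|_{L^2_p(H^s_x)}^2 + \kappa_0\,\calI(t) + \sum_{k=1}^s \epsilon_k \sum_{|\beta|=k}\sum_{|\alpha|\le s-k} \|\pa_x^\alpha\pa_p^\beta(I-P)g\|_{L^2_{x,p}}^2 .
\]

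The first step is to add $\kappa_0$ times the inequality of Lemma \ref{lem:mac-diss-s5} to the inequality of Lemma \ref{priori 1}. The right-hand side $C\kappa_0\sum_{|\alpha|\le s}\|\pa_x^\alpha(I-P)g\|_D^2$ is absorbed by the dissipation $C_1\sum\|\pa_x^\alpha(I-P)g\|_D^2$ once $\kappa_0 \le C_1/(2C)$. The term $C\delta_{\rm in}\|\nabla_x(a,b,c)\|^2_{H^{s-1}_x}$ is absorbed by $\frac{\kappa_0}{2}\|\nabla_x(a,b,c)\|^2_{H^{s-1}_x}$ once $\delta_{\rm in}\ll\kappa_0$. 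This gives dissipation of all pure spatial microscopic norms together with the full macroscopic gradient norm.

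Next I would add $\epsilon_k$ times the estimate \eqref{eq:sum2}, summed over $|\beta|=k$, and proceed inductively in $k$. The right-hand side of \eqref{eq:sum2} has three kinds of terms:
\begin{enumerate}
\item microscopic $D$-norms with at most $k-1$ velocity derivatives, which are dominated by dissipation already secured at levels $0,\dots,k-1$ if $\epsilon_k C$ is small relative to $\epsilon_{k-1}$ (with $\epsilon_0:=1$);
\item $C\|\nabla_x(a,b,c)\|^2_{H^{s-k}_x}$, which is absorbed by $\frac{\kappa_0}{4}\|\nabla_x(a,b,c)\|^2_{H^{s-1}_x}$ if $\epsilon_k C\le\kappa_0/(4s)$;
\item $C\delta_{\rm in}$ times the macroscopic gradient norm, handled in the same way.
\end{enumerate}
Choosing the $\epsilon_k$ successively smaller closes the hierarchy and yields $\frac{\rd}{\rdt}\mcE_s+\lambda\mcD_s\le0$, with $\lambda$ a fraction of $\min(C_1,\kappa_0,\epsilon_sC_2)$.

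For the equivalence \eqref{eq:ener-equi-s5}, I would use $|\calI|\le C\|g\|_{H^s}^2$ from \eqref{eq:I-low-ord}. The upper bound is immediate. The lower bound needs control of $\|\pa_x^\alpha\pa_p^\beta g\|_{L^2}$ for $|\beta|\ge1$, which the energy only gives for the microscopic part. This is covered by writing $\pa_x^\alpha\pa_p^\beta g=\pa_x^\alpha\pa_p^\beta(I-P)g+\pa_x^\alpha\pa_p^\beta Pg$ and bounding $\|\pa_x^\alpha\pa_p^\beta Pg\|\le C_\beta\|\pa_x^\alpha g\|_{L^2}$ via \eqref{eq:P-coeff-b}. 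Taking $\kappa_0 C\le\frac12$ then gives $\mcE_s\ge c\,\|g\|_{H^s}^2$.

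The main obstacle is that $\kappa_0$ must be small both for this equivalence and for absorbing the right-hand side of Lemma \ref{lem:mac-diss-s5}, yet not so small that the macroscopic dissipation $\kappa_0\|\nabla_x(a,b,c)\|^2$ fails to dominate the term $C\|\nabla_x(a,b,c)\|^2_{H^{s-k}}$ from \eqref{eq:sum2}. The ordering of constants resolves this: fix $\kappa_0$ first, then the $\epsilon_k\ll\kappa_0$, and finally $\delta_{\rm in}$. The term $C\|\nabla_x(a,b,c)\|^2$ carries no smallness, so it is controlled only through the smallness of $\epsilon_k$, not of $\delta_{\rm in}$.
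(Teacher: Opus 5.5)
Your construction is essentially the paper's: the spatial energy $\|g\|_{L^2_p(H^s_x)}^2$ with weight one, the levels $|\beta|=k\ge1$ from Lemma \ref{priori 2} with rapidly decreasing weights, and a small multiple of $\calI$. As in the paper, you fix the multiplier of $\calI$ before the velocity weights and fix $\delta_{\rm in}$ last. The dissipation inequality closes as you describe.

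\textbf{The gap: the lower bound in \eqref{eq:ener-equi-s5}.} This step fails as written. Suppose you use \eqref{eq:I-low-ord} in its stated form $|\calI|\le C\|g\|_{H^s}^2$. Then the negative contribution $\kappa_0C\|g\|_{H^s}^2$ contains the $p$-derivative norms. The positive part of $\mcE_s$ controls those only with the weights $\epsilon_k$. You would therefore need $\kappa_0C\lesssim\epsilon_s$, which contradicts your ordering $\epsilon_k\ll\kappa_0$. The condition $\kappa_0C\le\frac12$ alone is not sufficient.

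\textbf{The repair.} Read off from \eqref{eq:Ialpha-s5} that $\calI$ involves only $\pa_x^\alpha(I-P)g$, $\pa_x^\alpha\nabla_x(a,b,c)$ and $\pa_x^\alpha b$ with $|\alpha|\le s-1$. Hence in fact $|\calI|\le C\|g\|_{L^2_p(H^s_x)}^2$. Since this norm enters $\mcE_s$ with weight one, $\kappa_0C\le\frac12$ yields
$\mcE_s\ge\frac12\|g\|_{L^2_p(H^s_x)}^2+\sum_k\epsilon_k\sum_{|\beta|=k}\sum_{|\alpha|\le s-k}\|\pa_x^\alpha\pa_p^\beta(I-P)g\|_{L^2_{x,p}}^2\ge c\|g\|_{H^s}^2$.
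Here $c$ depends on $\epsilon_s$, but the choice of $\kappa_0$ no longer depends on the $\epsilon_k$, so your ordering of constants is consistent.

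The paper's proof has the same circularity and is repaired the same way. Its condition $\vartheta C_I\le 1/(2C_s)$ uses a constant $C_s$ that depends on the weights $a_k$, and those weights are chosen after $\vartheta$.
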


\begin{proof}
We first combine the microscopic estimates. For each $1\le k\le s$, define
\[
\mathfrak E_k(g) := \sum_{\substack{|\alpha|+|\beta|\le s\\|\beta|=k}} \|\pa_x^\alpha\pa_p^\beta(I-P)g\|_{L^2_{x,p}}^2,
\quad 
\mathfrak D_k(g) := \sum_{\substack{|\alpha|+|\beta|\le s\\|\beta|=k}} \|\pa_x^\alpha\pa_p^\beta(I-P)g\|_D^2.
\]
For the purely spatial derivatives, set
\[
\mathfrak E_0(g) := \|g\|_{L^2_p(H^s_x)}^2,
\quad
\mathfrak D_0(g) := \sum_{|\alpha|\le s} \|\pa_x^\alpha(I-P)g\|_D^2.
\]

Let $\e_*>0$ be a sufficiently small constant to be fixed below. Choose positive constants $1=a_0>a_1>\cdots>a_s>0$ recursively. The weights are chosen sufficiently small so that the lower $p$-order microscopic terms on the right-hand side of \eqref{eq:sum2} are absorbed by the dissipation terms at the
preceding levels. In addition, we choose them so that
\bq\label{eq:small-w-mac}
    \sum_{k=1}^s C_{s,k}a_k \le \frac{\e_*}{2},
\eq
where $C_{s,k}>0$ denotes the constant multiplying the macroscopic gradient terms at the $p$-derivative level $k$.

Define the weighted microscopic energy and dissipation by
\[
\mcE_s^{\rm mic}(g) := \mathfrak E_0(g) + \sum_{k=1}^s a_k\mathfrak E_k(g),
\quad
\mcD_s^{\rm mic}(g) := \mathfrak D_0(g) + \sum_{k=1}^s a_k\mathfrak D_k(g).
\]
Since the weights $a_1,\ldots,a_s$ are fixed positive constants and $P$ is a finite-dimensional projection, there exists a constant $C_s>1$ such that
\bq\label{eq:weight-mic-equi}
    C_s^{-1} \|g\|_{H^s}^2 \le \mcE_s^{\rm mic}(g) \le C_s  \|g\|_{H^s}^2.
\eq
Similarly,
\bq\label{eq:weight-diss-equi}
    C_s^{-1} \sum_{|\alpha|+|\beta|\le s} \|\pa_x^\alpha\pa_p^\beta(I-P)g\|_D^2 \le \mcD_s^{\rm mic}(g)
\eq
and
\[
    \mcD_s^{\rm mic}(g) \le C_s \sum_{|\alpha|+|\beta|\le s} \|\pa_x^\alpha\pa_p^\beta(I-P)g\|_D^2.
\]

We now multiply the estimate in Lemma \ref{priori 2} at the $p$-derivative level $k$ by $a_k$, sum over $1\le k\le s$, and add the spatial estimate in Lemma \ref{priori 1}. By the recursive choice of the weights, all lower $p$-order microscopic terms are absorbed into $\mcD_s^{\rm mic}(g)$. Moreover, by \eqref{eq:small-w-mac}, the macroscopic gradient terms arising from the mixed derivative estimates have a sufficiently small coefficient. Finally, choosing the bootstrap constant $\delta_{\rm in}>0$ sufficiently small, we also control the nonlinear macroscopic term arising
from Lemma \ref{priori 1}. Hence, we obtain
\bq\label{eq:comb-mic-s5}
\frac{\rd}{\rdt} \mcE_s^{\rm mic}(g) + c_1 \mcD_s^{\rm mic}(g) \le \e_* \|\nabla_x(a,b,c)\|_{H^{s-1}_x}^2
\eq
for some constant $c_1>0$.

Next, by Lemma \ref{lem:mac-diss-s5}, there exists an interaction functional $\calI(t)$ satisfying
\[
\frac{\rd}{\rdt} \calI(t) + \frac12  \|\nabla_x(a,b,c)\|_{H^{s-1}_x}^2 \le C_0 \sum_{|\alpha|\le s} \|\pa_x^\alpha(I-P)g\|_D^2.
\]
Since
\[
\sum_{|\alpha|\le s} \|\pa_x^\alpha(I-P)g\|_D^2 \le C \mcD_s^{\rm mic}(g),
\]
we have
\bq\label{eq:mac-diss-sh-we-s5}
\frac{\rd}{\rdt} \calI(t) + \frac12 \|\nabla_x(a,b,c)\|_{H^{s-1}_x}^2 \le C_0 \mcD_s^{\rm mic}(g).
\eq
Moreover,
\bq\label{eq:I-lower-s5}
    |\calI(t)| \le C_I \|g(t)\|_{H^s}^2
\eq
for some constant $C_I>0$.

Choose a fixed constant $\vartheta>0$ sufficiently small so that
\bq\label{eq:vartheta-cho}
    \vartheta C_0 \le \frac{c_1}{2} \quad \text{and} 
    \quad     
    \vartheta C_I \le \frac{1}{2C_s}.
\eq
After fixing $\vartheta$, choose the weights
$a_1,\ldots,a_s$ and the bootstrap constant $\delta_{\rm in}>0$ sufficiently small
so that
\bq\label{eq:epsilon-s-cho}
    \e_* \le \frac{\vartheta}{4}.
\eq

Multiplying \eqref{eq:mac-diss-sh-we-s5} by $\vartheta$ and adding the resulting inequality to \eqref{eq:comb-mic-s5}, we obtain
\[
\frac{\rd}{\rdt} \lt\{ \mcE_s^{\rm mic}(g) + \vartheta\calI(t) \rt\} + \lt( c_1-\vartheta C_0 \rt) \mcD_s^{\rm mic}(g) + \lt( \frac{\vartheta}{2} - \e_* \rt) \|\nabla_x(a,b,c)\|_{H^{s-1}_x}^2 \le
0.
\]
By \eqref{eq:vartheta-cho} and \eqref{eq:epsilon-s-cho}, we have
\[
c_1-\vartheta C_0 \ge \frac{c_1}{2} \quad \text{and} \quad \frac{\vartheta}{2}-\e_* \ge \frac{\vartheta}{4}.
\]
Therefore, defining
\[
\mcE_s(g)(t) := \mcE_s^{\rm mic}(g)(t) + \vartheta\calI(t),
\]
we obtain
\[
\frac{\rd}{\rdt} \mcE_s(g)(t) + \lambda \mcD_s(g)(t) \le 0
\]
for some constant $\lambda>0$, where we used \eqref{eq:weight-diss-equi}.

Finally, by \eqref{eq:weight-mic-equi}, \eqref{eq:I-lower-s5}, and \eqref{eq:vartheta-cho}, we have $\mcE_s(g)(t) \sim \|g(t)\|_{H^s}^2$. More precisely, after adjusting the equivalence constant if necessary,
\[
C_E^{-1} \|g(t)\|_{H^s}^2 \le \mcE_s(g)(t) \le C_E \|g(t)\|_{H^s}^2.
\]
This completes the proof.
\end{proof}

%
%
%
%
%
%
%
%
%
%

\section{Global existence and large-time behavior}\label{sec:gwp-lt}

In this section, we complete the proof of the global-in-time theory and derive the large-time behavior of the solution. The main ingredient is the global energy inequality established in Section \ref{sec:global-energy}. We first use this estimate to continue the local solution globally in time. We then establish the propagation of negative Sobolev norms and use an interpolation argument to obtain algebraic decay rates.

%
%
%
%
%
%
%
%
%
%

\subsection{Global existence}

We first prove the global existence part of Theorem \ref{ge}. Recall that the local solution constructed in Theorem \ref{LE} satisfies $g \in \mcC([0,T_*];H^s(\R^3 \times \R^3))$ and solves
\bq\label{eq:s6-pert}
    \pa_t g + p\cdot\nabla_xg = Lg+\Gamma(g).
\eq
The key point is that the a priori estimate in Lemma \ref{lem:glob-ener-s5} is uniform on any time interval on which the solution remains sufficiently small in $H^s$.

\begin{proof}[Proof of the global existence part of Theorem \ref{ge}]
Let $\delta_{\rm in}>0$ and $C_E>1$ be the constants in Lemma \ref{lem:glob-ener-s5}. We assume that the initial perturbation is sufficiently small so that
\bq\label{eq:glob-sma-ini}
    \|g_0\|_{H^s}^2 \le \frac{\delta_{\rm in}^2}{16C_E^2}.
\eq

By the local well-posedness theorem, there exists a local solution $g \in \mcC([0,T_0];H^s(\R^3 \times \R^3))$ for some $T_0>0$. We define the bootstrap lifespan by
\[
T^* := \sup \lt\{ T>0: \text{the solution exists on }[0,T] \text{ and} \sup_{0\le t\le T} \|g(t)\|_{H^s} \le \delta_{\rm in} \rt\}.
\]
By the local well-posedness theorem, the continuity of the solution in $H^s$, and \eqref{eq:glob-sma-ini}, we have $T^*>0$. Fix any $T<T^*$. Since the bootstrap assumption in
Lemma \ref{lem:glob-ener-s5} is satisfied on $[0,T]$, we have
\bq\label{eq:s6-ener-ineq}
    \frac{\rd}{\rdt}\mcE_s(g)(t) + \lambda \mcD_s(g)(t) \le 0.
\eq
Integrating \eqref{eq:s6-ener-ineq} over $[0,t]$, where
$0\le t\le T$, gives
\bq\label{eq:s6-ener-int}
\mcE_s(g)(t) + \lambda \int_0^t \mcD_s(g)(\tau) \,\rdta \le \mcE_s(g_0).
\eq
Using the energy equivalence \eqref{eq:ener-equi-s5}, we obtain
\[
\|g(t)\|_{H^s}^2 \le C_E \mcE_s(g)(t) \le C_E \mcE_s(g_0) \le C_E^2 \|g_0\|_{H^s}^2 \le \frac{\delta_{\rm in}^2}{16}.
\]
Since $T<T^*$ was arbitrary, it follows that
\bq\label{eq:s6-boots-imp}
\sup_{0\le t<T^*} \|g(t)\|_{H^s} \le \frac{\delta_{\rm in}}{4}.
\eq

We claim that $T^*=+\infty$. Suppose, for contradiction, that  $T^*<\infty$. By \eqref{eq:s6-boots-imp}, the $H^s$-norm of the solution remains uniformly bounded by $\delta_{\rm in}/4$ on $[0,T^*)$. The continuation criterion associated with the local well-posedness theory hence allows us to restart the solution at a time $t_0<T^*$ sufficiently close to $T^*$, with an existence interval whose length depends only on the uniform $H^s$-bound. This extends the solution beyond $T^*$. Moreover, by continuity in $H^s$, the bound $\|g(t)\|_{H^s} < \delta_{\rm in}$ continues to hold on a nontrivial interval beyond $T^*$. This contradicts the definition of $T^*$. Hence, we have $T^* = +\infty$.

Since \eqref{eq:s6-ener-int} holds on every finite time interval, we may let $t\to\infty$. Using the energy equivalence once again, we obtain
\bq\label{eq:glo-main-bound-s6}
\sup_{t\ge0} \|g(t)\|_{H^s}^2 + \int_0^\infty \mcD_s(g)(t) \,\rdt \le C  \|g_0\|_{H^s}^2.
\eq
In particular,
\[
\int_0^\infty \sum_{|\alpha|+|\beta|\le s} \|\pa_x^\alpha \pa_p^\beta(I-P)g(t)\|_D^2\,\rdt +  \int_0^\infty \|\nabla_x(a,b,c)(t)\|_{H^{s-1}_x}^2 \,\rdt \le C\|g_0\|_{H^s}^2.
\]

The uniqueness of the global solution follows from the local uniqueness result by continuation. The admissibility bounds are also preserved throughout the continuation argument: nonnegativity is propagated for all $t\ge0$, and, in the fermionic case $\kappa=-1$, so is the upper Pauli bound $f\le \frac1\hbar$. This completes the proof of the global existence part of Theorem \ref{ge}.
\end{proof}

%
%
%
%
%
%
%
%
%
%

\subsection{Large-time behavior}

We now prove the large-time behavior of the global solution. The argument is based on the propagation of negative Sobolev norms and the interpolation method of \cite{GW12}. Throughout this subsection, $g$ denotes the global solution constructed above. We assume in addition that $\Lambda_x^{-\tilde s}g_0 \in L^2(\R^3 \times \R^3)$ for some $0<\tilde s<\frac32$. For notational simplicity, we write $\Ls := \Lambda_x^{-\tilde s}$.

%
%
%
%
%
%
%
%
%
%

\subsubsection{Negative Sobolev estimates}

We first recall the interpolation and Hardy--Littlewood--Sobolev inequalities used below.  

\begin{lemma} 
The following estimates hold.

\begin{enumerate}
\item
Let $\tilde s\ge0$ and let $l\in\mN\cup\{0\}$. Then 
\bq\label{eq:NS-interp-s6}
\|\nabla_x^l f\|_{L^2_{x,p}} \le C \|\nabla_x^{l+1}f\|_{L^2_{x,p}}^{1-\theta} \|\Lambda^{-\tilde s}f\|_{L^2_{x,p}}^\theta,
    \quad
    \theta = \frac{1}{l+1+\tilde s}.
\eq

\item
Let $0<\tilde s<3$, and let $1<r_1<r_2<\infty$ satisfy
\[
\frac1{r_2} + \frac{\tilde s}{3} = \frac1{r_1}.
\]
Then
\bq\label{eq:Riesz-s6}
\|\Lambda^{-\tilde s}f\|_{L^{r_2}_x} \le C \|f\|_{L^{r_1}_x}.
\eq
The same estimate holds for $L^2_p$-valued functions.
\end{enumerate}
\end{lemma}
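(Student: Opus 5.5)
Both estimates are classical, so the plan is to pass to Fourier variables in $x$ for the interpolation inequality \eqref{eq:NS-interp-s6}, and to invoke the Hardy--Littlewood--Sobolev theorem for \eqref{eq:Riesz-s6}. In each case we first treat scalar functions of $x$ and then lift to $L^2_p$-valued functions.

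For \eqref{eq:NS-interp-s6}, fix $p$ and let $\hat f(\xi,p)$ denote the Fourier transform in $x$. By Plancherel,
\[
\|\nabla_x^l f\|_{L^2_{x,p}}^2 = c\iint |\xi|^{2l}|\hat f(\xi,p)|^2\,\rd\xi\,\rdp .
\]
The exponents satisfy
\[
l = (1-\theta)(l+1) + \theta(-\tilde s), \quad \text{equivalently} \quad \theta(l+1+\tilde s)=1 .
\]
Hence, pointwise in $(\xi,p)$,
\[
|\xi|^{2l}|\hat f|^2 = \lt(|\xi|^{2(l+1)}|\hat f|^2\rt)^{1-\theta}\lt(|\xi|^{-2\tilde s}|\hat f|^2\rt)^{\theta}.
\]
Applying Hölder's inequality in the joint variable $(\xi,p)$ with exponents $\frac1{1-\theta}$ and $\frac1\theta$ then gives
\[
\|\nabla_x^l f\|_{L^2_{x,p}}^2 \le C\|\nabla_x^{l+1}f\|_{L^2_{x,p}}^{2(1-\theta)}\|\Lambda^{-\tilde s}f\|_{L^2_{x,p}}^{2\theta}.
\]
Taking square roots yields the claim. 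We will interpret $\|\nabla_x^k f\|$ as the full set of $k$-th order derivatives, whose $L^2$ norm is comparable to $\||\xi|^k\hat f\|$. The case $\tilde s=0$ is trivial. No limiting argument is needed beyond assuming the right-hand side is finite.

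For \eqref{eq:Riesz-s6}, the operator $\Lambda^{-\tilde s}$ with $0<\tilde s<3$ is, up to a constant, convolution with the Riesz kernel $|x|^{\tilde s-3}$. The scalar estimate is then exactly the Hardy--Littlewood--Sobolev inequality under the scaling condition $\frac1{r_2}=\frac1{r_1}-\frac{\tilde s}3$ with $1<r_1<r_2<\infty$; we will quote Stein's theorem for this.

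The only point needing care is the $L^2_p$-valued extension, which I expect to be the one small obstacle. One option is to use that the kernel is positive. By Minkowski's integral inequality,
\[
\|\Lambda^{-\tilde s}f(x,\cdot)\|_{L^2_p} \le C\,(|\cdot|^{\tilde s-3} * F)(x), \quad F(y):=\|f(y,\cdot)\|_{L^2_p}.
\]
The scalar HLS inequality applied to $F$ then gives
\[
\|\Lambda^{-\tilde s}f\|_{L^{r_2}_xL^2_p}\le C\|F\|_{L^{r_1}_x}=C\|f\|_{L^{r_1}_xL^2_p}.
\]
Alternatively, one may cite the Hilbert-space-valued version of the Riesz potential bound, which holds with the same proof. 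The first route is self-contained, so I would use it.
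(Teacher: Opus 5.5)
Your proof is correct and follows the standard route. The paper gives no argument of its own here and simply recalls these facts: the interpolation inequality as in \cite{GW12}, where it is proved exactly by Plancherel and H\"older in frequency, and the Riesz-potential bound from the Hardy--Littlewood--Sobolev theorem; your Minkowski argument using positivity of the Riesz kernel is a clean way to justify the $L^2_p$-valued version. One small quibble: the case $\tilde s=0$ is not ``trivial'' when $l\ge1$, but your H\"older argument already covers it, so nothing is lost.
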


We next estimate the nonlinear term after applying the negative Sobolev operator. This is the only point at which the proof differs slightly from the nonlinear estimates in Section \ref{ssec:non-est}.

\begin{lemma}\label{LB Gamma}
Let $s\ge4$ and $0<\tilde s<\frac32$. Suppose that $\sup_{t\ge0} \|g(t)\|_{H^s} \le \delta_{\rm in}$ for sufficiently small $\delta_{\rm in}>0$. Then
\bq\label{eq:LB-Gamma}
 \lt| \lt\lal \Ls\Gamma(g), \Ls g \rt\ral_{L^2_{x,p}} \rt|  \le C \|\Ls(I-P)g\|_D \|g\|_{L^2_{x,p}} \lt( \|(I-P)g\|_{L^{3/\tilde s}_xD_p} + \|(a,b,c)\|_{L^{3/\tilde s}_x} \rt).
\eq
\end{lemma}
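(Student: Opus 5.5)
We follow the structure of the proofs of Lemma \ref{Gamma 2} and Lemma \ref{Gamma 3}: first move the inner product onto the microscopic part, then pass to the flux form, and finally estimate the flux with the Hardy--Littlewood--Sobolev inequality \eqref{eq:Riesz-s6}.

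\emph{Step 1: reduction to the microscopic part.} Recall that $\Gamma(g)\in\mcN^\perp$ pointwise in $(t,x)$ (proof of Lemma \ref{Gamma 2}). Since $\Ls$ acts only in $x$, it commutes with $P$, so
\[
\lal \Ls\Gamma(g),\Ls g\ral_{L^2_{x,p}}=\lal \Ls\Gamma(g),\Ls(I-P)g\ral_{L^2_{x,p}}.
\]
We write $\Gamma(g)=\calT(\calR(g)/\sqrt{\mu_\hbar})$ and use the weighted pairing \eqref{eq:wei-f-pair}, again because $\Ls$ commutes with multiplication by functions of $p$. This gives
\[
\lt|\lal \Ls\Gamma(g),\Ls g\ral\rt|\le \lt\|\Ls\lt(\calR(g)/\sqrt{\mu_\hbar}\rt)\rt\|_{L^2_{x,p}}\,\|\calA_+\Ls(I-P)g\|_{L^2_{x,p}}.
\]
The last factor is bounded by $C\|\Ls(I-P)g\|_D$. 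This produces the first factor on the right-hand side of \eqref{eq:LB-Gamma}.

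\emph{Step 2: HLS for the flux.} Set $\frac1q=\frac12+\frac{\tilde s}3$. Since $0<\tilde s<\frac32$, we have $1<q<2$, so \eqref{eq:Riesz-s6} applies with $r_2=2$, $r_1=q$, for $L^2_p$-valued functions. Hence
\[
\lt\|\Ls(\calR(g)/\sqrt{\mu_\hbar})\rt\|_{L^2_{x,p}}\le C\lt\|\calR(g)/\sqrt{\mu_\hbar}\rt\|_{L^q_xL^2_p}.
\]
Every term of $\calR(g)/\sqrt{\mu_\hbar}$ is at least quadratic in $g$. By H\"older in $x$ with $\frac1q=\frac12+\frac{\tilde s}3$, we aim to bound each such term by one factor in $L^2_x$ and one factor in $L^{3/\tilde s}_x$.

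\emph{Step 3: the representative terms.} For $\Gamma_1$ the term is $\mathscr c(g)\calA_-g$. We split $g=Pg+(I-P)g$ in the $\calA_-$ factor:
\begin{itemize}
\item In $\mathscr c(g)\calA_-(I-P)g$, put $\mathscr c(g)$ in $L^2_x$, bounded by $C\|g\|_{L^2_{x,p}}$, and $\calA_-(I-P)g$ in $L^{3/\tilde s}_xL^2_p$, bounded by $\|(I-P)g\|_{L^{3/\tilde s}_xD_p}$.
\item In $\mathscr c(g)\calA_-Pg$, the factor $\calA_-Pg$ is a fixed decaying function of $p$ times $(a,b,c)$. Put it in $L^{3/\tilde s}_x$ and $\mathscr c(g)$ in $L^2_x$.
\end{itemize}
For $\Gamma_2$ the term is $N_uG(g)$, and we use the quadratic structure of $N_u$ from Lemma \ref{lem:decomp}. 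Each summand of $N_u$ is a product of a linear moment of $g$ with a factor that is at least linear, such as $R_1(g)$ or a quadratic moment, times a coefficient bounded by $C$ in $L^\infty_x$ (Sobolev, smallness, Lemma \ref{lem:Nu-NT-est}). Place one linear moment in $L^2_x$. Bound the remaining linear moment $|\int \xi g\,\rdp|$, with $\xi$ exponentially decaying, by $C(|(a,b,c)|+\|(I-P)g\|_{L^2_p})$ and put it in $L^{3/\tilde s}_x$. The $G_1,G_2$ parts of $G(g)$ carry extra factors $g$, which are absorbed in $L^\infty_x$ using $\|g\|_{H^s}\le\delta_{\rm in}$. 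The terms involving $N_\Theta$, $\mathscr a(g)$, $\mathscr b(g)$ and $\hbar\kappa p\mu_\hbar g^2$ are handled the same way.

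\emph{Main obstacle.} The difficulty is the purely local quadratic terms $\hbar\kappa M p\sqrt{\mu_\hbar}g^2$ and $\eta_\hbar g\,\mathscr b(g)$, where no velocity moment is available to carry the $L^2_x$ factor. Here I would bound
\[
\|\sqrt{\mu_\hbar}\,g^2\|_{L^2_p}\le \|g\|_{L^2_p}\,\|\mu_\hbar^{1/4}g\|_{L^\infty_p}.
\]
Instead, I would try the cruder pointwise bound $\|\mu_\hbar^{1/4}g\|_{L^\infty_p}\lesssim \|g\|_{L^2_p}+|g|_D$ after splitting $g=Pg+(I-P)g$. The $L^2_x$ factor is then $\|g\|_{L^2_{x,p}}$. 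The $L^{3/\tilde s}_x$ factor is controlled by $|(a,b,c)|$ plus $|(I-P)g|_D$; the latter is the $D$-part of the $D_p$-norm and is admissible in $\|(I-P)g\|_{L^{3/\tilde s}_xD_p}$. If this pointwise control in $p$ fails, I would use $p$-derivatives through $H^s$, placing them on the small factor so that the other factor keeps the required form.
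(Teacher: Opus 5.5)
Your skeleton matches the paper's. The paper handles $\Gamma_1=\calT(\mathscr c(g)\calA_-g)$ exactly as you do: reduction to $\Ls(I-P)g$ via $\Gamma(g)\in\mcN^\perp$, the weighted pairing \eqref{eq:wei-f-pair}, Hardy--Littlewood--Sobolev with $\frac1q=\frac12+\frac{\tilde s}{3}$, and H\"older with a velocity moment in $L^2_x$. It then only asserts that the remaining terms follow in the same manner. The term $\eta_\hbar g\,\mathscr b(g)$ is not an obstacle: $\mathscr b(g)$ is a velocity moment and goes into $L^2_x$ just like $\mathscr c(g)$. The genuine gap is the purely local flux $\hbar\kappa Mp\mu_\hbar g^2$, which you rightly single out but do not close. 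Your bound $\|\mu_\hbar^{1/4}g\|_{L^\infty_p}\lesssim\|g\|_{L^2_p}+|g|_D$ is false, because $|\cdot|_D$ controls only one $p$-derivative and $H^1(\mR^3)$ does not embed into $L^\infty(\mR^3)$. The fallback does not rescue it either. This term is exactly bilinear, so there is no spare small factor, and any $p$-derivative moved onto one of the two factors takes it out of the norms $\|g\|_{L^2_{x,p}}$ and $\|(I-P)g\|_{L^{3/\tilde s}_xD_p}$ that appear in \eqref{eq:LB-Gamma}.

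In fact this term cannot be controlled by the right-hand side of \eqref{eq:LB-Gamma} at all when $\kappa\neq0$; here is a scaling check.
\begin{itemize}
\item Take $g=\lambda(\chi_1(x)\phi_\e(p)+\chi_2(x)\varphi_\e(p))$, where $\phi_\e,\varphi_\e$ are $L^2_p$-normalized bumps of width $\e$ centered at some $p_0\neq0$, $\chi_1,\chi_2$ are Schwartz, and $\lambda=c\,\delta_{\rm in}\e^{s}$ with $c$ small, so that $\|g\|_{H^s}\le\delta_{\rm in}$.
\item Pointwise in $x$, the identity $g^2\nabla_pg=\frac13\nabla_p(g^3)$ would cancel the leading order, but $\Ls$ is nonlocal in $x$.
\item Up to the factor $-\hbar\kappa M$, the contribution $\lal\Ls(p\sqrt{\mu_\hbar}g^2),\calA_+\Ls g\ral_{L^2_{x,p}}$ has leading part $\lambda^3\,p_0\sqrt{\mu_\hbar(p_0)}\cdot\int\phi_\e^2\nabla_p\varphi_\e\,\rdp\;\lal\chi_2,[\Lambda_x^{-2\tilde s},\chi_1]\chi_1\ral_{L^2_x}$, plus the same expression with the indices $1,2$ exchanged. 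Generically this has size $\lambda^3\e^{-5/2}$.
\item The right-hand side of \eqref{eq:LB-Gamma} is only $O(\lambda^3\e^{-2})$.
\item All other components of $\Gamma(g)$ are of lower order: linear moments of the bumps are $O(\lambda\e^{3/2})$, and $N_u,N_\Theta=O(\lambda^2)$ carry an extra power of $\lambda$.
\end{itemize}
So the inequality as stated, and the paper's one-line treatment of the remaining terms, fail for this term.

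What does work, and is all that Lemma \ref{LB bound} needs, is to place the $L^\infty_p$ factor on the $L^{3/\tilde s}_x$ side. Write $\|p\sqrt{\mu_\hbar}g^2\|_{L^2_p}\le C\|g\|_{L^2_p}\|\sqrt{\mu_\hbar}g\|_{L^\infty_p}$. By the embedding of $H^2(\mR^3)$ into $L^\infty$, $\|\sqrt{\mu_\hbar}g\|_{L^\infty_p}\le C(|(a,b,c)|+\sum_{|\beta|\le1}|\pa_p^\beta(I-P)g|_D)$. This amounts to adding $\sum_{|\beta|=1}\|\pa_p^\beta(I-P)g\|_{L^{3/\tilde s}_xD_p}$ to the last factor of \eqref{eq:LB-Gamma}. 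Since $s\ge4$, the subsequent Gagliardo--Nirenberg steps in $x$ still land in $\mcD_s(g)$.
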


\begin{proof}
Since $\Ls$ acts only in the $x$ variable, it commutes with $P$. Moreover, the nonlinear collision term satisfies $P\Gamma(g) = 0$. Thus,
\[
\lt\lal \Ls\Gamma(g), \Ls g \rt\ral_{L^2_{x,p}} = \lt\lal \Ls\Gamma(g), \Ls(I-P)g \rt\ral_{L^2_{x,p}}.
\]

The argument is similar to the proof of the nonlinear estimates in Section \ref{ssec:non-est}. We present the computation for a representative term. Recall that
\[
\calT F := \frac1{\sqrt{\mu_\hbar }} \nabla_p\cdot \lt( \sqrt{\mu_\hbar }F \rt) \quad \text{and} \quad  \calA_-g := \nabla_pg - \frac12p\eta_\hbar g.
\]
Since $\nabla_p(g\sqrt{\mu_\hbar }) = \sqrt{\mu_\hbar }\calA_-g$, the representative nonlinear term can be written, up to a fixed multiplicative constant, as $\Gamma_1 = \calT \lt( \mathscr c(g)\calA_-g \rt)$, where
\[
\mathscr c(g) = \intr |p|^2\eta_\hbar g\sqrt{\mu_\hbar } \,\rdp.
\]
Using the weighted flux pairing \eqref{eq:wei-f-pair}, we obtain
\[
\lt| \lt\lal \Ls\Gamma_1, \Ls(I-P)g \rt\ral_{L^2_{x,p}} \rt|  \le C \|\Ls(I-P)g\|_D \lt\| \Ls \lt( \mathscr c(g)\calA_-g \rt) \rt\|_{L^2_{x,p}}.
\]
Let $1<q<2$ be determined by $\frac1q = \frac12 + \frac{\tilde s}{3}$. By the Hardy--Littlewood--Sobolev inequality \eqref{eq:Riesz-s6}, applied with $r_1=q$ and $r_2=2$, we have
\[
\lt\| \Ls \lt( \mathscr c(g)\calA_-g \rt) \rt\|_{L^2_{x,p}} \le C \lt\| \mathscr c(g)\calA_-g \rt\|_{L^q_xL^2_p}.
\]
The moment bound $|\mathscr c(g)| \le C \|g\|_{L^2_p}$ and H\"older's inequality in the $x$ variable yield
\begin{align*} 
\lt\| \mathscr c(g)\calA_-g \rt\|_{L^q_xL^2_p} &\le C \|\mathscr c(g)\|_{L^2_x} \|\calA_-g\|_{L^{3/\tilde s}_xL^2_p} \cr
&\le C \|g\|_{L^2_{x,p}} \lt( \|(I-P)g\|_{L^{3/\tilde s}_xD_p} + \|Pg\|_{L^{3/\tilde s}_xL^2_p} \rt) \cr
&\le C \|g\|_{L^2_{x,p}} \lt( \|(I-P)g\|_{L^{3/\tilde s}_xD_p} + \|(a,b,c)\|_{L^{3/\tilde s}_x} \rt).
\end{align*}
Here we used the decomposition $g=Pg+(I-P)g$, the estimate $\|\calA_-(I-P)g\|_{L^2_p} \le C|(I-P)g|_D$, and the finite-dimensional equivalence $\|\calA_-Pg\|_{L^2_p} + \|Pg\|_{L^2_p} \le C|(a,b,c)|$. Combining the above, we obtain the desired estimate for $\Gamma_1$.

The remaining terms in $\Gamma(g)$ are estimated in the same manner. More precisely, we use the weighted flux pairing \eqref{eq:wei-f-pair}, the exponential decay of the $p$-dependent coefficients, the moment bounds, and the estimates for the nonlinear macroscopic remainders $N_u$ and $N_\Theta$. Every term contains at least two factors depending on $g$, and after applying the macro--micro decomposition to the flux factor, the same upper bound as in \eqref{eq:LB-Gamma} follows. Combining the estimates for all components of $\Gamma(g)$ gives \eqref{eq:LB-Gamma}.
\end{proof}

We now prove the propagation of the negative Sobolev norm.

\begin{lemma}\label{LB bound}
Let $s\ge4$, let $0<\tilde s<\frac32$, and let $g$ be the global solution constructed above. Assume that $\Ls g_0 \in L^2_{x,p}$. Then the following differential inequalities hold. If
$0<\tilde s\le\frac12$, then
\bq\label{NS ineq 1}
\frac{\rd}{\rdt} \|\Ls g\|_{L^2_{x,p}}^2 + \lambda_0 \|\Ls(I-P)g\|_D^2 \le C\delta_{\rm in}^2 \mcD_s(g).
\eq
If $\frac12<\tilde s<\frac32$, then
\bq\label{NS ineq 2}
\frac{\rd}{\rdt} \|\Ls g\|_{L^2_{x,p}}^2 + \lambda_0 \|\Ls(I-P)g\|_D^2  \le C\delta_{\rm in}^2\mcD_s(g) + C \|g\|_{L^2_{x,p}}^2 \|(a,b,c)\|_{L^2_x}^{2\tilde s-1} \|\nabla_x(a,b,c)\|_{L^2_x}^{3-2\tilde s}.
\eq
In particular, if $0<\tilde s\le\frac12$, then
\bq\label{eq:neg-uni-b-h}
\sup_{t\ge0} \|\Ls g(t)\|_{L^2_{x,p}}^2 \le C \lt( \|\Ls g_0\|_{L^2_{x,p}}^2 + \|g_0\|_{H^s}^2 \rt).
\eq
\end{lemma}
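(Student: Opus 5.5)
We apply $\Ls$ to \eqref{eq:s6-pert}. Since $\Ls$ acts only in $x$, it commutes with $p\cdot\nabla_x$, with $L$ and with $P$. We then take the $L^2_{x,p}$-inner product with $\Ls g$. The transport term vanishes by skew-symmetry, and the coercivity estimate \eqref{eq:L-coercivity} gives
\[
\frac12\frac{\rd}{\rdt}\|\Ls g\|_{L^2_{x,p}}^2+\lambda_0\|\Ls(I-P)g\|_D^2\le \lt|\lt\lal \Ls\Gamma(g),\Ls g\rt\ral_{L^2_{x,p}}\rt|.
\]
We bound the right-hand side by Lemma \ref{LB Gamma} and then apply Young's inequality. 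This produces
\[
\frac{\lambda_0}{2}\|\Ls(I-P)g\|_D^2+C\|g\|_{L^2_{x,p}}^2\Big(\|(I-P)g\|_{L^{3/\tilde s}_xD_p}^2+\|(a,b,c)\|_{L^{3/\tilde s}_x}^2\Big).
\]
The first term is absorbed into the left-hand side, which is why the stated inequalities carry $\lambda_0$ rather than $2\lambda_0$ after doubling. What remains is to control the two $L^{3/\tilde s}_x$ norms.

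For the microscopic part we use Sobolev embedding in $x$. Since $3/\tilde s>2$ for $\tilde s<\frac32$, we have $\|(I-P)g\|_{L^{3/\tilde s}_xD_p}\le C\sum_{|\alpha|\le 2}\|\pa_x^\alpha(I-P)g\|_D$ (Minkowski first, then Sobolev for the $L^2_p$-valued functions $\nabla_p(I-P)g$ and $p\eta_\hbar(I-P)g$). This is bounded by $C\mcD_s(g)^{1/2}$. Together with $\|g\|_{L^2_{x,p}}\le\delta_{\rm in}$, this gives the contribution $C\delta_{\rm in}^2\mcD_s(g)$.

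For the macroscopic part we use Gagliardo--Nirenberg in $\R^3$ and split into two cases.
\begin{enumerate}
\item If $0<\tilde s\le\frac12$, then $3/\tilde s\ge6$, and we interpolate between $\dot H^1$ and $\dot H^2$. Specifically, $\|(a,b,c)\|_{L^{3/\tilde s}_x}\le C\|\nabla_x(a,b,c)\|_{L^2_x}^{\theta}\|\nabla_x^2(a,b,c)\|_{L^2_x}^{1-\theta}$, with $\theta$ determined by $\tilde s/3=\theta/6+(1-\theta)(-1/6)$, that is, $\theta=\frac12+\tilde s\in(\frac12,1]$. This is admissible, it is bounded by $C\|\nabla_x(a,b,c)\|_{H^{s-1}_x}\le C\mcD_s(g)^{1/2}$, and \eqref{NS ineq 1} follows.
\item If $\frac12<\tilde s<\frac32$, then $2<3/\tilde s<6$, and we interpolate between $L^2$ and $\dot H^1$. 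Specifically, $\|(a,b,c)\|_{L^{3/\tilde s}_x}\le C\|(a,b,c)\|_{L^2_x}^{\tilde s-\frac12}\|\nabla_x(a,b,c)\|_{L^2_x}^{\frac32-\tilde s}$. Squaring gives exactly the extra term in \eqref{NS ineq 2}.
\end{enumerate}

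For the uniform bound \eqref{eq:neg-uni-b-h} we integrate \eqref{NS ineq 1} in time. The global bound \eqref{eq:glo-main-bound-s6} gives $\int_0^\infty\mcD_s(g)\,\rdt\le C\|g_0\|_{H^s}^2$, and the claim follows immediately.

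We expect the main obstacle to be the borderline case $\tilde s\le\frac12$. There one must verify that $L^{3/\tilde s}$ lies in the range reached by $\dot H^1\cap\dot H^2$ in three dimensions, i.e. $3/\tilde s\ge6$, so that no undifferentiated $L^2$ norm of $(a,b,c)$ appears. That norm is not controlled by $\mcD_s$, and it is exactly what forces the extra term in the case $\tilde s>\frac12$. The Sobolev step for the $D$-norm is routine by comparison.
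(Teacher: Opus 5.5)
Your proposal is correct and follows the paper's proof: the same energy identity for $\Lambda_x^{-\tilde s}g$, the bound from Lemma~\ref{LB Gamma} combined with Young's inequality, the same Gagliardo--Nirenberg interpolations for $(a,b,c)$ in the two regimes of $\tilde s$, and the same time integration using \eqref{eq:glo-main-bound-s6}. The only minor difference is the microscopic factor: you handle it uniformly in $\tilde s$ via the embedding $H^2_x\hookrightarrow L^{3/\tilde s}_x$, which is valid because $\mcD_s(g)$ contains the undifferentiated term $\|(I-P)g\|_D^2$, whereas the paper splits it into the same two Gagliardo--Nirenberg cases it uses for the macroscopic part.
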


\begin{proof}
Applying $\Ls$ to \eqref{eq:s6-pert} and taking the $L^2_{x,p}$-inner product with $\Ls g$, we obtain
\[
\frac12 \frac{\rd}{\rdt} \|\Ls g\|_{L^2_{x,p}}^2  = \lt\lal L(\Ls g), \Ls g \rt\ral_{L^2_{x,p}} + \lt\lal \Ls\Gamma(g), \Ls g \rt\ral_{L^2_{x,p}} =: K_1+K_2.
\]
Here we used the skew-symmetry of $p\cdot\nabla_x$ and the fact that $\Ls$ commutes with $L$, since $\Ls$ acts only on the $x$ variable whereas $L$ acts only on the $p$ variable.

By the coercivity of $L$, we have
\[
K_1 \le - \lambda_0 \|\Ls(I-P)g\|_D^2.
\]
On the other hand, Lemma \ref{LB Gamma} and Young's inequality yield
\begin{align}\label{eq:K2-g}
\begin{aligned}
|K_2| &\le C \|\Ls(I-P)g\|_D \|g\|_{L^2_{x,p}} \lt( \|(I-P)g\|_{L^{3/\tilde s}_xD_p} + \|(a,b,c)\|_{L^{3/\tilde s}_x} \rt) \cr
&\le \frac{\lambda_0}{2} \|\Ls(I-P)g\|_D^2 + C \|g\|_{L^2_{x,p}}^2 \lt( \|(I-P)g\|_{L^{3/\tilde s}_xD_p}^2 + \|(a,b,c)\|_{L^{3/\tilde s}_x}^2 \rt).
\end{aligned}
\end{align}

We estimate the last factor according to the size of $\tilde s$.

\medskip
\noindent
{\it Case 1: $0<\tilde s\le\frac12$.} In this case, $\frac3{\tilde s} \ge 6$. By the Gagliardo--Nirenberg interpolation inequality in the $x$ variable, applied to the $D_p$-valued function $(I-P)g$, we have
\bq\label{eq:GN-mic-low-s6}
\|(I-P)g\|_{L^{3/\tilde s}_xD_p} \le C \|\nabla_x(I-P)g\|_{L^2_xD_p}^{\frac12+\tilde s} \|\nabla_x^2(I-P)g\|_{L^2_xD_p}^{\frac12-\tilde s}.
\eq
Similarly,
\bq\label{eq:GN-mac-low-s6}
\|(a,b,c)\|_{L^{3/\tilde s}_x} \le C \|\nabla_x(a,b,c)\|_{L^2_x}^{\frac12+\tilde s} \|\nabla_x^2(a,b,c)\|_{L^2_x}^{\frac12-\tilde s}.
\eq
Since $s\ge4$, all terms on the right-hand sides of \eqref{eq:GN-mic-low-s6} and \eqref{eq:GN-mac-low-s6} are contained in $\mcD_s(g)$. Hence
\[
\|(I-P)g\|_{L^{3/\tilde s}_xD_p}^2 + \|(a,b,c)\|_{L^{3/\tilde s}_x}^2 \le C\mcD_s(g).
\]
Using the global smallness $\|g\|_{L^2_{x,p}} \le \|g\|_{H^s} \le \delta_{\rm in}$ in \eqref{eq:K2-g}, we get
\[
|K_2| \le \frac{\lambda_0}{2} \|\Ls(I-P)g\|_D^2 + C\delta_{\rm in}^2\mcD_s(g).
\]
Combining this with the estimate for $K_1$ gives \eqref{NS ineq 1}.

\medskip
\noindent
{\it Case 2: $\frac12<\tilde s<\frac32$.} In this case, $2 < \frac3{\tilde s} < 6$. For the microscopic contribution, the Gagliardo--Nirenberg interpolation inequality gives
\[
\|(I-P)g\|_{L^{3/\tilde s}_xD_p} \le C \|(I-P)g\|_{L^2_xD_p}^{\tilde s-\frac12} \|\nabla_x(I-P)g\|_{L^2_xD_p}^{\frac32-\tilde s}.
\]
Using the elementary inequality $X^{2\tilde s-1}Y^{3-2\tilde s} \le C(X^2+Y^2)$ for $X,Y\ge0$, and the smallness of $g$ in $H^s$, we obtain
\bq\label{eq:mic-h-Ds-s6}
\|g\|_{L^2_{x,p}}^2 \|(I-P)g\|_{L^{3/\tilde s}_xD_p}^2 \le C\delta_{\rm in}^2\mcD_s(g).
\eq

For the macroscopic contribution, we use
\[
\|(a,b,c)\|_{L^{3/\tilde s}_x} \le C \|(a,b,c)\|_{L^2_x}^{\tilde s-\frac12} \|\nabla_x(a,b,c)\|_{L^2_x}^{\frac32-\tilde s}.
\]
This implies
\bq\label{eq:mac-h-term-s6}
\|g\|_{L^2_{x,p}}^2 \|(a,b,c)\|_{L^{3/\tilde s}_x}^2 \le C \|g\|_{L^2_{x,p}}^2 \|(a,b,c)\|_{L^2_x}^{2\tilde s-1} \|\nabla_x(a,b,c)\|_{L^2_x}^{3-2\tilde s}.
\eq
Combining \eqref{eq:K2-g}, \eqref{eq:mic-h-Ds-s6}, and \eqref{eq:mac-h-term-s6}, and absorbing the microscopic coercive term, we obtain \eqref{NS ineq 2}.

It remains to prove the uniform bound \eqref{eq:neg-uni-b-h}. Assume that $0<\tilde s\le\frac12$. Integrating \eqref{NS ineq 1} over $[0,t]$ and using the global energy
estimate \eqref{eq:glo-main-bound-s6}, we obtain
\[
\|\Ls g(t)\|_{L^2_{x,p}}^2 \le \|\Ls g_0\|_{L^2_{x,p}}^2 + C\delta_{\rm in}^2 \int_0^t \mcD_s(g)(\tau) \,\rdta \le C \lt( \|\Ls g_0\|_{L^2_{x,p}}^2 + \|g_0\|_{H^s}^2 \rt).
\]
This concludes the desired bound \eqref{eq:neg-uni-b-h}.
\end{proof}

%
%
%
%
%
%
%
%
%
%

\subsubsection{Algebraic decay}

We now use the negative Sobolev estimates to derive algebraic decay. For each integer $0\le l\le s-1$, define the spatial dissipation functional
\begin{align}\label{eq:Dl-x-def}
\mcD_l^{\rm x}(t) := \sum_{l\le|\alpha|\le s} \|\pa_x^\alpha(I-P)g(t)\|_D^2 + \sum_{l\le|\alpha|\le s-1} \|\pa_x^\alpha\nabla_x(a,b,c)(t)\|_{L^2_x}^2.
\end{align}
We also define the modified spatial energy
\[
\mcE_l^{\rm x}(t) := \sum_{l\le|\alpha|\le s} \|\pa_x^\alpha g(t)\|_{L^2_{x,p}}^2 + \vartheta \sum_{l\le|\alpha|\le s-1} \calI_\alpha(t),
\]
where $\vartheta>0$ is the sufficiently small fixed constant used in the closure of the global energy estimate.

By \eqref{eq:I-low-ord}, after decreasing $\vartheta>0$ if necessary, we have
\bq\label{eq:El-x-equi}
\mcE_l^{\rm x}(t) \sim \sum_{l\le|\alpha|\le s} \|\pa_x^\alpha g(t)\|_{L^2_{x,p}}^2.
\eq

Repeating the spatial energy argument in Lemma \ref{priori 1}, summing over $l\le|\alpha|\le s$, and combining it with the macroscopic dissipation estimate \eqref{eq:mac-rec-s5}, summed over $l\le|\alpha|\le s-1$, we obtain
\bq\label{eq:El-x-diff}
\frac{\rd}{\rdt} \mcE_l^{\rm x}(t) + \lambda \mcD_l^{\rm x}(t) \le 0,  \quad  0\le l\le s-1.
\eq
Here and below, $\lambda>0$ denotes a generic constant independent of time.

Moreover, by the macro--micro decomposition $g = Pg+(I-P)g$, the finite-dimensional equivalence between $Pg$ and $(a,b,c)$, and the coercivity of the $D$-norm, we have
\[
C\mcD_l^{\rm x}(t) \ge  \mcE_{l+1}^{\rm x}(t), \quad 0\le l\le s-1,
\]
where, for $l=s-1$, we use the convention
\[
\mcE_s^{\rm x}(t) := \sum_{|\alpha|=s} \|\pa_x^\alpha g(t)\|_{L^2_{x,p}}^2.
\]
Indeed, the microscopic part of $\mcE_{l+1}^{\rm x}$ is controlled by the first sum in \eqref{eq:Dl-x-def}, whereas the macroscopic part is controlled by the second sum.

We first consider the case $0<\tilde s\le\frac12$. By Lemma \ref{LB bound}, we have
\bq\label{eq:NS-unif-low-s6}
\sup_{t\ge0} \|\Ls g(t)\|_{L^2_{x,p}}^2 \le C \lt( \|\Ls g_0\|_{L^2_{x,p}}^2 + \|g_0\|_{H^s}^2 \rt).
\eq
Set
\[
M_{-\tilde s} := \sup_{t\ge0} \|\Ls g(t)\|_{L^2_{x,p}}.
\]
For each $0\le l\le s-1$, the negative Sobolev interpolation inequality \eqref{eq:NS-interp-s6} gives
\bq\label{eq:low-ord-inter-s6}
\|\nabla_x^lg(t)\|_{L^2_{x,p}}^2 \le C \|\nabla_x^{l+1}g(t)\|_{L^2_{x,p}}^{ \frac{2(l+\tilde s)}{l+1+\tilde s} } M_{-\tilde s}^{\frac{2}{l+1+\tilde s}}.
\eq
Since every spatial derivative of order at least $l+1$ is controlled by $\mcD_l^{\rm x}$, the equivalence \eqref{eq:El-x-equi} and \eqref{eq:low-ord-inter-s6} imply
\[
\mcE_l^{\rm x}(t) \le C \mcD_l^{\rm x}(t) + C \lt( \mcD_l^{\rm x}(t) \rt)^{\frac{l+\tilde s}{l+1+\tilde s}} M_{-\tilde s}^{\frac{2}{l+1+\tilde s}}.
\]
Since the global solution remains uniformly small in $H^s$, we may absorb the first term on the right-hand side into the second one after adjusting the constant. Consequently,
\[
\mcD_l^{\rm x}(t) \ge C \lt( \mcE_l^{\rm x}(t) \rt)^{1+\frac1{l+\tilde s}}, \quad 0\le l\le s-1.
\]
Here, the constant may depend on $M_{-\tilde s}$.

Combining this and \eqref{eq:El-x-diff}, we obtain
\bq\label{eq:El-x-ode}
\frac{\rd}{\rdt} \mcE_l^{\rm x}(t) + C \lt( \mcE_l^{\rm x}(t) \rt)^{1+\frac1{l+\tilde s}} \le 0.
\eq
Solving this differential inequality yields
\bq\label{eq:decay-low-s6}
\mcE_l^{\rm x}(t) \le C (1+t)^{-(l+\tilde s)}, \quad 0<\tilde s\le\frac12, \quad 0\le l\le s-1.
\eq

We next extend the uniform negative Sobolev bound to the range $\frac12<\tilde s<\frac32$. As observed after Lemma \ref{LB bound}, the assumption $\Lambda_x^{-\tilde s}g_0 \in L^2_{x,p}$ implies $\Lambda_x^{-\frac12}g_0 \in L^2_{x,p}$. We may thus apply \eqref{eq:decay-low-s6} at the endpoint
$\tilde s=\frac12$. Taking $l=0$ and $l=1$, we obtain
\[
\|\nabla_x^k g(t)\|_{L^2_{x,p}} + \|\nabla_x^k (a,b,c)(t)\|_{L^2_x} \le C (1+t)^{-\frac{1+2k}{4}}, \quad k=0,1.
\]
Here we used the finite-dimensional equivalence between $Pg$ and $(a,b,c)$.

For $\frac12<\tilde s<\frac32$, inequality \eqref{NS ineq 2} gives
\begin{align}\label{eq:NS-high-b-pf-s6}
\begin{aligned}
\frac{\rd}{\rdt} \|\Ls g(t)\|_{L^2_{x,p}}^2 &\le C\delta_{\rm in}^2 \mcD_s(g)(t) + C \|g(t)\|_{L^2_{x,p}}^2 \|(a,b,c)(t)\|_{L^2_x}^{2\tilde s-1} \|\nabla_x(a,b,c)(t)\|_{L^2_x}^{3-2\tilde s} \cr
&\le C\delta_{\rm in}^2 \mcD_s(g)(t) + C (1+t)^{-(\frac52-\tilde s)}. 
\end{aligned}
\end{align}
Since $\frac12 < \tilde s < \frac32$, we get $\frac52-\tilde s > 1$. Thus, the second term on the right-hand side of \eqref{eq:NS-high-b-pf-s6} is integrable in time. The first
term is also integrable by \eqref{eq:glo-main-bound-s6}. Consequently,
\[
\sup_{t\ge0} \|\Ls g(t)\|_{L^2_{x,p}}^2 \le C \lt( \|\Ls g_0\|_{L^2_{x,p}}^2 + \|g_0\|_{H^s}^2 \rt), \quad \frac12<\tilde s<\frac32.
\]
Combining this and \eqref{eq:NS-unif-low-s6}, we obtain
\[
\sup_{t\ge0} \|\Ls g(t)\|_{L^2_{x,p}}^2 \le C \lt( \|\Ls g_0\|_{L^2_{x,p}}^2 + \|g_0\|_{H^s}^2 \rt)
\]
for every $0<\tilde s<\frac32$.

We can now repeat the interpolation argument \eqref{eq:low-ord-inter-s6}--\eqref{eq:El-x-ode} for the full range $0<\tilde s<\frac32$. This yields
\[
\mcE_l^{\rm x}(t) \le C (1+t)^{-(l+\tilde s)}, \quad 0<\tilde s<\frac32, \quad 0\le l\le s-1.
\]
Using \eqref{eq:El-x-equi}, we conclude that
\[
\lt( \sum_{l\le|\alpha|\le s} \|\pa_x^\alpha g(t)\|_{L^2_{x,p}}^2 \rt)^{\frac12} \le C (1+t)^{-\frac{l+\tilde s}{2}}, \quad 0<\tilde s<\frac32, \quad 0\le l\le s-1.
\]
This completes the proof of the large-time behavior part of Theorem
\ref{ge}.

%
%
%
%
%
%
%
%
%
%

\section*{Acknowledgments}
 The work of Y.-P. Choi and J.-H. Hyun was supported by NRF grant no. 2022R1A2C1002820 and no. RS-2024-00406821. The work of B.-H. Hwang was supported by the National Research Foundation of Korea(NRF) grant funded by the Korean government(MSIT) (No.RS-2026-25475225).

%
%
%
%
%
%
%
%
%
%
 
\appendix

%
%
%
%
%
%
%
%
%
%

\section{Details of the local well-posedness argument}\label{app:lwp}

In this appendix, we provide the details of the construction of the local solution stated in Theorem \ref{LE}. We first establish uniform estimates for an iterative approximation scheme and then prove the convergence of the approximate solutions. We finally show that the admissible bounds on the distribution function are propagated by the local solution.

%
%
%
%
%
%
%
%
%
%

\subsection{Construction of the local solution}

We begin with the iterative scheme \eqref{eq:local-iter}. For convenience, we recall that
\bq\label{app:iter}
\pa_t g^{n+1} = Bg^{n+1}+\Gamma(g^n), \quad g^{n+1}|_{t=0} = g_0.
\eq
Equivalently,
\[
g^{n+1}(t) = e^{tB}g_0 + \int_0^t e^{(t-\tau)B}\Gamma(g^n(\tau))\,\rdta.
\]
The scheme is first carried out for smooth approximations of the initial datum and the nonlinear source term. Since all the estimates below are uniform with respect to the approximation parameters, we suppress these parameters in the notation.

\begin{lemma}\label{lem:local-uni-b}
Let $s\ge4$. There exist sufficiently small constants $ \delta_{\rm in}>0$ and $T_*>0$ such that, if $\|g_0\|_{H^s}\le \delta_{\rm in}$, then the sequence $\{g^n\}_{n\ge0}$ defined by \eqref{app:iter}
satisfies
\[
g^n\in\mcI(s,T_*; \delta_{\rm in}) \quad \text{for all }n\ge0.
\]
\end{lemma}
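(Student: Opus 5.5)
The proof will go by induction on $n$. The base case is immediate, since $g^0=0\in\mcI(s,T_*;\delta_{\rm in})$. For the inductive step we assume $g^n\in\mcI(s,T_*;\delta_{\rm in})$ and derive an energy inequality for $g^{n+1}$ from \eqref{app:iter}, using the same weighted triangular structure as in Lemma \ref{lem:glob-ener-s5}. We do not need the macroscopic dissipation: on a short interval the macroscopic part is simply bounded by the energy, and the smallness comes from $T_*$ and $\delta_{\rm in}$.

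\emph{Spatial derivatives.} Fix $|\alpha|\le s$. Apply $\pa_x^\alpha$ to \eqref{app:iter} and test with $\pa_x^\alpha g^{n+1}$. Skew-symmetry removes the transport term, and \eqref{eq:B-diss-local} gives
\[
\frac12\frac{\rd}{\rdt}\|\pa_x^\alpha g^{n+1}\|_{L^2_{x,p}}^2+\lambda_0\|\pa_x^\alpha(I-P)g^{n+1}\|_D^2\le \lt|\lal\pa_x^\alpha\Gamma(g^n),\pa_x^\alpha g^{n+1}\ral\rt|.
\]
Lemma \ref{Gamma 1} with $g=g^n$, $h=g^{n+1}$ bounds the right-hand side by
\[
C\|g^n\|_{H^s}\bigl(\|\pa_x^\alpha(I-P)g^{n+1}\|_D+\|g^{n+1}\|_{H^s}\bigr)\|g^n\|_{D,s}.
\]
We then split $\|g^n\|_{D,s}\le \bigl(\sum\|\pa_x^\alpha\pa_p^\beta(I-P)g^n\|_D^2\bigr)^{1/2}+C\|g^n\|_{H^s}$ and apply Young's inequality.

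\emph{Mixed derivatives.} Fix $|\beta|\ge1$ and test the $\pa_x^\alpha\pa_p^\beta$-differentiated equation with $\pa_x^\alpha\pa_p^\beta g^{n+1}$. The commutator $[\pa_p^\beta,p\cdot\nabla_x]$ produces terms with one fewer $p$-derivative. Lemma \eqref{eq:L-p-der-coer} supplies dissipation up to lower $p$-order terms and $C\|g^{n+1}\|_{L^2_p}^2$. The nonlinear term is again handled by Lemma \ref{Gamma 1}, with the test function's macroscopic part bounded by $\|\pa_x^\alpha g^{n+1}\|$. Summing with the weights $1=a_0>a_1>\dots>a_s$ yields
\[
\frac{\rd}{\rdt}\mcE^{n+1}+c\,\mcD^{n+1}\le C\mcE^{n+1}+C\|g^n\|_{H^s}^2\bigl(\mcD^n+\|g^n\|_{H^s}^2\bigr)+\tfrac c2\mcD^{n+1},
\]
where $\mcE^{n+1}\sim\|g^{n+1}\|_{H^s}^2$ and $\mcD^{k}:=\sum_{|\alpha|+|\beta|\le s}\|\pa_x^\alpha\pa_p^\beta(I-P)g^{k}\|_D^2$. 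Note that $\mcE^{n+1}$ involves $\pa_p^\beta g$ rather than $\pa_p^\beta(I-P)g$; the two are equivalent modulo $C\|g\|_{H^s}^2$ because $P$ is finite rank.

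\emph{Closing.} Absorb the $\tfrac c2\mcD^{n+1}$ term, integrate over $[0,t]$, and apply Gr\"onwall. Using the inductive hypothesis $\sup\|g^n\|_{H^s}^2+\int_0^{T_*}\mcD^n\le\delta_{\rm in}$, we get
\[
\sup_{[0,T_*]}\|g^{n+1}\|_{H^s}^2+\int_0^{T_*}\mcD^{n+1}\le C_0e^{CT_*}\bigl(\|g_0\|_{H^s}^2+\delta_{\rm in}^2+T_*\delta_{\rm in}^2\bigr).
\]
If the initial datum satisfies $\|g_0\|_{H^s}^2\le\delta_{\rm in}/(4C_0)$, a quadratic-in-$\delta_{\rm in}$ smallness that I would impose by shrinking the data threshold, and we take $\delta_{\rm in}$ and $T_*$ small, the right-hand side is at most $\delta_{\rm in}$. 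This closes the induction.

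The main obstacle is the mixed-derivative step. The coercivity \eqref{eq:L-p-der-coer} controls only the microscopic part, so the lower-order terms ($C\|g\|_{L^2_p}^2$, and the commutators that transfer $p$-derivatives into $x$-derivatives) must be controlled by the energy through Gr\"onwall rather than absorbed. The weights $a_k$ have to be chosen so that each commutator term lands either in the dissipation of a previous level or in $C\mcE^{n+1}$. All of this is rigorous for the smooth approximations mentioned before the lemma, and the bounds are uniform in the approximation parameters.
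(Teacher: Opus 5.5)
Your proposal is correct and follows essentially the same route as the paper: induction on $n$, spatial and mixed-derivative energy estimates using Lemma \ref{Gamma 1} and \eqref{eq:L-p-der-coer}, triangular weights $a_k$ to absorb the lower $p$-order dissipation, and Gr\"onwall on a short interval giving the bound $Ce^{CT_*}(\|g_0\|_{H^s}^2+\delta_{\rm in}^2+T_*\delta_{\rm in}^2)$. The only superfluous point is the extra threshold on the data: the hypothesis $\|g_0\|_{H^s}\le\delta_{\rm in}$ already gives $\|g_0\|_{H^s}^2\le\delta_{\rm in}^2\le\delta_{\rm in}/(4C_0)$ once $\delta_{\rm in}$ is small, and this is exactly how the paper closes the induction.
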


\begin{proof}
We prove the estimate by induction. The proof is divided into two steps. We first derive the purely spatial $H^s_x$ estimate and then establish the full mixed $H^s$ estimate.

For notational convenience, we set
\[
\calD_n := \sum_{|\alpha|+|\beta|\le s} \|\pa_x^\alpha\pa_p^\beta(I-P)g^n\|_D^2.
\]
By the finite-dimensionality of $P$ and the exponential decay of the basis functions of $\mcN$, we have
\bq\label{app:full-D-con}
\|g^n\|_{D,s}^2 \le C\lt( \calD_n + \|g^n\|_{H^s}^2 \rt).
\eq
Indeed, for every $|\alpha|+|\beta|\le s$,
\[
\|\pa_x^\alpha\pa_p^\beta Pg^n\|_D \le C_\beta \|\pa_x^\alpha g^n\|_{L^2_{x,p}}.
\]

\medskip
\noindent
{\bf Step 1. The spatial $H^s_x$ estimate.} For $|\alpha|\le s$, we apply $\pa_x^\alpha$ to \eqref{app:iter} and take the $L^2_{x,p}$-inner product with $\pa_x^\alpha g^{n+1}$. Since $p\cdot\nabla_x$ is skew-symmetric in $L^2_{x,p}$, we obtain
\bq\label{app:Hx-energy-0}
\frac12\frac{\rd}{\rdt} \|\pa_x^\alpha g^{n+1}\|_{L^2_{x,p}}^2 = \lt\lal L(\pa_x^\alpha g^{n+1}), \pa_x^\alpha g^{n+1} \rt\ral_{L^2_{x,p}} + \lt\lal \pa_x^\alpha\Gamma(g^n), \pa_x^\alpha g^{n+1} \rt\ral_{L^2_{x,p}}.
\eq
By the coercivity estimate \eqref{eq:L-coercivity},
\bq\label{app:Hx-lin-bo}
\lt\lal L(\pa_x^\alpha g^{n+1}), \pa_x^\alpha g^{n+1} \rt\ral_{L^2_{x,p}} \le -\lambda_0 \|\pa_x^\alpha(I-P)g^{n+1}\|_D^2.
\eq

We next estimate the nonlinear term. Applying Lemma \ref{Gamma 1} with $\beta=0$ and $h=g^{n+1}$, using Young's inequality and \eqref{app:full-D-con}, we find
\begin{align*} 
 \lt| \lt\lal \pa_x^\alpha\Gamma(g^n), \pa_x^\alpha g^{n+1} \rt\ral_{L^2_{x,p}} \rt|  
&\le C\|g^n\|_{H^s} \lt( \|\pa_x^\alpha(I-P)g^{n+1}\|_D + \|\pa_x^\alpha g^{n+1}\|_{L^2_{x,p}} \rt) \|g^n\|_{D,s} \cr
&\le \frac{\lambda_0}{2} \|\pa_x^\alpha(I-P)g^{n+1}\|_D^2 + C \|\pa_x^\alpha g^{n+1}\|_{L^2_{x,p}}^2 + C \|g^n\|_{H^s}^2 \lt( \calD_n + \|g^n\|_{H^s}^2 \rt).
\end{align*}

Combining the above estimates, and summing over $|\alpha|\le s$, we obtain
\bq\label{app:Hx-energy}
 \frac{\rd}{\rdt} \|g^{n+1}\|_{L^2_p(H^s_x)}^2 + \lambda_0 \sum_{|\alpha|\le s} \|\pa_x^\alpha(I-P)g^{n+1}\|_D^2  \le C \|g^{n+1}\|_{L^2_p(H^s_x)}^2 + C \|g^n\|_{H^s}^2 \lt( \calD_n + \|g^n\|_{H^s}^2 \rt).
\eq

For the first iterate, $g^0=0$ and thus $\Gamma(g^0)=0$. Hence, it follows from \eqref{app:Hx-energy-0} and \eqref{app:Hx-lin-bo} that 
\[
\frac{\rd}{\rdt} \|g^1\|_{L^2_p(H^s_x)}^2 + 2\lambda_0 \sum_{|\alpha|\le s} \|\pa_x^\alpha(I-P)g^1\|_D^2 \le 0,
\]
and consequently,
\[
\sup_{0\le t\le T_*} \|g^1(t)\|_{L^2_p(H^s_x)}^2 + 2\lambda_0 \sum_{|\alpha|\le s} \int_0^{T_*} \|\pa_x^\alpha(I-P)g^1(\tau)\|_D^2\,\rdta  \le \|g_0\|_{H^s}^2 <  \delta_{\rm in}^2.
\]

Assume that $g^n\in\mcI(s,T_*; \delta_{\rm in})$. Then, we get
\[
\sup_{0\le t\le T_*} \|g^n(t)\|_{H^s} \le \sqrt{ \delta_{\rm in}} \quad \text{and} \quad \int_0^{T_*} \calD_n(\tau)\,\rdta \le  \delta_{\rm in}.
\]
Integrating \eqref{app:Hx-energy} over $[0,t]\subset[0,T_*]$ and applying Gr\"onwall's inequality, we obtain
\[
\sup_{0\le t\le T_*} \|g^{n+1}(t)\|_{L^2_p(H^s_x)}^2 + \lambda_0 \sum_{|\alpha|\le s} \int_0^{T_*} \|\pa_x^\alpha(I-P)g^{n+1}(\tau)\|_D^2\,\rdta  \le C e^{CT_*} \lt( \|g_0\|_{H^s}^2 +  \delta_{\rm in}^2 + T_* \delta_{\rm in}^2 \rt).
\]
Thus, after choosing $ \delta_{\rm in}>0$ and $T_*>0$ sufficiently small, the spatial part of the induction estimate is closed.

\medskip
\noindent
{\bf Step 2. The mixed $H^s$ estimate.} We now estimate derivatives involving at least one $p$-derivative. Let $|\alpha|+|\beta|\le s$ and $|\beta|\ge1$. Applying $\pa_x^\alpha\pa_p^\beta$ to \eqref{app:iter} and taking the $L^2_{x,p}$-inner product with $\pa_x^\alpha\pa_p^\beta g^{n+1}$, we obtain
\begin{align*}
\begin{aligned}
\frac12\frac{\rd}{\rdt} \|\pa_x^\alpha\pa_p^\beta g^{n+1}\|_{L^2_{x,p}}^2
&= \lt\lal \pa_p^\beta L(\pa_x^\alpha g^{n+1}), \pa_x^\alpha\pa_p^\beta g^{n+1} \rt\ral_{L^2_{x,p}} - \lt\lal [\pa_p^\beta,p\cdot\nabla_x] \pa_x^\alpha g^{n+1}, \pa_x^\alpha\pa_p^\beta g^{n+1} \rt\ral_{L^2_{x,p}} \cr
&\quad + \lt\lal \pa_x^\alpha\pa_p^\beta\Gamma(g^n), \pa_x^\alpha\pa_p^\beta g^{n+1} \rt\ral_{L^2_{x,p}}.
\end{aligned}
\end{align*}

We estimate the three terms on the right-hand side separately. First, by the $p$-derivative coercivity estimate \eqref{eq:L-p-der-coer}, the finite-dimensionality of $P$, and the fact that the $D$-norm controls the $L^2$-norm, we have
\begin{align*}
\begin{aligned}
\lt\lal \pa_p^\beta L(\pa_x^\alpha g^{n+1}), \pa_x^\alpha\pa_p^\beta g^{n+1} \rt\ral_{L^2_{x,p}}
& \le -\frac{\lambda_1}{2} \|\pa_x^\alpha\pa_p^\beta(I-P)g^{n+1}\|_D^2 + C_\beta \|\pa_x^\alpha g^{n+1}\|_{L^2_{x,p}}^2 \cr
&\quad + C_\beta \sum_{|\nu|\le|\beta|-1} \|\pa_x^\alpha\pa_p^\nu(I-P)g^{n+1}\|_D^2.
\end{aligned}
\end{align*}

We next consider the transport commutator. Since $p\cdot\nabla_x$ is linear in $p$, we have
\[
[\pa_p^\beta,p\cdot\nabla_x]h = \sum_{\substack{1\le j\le3\\ \beta_j\ge1}} \beta_j \pa_{x_j}\pa_p^{\beta-\mfe_j}h.
\]
Using the decomposition $h=(I-P)h+Ph$, the finite-dimensionality of $P$, and Young's inequality, we obtain, for
any $\eta>0$,
\begin{align*}
\begin{aligned}
\lt| \lt\lal [\pa_p^\beta,p\cdot\nabla_x] \pa_x^\alpha g^{n+1}, \pa_x^\alpha\pa_p^\beta g^{n+1} \rt\ral_{L^2_{x,p}} \rt|
&\le \eta \|\pa_x^\alpha\pa_p^\beta(I-P)g^{n+1}\|_D^2 + C_{\eta,\beta} \|\pa_x^\alpha g^{n+1}\|_{L^2_p(H^1_x)}^2 \cr
&\quad + C_{\eta,\beta} \sum_{\substack{1\le j\le3\\ \beta_j\ge1}} \|\pa_x^{\alpha+\mfe_j} \pa_p^{\beta-\mfe_j}(I-P)g^{n+1}\|_D^2.
\end{aligned}
\end{align*}
The last term contains strictly fewer $p$-derivatives and preserves the total number of derivatives. Indeed,  $|\alpha+\mfe_j|+|\beta-\mfe_j| = |\alpha|+|\beta| \le s$.

Finally, applying Lemma \ref{Gamma 1} with $g=g^n$ and $h=g^{n+1}$, we obtain
\[
\lt| \lt\lal \pa_x^\alpha\pa_p^\beta\Gamma(g^n), \pa_x^\alpha\pa_p^\beta g^{n+1} \rt\ral_{L^2_{x,p}} \rt| \le C \|g^n\|_{H^s} \lt( \|\pa_x^\alpha\pa_p^\beta(I-P)g^{n+1}\|_D + \|\pa_x^\alpha g^{n+1}\|_{L^2_{x,p}} \rt) \|g^n\|_{D,s}.
\]
By Young's inequality and \eqref{app:full-D-con}, we have
\[
 \lt| \lt\lal \pa_x^\alpha\pa_p^\beta\Gamma(g^n), \pa_x^\alpha\pa_p^\beta g^{n+1} \rt\ral_{L^2_{x,p}} \rt|  \le \frac{\lambda_1}{8} \|\pa_x^\alpha\pa_p^\beta(I-P)g^{n+1}\|_D^2 + C \|\pa_x^\alpha g^{n+1}\|_{L^2_{x,p}}^2 + C \|g^n\|_{H^s}^2 \lt( \calD_n + \|g^n\|_{H^s}^2 \rt).
\]

Choosing $\eta>0$ sufficiently small and combining the above estimates, we obtain
\begin{align}\label{app:mixed-diff}
\begin{aligned}
&\frac{\rd}{\rdt} \|\pa_x^\alpha\pa_p^\beta g^{n+1}\|_{L^2_{x,p}}^2 + \lambda_2 \|\pa_x^\alpha\pa_p^\beta(I-P)g^{n+1}\|_D^2 \cr
&\quad\le C_\beta \|\pa_x^\alpha g^{n+1}\|_{L^2_{p}(H^1_x)}^2 + C_\beta \sum_{|\nu|\le|\beta|-1} \|\pa_x^\alpha\pa_p^\nu(I-P)g^{n+1}\|_D^2 \cr
&\quad \quad + C_\beta \sum_{\substack{1\le j\le3\\ \beta_j\ge1}} \|\pa_x^{\alpha+\mfe_j} \pa_p^{\beta-\mfe_j}(I-P)g^{n+1}\|_D^2 + C \|g^n\|_{H^s}^2 \lt( \calD_n + \|g^n\|_{H^s}^2 \rt)
\end{aligned}
\end{align}
for some constant $\lambda_2>0$.

We now sum the preceding estimates by means of a triangular induction over the number of $p$-derivatives. For each $1\le k\le s$, define
\[
\calE_{n+1,k}(t) := \sum_{\substack{|\alpha|+|\beta|\le s\\|\beta|=k}} \|\pa_x^\alpha\pa_p^\beta g^{n+1}(t)\|_{L^2_{x,p}}^2 
\quad \text{and} \quad 
\calD_{n+1,k}(t) := \sum_{\substack{|\alpha|+|\beta|\le s\\|\beta|=k}} \|\pa_x^\alpha\pa_p^\beta (I-P)g^{n+1}(t)\|_D^2.
\]
For the spatial level, we set
\[
\calE_{n+1,0}(t) := \|g^{n+1}(t)\|_{L^2_p(H^s_x)}^2
\quad \text{and} \quad 
\calD_{n+1,0}(t) := \sum_{|\alpha|\le s} \|\pa_x^\alpha(I-P)g^{n+1}(t)\|_D^2.
\]
These quantities have already been estimated in Step 1.

We first note that the lower $p$-order terms on the right-hand side of \eqref{app:mixed-diff} are controlled by the dissipation levels strictly below $|\beta|$. Indeed, for $|\beta|=k\ge1$, we get $|\nu| \le |\beta|-1 = k-1$, and hence
\bq\label{app:lower-p-order-sum}
\sum_{\substack{|\alpha|+|\beta|\le s\\|\beta|=k}} \sum_{|\nu|\le|\beta|-1} \|\pa_x^\alpha\pa_p^\nu(I-P)g^{n+1}\|_D^2 \le C_s \sum_{\ell=0}^{k-1} \calD_{n+1,\ell}.
\eq
Here and below, $C_s>0$ accounts for the finite multiplicity arising from the summation over multi-indices.

The transport commutator terms have the same structure. Since $|\alpha+\mfe_j|+|\beta-\mfe_j| = |\alpha|+|\beta| \le s$ and $|\beta-\mfe_j| = |\beta|-1 = k-1$, we have
\bq\label{app:tran-low-l}
\sum_{\substack{|\alpha|+|\beta|\le s\\|\beta|=k}} \sum_{\substack{1\le j\le3\\\beta_j\ge1}} \|\pa_x^{\alpha+\mfe_j} \pa_p^{\beta-\mfe_j}(I-P)g^{n+1}\|_D^2 \le C_s \calD_{n+1,k-1}.
\eq

Summing \eqref{app:mixed-diff} over $|\alpha|+|\beta|\le s$ with $|\beta|=k$, and using \eqref{app:lower-p-order-sum} and \eqref{app:tran-low-l}, we obtain
\bq\label{app:mixed-lev-k}
\frac{\rd}{\rdt} \calE_{n+1,k} + \lambda_2 \calD_{n+1,k} \le C_{s,k} \calE_{n+1,0} + C_{s,k} \sum_{\ell=0}^{k-1} \calD_{n+1,\ell} + C_{s,k} \|g^n\|_{H^s}^2 \lt( \calD_n + \|g^n\|_{H^s}^2 \rt)
\eq
for every $1\le k\le s$.

We now choose positive constants $1=a_0>a_1>\cdots>a_s>0$ recursively so that the lower $p$-order dissipation terms arising at level $k$ are absorbed by the dissipation terms at the levels $0,\ldots,k-1$. More precisely, after fixing $a_0=1$, we choose $a_k>0$ sufficiently small relative to $a_0,\ldots,a_{k-1}$ so that
\[
a_k C_{s,k} \le \frac{\lambda_*}{2s} \min_{0\le\ell\le k-1}a_\ell \quad \text{for each }1\le k\le s,
\]
where $\lambda_* := \min\{\lambda_0,\lambda_2\}$. With this choice, the lower-order dissipation terms satisfy
\[
\sum_{k=1}^s a_k C_{s,k} \sum_{\ell=0}^{k-1} \calD_{n+1,\ell} \le \frac{\lambda_*}{2} \sum_{\ell=0}^{s-1} a_\ell \calD_{n+1,\ell}.
\]
Thus, after multiplying \eqref{app:mixed-lev-k} by $a_k$, summing over $1\le k\le s$, and adding the spatial estimate \eqref{app:Hx-energy}, all terms involving strictly fewer $p$-derivatives are absorbed by the dissipation terms at the preceding levels.

Define
\[
\calE_{n+1}(t) := \sum_{k=0}^s a_k\calE_{n+1,k}(t) = \sum_{|\alpha|+|\beta|\le s} a_{|\beta|} \|\pa_x^\alpha\pa_p^\beta g^{n+1}(t)\|_{L^2_{x,p}}^2
\]
and
\[
\widetilde{\mathcal D}_{n+1}(t) := \sum_{k=0}^s a_k\calD_{n+1,k}(t) = \sum_{|\alpha|+|\beta|\le s} a_{|\beta|} \|\pa_x^\alpha\pa_p^\beta (I-P)g^{n+1}(t)\|_D^2.
\]
We then obtain
\bq\label{app:full-energy}
\frac{\rd}{\rdt} \calE_{n+1}(t) + c_s \widetilde{\mathcal D}_{n+1}(t) \le C_s \calE_{n+1}(t) + C_s \|g^n(t)\|_{H^s}^2 \lt( \calD_n(t) + \|g^n(t)\|_{H^s}^2 \rt)
\eq
for some positive constants $c_s$ and $C_s$. Since the weights $a_0,\ldots,a_s$ are fixed positive constants, the weighted energy $\calE_{n+1}$ is equivalent to $\|g^{n+1}\|_{H^s}^2$,  and the weighted dissipation $\widetilde{\mathcal D}_{n+1}$ is equivalent to $\calD_{n+1}$.
 
Integrating \eqref{app:full-energy} over $[0,t]\subset[0,T_*]$ and applying
Gr\"onwall's inequality, we obtain
\[
\sup_{0\le t\le T_*} \|g^{n+1}(t)\|_{H^s}^2 + \int_0^{T_*} \calD_{n+1}(\tau)\,\rdta \le C_s e^{C_sT_*} \lt( \|g_0\|_{H^s}^2 +  \delta_{\rm in}^2 + T_* \delta_{\rm in}^2 \rt) \le C_s e^{C_sT_*} \lt(  2 \delta_{\rm in}^2  + T_* \delta_{\rm in}^2 \rt)
\]
due to $\|g_0\|_{H^s}^2 \le  \delta_{\rm in}^2$. Choosing $ \delta_{\rm in}>0$ and $T_*>0$ sufficiently small so that $C_s e^{C_sT_*}(2+T_*) \delta_{\rm in} \le 1$, we obtain $g^{n+1}\in\mcI(s,T_*; \delta_{\rm in})$.
Since the first iterate is controlled by the corresponding homogeneous linear estimate, the result follows by induction.
\end{proof}

\begin{lemma}\label{lem:local-cau}
Let $\{g^n\}_{n\ge0}$ be the approximate sequence defined by \eqref{app:iter}. Under the assumptions of Lemma \ref{lem:local-uni-b}, $\{g^n\}_{n\ge0}$ is a Cauchy sequence in $\mcC([0,T_*];L^2(\R^3 \times \R^3))$. Moreover, $\{(I-P)g^n\}_{n\ge0}$ is a Cauchy sequence in the corresponding microscopic dissipation space.
\end{lemma}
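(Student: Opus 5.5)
We will estimate the differences $w^{n+1}:=g^{n+1}-g^n$ in $L^2_{x,p}$ via an energy estimate and a contraction argument, using only the uniform bounds of Lemma \ref{lem:local-uni-b} and the Lipschitz estimate of Lemma \ref{Gamma 1.5}. For $n\ge1$, $w^{n+1}$ solves
\[
\pa_t w^{n+1} = Bw^{n+1} + \Gamma(g^n)-\Gamma(g^{n-1}),\quad w^{n+1}|_{t=0}=0 .
\]
Testing against $w^{n+1}$, using the skew-symmetry of $p\cdot\nabla_x$ and \eqref{eq:B-diss-local}, we obtain
\[
\frac12\frac{\rd}{\rdt}\|w^{n+1}\|_{L^2_{x,p}}^2+\lambda_0\|(I-P)w^{n+1}\|_D^2\le \big|\lal \Gamma(g^n)-\Gamma(g^{n-1}),w^{n+1}\ral_{L^2_{x,p}}\big| .
\]

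We then apply Lemma \ref{Gamma 1.5} with $g=g^n$, $h=g^{n-1}$ and $\phi=w^{n+1}$. This gives the bound
\[
C\big[(\|g^n\|_{H^s}+\|g^{n-1}\|_{H^s})\|w^n\|_D+(\|g^n\|_{D,s}+\|g^{n-1}\|_{D,s})\|w^n\|_{L^2}\big]\|w^{n+1}\|_D .
\]
Every $D$-norm of a difference is split as $\|w\|_D\le \|(I-P)w\|_D+C\|w\|_{L^2_{x,p}}$, since $P$ has finite rank with exponentially decaying basis. Applying Young's inequality and absorbing $\tfrac{\lambda_0}{2}\|(I-P)w^{n+1}\|_D^2$ leaves
\[
\frac{\rd}{\rdt}\|w^{n+1}\|^2+\lambda_0\|(I-P)w^{n+1}\|_D^2\le C\|w^{n+1}\|^2+C\delta_{\rm in}\|(I-P)w^n\|_D^2+C\big(\delta_{\rm in}+\calD_n+\calD_{n-1}+\|g^n\|_{H^s}^2+\|g^{n-1}\|_{H^s}^2\big)\|w^n\|^2 .
\]
Here we used \eqref{app:full-D-con} to replace $\|g^n\|_{D,s}^2$ by $\calD_n+\|g^n\|_{H^s}^2$, and $\|g^n\|_{H^s}\le\sqrt{\delta_{\rm in}}$ from membership in $\mcI(s,T_*;\delta_{\rm in})$.

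Next we integrate in time and use Gr\"onwall's inequality on $[0,T_*]$. Set $X_n:=\sup_{[0,T_*]}\|w^n\|_{L^2}^2+\int_0^{T_*}\|(I-P)w^n\|_D^2\,\rdta$. The weight $A_n(t):=\calD_n+\calD_{n-1}+\dots$ is only integrable, not bounded, but $\int_0^{T_*}A_n\le C(\delta_{\rm in}+T_*)$ by Lemma \ref{lem:local-uni-b}. Hence $\int_0^{T_*}A_n\|w^n\|^2\le C(\delta_{\rm in}+T_*)\sup\|w^n\|^2$. This yields
\[
X_{n+1}\le Ce^{CT_*}(\delta_{\rm in}+T_*)X_n .
\]
Shrinking $\delta_{\rm in}$ and $T_*$ makes the factor at most $\tfrac12$. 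Therefore $\sum_n X_n^{1/2}<\infty$, so $\{g^n\}$ is Cauchy in $\mcC([0,T_*];L^2)$ and $\{(I-P)g^n\}$ is Cauchy in $L^2(0,T_*;D)$.

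The main obstacle is the second term of Lemma \ref{Gamma 1.5}, in which the full $D,s$-norm of the iterates multiplies $\|w^n\|_{L^2}$. It is not pointwise small in time, and only its square is time-integrable. To handle it we must place the supremum in time on $\|w^n\|$ and use the integrated bound on $\calD_n$, rather than a pointwise Gr\"onwall. We also need the smallness of $T_*$ to control the macroscopic part $\|Pw^n\|_D\lesssim\|w^n\|_{L^2}$. A secondary point is that all the estimates are first carried out for the smooth approximations and then passed to the limit, as stated for the scheme.
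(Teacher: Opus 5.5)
Your proof is correct and is essentially the paper's argument: the same energy estimate for $w^{n+1}$, Lemma \ref{Gamma 1.5}, the micro--macro splitting of $\|w^n\|_D$, and a time integration that puts the supremum on $\|w^n\|_{L^2_{x,p}}$ against $\int_0^{T_*}(\calD_n+\calD_{n-1})\,\rdta\le 2\delta_{\rm in}$, which gives a contraction. The one difference is that the paper uses $\Gamma(g^n)-\Gamma(g^{n-1})\in\mcN^\perp$ to test against $(I-P)w^{n+1}$, so no $C\|w^{n+1}\|_{L^2_{x,p}}^2$ term appears and Gr\"onwall is not needed, whereas you test against the full $w^{n+1}$ and pay a harmless factor $e^{CT_*}$.
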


\begin{proof}
Set $w^{n+1} := g^{n+1}-g^n$. For $n\ge1$, subtracting the equations for $g^{n+1}$ and $g^n$ gives
\[
\pa_t w^{n+1} = Bw^{n+1} + \Gamma(g^n)-\Gamma(g^{n-1}), \quad w^{n+1}|_{t=0}=0.
\]
Taking the $L^2_{x,p}$-inner product with $w^{n+1}$ and using the skew-symmetry of the transport operator together with the coercivity estimate \eqref{eq:L-coercivity}, we obtain
\begin{align}\label{app:cauchy-0}
\frac12 \frac{\rd}{\rdt} \|w^{n+1}\|_{L^2_{x,p}}^2 + \lambda_0 \|(I-P)w^{n+1}\|_D^2 \le \lt| \lt\lal \Gamma(g^n)-\Gamma(g^{n-1}), w^{n+1} \rt\ral_{L^2_{x,p}} \rt|.
\end{align}

Since $\Gamma(g)\in\mcN^\perp$, we get $\Gamma(g^n)-\Gamma(g^{n-1}) \in \mcN^\perp$. Thus,
\[
\lt\lal \Gamma(g^n)-\Gamma(g^{n-1}), w^{n+1} \rt\ral_{L^2_{x,p}} = \lt\lal \Gamma(g^n)-\Gamma(g^{n-1}), (I-P)w^{n+1} \rt\ral_{L^2_{x,p}}.
\]

Applying Lemma \ref{Gamma 1.5} with $g=g^n$, $h=g^{n-1}$, $\phi=(I-P)w^{n+1}$, we obtain
\begin{align*} 
&\lt| \lt\lal \Gamma(g^n)-\Gamma(g^{n-1}), (I-P)w^{n+1} \rt\ral_{L^2_{x,p}} \rt| \cr
&\quad\le C \lt[ \lt( \|g^n\|_{H^s} + \|g^{n-1}\|_{H^s} \rt) \|w^n\|_D + \lt( \|g^n\|_{D,s} + \|g^{n-1}\|_{D,s} \rt) \|w^n\|_{L^2_{x,p}} \rt] \|(I-P)w^{n+1}\|_D.
\end{align*}

By the finite-dimensionality of $P$ and the exponential decay of the basis functions of $\mcN$, we have $\|w^n\|_D \le \|(I-P)w^n\|_D + C\|w^n\|_{L^2_{x,p}}$. Moreover, by \eqref{app:full-D-con}, $\|g^k\|_{D,s}^2 \le C \lt( \calD_k + \|g^k\|_{H^s}^2 \rt)$. Using Lemma \ref{lem:local-uni-b}, we also have
\[
\sup_{0\le t\le T_*} \|g^k(t)\|_{H^s} \le \sqrt{ \delta_{\rm in}} \quad (k\ge0).
\]

Combining the above estimates and applying Young's inequality, we obtain
\bq\label{app:cau-non-b}
\lt| \lt\lal \Gamma(g^n)-\Gamma(g^{n-1}), (I-P)w^{n+1} \rt\ral_{L^2_{x,p}} \rt| \le \frac{\lambda_0}{2} \|(I-P)w^{n+1}\|_D^2 + C \delta_{\rm in} \|(I-P)w^n\|_D^2 + \calA_n(t) \|w^n\|_{L^2_{x,p}}^2,
\eq
where $\calA_n(t) := C \lt(  \delta_{\rm in} + \calD_n(t) + \calD_{n-1}(t) \rt)$. Substituting \eqref{app:cau-non-b} into \eqref{app:cauchy-0} and absorbing the first term on the right-hand side, we obtain
\bq\label{app:cauchy-diff}
 \frac{\rd}{\rdt} \|w^{n+1}\|_{L^2_{x,p}}^2 + \lambda_0 \|(I-P)w^{n+1}\|_D^2  \le C \delta_{\rm in} \|(I-P)w^n\|_D^2 + \calA_n(t) \|w^n\|_{L^2_{x,p}}^2.
\eq

By Lemma \ref{lem:local-uni-b},
\[
\int_0^{T_*} \calD_k(\tau)\,\rdta \le  \delta_{\rm in} \quad (k\ge0),
\]
and this gives
\[
\int_0^{T_*} \calA_n(\tau)\,\rdta \le C \delta_{\rm in}(1+T_*).
\]
Integrating \eqref{app:cauchy-diff} over $[0,t]$ and taking the supremum over $0\le t\le T_*$, we find
\begin{align}\label{app:cauchy-ineq}
\begin{aligned}
&\sup_{0\le t\le T_*} \|w^{n+1}(t)\|_{L^2_{x,p}}^2 + \lambda_0 \int_0^{T_*} \|(I-P)w^{n+1}(\tau)\|_D^2\,\rdta \cr
&\quad\le C \delta_{\rm in} \int_0^{T_*} \|(I-P)w^n(\tau)\|_D^2\,\rdta + C \delta_{\rm in}(1+T_*) \sup_{0\le t\le T_*} \|w^n(t)\|_{L^2_{x,p}}^2.
\end{aligned}
\end{align}

Define
\[
X_n := \sup_{0\le t\le T_*} \|w^{n+1}(t)\|_{L^2_{x,p}}^2 \quad \text{and} \quad Y_n := \int_0^{T_*} \|(I-P)w^{n+1}(\tau)\|_D^2\,\rdta.
\]
Then \eqref{app:cauchy-ineq} gives
\[
X_n+\lambda_0Y_n \le C \delta_{\rm in}(1+T_*) \lt( X_{n-1}+Y_{n-1} \rt).
\]
Choosing $ \delta_{\rm in}>0$ and $T_*>0$ sufficiently small, we may assume that
\[
C \delta_{\rm in}(1+T_*) \max\lt\{1,\frac1{\lambda_0}\rt\} \le \frac12.
\]
This implies
\bq\label{app:cauchy-cont}
X_n+\lambda_0Y_n \le \frac12 \lt( X_{n-1}+\lambda_0Y_{n-1} \rt).
\eq
Iterating this, we obtain
\[
X_n+\lambda_0Y_n \le 2^{-n} \lt( X_0+\lambda_0Y_0 \rt).
\]
In particular,
\[
\sum_{n\ge0} X_n^{\frac12} < \infty,
\]
and thus, for $m>n$,
\[
\sup_{0\le t\le T_*} \|g^m(t)-g^n(t)\|_{L^2_{x,p}} \le \sum_{k=n}^{m-1} \sup_{0\le t\le T_*} \|w^{k+1}(t)\|_{L^2_{x,p}} = \sum_{k=n}^{m-1} X_k^{\frac12} \to 0 \quad \text{as }n\to\infty.
\]
Hence, $\{g^n\}_{n\ge0}$ is a Cauchy sequence in $\mcC([0,T_*];L^2_{x,p})$. Similarly, it follows from \eqref{app:cauchy-cont} that $\{(I-P)g^n\}_{n\ge0}$ is a Cauchy sequence in the corresponding microscopic dissipation space. This completes the proof.
\end{proof}

%
%
%
%
%
%
%
%
%
%

\subsection{Propagation of the admissible bounds} We next show that the admissible bounds on the initial distribution are propagated by the local solution. In particular, nonnegative initial data remain nonnegative. In the fermionic case, the upper Pauli bound is also preserved.

\begin{lemma} \label{lem:local-pos-pauli}
Let $g$ be the local solution obtained in Theorem \ref{LE}, and set $f:=\mcF_\hbar +g\sqrt{\mu_\hbar }$. If $f_0 := \mcF_\hbar +g_0\sqrt{\mu_\hbar } \ge0$ almost everywhere, then $f(t,x,p)\ge0$ for almost every $(x,p)\in\mR^3\times\mR^3$ and all $0\le t\le T_*$. Moreover, in the fermionic case $\kappa=-1$, if $f_0(x,p)\le\frac1{\hbar}$ almost everywhere, then $ f(t,x,p)\le\frac1{\hbar}$ for almost every $(x,p)\in\mR^3\times\mR^3$ and all $0\le t\le T_*$.
\end{lemma}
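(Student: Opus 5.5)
We will prove the lemma by an $L^2$ truncation (Stampacchia) argument applied directly to the nonlinear equation for $f$, treating the macroscopic fields as given coefficients. By Theorem \ref{LE}, $g\in\mcC([0,T_*];H^s)$ with $s\ge4$ and $\|g\|_{H^s}$ small. Hence $\rho$, $u$, $\Theta$ are bounded and Lipschitz in $x$; in particular $\nabla_x\cdot u$ is bounded. Lemma \ref{lem:decomp} together with the Sobolev embedding also gives $\rho\Theta\ge M/2>0$ and $\rho\le 2M$ on $[0,T_*]\times\mR^3$. We then view $f$ as a solution of the linear equation
\[
\pa_t f+p\cdot\nabla_x f=\rho\Theta\Delta_p f+\rho\nabla_p\cdot\{(p-u)F(f)\},\qquad F(f)=f(1+\hbar\kappa f),
\]
and use the fact that $F$ vanishes at $f=0$ and, when $\kappa=-1$, also at $f=1/\hbar$.

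\textbf{Lower bound.} Set $f_-:=\max\{-f,0\}$. Note that $f_-\le|g|\sqrt{\mu_\hbar}$, so $f_-\in L^2$ with $\nabla_pf_-\in L^2$. Testing the equation with $-f_-$ (justified by regularizing $(\cdot)_-$ by convex $C^2$ functions and using the $H^s$ regularity), the transport term vanishes by skew-symmetry. The diffusion term gives $-\intr\rho\Theta|\nabla_pf_-|^2\le0$. For the drift term, on $\{f<0\}$ we have $F(f)=-f_-(1-\hbar\kappa f_-)$. After integrating by parts we obtain
\[
\inttr\rho(p-u)F(f)\cdot\nabla_pf_-\,\rdx\rdp.
\]
We write $f_-F(f)\nabla_p$-type terms as $\nabla_p\Phi(f_-)$ with $\Phi(z)=-z^2/2+\hbar\kappa z^3/3$, integrate by parts again, and obtain $3\inttr\rho\,\Phi(f_-)\,\rdx\rdp$. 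This is bounded by $C\|\rho\|_\infty(1+\|f_-\|_\infty)\|f_-\|_{L^2}^2$, and $\|f_-\|_\infty\le C\|g\|_{H^s}$ is small. This yields
\[
\frac{\rd}{\rdt}\|f_-\|_{L^2}^2\le C\|f_-\|_{L^2}^2,\qquad f_-(0)=0,
\]
and Gr\"onwall gives $f_-\equiv0$.

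\textbf{Fermionic upper bound.} For $\kappa=-1$, set $w:=f-1/\hbar$ and $w_+:=\max\{w,0\}$. Here the main obstacle appears: $1/\hbar-\mcF_\hbar\ge c>0$ is not integrable, so $w_+$ might a priori fail to be in $L^2$. However, since $\mcF_\hbar\le(e^{\theta_0}+\hbar)^{-1}<1/\hbar$ by \eqref{eq:ht-cond}, we have $w_+\le(\sqrt{\mu_\hbar}|g|-c)_+\le\sqrt{\mu_\hbar}|g|$, and therefore $w_+\in L^2$ after all. Moreover, since $\|g\sqrt{\mu_\hbar}\|_\infty\ll c$ for small data, in fact $w_+\equiv0$ identically in the perturbative regime. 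We nevertheless give the dynamical argument. Since $F(f)=-\hbar\,w(1/\hbar+w)$, it has the same structure as in the lower-bound case with $w_+$ in place of $f_-$. Testing with $w_+$, using the diffusion sign $-\rho\Theta|\nabla_pw_+|^2$, and converting the drift into $\inttr\rho\,\tilde\Phi(w_+)\,\rdx\rdp$ with $\tilde\Phi$ cubic vanishing quadratically at $0$, we obtain $\frac{\rd}{\rdt}\|w_+\|_{L^2}^2\le C\|w_+\|_{L^2}^2$. Gr\"onwall then gives $f\le1/\hbar$.

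The delicate steps are the rigorous justification of the integrations by parts with truncations at the $H^s$ level, and the integrability of the truncated quantities. We handle both by regularizing the positive and negative parts and passing to the limit, using $g\in L^2(0,T_*;D)$.
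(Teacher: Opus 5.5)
Your proof is correct. The lower bound is essentially the paper's argument. The paper also uses $f_-\le\sqrt{\mu_\hbar}|g|$ and $\rho\Theta\ge M/2$, applies a convex $C^2$ regularization of $\frac12 z_-^2$, rewrites the drift as $3\int\rho\,\Psi_\delta(f)$, and closes by Gr\"onwall, using the smallness of $\|f_-\|_{L^\infty}$ to control the cubic part. You have a harmless sign slip: with your $\Phi$, the drift contribution is $-3\int\rho\,\Phi(f_-)=3\int\rho\bigl(\frac12 f_-^2-\frac{\hbar\kappa}{3}f_-^3\bigr)$, not $+3\int\rho\,\Phi(f_-)$. This is a genuine source term, so Gr\"onwall really is needed there.

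For the fermionic bound you take a different route.

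\emph{Static argument.} Your observation is valid and makes the dynamical argument unnecessary. We have $\sup_p\mcF_\hbar=(e^{\theta_0}+\hbar)^{-1}$, and the gap
\[
\frac1\hbar-(e^{\theta_0}+\hbar)^{-1}=\frac{e^{\theta_0}}{\hbar(e^{\theta_0}+\hbar)}>\frac1{2\hbar}
\]
under \eqref{eq:ht-cond}. Meanwhile $\|\sqrt{\mu_\hbar}g\|_{L^\infty}\le C\|g\|_{H^s}$ can be pushed below this gap by shrinking $\delta_{\rm in}$, as the paper itself does inside this proof to get $\rho\Theta\ge M/2$. Hence $f<\frac1\hbar$ everywhere for all $t$, and the hypothesis $f_0\le\frac1\hbar$ is automatic in the perturbative regime. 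The paper proves both bounds dynamically, but only the lower bound is genuinely not a consequence of smallness, since $\mcF_\hbar\sim e^{-|p|^2/2}$ decays faster than $\sqrt{\mu_\hbar}\sim e^{-|p|^2/4}$.

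\emph{Dynamical fallback.} You bound the drift term in absolute value and use Gr\"onwall. The paper instead exploits its sign: $\Psi^\hbar_\delta\le0$ because $s(1-\hbar s)\le0$ for $s\ge\frac1\hbar$, and $\rho\ge0$ by the nonnegativity just proved. This gives $\int\chi^\hbar_\delta(f(t))\,\rdx\rdp=0$ directly.

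\emph{Comparison.} The paper's sign argument is the rigorous form of the barrier heuristic in Section~\ref{ssec:pauli}. It uses only $\rho\ge0$ and $\rho\Theta>0$, with no $L^\infty$ bounds on $(f-\frac1\hbar)_+$ or $\rho$, so it is the version that would survive outside the small-data setting. Your Gr\"onwall variant needs those bounds but does not require proving nonnegativity first. Your static argument is the shortest of the three, but it is purely perturbative.
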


\begin{proof}
The computations below are justified by using smooth convex approximations of the relevant truncation functions and standard cutoff functions in $(x,p)$. We first derive the corresponding time-integrated estimates and then pass to the limits in the approximation and cutoff parameters.

Since $s\ge4$, the Sobolev embedding and \eqref{eq:local-b-main} give 
\[
\sup_{0\le t\le T_*} \|g(t)\|_{L^\infty_{x,p}} \le \sup_{0\le t\le T_*} \|g(t)\|_{H^s} \le \sqrt{ \delta_{\rm in}}.
\]
Thus, the macroscopic coefficients remain close to their equilibrium values. In particular, using
\[
\rho \Theta = M + \frac13 \intr |p|^2\eta_\hbar g\sqrt{\mu_\hbar }\,\rdp + N_\Theta
\]
and Lemma \ref{lem:Nu-NT-est}, we obtain
\[
|\rho \Theta-M| \le C \lt( \|g\|_{L^\infty_{x,p}} + \|g\|_{L^\infty_{x,p}}^2 \rt) \le C\sqrt{ \delta_{\rm in}}.
\]
Hence, after reducing $ \delta_{\rm in}>0$ if necessary,
\bq\label{app:rhoT-pos}
\rho \Theta \ge \frac{M}{2} > 0.
\eq
The same perturbative estimates imply
\bq\label{app:rho-bdd}
\|\rho\|_{L^\infty_x} \le C.
\eq

We first prove the nonnegativity of $f$. Recall that $f$ satisfies
\bq\label{app:non-fp-eq}
\pa_t f + p\cdot\nabla_x f = \rho \nabla_p\cdot \lt\{ \Theta\nabla_p f + (p-u)f(1+\hbar\kappa f) \rt\}.
\eq
Set $f_- := \max\{-f,0\}$. Since $f = \mcF_\hbar +\sqrt{\mu_\hbar }g$ and $\mcF_\hbar \ge0$, we have $f_- \le \sqrt{\mu_\hbar }|g|$. Indeed, the estimate is immediate on the set $\{f\ge0\}$. On the set
$\{f<0\}$, we have $g<0$ and 
\[
f_- = -\mcF_\hbar -\sqrt{\mu_\hbar }g \le -\sqrt{\mu_\hbar }g = \sqrt{\mu_\hbar }|g|. 
\]
Since $\mu_\hbar $ is bounded, it follows that $f_-\in L^2_{x,p}$. Moreover,
\bq\label{app:fm-L}
\|f_-\|_{L^\infty_{x,p}} \le C\sqrt{ \delta_{\rm in}}.
\eq

Let $\chi_\delta\in C^2(\mR)$ be a nonnegative convex function satisfying 
\[
\chi_\delta(z)=0
\quad
\text{for }z\ge0,
\quad
0\le\chi_\delta''(z)\le1, 
\quad \text{and} \quad 
\chi_\delta(z) \to \frac12(z_-)^2 \quad \text{as }\delta\to0.
\]
We may choose $\chi_\delta$ so that $\chi_\delta''(z) \to \mathbf 1_{\{z<0\}}$ as $\delta\to 0$ for a.e. $z \in \R$. Multiplying \eqref{app:non-fp-eq} by
$\chi_\delta'(f)$ and integrating by parts in $(x,p)$, we obtain
\bq\label{app:conv-neg-p}
 \frac{\rd}{\rdt} \inttr \chi_\delta(f) \,\rdx\rdp  + \inttr \rho \Theta \chi_\delta''(f) |\nabla_p f|^2 \,\rdx\rdp  = 3 \inttr \rho \Psi_\delta(f) \,\rdx\rdp ,
\eq
where
\[
\Psi_\delta(z) := \int_0^z s(1+\hbar\kappa s) \chi_\delta''(s) \,\rds.
\]
Here we used the fact that $\rho$ and $u$ are independent of $p$ and $\nabla_p\cdot(p-u)=3$.

We now integrate \eqref{app:conv-neg-p} over $[0,t]$. By \eqref{app:rhoT-pos} and the convexity of $\chi_\delta$, the diffusion term is nonnegative. Hence,
\bq\label{app:conv-neg-p-i}
\inttr \chi_\delta(f(t)) \,\rdx\rdp   \le \inttr \chi_\delta(f_0) \,\rdx\rdp  + 3 \int_0^t \inttr \rho \Psi_\delta(f) \,\rdx\rdp \rdta.
\eq
Since $f_0\ge0$, we have $\chi_\delta(f_0)=0$. Moreover, for every $z\in\mR$, $\Psi_\delta(z) \to \Psi_-(z)$ as $\delta \to 0$, where
\[
\Psi_-(z) :=
\begin{cases}
  \frac12z^2+\frac{\hbar\kappa}{3}z^3, & z<0, \\[2mm]
0, & z\ge0,
\end{cases}
\quad \text{equivalently, } 
\Psi_-(z) = \frac12z_-^2 - \frac{\hbar\kappa}{3}z_-^3.
\]
Since $0\le\chi_\delta''\le1$, we also have $|\Psi_\delta(f)| \le C( f_-^2+f_-^3)$, and by \eqref{app:fm-L}, $f_-^3 \le C\sqrt{ \delta_{\rm in}}\,f_-^2$. Together with \eqref{app:rho-bdd}, this gives $|\rho\Psi_\delta(f)| \le C f_-^2$. The right-hand side is integrable on $[0,T_*]\times\R^3 \times \R^3$. Thus, by dominated convergence and Fatou's lemma, we may let $\delta\to0$ in \eqref{app:conv-neg-p-i} to obtain
\[
\frac12 \|f_-(t)\|_{L^2_{x,p}}^2  \le 3 \int_0^t \inttr \rho \lt( \frac12f_-^2 - \frac{\hbar\kappa}{3}f_-^3 \rt) \,\rdx\rdp \rdta  \le C \int_0^t \|f_-(\tau)\|_{L^2_{x,p}}^2 \,\rdta.
\]
By Gr\"onwall's inequality, $f_-(t)=0$ for all $0\le t\le T_*$. Hence, we have $f(t,x,p)\ge0$ almost everywhere.

We next prove the fermionic upper bound. Assume that $\kappa=-1$. Since the fermionic equilibrium satisfies $0<\mcF_\hbar (p)\le\frac1{\hbar}$, we have $\lt( f-\frac1{\hbar} \rt)_+ \le \sqrt{\mu_\hbar }|g|$. In particular, $\lt( f-\frac1{\hbar} \rt)_+ \in L^2_{x,p}$.

We proceed as in the proof of the nonnegativity of $f$. Let $\chi_\delta^\hbar\in C^2(\mR)$ be a nonnegative convex function satisfying
\[
\chi_\delta^\hbar(z)=0 \quad \text{for }z\le\frac1{\hbar},
\quad
0\le (\chi_\delta^\hbar)''(z) \le1,
\quad \text{and} \quad 
\chi_\delta^\hbar(z) \to \frac12 \lt( z-\frac1{\hbar} \rt)_+^2 \quad \text{as }\delta\to0.
\]
Multiplying the fermionic equation by $(\chi_\delta^\hbar)'(f)$, integrating by parts in $(x,p)$, and then integrating in time over $[0,t]$, we obtain
\begin{align}\label{app:conv-pauli-i}
\begin{aligned}
&\inttr \chi_\delta^\hbar(f(t)) \,\rdx\rdp  + \int_0^t \inttr \rho \Theta (\chi_\delta^\hbar)''(f) |\nabla_p f|^2 \,\rdx\rdp \rdta \cr
&\quad = \inttr \chi_\delta^\hbar(f_0) \,\rdx\rdp  + 3 \int_0^t \inttr \rho \Psi_\delta^\hbar(f) \,\rdx\rdp \rdta,
\end{aligned}
\end{align}
where
\[
\Psi_\delta^\hbar(z) := \int_{\frac1\hbar}^z s(1-\hbar s) (\chi_\delta^\hbar)''(s) \,\rds.
\]
Since $f_0\le\frac1{\hbar}$, we get $\chi_\delta^\hbar(f_0)=0$. Moreover, on the support of $(\chi_\delta^\hbar)''$, we find $s\ge\frac1{\hbar}$, and thus $s(1-\hbar s)\le0$. This yields $\Psi_\delta^\hbar(z)\le0$ for every $z \in \R$. Since the nonnegativity of $f$ has already been established, we also obtain $\rho = \intr f\,\rdp \ge0$. Consequently, the right-hand side of
\eqref{app:conv-pauli-i} is nonpositive, whereas its left-hand
side is nonnegative. It follows that
\[
\inttr \chi_\delta^\hbar(f(t)) \,\rdx\rdp  = 0.
\]
Letting $\delta\to0$ and using Fatou's lemma, we obtain
\[
\lt\| \lt( f(t)-\frac1{\hbar} \rt)_+ \rt\|_{L^2_{x,p}}^2 = 0.
\]
Hence, we have $f(t,x,p)\le\frac1{\hbar}$ almost everywhere for all $0\le t\le T_*$. This completes the proof.
\end{proof}

%
%
%
%
%
%
%
%
%
%
\section{Details of the macroscopic dissipation estimate}
\label{app:mac-diss}

In this appendix, we provide the coefficient-level details used in the proof of Lemma \ref{lem:mac-reco-s5}. The argument is based on the finite-dimensional expansion of $(\pa_t+p\cdot\nabla_x)Pg=\ell+\mathfrak h$, where
\[
    \ell= \lt\{ -\pa_t-p\cdot\nabla_x+L \rt\}(I-P)g, \quad \mathfrak h=\Gamma(g).
\]
Using the coefficient relations \eqref{eq:mac-rel-s5}, we derive the dissipation estimates for the spatial derivatives of the macroscopic variables $a$, $b$, and $c$.

%
%
%
%
%
%
%
%
%
%

\subsection{Estimates for the macroscopic variables}

We first give the details for the estimate of $\nabla_x b$. Recall from Lemma \ref{lem:lh-s5} that
\bq\label{app:cb_rela}
\pa_t c+\pa_{x_i}b_i = l_i+h_i, \quad \pa_{x_i}b_j+\pa_{x_j}b_i = l_{ij}+h_{ij}, \quad i\neq j.
\eq

\begin{lemma}\label{app:blh-section5}
Let $|\alpha|\le s-1$. Then, for each $i=1,2,3$,
\[
\|\pa_x^\alpha\nabla_x b_i\|_{L^2_x}^2 \le C \sum_{j\neq i} \lt| \intr \pa_x^\alpha l_{ij}\cdot \pa_x^\alpha\nabla_x b_i\,\rdx \rt| + C \sum_{j=1}^3 \lt| \intr \pa_x^\alpha l_j\cdot \pa_x^\alpha\nabla_x b_i\,\rdx \rt| + C\|\pa_x^\alpha\tilde h\|_{L^2_x}^2.
\]
Consequently, using the representation \eqref{eq:form-I-s5}, the right-hand side can be written in the form used in \eqref{eq:mac-rec-s5}.
\end{lemma}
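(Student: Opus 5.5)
We derive an elliptic identity for each component $b_i$ from \eqref{app:cb_rela} and then test it against $\pa_x^\alpha b_i$ itself, so that the right-hand side only pairs the $l$- and $h$-coefficients with $\pa_x^\alpha\nabla_x b_i$. The first step removes $\pa_t c$ from the diagonal relations. Subtracting the relation for index $j$ from the one for index $i$ gives
\[
\pa_{x_i}b_i-\pa_{x_j}b_j=(l_i-l_j)+(h_i-h_j).
\]
Together with the off-diagonal relations $\pa_{x_i}b_j+\pa_{x_j}b_i=l_{ij}+h_{ij}$, this yields, for fixed $i$, the classical identity
\[
\Delta_x b_i=\sum_{j\neq i}\pa_{x_j}\big(\pa_{x_j}b_i+\pa_{x_i}b_j\big)-\sum_{j\neq i}\pa_{x_i}\big(\pa_{x_j}b_j-\pa_{x_i}b_i\big)+\pa_{x_i}\pa_{x_i}b_i.
\]
One rewrites the last term through the differences as well, i.e.\ one uses the standard form
\[
\Delta_x b_i=\sum_{j\neq i}\pa_{x_j}(l_{ij}+h_{ij})+\pa_{x_i}\Big(\pa_{x_i}b_i-\sum_{j\neq i}\pa_{x_j}b_j\Big).
\]
The parenthesis is $\sum_{j\neq i}(\pa_{x_i}b_i-\pa_{x_j}b_j)-\pa_{x_i}b_i$, which does not close. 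Instead, I would use the correct combination $-\Delta b_i-\pa_{x_i}\pa_{x_i}b_i=\dots$, or more simply write each second derivative of $b_i$ as a combination of first derivatives of $l_{ij},l_j,h_{ij},h_j$. This follows from the standard symmetric-gradient argument: knowing all $\pa_{x_i}b_j+\pa_{x_j}b_i$ for $i\neq j$ and all differences $\pa_{x_i}b_i-\pa_{x_j}b_j$ determines $\nabla_x^2 b$ up to first derivatives of these data. Concretely, for $j\neq i$ one has
\[
\pa_{x_j}^2 b_i=\pa_{x_j}(l_{ij}+h_{ij})-\pa_{x_i}\pa_{x_j}b_j,\qquad \pa_{x_i}\pa_{x_j}b_j=\pa_{x_i}\pa_{x_i}b_i-\pa_{x_i}\big(l_i-l_j+h_i-h_j\big).
\]
Summing over $j$ and adding $\pa_{x_i}^2b_i$ expresses $-\Delta_x b_i+(\text{multiple of }\pa_{x_i}^2b_i)$ through derivatives of the data. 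The coefficient bookkeeping, that is, checking that $\Delta_x b_i$ with a nonzero factor is recovered, is a routine finite algebraic verification in dimension three and yields
\[
-\Delta_x b_i=\sum_{j\neq i}\pa_{x_j}(\cdots)+\pa_{x_i}(\cdots),
\]
where the dots are fixed linear combinations of $l_{ij}+h_{ij}$ and $l_j+h_j$, $j=1,2,3$.

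The second step applies $\pa_x^\alpha$ to this identity and multiplies by $\pa_x^\alpha b_i$. After integration by parts in $x$, the left-hand side becomes $\|\pa_x^\alpha\nabla_xb_i\|_{L^2_x}^2$. The right-hand side becomes a sum of terms $\int\pa_x^\alpha(l_{ij}+h_{ij})\,\pa_{x_j}\pa_x^\alpha b_i\,\rdx$ and $\int\pa_x^\alpha(l_j+h_j)\,\pa_{x_i}\pa_x^\alpha b_i\,\rdx$. The $l$-terms are kept as they stand, which gives the first two sums in the statement. The $h$-terms are bounded by Cauchy--Schwarz and Young, $\le\frac12\|\pa_x^\alpha\nabla_xb_i\|_{L^2_x}^2+C\|\pa_x^\alpha\tilde h\|_{L^2_x}^2$, and the first half is absorbed into the left-hand side.

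The final remark, that the right-hand side can be written as in \eqref{eq:mac-rec-s5}, follows from the representation \eqref{eq:form-I-s5}. The $l$-coefficients are moments of $\{-\pa_t-p\cdot\nabla_x+L\}(I-P)g$. The $\pa_t$ part is later converted into the time derivative of an interaction term plus terms involving $\pa_t\nabla_x b$, and that conversion is handled in Lemma \ref{app:ell-terms-s5}. Here only the form of the pairings is needed. The main obstacle is the algebraic step: verifying that the specific combination of the relations recovers a genuine Laplacian of $b_i$, rather than a degenerate operator, and that $\pa_t c$ indeed cancels. I would check this first by writing out the case $i=1$ explicitly.
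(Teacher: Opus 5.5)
Your overall strategy is the paper's: derive an elliptic equation for $b_i$ from the diagonal and off-diagonal relations, test $\partial_x^\alpha$ of it against $\partial_x^\alpha b_i$, keep the $l$-pairings, and absorb the $h$-pairings by Young. However, the algebraic step you defer as ``routine bookkeeping'' is false as stated.

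No identity of the form $-\Delta_x b_i=\sum_{j\neq i}\partial_{x_j}(\cdots)+\partial_{x_i}(\cdots)$ can hold when the dots are built from $l_{ij}+h_{ij}$ and $l_j+h_j$. Likewise, the ``symmetric-gradient argument'' does not determine $\nabla_x^2 b$. Once $\partial_t c$ is eliminated, the data only see the trace-free part of the symmetric gradient, and its kernel contains quadratic conformal Killing fields. Concretely, take $b=(x_1^2-x_2^2-x_3^2,\,2x_1x_2,\,2x_1x_3)$. It satisfies $\partial_{x_i}b_j+\partial_{x_j}b_i=0$ for $i\neq j$ and $\partial_{x_1}b_1=\partial_{x_2}b_2=\partial_{x_3}b_3=2x_1$. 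Choosing $\partial_t c=-2x_1$ makes every $l+h$ vanish, yet $\Delta_x b_1=-2$. Korn-type arguments need the diagonal entries $\partial_{x_i}b_i$ individually, and these are unavailable here because $\partial_t c$ is unknown. Your displayed ``classical identity'' is also off: its right-hand side equals $\Delta_x b_i+2\partial_{x_i}^2 b_i$.

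Your own concrete computation, which you briefly wrote down before switching to the false version, actually yields
\[
\Delta_x b_i+\partial_{x_i}^2 b_i=\sum_{j\neq i}\Big\{\partial_{x_j}(l_{ij}+h_{ij})+\partial_{x_i}\big(l_i-l_j+h_i-h_j\big)\Big\}.
\]
This identity is enough, because $\Delta_x+\partial_{x_i}^2$ is elliptic. Test $\partial_x^\alpha$ of it against $-\partial_x^\alpha b_i$. The left side becomes $\|\partial_x^\alpha\nabla_x b_i\|_{L^2_x}^2+\|\partial_x^\alpha\partial_{x_i}b_i\|_{L^2_x}^2$. Drop the second, nonnegative term and run your second step unchanged.

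This is exactly the paper's proof. The paper substitutes $\partial_{x_j}b_j=l_j+h_j-\partial_t c$ for $j\neq i$, which produces two copies of $\partial_{x_i}\partial_t c$. It then removes them with the $i$-th diagonal relation, and that is where the extra $-\partial_{x_i}^2 b_i$ on the left-hand side comes from.
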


\begin{proof}
Fix $i\in\{1,2,3\}$. We compute
\[
 \Delta_x b_i = \pa_{x_i}^2 b_i + \sum_{j\neq i}\pa_{x_j}^2 b_i.
\]
For $j\neq i$, the relation \eqref{app:cb_rela} gives
\[
\pa_{x_j}^2 b_i = \pa_{x_j}(l_{ij}+h_{ij}) - \pa_{x_i}\pa_{x_j}b_j \quad \text{and} \quad \pa_{x_j}b_j =  l_j+h_j-\pa_t c.
\]
Thus, we obtain
\[
\pa_{x_j}^2 b_i = \pa_{x_j}(l_{ij}+h_{ij}) - \pa_{x_i}(l_j+h_j) + \pa_{x_i}\pa_t c.
\]
Summing over $j\neq i$, we obtain
\[
\Delta_x b_i = \pa_{x_i}^2 b_i + \sum_{j\neq i} \lt\{ \pa_{x_j}(l_{ij}+h_{ij}) - \pa_{x_i}(l_j+h_j) + \pa_{x_i}\pa_t c \rt\}.
\]
Using again \eqref{app:cb_rela}, we get
\[
\pa_{x_i}\pa_t c = \pa_{x_i}(l_i+h_i) - \pa_{x_i}^2b_i.
\]
This yields
\[
\Delta_x b_i = -\pa_{x_i}^2b_i + \sum_{j\neq i} \lt\{ \pa_{x_j}(l_{ij}+h_{ij}) - \pa_{x_i}(l_j+h_j) + \pa_{x_i}(l_i+h_i) \rt\}.
\]
Applying $\pa_x^\alpha$, multiplying by $\pa_x^\alpha b_i$, integrating in $x$, and integrating by parts, we obtain
\[
\|\pa_x^\alpha\nabla_x b_i\|_{L^2_x}^2  \le C \sum_{j \neq i} \lt| \intr \pa_x^\alpha l_{ij}\cdot \pa_x^\alpha\nabla_xb_i\,\rdx \rt| + C \sum_{j=1}^3 \lt| \intr \pa_x^\alpha l_j\cdot \pa_x^\alpha\nabla_xb_i\,\rdx \rt| + C\|\pa_x^\alpha\tilde h\|_{L^2_x} \|\pa_x^\alpha\nabla_xb_i\|_{L^2_x}.
\]
Applying Young's inequality to the above concludes the desired result.
\end{proof}

%
%
%
%
%
%
%
%
%
%
The estimate for $c$ is more direct. From Lemma \ref{lem:lh-s5}, $\nabla_x c=l_c+h_c$.

\begin{lemma}\label{app:clh-section5}
Let $|\alpha|\le s-1$. Then
\bq\label{app:c-estimate-section5}
\|\pa_x^\alpha\nabla_x c\|_{L^2_x}^2 \le C \intr \pa_x^\alpha l_c\cdot \pa_x^\alpha\nabla_xc\,\rdx + C\|\pa_x^\alpha\tilde h\|_{L^2_x}^2.
\eq
\end{lemma}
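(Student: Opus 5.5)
The relation $\nabla_x c=l_c+h_c$ from Lemma \ref{lem:lh-s5} is an exact identity at the level of each component, so the estimate should follow from a single energy-type pairing, with no elliptic inversion needed (unlike the $b$ case in Lemma \ref{app:blh-section5}).

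First apply $\pa_x^\alpha$ (with $|\alpha|\le s-1$) to the identity to get $\pa_x^\alpha\nabla_x c=\pa_x^\alpha l_c+\pa_x^\alpha h_c$. The regularity needed for this is provided by $g\in H^s$ and the definition of the coefficient functionals as moments against fixed exponentially decaying weights. Next take the $L^2_x$ inner product with $\pa_x^\alpha\nabla_x c$. This gives
\[
\|\pa_x^\alpha\nabla_x c\|_{L^2_x}^2=\intr \pa_x^\alpha l_c\cdot\pa_x^\alpha\nabla_x c\,\rdx+\intr \pa_x^\alpha h_c\cdot\pa_x^\alpha\nabla_x c\,\rdx .
\]

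The last term is bounded by Cauchy--Schwarz and Young's inequality:
\[
\Big|\intr \pa_x^\alpha h_c\cdot\pa_x^\alpha\nabla_x c\,\rdx\Big|\le \tfrac12\|\pa_x^\alpha\nabla_x c\|_{L^2_x}^2+\tfrac12\|\pa_x^\alpha h_c\|_{L^2_x}^2 .
\]
By the definition of $\|\pa_x^\alpha\tilde h\|_{L^2_x}$, we have $\|\pa_x^\alpha h_c\|_{L^2_x}\le\|\pa_x^\alpha\tilde h\|_{L^2_x}$. Absorbing the first half into the left-hand side and multiplying by $2$ yields \eqref{app:c-estimate-section5} with $C=2$. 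The $l_c$-pairing is kept intact rather than estimated, because it is later handled in Lemma \ref{app:ell-terms-s5}: the $\pa_t(I-P)g$ part of $l_c$ is moved into the time derivative of the interaction functional $\calJ_\alpha$.

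The argument is essentially routine; the only point requiring care is to keep the $l_c$ term in this paired form.
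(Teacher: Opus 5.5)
Your proposal is correct and is essentially the paper's proof: apply $\pa_x^\alpha$ to $\nabla_x c=l_c+h_c$, pair with $\pa_x^\alpha\nabla_x c$, keep the $l_c$ pairing intact for later treatment, and absorb the $h_c$ pairing via Cauchy--Schwarz and Young's inequality. Your explicit constant $C=2$ is exactly what the absorption produces, which matters because the $l_c$ pairing is signed.
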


\begin{proof}
Applying $\pa_x^\alpha$ to $\nabla_x c=l_c+h_c$ and taking the $L^2_x$-inner product with $\pa_x^\alpha\nabla_x c$, we obtain
\[
\|\pa_x^\alpha\nabla_xc\|_{L^2_x}^2 = \intr \pa_x^\alpha l_c\cdot\pa_x^\alpha\nabla_xc\,\rdx + \intr \pa_x^\alpha h_c\cdot\pa_x^\alpha\nabla_xc\,\rdx.
\]
The second term is bounded by $C\|\pa_x^\alpha\tilde h\|_{L^2_x} \|\pa_x^\alpha\nabla_xc\|_{L^2_x}$, and Young's inequality gives \eqref{app:c-estimate-section5}.
\end{proof}

%
%
%
%
%
%
%
%
%
%

We finally estimate the gradient of $a$. From Lemma \ref{lem:lh-s5}, $\pa_t b_i+\pa_{x_i}a=l_{bi}+h_{bi}$. Taking the divergence gives
\[
\pa_t\nabla_x\cdot b+\Delta_x a = \nabla_x\cdot l_b+\nabla_x\cdot h_b,
\]
where $l_b=(l_{b1},l_{b2},l_{b3})$ and $h_b=(h_{b1},h_{b2},h_{b3})$.

\begin{lemma}\label{app:alh-section5}
Let $|\alpha|\le s-1$. Then, we have
\begin{align*} 
\|\pa_x^\alpha\nabla_x a\|_{L^2_x}^2
&\le -\frac{\rd}{\rdt} \intr \pa_x^\alpha b\cdot \pa_x^\alpha\nabla_xa\,\rdx + C \intr \pa_x^\alpha l_b\cdot \pa_x^\alpha\nabla_xa\,\rdx + C \|\pa_x^\alpha\nabla_x b\|_{L^2_x}^2 \cr
&\quad + C \|\pa_x^\alpha\nabla_x(I-P)g\|_{L^2_{x,p}}^2 + C\|\pa_x^\alpha\tilde h\|_{L^2_x}^2.
\end{align*}
\end{lemma}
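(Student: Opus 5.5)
We will start from the relation $\pa_t b_i+\pa_{x_i}a=l_{bi}+h_{bi}$ of Lemma \ref{lem:lh-s5}. Applying $\pa_x^\alpha$ and testing against $\pa_x^\alpha\nabla_x a$ gives
\[
\|\pa_x^\alpha\nabla_x a\|_{L^2_x}^2 = \intr \pa_x^\alpha l_b\cdot\pa_x^\alpha\nabla_x a\,\rdx + \intr \pa_x^\alpha h_b\cdot\pa_x^\alpha\nabla_x a\,\rdx - \intr \pa_t\pa_x^\alpha b\cdot\pa_x^\alpha\nabla_x a\,\rdx .
\]
The $h_b$ term is handled by Cauchy--Schwarz and Young's inequality. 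This yields $\frac14\|\pa_x^\alpha\nabla_x a\|_{L^2_x}^2+C\|\pa_x^\alpha\tilde h\|_{L^2_x}^2$, and the first piece is absorbed into the left-hand side. The $l_b$ term is kept as is; after absorption, its coefficient becomes the constant $C$ in the statement.

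For the time-derivative term, we move $\pa_t$ off $b$:
\[
-\intr \pa_t\pa_x^\alpha b\cdot\pa_x^\alpha\nabla_x a\,\rdx = -\frac{\rd}{\rdt}\intr \pa_x^\alpha b\cdot\pa_x^\alpha\nabla_x a\,\rdx + \intr \pa_x^\alpha b\cdot\pa_x^\alpha\nabla_x \pa_t a\,\rdx .
\]
Integrating by parts in $x$, the last integral equals $-\intr \pa_x^\alpha(\nabla_x\cdot b)\,\pa_x^\alpha\pa_t a\,\rdx$. To control $\pa_t a$, we will not use $\pa_t a=l_a+h_a$, since that would produce another $\pa_t$ acting on $(I-P)g$. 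Instead we use the exact balance law \eqref{eq:abc_t}:
\[
\pa_t a=\frac{3C_b}{D_0}\lal p\cdot\nabla_x(I-P)g,|p|^2\sqrt{\mu_\hbar}\ral_{L^2_p}.
\]
It contains no $\Gamma$ contribution, by the conservation properties of $L$ and $\Gamma$, and no time derivative. By the exponential decay of $|p|^3\sqrt{\mu_\hbar}$,
\[
\|\pa_x^\alpha\pa_t a\|_{L^2_x}\le C\|\pa_x^\alpha\nabla_x(I-P)g\|_{L^2_{x,p}} .
\]
Combined with $\|\pa_x^\alpha\nabla_x\cdot b\|_{L^2_x}\le C\|\pa_x^\alpha\nabla_x b\|_{L^2_x}$ and Young's inequality, this term is bounded by $C\|\pa_x^\alpha\nabla_x b\|_{L^2_x}^2+C\|\pa_x^\alpha\nabla_x(I-P)g\|_{L^2_{x,p}}^2$.

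Collecting the pieces and multiplying by the factor coming from the absorption gives the claimed inequality, with interaction term $-\frac{\rd}{\rdt}\intr\pa_x^\alpha b\cdot\pa_x^\alpha\nabla_x a\,\rdx$ (up to a fixed positive multiple, which is absorbed into $C_*$ in Lemma \ref{lem:mac-reco-s5}).

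The main delicate point is the handling of $\pa_t$. The integration by parts in $x$ must be arranged so that the time derivative falls on $a$ rather than on $b$. It is also essential that \eqref{eq:abc_t} expresses $\pa_t a$ purely in terms of $\nabla_x(I-P)g$, with $\nabla_x\cdot b$ cancelling and no nonlinear remainder. I would double-check that cancellation from \eqref{eq:abc-bal-raw}, combining the first and third equations with weights $C_c$ and $-3C_b$. All manipulations are justified for the smooth solutions considered in Section \ref{sec:global-energy}, since $|\alpha|+1\le s$.
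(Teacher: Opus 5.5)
Your proposal is correct and follows the paper's proof essentially step for step. You pair $\pa_t b+\nabla_x a=l_b+h_b$ with $\pa_x^\alpha\nabla_x a$, move $\pa_t$ onto $a$ and integrate by parts in $x$, and then use the exact balance law \eqref{eq:abc_t} to bound $\|\pa_t\pa_x^\alpha a\|_{L^2_x}$ by $C\|\pa_x^\alpha\nabla_x(I-P)g\|_{L^2_{x,p}}$, exactly as the paper does. The fixed positive factor in front of the interaction term, which you pick up when absorbing the $h_b$ contribution, is also implicit in the paper's Young's inequality step; it is harmless, since it only changes the constant $C_*$ in Lemma \ref{lem:mac-reco-s5}.
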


\begin{proof}
Applying $\pa_x^\alpha$ to $\pa_t b+\nabla_x a=l_b+h_b$ and taking the $L^2_x$-inner product with $\pa_x^\alpha\nabla_xa$, we obtain
\[
\|\pa_x^\alpha\nabla_xa\|_{L^2_x}^2 = -\intr \pa_t\pa_x^\alpha b\cdot \pa_x^\alpha\nabla_xa\,\rdx + \intr \pa_x^\alpha l_b\cdot \pa_x^\alpha\nabla_xa\,\rdx + \intr \pa_x^\alpha h_b\cdot \pa_x^\alpha\nabla_xa\,\rdx.
\]
The time derivative term is rewritten as
\begin{align*}
-\intr \pa_t\pa_x^\alpha b\cdot \pa_x^\alpha\nabla_xa\,\rdx = -\frac{\rd}{\rdt} \intr \pa_x^\alpha b\cdot \pa_x^\alpha\nabla_xa\,\rdx + \intr \pa_x^\alpha b\cdot \pa_t\pa_x^\alpha\nabla_xa\,\rdx.
\end{align*}
Here, the last term can be estimated as 
\[
\lt|\intr \pa_x^\alpha b\cdot \pa_t\pa_x^\alpha\nabla_xa\,\rdx\rt| =  \lt| \intr \nabla_x\cdot\pa_x^\alpha b\, \pa_t\pa_x^\alpha a\,\rdx \rt| \le \eta\|\pa_x^\alpha\nabla_xb\|_{L^2_x}^2 + C_\eta\|\pa_x^\alpha\nabla_x(I-P)g\|_{L^2_{x,p}}^2,
\]
for any $\eta>0$, where we used $\|\pa_t\pa_x^\alpha a\|_{L^2_x} \le C\|\pa_x^\alpha\nabla_x(I-P)g\|_{L^2_{x,p}}$ due to \eqref{eq:abc_t}.
The $h_b$-term is controlled by $C\|\pa_x^\alpha\tilde h\|_{L^2_x} \|\pa_x^\alpha\nabla_xa\|_{L^2_x}$, and hence by Young's inequality. Combining these estimates concludes the desired result.
\end{proof}

%
%
%
%
%
%
%
%
%
%

\subsection{Estimate of the \texorpdfstring{$\ell$-terms}{ell-terms}}

We now estimate the terms involving the $l$-coefficients. By \eqref{eq:form-I-s5}, every $l$-coefficient is a finite linear combination of moments of
\[
\lt\{ -\pa_t-p\cdot\nabla_x+L \rt\}(I-P)g
\]
against fixed smooth exponentially decaying functions of $p$. The time-derivative parts produce the interaction functional, while the remaining terms are controlled by the microscopic norms.

Let $\calL_\alpha$ denote the signed finite linear combination of the $l$-coefficient contributions arising in Lemmas \ref{app:blh-section5}, \ref{app:clh-section5}, and \ref{app:alh-section5}, with the fixed coefficients inherited from the preceding elliptic estimates.

\begin{lemma}\label{app:ell-terms-s5}
Let $|\alpha|\le s-1$. Then, we have
\begin{align}\label{app:ell-est-s5}
\calL_\alpha \le -\frac{\rd}{\rdt}\calJ_\alpha(t) + C \|\pa_x^\alpha\nabla_x(I-P)g\|_{L^2_{x,p}}^2 + C \|\pa_x^\alpha(I-P)g\|_{L^2_{x,p}}^2 + \eta \|\pa_x^\alpha\nabla_x(a,b,c)\|_{L^2_x}^2,
\end{align}
where $\eta>0$ is arbitrary and $\calJ_\alpha$ is a finite linear combination of interaction terms of the form
\[
\inttr \pa_x^\alpha(I-P)g\, \xi(p)\, \pa_x^\alpha\nabla_xq \,\rdp\rdx , \quad q\in\{a,b,c\}.
\]
Here $\xi$ ranges over a fixed finite family of smooth exponentially decaying functions of $p$.
\end{lemma}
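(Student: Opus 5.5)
The plan is to treat the three pieces of each $l$-coefficient separately. By \eqref{eq:form-I-s5}, every $l$-coefficient is a finite sum of moments
\[
\intr \partial_x^\alpha\{-\pa_t-p\cdot\nabla_x+L\}(I-P)g\,\xi(p)\,\rdp,
\]
and each enters $\calL_\alpha$ paired in $L^2_x$ with some $\pa_x^\alpha\nabla_x q$, $q\in\{a,b,c\}$. Accordingly, each pairing splits as $T_t+T_x+T_L$, according to the time-derivative, transport, and collision parts of $\ell$.

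The transport and collision parts are direct. For $T_x$, I will bound $|\intr p\cdot\nabla_x\pa_x^\alpha(I-P)g\,\xi\,\rdp|\le C\|\pa_x^\alpha\nabla_x(I-P)g\|_{L^2_p}$, using the exponential decay of $\xi$. For $T_L$, I will use the self-adjointness of $L$ from Lemma \ref{lem:Prop-L} to write $\lal L\pa_x^\alpha(I-P)g,\xi\ral_{L^2_p}=\lal \pa_x^\alpha(I-P)g,L\xi\ral_{L^2_p}$. Since $L\xi=L_0\xi+P_1\xi$ is again a smooth, exponentially decaying function (its coefficients are bounded by \eqref{eq:P-coeff-b}), this is bounded by $C\|\pa_x^\alpha(I-P)g\|_{L^2_p}$. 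Cauchy--Schwarz in $x$ followed by Young's inequality with parameter $\eta$ then gives
\[
|T_x|+|T_L|\le \eta\|\pa_x^\alpha\nabla_x(a,b,c)\|_{L^2_x}^2+C_\eta\big(\|\pa_x^\alpha\nabla_x(I-P)g\|_{L^2_{x,p}}^2+\|\pa_x^\alpha(I-P)g\|_{L^2_{x,p}}^2\big).
\]

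For the time-derivative part, I will integrate by parts in time:
\[
-\inttr \pa_t\pa_x^\alpha(I-P)g\,\xi\,\pa_x^\alpha\nabla_x q\,\rdp\rdx=-\frac{\rd}{\rdt}\inttr \pa_x^\alpha(I-P)g\,\xi\,\pa_x^\alpha\nabla_x q\,\rdp\rdx+\inttr \pa_x^\alpha(I-P)g\,\xi\,\pa_t\pa_x^\alpha\nabla_x q\,\rdp\rdx.
\]
The first term, summed over all pairings with their signed coefficients, defines $\calJ_\alpha$, which has exactly the stated form. In the second term I will move one $x$-derivative off $q$, so that it lands on $(I-P)g$. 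This produces $\inttr \pa_x^\alpha\nabla_x(I-P)g\,\xi\,\pa_t\pa_x^\alpha q\,\rdp\rdx$ and requires an $L^2_x$ bound on $\pa_t\pa_x^\alpha q$.

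The main obstacle is controlling $\pa_t\pa_x^\alpha(a,b,c)$ without losing derivatives; I will read it off the exact balance laws \eqref{eq:abc_t}, not off \eqref{eq:mac-rel-s5}, since the latter contain the nonlinear $h$-terms.
\begin{itemize}
\item[(i)] $\pa_t a$ is a moment of $p\cdot\nabla_x(I-P)g$, so $\|\pa_t\pa_x^\alpha a\|\le C\|\pa_x^\alpha\nabla_x(I-P)g\|$.
\item[(ii)] $\pa_t b=-\nabla_x(a+\tfrac{C_c}{3C_b}c)+\text{(micro)}$, which gives $\|\pa_t\pa_x^\alpha b\|\le C(\|\pa_x^\alpha\nabla_x(a,c)\|+\|\pa_x^\alpha\nabla_x(I-P)g\|)$.
\item[(iii)] $\pa_t c=-\tfrac13\nabla_x\cdot b+\text{(micro)}$, which gives the analogous bound.
\end{itemize}
With these bounds, the second term is at most $C\|\pa_x^\alpha\nabla_x(I-P)g\|(\|\pa_x^\alpha\nabla_x(a,b,c)\|+\|\pa_x^\alpha\nabla_x(I-P)g\|)$. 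Young's inequality with $\eta$ then bounds it by $\eta\|\pa_x^\alpha\nabla_x(a,b,c)\|^2+C_\eta\|\pa_x^\alpha\nabla_x(I-P)g\|^2$. Because the balance laws are linear with no nonlinear source, no $\tilde h$-term appears here.

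Summing the bounds for $T_t$, $T_x$, $T_L$ over the finitely many pairings, with their fixed coefficients, yields \eqref{app:ell-est-s5}.
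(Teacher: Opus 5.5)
Your proposal is correct and follows essentially the same route as the paper. You split each pairing into its time-derivative, transport, and collision parts, then integrate by parts in $t$ to produce $\calJ_\alpha$ and in $x$ to avoid losing a derivative. You control $\pa_t\pa_x^\alpha(a,b,c)$ through the exact balance laws \eqref{eq:abc_t}, which have no $\Gamma$-source because $\Gamma(g)\in\mcN^\perp$, rather than through \eqref{eq:mac-rel-s5}, and you treat the $L$-term by self-adjointness with $L\xi\in L^2_p$; the only difference is that you make explicit the reason for using \eqref{eq:abc_t}, which the paper leaves implicit.
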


\begin{proof}
It suffices to estimate a representative signed term. Set $r := (I-P)g$, and let $\xi=\xi(p)$ be one of the fixed smooth exponentially decaying functions appearing in \eqref{eq:form-I-s5}. For
$q\in\{a,b,c\}$, consider
\[
\mathfrak L_{\alpha,q,\xi} := \inttr \lt\{ -\pa_t-p\cdot\nabla_x+L \rt\} \pa_x^\alpha r\, \xi(p)\, \pa_x^\alpha\nabla_xq \,\rdp\rdx .
\]
We preserve the sign of the time-derivative contribution and estimate only the remaining terms in absolute value.

\medskip
\noindent
{\it Estimate of the time-derivative term.} Note that
\[
- \inttr \pa_t\pa_x^\alpha r\, \xi(p)\,\pa_x^\alpha\nabla_xq \,\rdp\rdx  = - \frac{\rd}{\rdt} \inttr \pa_x^\alpha r\, \xi(p)\, \pa_x^\alpha\nabla_xq \,\rdp\rdx  + \inttr \pa_x^\alpha r\, \xi(p)\, \pa_t\pa_x^\alpha\nabla_xq \,\rdp\rdx .
\]
To estimate the last term without losing one spatial derivative, we
integrate by parts in $x$:
\[
\inttr \pa_x^\alpha r\, \xi(p)\, \pa_t\pa_x^\alpha\nabla_xq \,\rdp\rdx  = - \intr \nabla_x\cdot \lt( \intr  \pa_x^\alpha r\,\xi(p)\,\rdp \rt) \pa_t\pa_x^\alpha q \,\rdx.
\]
By the exponential decay of $\xi$,
\[
\lt\| \nabla_x\cdot \lt( \intr  \pa_x^\alpha r\,\xi(p)\,\rdp \rt) \rt\|_{L^2_x} \le C \|\pa_x^\alpha\nabla_xr\|_{L^2_{x,p}}.
\]

The balance laws \eqref{eq:abc_t} imply
\[
\|\pa_t\pa_x^\alpha(a,b,c)\|_{L^2_x} \le C \|\pa_x^\alpha\nabla_x(a,b,c)\|_{L^2_x} + C \|\pa_x^\alpha\nabla_xr\|_{L^2_{x,p}}.
\]
Indeed, the equation for $\pa_ta$ contains only a moment of $p\cdot\nabla_xr$, the equation for $\pa_tb$ contains $\nabla_x(a,c)$ and a moment of $p\cdot\nabla_xr$, and the equation for $\pa_tc$ contains $\nabla_x\cdot b$ and a moment of $p\cdot\nabla_xr$.

Combining these estimates and applying Young's inequality, we obtain
\[
\lt| \inttr \pa_x^\alpha r\, \xi(p)\, \pa_t\pa_x^\alpha\nabla_xq \,\rdp\rdx  \rt| \le C \|\pa_x^\alpha\nabla_xr\|_{L^2_{x,p}}^2 + \eta \|\pa_x^\alpha\nabla_x(a,b,c)\|_{L^2_x}^2.
\]

\medskip
\noindent
{\it Estimate of the transport term.} Using the exponential decay of $\xi$ and Cauchy's inequality, we have
\[
\lt| \inttr p\cdot\nabla_x\pa_x^\alpha r\, \xi(p)\, \pa_x^\alpha\nabla_xq \,\rdp\rdx  \rt| \le C \|\pa_x^\alpha\nabla_xr\|_{L^2_{x,p}} \|\pa_x^\alpha\nabla_xq\|_{L^2_x} \le C_\eta \|\pa_x^\alpha\nabla_xr\|_{L^2_{x,p}}^2 + \eta \|\pa_x^\alpha\nabla_xq\|_{L^2_x}^2.
\]

\medskip
\noindent
{\it Estimate of the $L$-term.} Since $L$ is self-adjoint and $\xi$ is a fixed smooth exponentially decaying function of $p$, we have
\[
\intr  L(\pa_x^\alpha r)\,\xi(p)\,\rdp = \intr  \pa_x^\alpha r\,L\xi(p)\,\rdp.
\]
Moreover, $L\xi \in L^2_p$, and hence
\[
\lt| \inttr L(\pa_x^\alpha r)\, \xi(p)\, \pa_x^\alpha\nabla_xq \,\rdp\rdx  \rt| \le C \|\pa_x^\alpha r\|_{L^2_{x,p}} \|\pa_x^\alpha\nabla_xq\|_{L^2_x} \le C_\eta \|\pa_x^\alpha r\|_{L^2_{x,p}}^2 + \eta \|\pa_x^\alpha\nabla_xq\|_{L^2_x}^2.
\]
Combining the above estimates and summing over the finitely many coefficient functionals and macroscopic variables, we obtain \eqref{app:ell-est-s5}. This completes the proof.
\end{proof}

%
%
%
%
%
%
%
%
%
%

\bibliographystyle{abbrv}
\bibliography{NQFP_ref}

%
%
%
%
%
%
%
%
%
%

\end{document}